\newsavebox{\ssa}
\newtheorem{thm}{Theorem}[subsection]
\newtheorem{prop}[thm]{Proposition}
\newtheorem{lemma}[thm]{Lemma}
\newtheorem{cor}[thm]{Corollary}
\theoremstyle{definition}
\newtheorem{def0}[thm]{Definition}
\newtheoremstyle{boldremark}
    {\dimexpr\topsep/2\relax} 
    {\dimexpr\topsep/2\relax} 
    {}          
    {}          
    {\bfseries} 
    {.}         
    {.5em}      
    {}          
\theoremstyle{boldremark}
\newtheorem{rem}[thm]{Remark}
\newtheorem{ex}[thm]{Example}
\newtheorem{cons}[thm]{Construction}
\newcommand{\lthm}{Łoś's theorem}
\newcommand{\Rep}{\textrm{Rep}}
\newcommand{\Repb}{\textbf{Rep}}
\newcommand{\Hom}{\textrm{Hom}}
\newcommand{\End}{\textrm{End}}
\newcommand{\Ind}{\textrm{Ind}}
\newcommand{\Tr}{\textrm{Tr}}
\newcommand{\IND}{\textrm{IND}}
\newcommand{\po}{\mathfrak{po}}
\newcommand{\fan}{for almost all $n$}
\newcommand\prodF{{\prod}_{\mathcal F}}
\newcommand{\Fp}{\overline{\mathbb F}_p}
\newcommand{\Fpn}{\overline{\mathbb F}_{p_n}}
\newcommand{\FQ}{\overline{\mathbb Q}}
\newcommand{\Cext}{\overline{\mathbb C(\nu)}}
\newcommand{\chct}{\textrm{char}}
\newcommand{\ct}{\textrm{ct}}
\newcommand{\be}{\bold{e}}
\newcommand{\gr}{{\rm{gr}}}
\titleformat*{\section}{\normalfont\fontfamily{cmr}\fontsize{16}{19}\bfseries}
\titleformat*{\subsection}{\normalfont\fontfamily{cmr}\fontsize{15}{17}\selectfont}
\titleformat*{\subsubsection}{\normalfont\fontfamily{cmr}\fontsize{14}{17}\selectfont}
\long\def\/*#1*/{}
\title{New realizations of deformed double current algebras and Deligne categories}
\author[1]{Pavel Etingof}
\author[1]{Daniil Kalinov}
\author[2]{Eric Rains}
\affil[1]{Department of Mathematics, Massachusetts Institute of Technology, Cambridge, MA 02139, USA}
\affil[2]{Department of Mathematics, California Institute of Technology, Pasadena, CA 91125, USA}
\date{}
\begin{document}

\maketitle

\centerline{\bf To the memory of Ernest Borisovich Vinberg}  

\begin{abstract}
In this paper we propose an alternative construction of a certain class of Deformed Double Current Algebras. We construct them as spherical subalgebras of symplectic reflection algebras in the Deligne category. They can also be thought of as ultraproducts of the corresponding spherical subalgebras in finite rank. We also provide new presentations of DDCA of types A and B by generators and relations.
\end{abstract}

\tableofcontents

\section{Introduction}

Deformed double current algebras (DDCA) of ${\mathfrak{gl}}_m$ are interpolations with respect to the
rank $n$ of Schur algebras associated to symplectic reflection
algebras for wreath products $S_n\ltimes \Gamma^n$, where
$\Gamma$ is a finite subgroup of $SL(2,\mathbb C)$ (\cite{Gu3}). 
They can also be viewed as deformations of enveloping algebras of
(generalized) matrix $W_{1+\infty}$-algebras, and (in some cases) as rational limits of
affine Yangians and toroidal quantum groups. DDCA appeared
first (in a special case) in the physical paper \cite{bernard1995yangian} in 1994. 
However, the systematic theory of DDCA, including
their full definition, was developed only in the last 15 years,  
in a series of papers by N. Guay and his collaborators \cite{Gu1,Gu2,Gu3,guay2016deformed,guay2009double}. In these
papers, presentations of DDCA by generators
and relations are given, the Schur-Weyl functor is defined and
shown to be an equivalence of categories, and the degeneration
of toroidal quantum groups and affine Yangians to DDCA is considered. 

The goal of this paper is to give two alternative definitions of DDCA for ${\mathfrak{gl}}_1$ (i.e., of the interpolations of spherical symplectic reflection algebras). The first definition applies to any finite subgroup $\Gamma\subset SL_2(\mathbb C)$ and is based on {\it Deligne categories}. 
Namely, we consider the Deligne category ${\rm Rep}(S_\nu)$, $\nu\in \mathbb C$, which 
is obtained by interpolating the representation categories of the symmetric group $S_n$ with respect to $n$, \cite{deligne2007categorie}. Using this category, we can define the interpolation $\mathcal{C}_\nu$ of the representation category of the symplectic reflection algebra $H_{t,k}(S_n\ltimes \Gamma^n)$ attached to $\Gamma$ in which the integer $n$ is replaced by a complex parameter $\nu$ (\cite{etingof2014representation}, Subsection 5.3). In the category $\mathcal{C}_\nu$, we have an object $M$ obtained by interpolating the $H_{t,k}(S_n\ltimes \Gamma^n)$-modules $H_{t,k}(S_n\ltimes \Gamma^n)\be$, where ${\be}\in {\mathbb C}[S_n\ltimes \Gamma^n]$ is the projector to the trivial representation, and the DDCA for ${\mathfrak{gl}}_1$ attached to $\Gamma$ may be defined as $\mathcal D_{t,k,c,\nu}(\Gamma)=\End (M)$. This definition opens the door for studying the representation theory of $\mathcal D_{t,k,c,\nu}(\Gamma)$; indeed, if $N$ is another object of $\mathcal{C}_\nu$ then the space $\Hom(M,N)$ 
is naturally a (right) module over $\mathcal D_{t,k,c,\nu}(\Gamma)$. At the same time, it is easy to construct objects of 
$\mathcal{C}_\nu$ because it is given ``by generators and relations"; for instance, if $\Gamma$ is cyclic then $\mathcal{C}_\nu$ contains the category $\mathcal{O}$ which can be studied by methods of the theory of highest weight categories. In fact, in the case $\Gamma=1$ this has already been started in \cite{entova2014representations}. 

In future publications we plan to apply this approach to the DDCA of ${\mathfrak{gl}}_m$ for $m>1$. 
Note that one of its advantages is that it easily applies to the case of $m=1$ (discussed in this paper), while this is a difficult case for the approach of \cite{Gu1,Gu2,Gu3,guay2016deformed,guay2009double} which uses Steinberg-type presentations of ${\mathfrak{gl}}_m$. 
 
The second definition of the DDCA (which we show to be equivalent to the first one) is by explicit generators and relations (but different from \cite{Gu1,Gu2,Gu3,guay2016deformed,guay2009double}), and we give it only for $\Gamma=1$ and $\Gamma = \mathbb Z / 2$. This definition is based on deforming the presentation of the Lie algebra $\mathfrak{po}$ of Hamiltonians on ${\mathbb C}^2$ and of its even part $\mathfrak{po}^+$ by generators and relations. Namely, we show (in part using a computer) that $\mathcal D_{t,k,c,\nu}(\Gamma)$ in the case $\Gamma=1$ is the unique filtered deformation of the enveloping algebra $U(\mathfrak{po})$ for an appropriate filtration. We outline a similar approach for $\Gamma={\mathbb Z}/2$, although for larger $\Gamma$ the relations get too complicated.

The organization of the paper is as follows. 

Section 2 contains preliminaries. 

Section 3 describes generalities on ultraproducts, Deligne categories, and symplectic reflection algebras in complex rank, for simplicity concentrating mostly on the case of the rational Cherednik algebra of type A.

Section 4  explains two definitions of $\mathcal D_{t,k,\nu}$ -- the DDCA of type A, both the usual one (as the ultraproduct of spherical rational Cherednik algebras of type A) and the one via Deligne categories, and shows that they are equivalent. In this section we also state and prove the presentation of this algebra by generators and relations, showing that this is the unique filtered deformation of $U(\mathfrak{po})$. 

Finally, in Section 5 we generalize some of our results to DDCA for arbitrary $\Gamma$ and also state the result about the presentation of the DDCA of type B by generators and relations.

{\bf Acknowledgments.} This paper owes its existence to Victor Ginzburg, who proposed to study deformed double current algebras in the spring of 2001 and suggested, around the same time, some of the important ideas explored below. We are very grateful to Victor for sharing these ideas and initiating this research. We are also grateful to N. Guay, V. Ostrik, and T. Schedler for useful discussions. The work of P.E. and D.K. was  partially supported by the NSF grant DMS-1502244. The computer calculations for this paper were done using MAGMA, \cite{BCP}. 

\section{Preliminaries and notation}

\subsection{General notation} \label{sectnot}

In what follows we will use a lot of different categories of representations. We will always denote the usual (``finite rank") categories of representations using the boldface font, and use the regular font for the interpolation categories (e.g. $\Rep(S_\nu)$).

For example we will use the following notation for the categories of representations of symmetric groups. For convenience set ${\mathbb F}_0 = \Bbb Q$.

\begin{def0}
By $\textbf{Rep}(S_n; \Bbbk)$ denote the category of (possibly infinite dimensional) representations of the symmetric group $S_n$ over $\Bbbk$. By $\textbf{Rep}^{f}(S_n;\Bbbk)$ denote the full subcategory of finite dimensional representations. 
Also for $p\ge 0$ set  $\textbf{Rep}_p(S_n) := \textbf{Rep}(S_n; \Fp)$ and  $\textbf{Rep}^{f}_p(S_n) := \Repb^f(S_n; \Fp)$. 
\end{def0}

We will also fix the notation for the irreducible representations of the symmetric group.
\begin{def0}
 For a Young diagram $\lambda$, by $l(\lambda)$ denote the number of rows of the diagram (the length), by $|\lambda|$ the number of boxes (the weight) and by $\ct(\lambda)$ the content of $\lambda$, i.e., $\ct(\lambda)=\sum_{(i,j)\in \lambda}(j-i)$, where $(i,j)$ denotes the box of $\lambda$ in row $i$ and column $j$.
\end{def0}
\begin{def0}
For $p=0$ or $p>n$ and a Young diagram $\lambda$ such that $|\lambda|=n$ denote by $X_p(\lambda)$ the unique simple object of $\Repb_p(S_n)$ corresponding to $\lambda$.

For $n>0$ and $p\ge 0$ denote by $\mathfrak{h}_n^p \in \Repb_p(S_n)$, or shortly by $\mathfrak{h}_n$ (if there is no ambiguity about the characteristic) the standard permutation representation of $S_n$. 
\end{def0}

There is an important central element in $\Bbbk[S_n]$:
\begin{def0}\label{centraldef}
Denote the central element $\sum_{1 \le i<j \le n}s_{ij} \in \Bbbk[S_n]$ by $\Omega_n$. 
\end{def0}
\begin{rem}
Note that $\Omega_n$ acts on $X_p(\lambda)$ by $\ct(\lambda)$.
\end{rem}

As another piece of notation, below we will frequently use the following operation on Young diagrams:
\begin{def0} \label{defyoung}
 For a Young diagram $\lambda$ and an integer $n \ge \lambda_1+|\lambda|$ denote by $\lambda|_n$ the Young diagram $(n - |\lambda|, \lambda_1, \dots, \lambda_{l(\lambda)})$, where $\lambda_i$ is the length of the $i$-th row of $\lambda$.
\end{def0}

In what follows we will often use the language of tensor categories. Here's what we mean by a tensor category (see Definition 4.1.1 in \cite{etingof2016tensor}):
\begin{def0}
A tensor category $\mathcal C$ is a $\Bbbk$-linear locally finite abelian rigid symmetric monoidal category, such that $\End_{\mathcal C}(\mathbbm{1}) \simeq \Bbbk$.
\end{def0}

We will also fix a notation for the symmetric structure:
\begin{def0}
 For an object $X$ of a tensor category $\mathcal C$, we will denote by $\sigma_X$ the map from $X \otimes X$ to itself, given by the symmetric structure, i.e., the map permuting the two copies of $X$. Oftentimes, when the object we are referring to is obvious from the context, we will denote it simply by $\sigma$.
\end{def0}

We will also use the notion of the ind-completion of a category. For a general category ind-objects are given by diagrams in the category, with morphisms being morphisms between diagrams. However, in the case of a semisimple category there is a more concrete description. 

\begin{def0} \label{inddef}
For a semisimple category $\mathcal C$ with the set of simple objects $\{V_{\alpha}\}$ for $\alpha \in A$ the category\footnote{We use all uppercase letters to denote $\IND$, so as not to confuse it with the induction functors.}
 $\IND(\mathcal C)$  is the category $\mathcal D$ with objects $\bigoplus_{\alpha \in A} M_{\alpha}\otimes V_{\alpha}$, where $M_{\alpha}$ are (possibly infinite dimensional) vector spaces. The morphism spaces are given by:
$$
\Hom_{\mathcal D}(\bigoplus_{\alpha \in A} M_{\alpha} \otimes V_{\alpha},\bigoplus_{\beta \in A} N_{\beta} \otimes V_{\beta}) = \prod_{\alpha \in A} \Hom_{\textrm{Vect}}(M_{\alpha},N_{\alpha})  . 
$$
\end{def0}

Thus, in this case, we can think of ind-objects as infinite direct sums of objects of $\mathcal C$.

Next we would like to explain a way to define an ind-object of $\mathcal C$.
\begin{cons} \label{indcons}
Suppose $0 = X_0 \subset X_1 \subset X_2 \subset \dots \subset X_i \subset \dots$ is a nested sequence of objects of $\mathcal C$. Then their formal colimit, which we denote by $X$, is an object of $\IND(\mathcal C)$. We can write it down explicitly in terms of Definition \ref{inddef}. 

Indeed, suppose we have $X_i = \bigoplus_{\alpha \in A}M_{i,\alpha} \otimes V_{\alpha}$. Then it follows that:
$$
\bigcup_{i \in \mathbb N} X_i  = X = \bigoplus_{\alpha \in A} \left(\bigcup_{i \in \mathbb N} M_{i,\alpha}\right) \otimes V_{\alpha}  ,
$$
where $\bigcup_{i \in \mathbb N}X_i=\varinjlim X_i$ stands for the colimit along the diagram consisting of points numbered by $\mathbb N$ and arrows from $i$ to $i+1$ for all $i$.
\end{cons}

\begin{rem}\label{remindmor}
Suppose that $X$ and $Y$ are two objects constructed via Construction \ref{indcons}. Then:
$$
\Hom_{\IND(\mathcal C)}(X,Y) =\varprojlim_{i \in \mathbb N} \bigcup_{j \in \mathbb N}\Hom_{\mathcal C}(X_i,Y_j) .
$$

In case when $X$ is actually an object of $\mathcal C$, this simplifies to:
$$
\Hom_{\IND(\mathcal C)}(X,Y) = \bigcup_{j \in \mathbb N}\Hom_{\mathcal C}(X,Y_j).
$$
In other words, $X$ is a compact object of $\IND(\mathcal C)$.
\end{rem}
\begin{ex}
We have  $\textbf{Rep}_p(S_n) = \IND(\textbf{Rep}^{f}_p(S_n))$. Indeed, this holds for the representation category of any finite dimensional algebra.
\end{ex}

\subsection{Wreath products $S_n \ltimes \Gamma^n$}

To deal with DDCA with non-trivial $\Gamma$ we will need to use a certain interpolation of categories of representations of wreath products. Below we will state basic facts about representations of wreath products in finite rank.

\begin{def0} \label{wreathdef}
For a finite group $\Gamma$, consider the action of $S_n$ on $\Gamma^n$ by permutations. The semiderect product $S_n \ltimes \Gamma^n$ is called the wreath product.
\end{def0}

\begin{rem}
Outside of the present section we will be interested only in $\Gamma \subset \mathrm{SL}(2,\Bbbk)$. However the results stated in the present section hold for any $\Gamma$.
\end{rem}

We have the following classification of irreducible representations of $S_n \ltimes \Gamma^n$.
\begin{prop}\label{wreathprop1}
Suppose $\Bbbk$ is an algebraically closed field of characteristic \linebreak $\chct(\Bbbk)= p > n,|\Gamma|$ or $p=0$. Suppose $A$ is the set of indices which goes over all of the irreducible representations of $\Gamma$ over $\Bbbk$, i.e., $\{W_\alpha\}_{\alpha \in A}$ is the set of irreducible representations of $\Gamma$. Then the set of all irreducible representations of $S_n \ltimes \Gamma^n$  over $\Bbbk$ is in 1-1 correspondence with functions:
$$
\lambda: A \to {\rm Partitions},
$$
such that $\sum_{\alpha \in A}| \lambda (\alpha)| = n$. The representation corresponding to fixed $\lambda$ is given by:
$$
X_p(\lambda) = {\rm Ind}^{S_n \ltimes \Gamma^n}_{(\prod_{\alpha \in A}S_{\lambda(\alpha)})\ltimes \Gamma^n}(\bigotimes_{\alpha \in A}X_{p}(\lambda(\alpha))\otimes W_{\alpha}^{\otimes |\lambda(\alpha)|}).
$$

\end{prop}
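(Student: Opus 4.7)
\begin{sproof}
The plan is to derive the classification from Clifford theory applied to the normal subgroup $N := \Gamma^n \trianglelefteq G := S_n \ltimes \Gamma^n$. The hypothesis $p > n, |\Gamma|$ (or $p=0$) implies $p \nmid |G| = n! \cdot |\Gamma|^n$, so $\Bbbk[G]$ and the group algebras of all its subgroups are semisimple; this is precisely what is required to run the Clifford-theoretic machine cleanly.

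First I would enumerate $\Irr(N)$: since $\Bbbk$ is algebraically closed of good characteristic, these are the external tensor products $W_{\alpha_1} \boxtimes \cdots \boxtimes W_{\alpha_n}$ indexed by tuples $(\alpha_i) \in A^n$. Next, $S_n$ acts on $\Irr(N)$ by permuting the tensor factors, so the orbits are parametrized by functions $\mu : A \to \mathbb{Z}_{\geq 0}$ with $\sum_\alpha \mu(\alpha) = n$. For each orbit pick the distinguished representative
$$
V_\mu := \bigotimes_{\alpha \in A} W_\alpha^{\otimes \mu(\alpha)}
$$
(with respect to some fixed linear order on $A$). Its stabilizer in $G$ is the inertia subgroup $I_\mu = \bigl(\prod_{\alpha \in A} S_{\mu(\alpha)}\bigr) \ltimes \Gamma^n$, and $V_\mu$ extends canonically to a genuine representation of $I_\mu$ by letting each $S_{\mu(\alpha)}$ permute its own $\mu(\alpha)$ tensor slots. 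Clifford's theorem then produces a bijection between the irreducibles of $G$ lying over the $S_n$-orbit of $V_\mu$ and $\Irr(I_\mu / N) = \Irr\bigl(\prod_\alpha S_{\mu(\alpha)}\bigr)$, sending $Y$ to $\Ind_{I_\mu}^G(Y \otimes V_\mu)$. Under the hypothesis on $p$, the irreducibles of $\prod_\alpha S_{\mu(\alpha)}$ are exactly the tensor products $\bigotimes_\alpha X_p(\lambda(\alpha))$ for tuples of partitions with $|\lambda(\alpha)| = \mu(\alpha)$. Collecting all orbits, ranging over $\mu$ and refining by partitions is the same as ranging over all $\lambda : A \to \{\textrm{Partitions}\}$ with $\sum_\alpha |\lambda(\alpha)| = n$, and the resulting induced module is exactly the $X_p(\lambda)$ in the proposition.

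The main subtlety to address is showing that the extension of $V_\mu$ from $N$ to $I_\mu$ is well defined and that the associated Clifford obstruction (a $2$-cocycle with values in $\Bbbk^\times$) is trivial; only then is one entitled to tensor with a pullback from $I_\mu / N$ as above. This works here because $G$ is an honest semidirect product rather than a general extension: the permutation action of $\prod_\alpha S_{\mu(\alpha)}$ on the tensor factors of $V_\mu$ is an honest linear action on the same vector space already carrying the $\Gamma^n$-action, and compatibility of the two actions is immediate. Once this is established, irreducibility, pairwise non-isomorphism, and exhaustiveness of the listed modules all follow formally from Clifford theory.
\end{sproof}
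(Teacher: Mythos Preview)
Your sketch via Clifford theory is correct and is in fact the standard argument for this classical result. The paper itself does not give a proof of this proposition at all; it is stated as a known fact about representations of wreath products and then used later without further justification. So there is nothing to compare against: your approach is the natural one, and the points you flag (semisimplicity from the characteristic hypothesis, the honest extension of $V_\mu$ to the inertia group coming from the semidirect-product structure, and the resulting triviality of the Clifford obstruction) are exactly the ingredients needed.
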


We will use the notations for the representation categories similar to the case of the symmetric group:

\begin{def0}
By $\textbf{Rep}(S_n\ltimes \Gamma^n; \Bbbk)$ denote the category of representations of the wreath product $S_n\ltimes \Gamma^n$ over $\Bbbk$. By $\textbf{Rep}^{f}(S_n\ltimes \Gamma^n;\Bbbk)$ denote the full subcategory of finite dimensional representations. 

Also for $p\ge 0$ set  $$\textbf{Rep}_p(S_n\ltimes \Gamma^n) := \textbf{Rep}(S_n\ltimes \Gamma^n; \Fp),\ \textbf{Rep}^{f}_p(S_n\ltimes \Gamma^n) := \Repb^f(S_n\ltimes \Gamma^n; \Fp).$$ 
\end{def0}

\subsection{The Cherednik algebra}

In this paper we will be mainly interested in rational Cherednik algebras of type A. Thus we will only give definitions of this algebra below. For the definition and theory of general rational Cherednik algebras, see \cite{etingof2010lecture}.

\begin{def0} \label{cherAdef}
 The rational Cherednik algebra of type $A$ and rank $n$  over a field $\Bbbk$, denoted by $H_{t,k}(n,\Bbbk)=H_{t,k}(n)$, where $t,k \in \Bbbk$, is defined as follows. Consider the standard representation of $S_n$ acting by permutations on $\mathfrak h = \Bbbk^n$ with the basis given by $y_i \in \mathfrak h$, and the dual basis $x_i \in \mathfrak h^*$. Then $H_{t,k}(n)$ is the quotient of $\Bbbk[S_n] \ltimes T(\mathfrak h \oplus \mathfrak h^*) $ by the following relations:
 $$
 [x_i,x_j] = 0  ,\ [y_i,y_j] = 0  ,\ [y_i,x_j] = \delta_{ij}(t-k\sum_{m\ne i}s_{im}) + (1-\delta_{ij})ks_{ij}  ,
 $$
 where $s_{ij}$ denotes the transposition of $i$ and $j$.
\end{def0}

In other words, this is the rational Cherednik algebra corresponding to the root system $A_{n-1}$.

This algebra has a filtration determined by $\deg(x_i) = \deg(y_i)=1$ and $\deg(g) = 0$ for any group element $g$. The associated graded algebra is:
$$
\gr(H_{t,k}(n)) = \Bbbk[S_n]\ltimes S(\mathfrak h \oplus \mathfrak h^*). 
$$
This follows from the fact that the  analog of the PBW theorem holds for this algebra:
\begin{prop}
The natural map $H_{0,0}(n) \to \gr(H_{t,k}(n))$ is a vector space isomorphism.
\end{prop}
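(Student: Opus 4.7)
The plan is to prove the PBW property by combining a spanning argument for the associated graded (which uses only the form of the defining relations) with a faithful ``Dunkl operator'' representation that certifies linear independence.

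First, I would establish surjectivity of the natural map. The relations defining $H_{t,k}(n)$ have the shape $[x_i,x_j]=0$, $[y_i,y_j]=0$, and $[y_i,x_j]=(\text{degree }0\text{ element of }\Bbbk[S_n])$. In the associated graded of the filtration with $\deg(x_i)=\deg(y_i)=1$ and $\deg(g)=0$, these commutators become zero, so $\gr(H_{t,k}(n))$ is a quotient of $\Bbbk[S_n]\ltimes S(\mathfrak{h}\oplus\mathfrak{h}^*)=H_{0,0}(n)$. Equivalently, the standard straightening argument shows that every element of $H_{t,k}(n)$ can be written as a $\Bbbk$-linear combination of ordered monomials $g\cdot x^I y^J$ with $g\in S_n$, $I,J$ multi-indices: any occurrence of a $y$ to the left of an $x$ can be rewritten using $[y_i,x_j]$, producing a lower-order correction plus a group-algebra-valued term, and induction on degree terminates.

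Second, for injectivity I would construct the polynomial (Dunkl) representation of $H_{t,k}(n)$ on $\Bbbk[\mathfrak{h}^*]\otimes \Bbbk[S_n]$ (or on $\Bbbk[\mathfrak{h}^*]$ with its natural $S_n$-action). The $x_i$ act by multiplication, the $S_n$ acts by its standard action, and the $y_i$ act as the usual rational Dunkl operators
\[
y_i \;\mapsto\; t\,\partial_{x_i} \;-\; k\sum_{j\neq i}\frac{1-s_{ij}}{x_i-x_j}.
\]
One verifies directly that these operators satisfy the defining relations of $H_{t,k}(n)$; the only nontrivial computation is $[y_i,y_j]=0$, which reduces to a small partial-fractions identity among the factors $(x_i-x_j)^{-1}$.

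Third, I would use this representation to conclude. Order the monomials $g\cdot x^I y^J$ lexicographically on $(|J|, \text{something})$ and apply them to a well-chosen family of test vectors: the leading symbol of the Dunkl operator $y_i$ is $t\,\partial_{x_i}$ (so when $t\neq 0$ the $y^J$ act on polynomials with leading term $t^{|J|}\partial^J$), and the $g\cdot x^I$ part is already faithful on $\Bbbk[\mathfrak{h}^*]\otimes\Bbbk[S_n]$. A triangularity argument then shows that the ordered monomials act as linearly independent operators, which forces them to be linearly independent in $H_{t,k}(n)$. This handles $t\neq 0$; the case $t=0$ follows by a generic-parameter / flat-family argument (the algebra is defined over $\Bbbk[t,k]$ and the PBW property is a closed condition on the parameters, so holding on a Zariski-dense set it holds everywhere), or alternatively by directly invoking the Etingof--Ginzburg PBW theorem for symplectic reflection algebras, whose Jacobi-type hypothesis is trivially satisfied since the right-hand side of $[y_i,x_j]$ lies in $\Bbbk[S_n]$ and the $S_n$-equivariance is built into the formula.

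The main obstacle is the faithfulness of the Dunkl representation, but this is handled by the leading-symbol triangularity once the operators are shown to satisfy the relations; everything else is bookkeeping. I would then combine surjectivity with injectivity to conclude that $H_{0,0}(n)\to\gr H_{t,k}(n)$ is a vector space isomorphism.
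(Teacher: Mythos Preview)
The paper does not actually prove this proposition: it is stated as the well-known PBW theorem for rational Cherednik algebras, with the reader referred to \cite{etingof2010lecture} for the general theory. Your sketch via the Dunkl polynomial representation is precisely one of the standard arguments found there (and in \cite{etingof2002symplectic}), so in substance you are reproducing the proof the paper is implicitly citing.

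One small correction of phrasing in your $t=0$ step: linear independence is an \emph{open} condition, not a closed one, so ``PBW is a closed condition on the parameters'' is not the right way to say it. The clean version of the specialization argument is to run your Dunkl triangularity over the base ring $\Bbbk[t,k]$ (where $t$ is an indeterminate, hence a nonzerodivisor, so the leading symbol $t^{|J|}\partial^J$ is still nonzero); this shows the PBW monomials are $\Bbbk[t,k]$-linearly independent in the universal algebra, hence that algebra is free over $\Bbbk[t,k]$, and then specialize $t\mapsto t_0$, $k\mapsto k_0$. Since you also offer the Etingof--Ginzburg PBW theorem as an alternative for this case, there is no genuine gap.
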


Another important object is the spherical subalgebra of the rational Cherednik algebra.
\begin{def0}
If $\chct(\Bbbk)=p > n$ or $p=0$, denote by $\mathcal B_{t,k}(n)$ the subalgebra $\bold{e}H_{t,k}(n)\be$ of $H_{t,k}(n)$, where $\be \in \Bbbk[S_n]$ is the averaging idempotemt.
\end{def0}

Note that:
$$
\gr(\mathcal B_{t,k}(n)) = S(\mathfrak h \oplus \mathfrak h^*)^{S_n} = \Bbbk[x_1,\dots,x_n,y_1,\dots, y_n]^{S_n}  .
$$

\begin{rem} \label{sphrem}
One can construct the spherical subalgebra in another way. Indeed, regard $\Bbbk$ as the trivial representation of $S_n$  and apply to it the induction functor  $\Ind_{S_n}^{H_{t,k}(n)}(\Bbbk)$. It's easy to see that this representation is in fact $H_{t,k}(n)\be$. Now the spherical subalgebra is given as follows:
$$
\mathcal B_{t,k}(n) = \be H_{t,k}(n)\be = \Hom_{S_n}(\Bbbk, H_{t,k}(n)\be) = \End_{H_{t,k}(n)}(\Ind_{S_n}^{H_{t,k}(n)}(\Bbbk))  .
$$
\end{rem}

Now we can introduce the corresponding categories of representations.

\begin{def0}
 By $\textbf{Rep}(H_{t,k}(n); \Bbbk)$ denote the category of (possibly infinite dimensional) representations of the rational Cherednik algebra $H_{t,k}(n)=H_{t,k}(n,\Bbbk)$. Also set  $\textbf{Rep}_p(H_{t,k}(n)) = \textbf{Rep}(H_{t,k}(n),\Fp)$. 
\end{def0}

\subsection{Symplectic reflection algebras}

Another entity we are going to use to construct DDCA with non-trivial $\Gamma$ is symplectic reflection algebras. Below we will give some basic definitions, needed for our purposes. For more on this topic see \cite{etingof2002symplectic}.

The symplectic reflection algebra is defined as follows:
\begin{def0}\label{sympfindef}
Fix a finite subgroup $\Gamma \subset \textrm{SL}(2; \Bbbk)$. Fix numbers $t,k \in \Bbbk$. Fix numbers $c_{C} \in \Bbbk$ for every conjugacy class $C \subset \Gamma$; we will denote the collection of these numbers by $c$. For every conjugacy class $C$, set $T_C:= \frac{1}{2}\Tr|_{\Bbbk^2}\gamma$, where $\gamma \in C$ is an element of the conjugacy class and we take the trace over the tautological representation.
Consider $V = (\Bbbk^2)^n$, the tautological representation of the wreath product $S_n \ltimes \Gamma^n$. Note that this space has a natural symplectic structure, which we will denote by $\omega$. Let $\Sigma$ stand for the set of elements of $S_n \ltimes \Gamma^n$ conjugate to a transposition. For a conjugacy class $C \subset \Gamma$, let $\Sigma_C$ be the set of all elements conjugate to $(1,1,\dots,1,\gamma)$ for $\gamma \in C$.

The symplectic reflection algebra $H_{t,k,c}(n,\Gamma)$ is the quotient of $\Bbbk[ S_n\ltimes \Gamma^n]\ltimes T(V)$ by the relations:
$$
[y,x] = t\omega(y,x) - k\sum_{s \in \Sigma}\omega(y,(1-s)x)s - \sum_{C}\frac{c_C}{1-T_C}\sum_{s \in \Sigma_C}\omega((1-s)y,(1-s)x)s,\ x,y\in V.
$$
\end{def0}

We can also define the spherical subalgebra of this algebra:
\begin{def0}
The spherical subalgebra of the symplectic reflection algebra $H_{t,k,c}(n,\Gamma)$ is denoted by $\mathcal B_{t,k,c}(n,\Gamma)$ and is given by:
$$
\mathcal B_{t,k,c}(n,\Gamma) = \be H_{t,k,c}(n,\Gamma)\be,
$$
where $\be$ is the symmetrizer for $S_n\ltimes \Gamma^n$.
\end{def0}

\begin{rem}
As before we have:
$$
\mathcal B_{t,k,c}(n,\Gamma) = {\rm Hom}_{S_n\ltimes \Gamma^n}(\Bbbk, \Ind_{S_n \ltimes \Gamma^n}^{H_{t,k,c}(n,\Gamma)}(\Bbbk))  .
$$
\end{rem}

We will use the same notation for the categories of representations:
\begin{def0}
 By $\textbf{Rep}(H_{t,k,c}(\Gamma,n); \Bbbk)$ denote the category of  representations of the symplectic reflection algebra $H_{t,k,c}(\Gamma,n)$ over $\Bbbk$. Also for $p\ge 0$ denote 
 $$
 \textbf{Rep}_p(H_{t,k,c}(\Gamma,n)) = \textbf{Rep}(H_{t,k,c}(\Gamma,n); \Fp) \ .
 $$
\end{def0}

\begin{rem}
Notice that when $\Gamma =1$ we get back the case of rational Cherednik algebra of type A, i.e., $H_{t,k,\emptyset}(n,1) = H_{t,k}(n)$. Also, in the case $\Gamma = \mathbb Z/2\mathbb Z$ we get the rational Cherednik algebra of type B.
\end{rem}

\subsection{Ultrafilters}

Below we will discuss some basic facts about ultrafilters and ultraproducts. Ultrafilters provide us with a notion of the limit of algebraic structures, which works really well for describing Deligne categories. Thus, we will use this framework extensively in the present paper.

We will define what ultrafilters and ultraproducts are, state their main properties and give some important examples, which will be used later in the paper. The following discussion is an updated version of the corresponding discussion from \cite{kalinov2018finite}. For more details on  this topic in the algebraic context, see \cite{schoutens2010use}.

\subsubsection{Ultrafilters and ultraproducts: basic definitions}

\begin{def0}
An ultrafilter $\mathcal F$ on a set $X$ is a subset of $2^{X}$ satisfying the following properties:

$\bullet$ $X \in \mathcal F$ ;

$\bullet$ If $A \in \mathcal F$ and $A \subset B$, then $B \in \mathcal F$ ;

$\bullet$ If $A,B \in \mathcal F$, then $A\cap B \in \mathcal F$ ;

$\bullet$ For any $A\subset X$ either $A$ or $X \backslash A$ belongs to $A$, but not both.
\end{def0}

For any $X$, there is an obvious family of examples of ultrafilters. Indeed, taking $\mathcal F_x = \{ A \in 2^{X}| x \in A \}$ for any $x \in X$ gives us an ultrafilter. Such ultrafilters are called principal. Using Zorn's lemma one can show that non-principal ultrafilters $\mathcal F$ exist iff the cardinality of $X$ is infinite. However the proof is non-constructive.

From now on we will only work with  non-principal ultrafilters on $X = \mathbb N$. 
\begin{def0}
For the rest of the paper we will denote by $\mathcal F$ a fixed non-principal ultrafilter on $\mathbb N$.
\end{def0}

Note that it doesn't matter which non-principal ultrafilter to take, and all our results do not depend on this choice. Also note  that all cofinite sets belong to $\mathcal F$. Indeed, if some cofinite set wouldn't belong to $\mathcal F$, it would follow that a finite set belongs to $\mathcal F$. But from this one can conclude that $\mathcal F$ is a principal ultrafilter for one of the elements of this set.

Throughout the paper we will use the following shorthand phrase.
\begin{def0}
By the statement  ``$A$ holds for almost all $n$'', where $A$  is a logical statement depending on $n$, we will mean that $A$ is true for some subset of natural numbers $U$, such that $U \in \mathcal F$.
\end{def0}

The following is an important lemma describing what happens with the conjuction and disjunction of statements which ``hold for almost all $n$". 

\begin{lemma}\label{ultrlemma}
1) If for two logical statements $A$ and $B$ we know that $A$ holds for almost all $n$ and $B$ holds for almost all $n$, then $A \wedge B$ holds \fan.

2) If for a finite number of logical statements $A_i$, for $i \in I$, we know that $\bigvee_{i\in I} A_i$ holds \fan, then there is $j\in I$ such that $A_j$ holds \fan. 
\end{lemma}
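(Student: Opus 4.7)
The plan is to unwind the definition of ``holds for almost all $n$'' into the statement that a certain set belongs to $\mathcal F$, and then deduce both parts from the four axioms of an ultrafilter.

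For part (1), let $U_A := \{n \in \mathbb N \mid A(n)\}$ and $U_B := \{n \mid B(n)\}$, both of which lie in $\mathcal F$ by hypothesis. The third ultrafilter axiom (closure under finite intersection) gives $U_A \cap U_B \in \mathcal F$, and on this set both $A$ and $B$ are true, so $A \wedge B$ holds for almost all $n$.

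For part (2), I would argue by contradiction. Suppose $\bigvee_{i\in I} A_i$ holds on some $U \in \mathcal F$, but no individual $A_j$ holds for almost all $n$. For each $j \in I$, let $U_j := \{n \mid A_j(n)\}$, so by assumption $U_j \notin \mathcal F$. By the fourth axiom this forces $\mathbb N \setminus U_j \in \mathcal F$. Applying part (1) finitely many times (valid since $I$ is finite) we get
\[
\bigcap_{j\in I}(\mathbb N \setminus U_j) \;=\; \mathbb N \setminus \bigcup_{j \in I} U_j \;\in\; \mathcal F.
\]
On the other hand, $U \subset \bigcup_{j \in I} U_j$ (this is what $\bigvee_j A_j$ holding on $U$ means), so by upward closure $\bigcup_{j\in I} U_j \in \mathcal F$. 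But now both a set and its complement belong to $\mathcal F$, contradicting the fourth axiom. Hence some $U_j$ must lie in $\mathcal F$, giving the desired index.

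Neither step poses a real obstacle — the whole lemma is a direct translation of the ultrafilter axioms into the ``for almost all $n$'' language. The only mild subtlety worth flagging is the finiteness of $I$ in part (2): without it, the inductive application of part (1) fails, and indeed the conclusion genuinely breaks down for infinite $I$ (e.g.\ $A_n(m) := (m=n)$ satisfies $\bigvee_n A_n$ everywhere but no single $A_n$ holds for almost all $m$).
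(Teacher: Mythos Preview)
Your proof is correct and follows essentially the same route as the paper's: both parts reduce directly to the ultrafilter axioms, with part (2) argued by contradiction via the complements $\mathbb N\setminus U_j$ and their finite intersection. Your added remark on why finiteness of $I$ is essential is a nice touch not present in the original.
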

\begin{proof}
1) Indeed, we know that there is a set $U_A\in\mathcal F$ such that $A$ holds for all $n \in U_A$, and the corresponding set for $B$. Now by definition of the ultrafilter $U_A\cap U_B \in \mathcal F$, and $A\wedge B$ holds for all $n \in U_A \cap U_B$.

2) Suppose that none of the statements $A_i$ hold \fan. This means that the sets on which $A_i$ hold do not belong to $\mathcal F$. Thus by definition of the ultrafilter, the sets $V_i = \{ n \in \mathbb N|  \ A_i \text{ does not hold} \}$ are in $\mathcal F$. Thus $V = \bigcap_{i\in I} V_i \in \mathcal F$. But for any $n\in V$ we know that all of the statements $A_i$ do not hold. Hence for any $n \in V$ we know that $\bigvee_{i\in I} A_i$ does not hold. But the set $U = \{n \in N| \bigvee_{i\in I} A_i\}$ belongs to $\mathcal F$ by assumption. So we have $V$ and $\mathbb N \backslash V$ belonging to $\mathcal F$. A contradiction.
\end{proof}

We will use these elementary observations quite frequently, sometimes without even mentioning it.  

Now, define the notion of an ultraproduct. 
\begin{def0}
Suppose we have a sequence of sets $E_n$ labeled by natural numbers. Consider the set $\prod'_{\mathcal F}E_n$ consisting of the sequences $\{e_n\}_{n \in A}$ for a set $A \in \mathcal F$ and $e_n \in E_n$. i.e., $\prod'_{\mathcal F}E_n$ consists of sequences of elements of $E_n$ which are defined \fan. Then $\prod_{\mathcal F}E_n$ is the quotient of $\prod'_{\mathcal F}E_n$ by the following relation: $\{e_n\}_{n \in A} \sim \{e_n'\}_{n \in A'}$ iff $e_n = e_n'$ for almost all $n$ (i.e.,  on $B \subset A' \cap A$, such that $B \in \mathcal F$).
The set $\prod_{\mathcal F}E_n$ is called the ultraproduct of the sequence $\{E_n\}_{n \in \mathbb N}$. 
\end{def0}

\begin{rem}
Thus in a nutshell the ultraproduct consists of ``germs" of sequences of elements which are defined \fan. Because of this in what follows we will sometimes use ``sequence'' to mean ``sequence defined \fan''.
\end{rem}

\begin{rem}
Note that for any finite set $C$, the ultraproduct of its copies $\prod_{\mathcal F}C_i$ with $C_i = C$ is equal to $C$. Indeed, for any sequence $\{c_n\}_{n \in  A}$, for some $A \in \mathcal F$, we can define $U_{d} = \{ n \in A| d = c_n\} $ for any $d \in C$. Then we have $\bigcup_{d \in C}U_d = A$, thus one of the $U_d$'s must belong to $\mathcal F$. So it follows that $\{c_n\}_{n \in A} \sim \{d\}_{n\in A}$ for this particular $d$.
\end{rem}

Oftentimes we use the following notation:
\begin{def0}
For a sequence $\{E_n\}_{n \in \mathbb N}$, denote an element $\{ e_n \}_{n \in \mathbb N} \in \prod_{\mathcal F}E_n$ by $\prod_\mathcal F e_n$.
\end{def0}

This construction is interesting for us, because it, in a certain sense, preserves a lot of algebraic structures. We will explore this dimension of ultraproducts below.

\begin{ex} \label{firstultex}
First,  note that the ultraproduct inherits any operation or any relation which is defined on a sequence of sets $E_n$ for almost all $n$. For example, suppose we are given a sequence of $k$-ary operations $\circ_n$ defined for almost all $n$. Let $E:= \prod_{\mathcal F} E_n$ and consider the $k$-ary operation $\circ: E\times E\times \dots \times E\to E$ defined as 
$$
\circ(e^1,e^2,\dots,e^k) = \circ(\prodF e^1_n, \dots, \prodF e^k_n) = \prodF \circ_n(e^1_n,\dots,e^k_n)  .
$$
Note that this is the same as taking $\circ = \prodF \circ_n \in \prodF \Hom_{\rm Sets}(E^{\times k},E)$, so we can call $\circ$ an ultraproduct of $\circ_n$.
Now if we have any sequence of relations $r_n$ given \fan, they can be written as a sequence of $k$-ary maps with Boolean values. And one can define $r$ to be a relation on $E$ in a similar way 
$$
r(e^1,e^2,\dots,e^k) = r(\prodF e^1_n, \dots, \prodF e^k_n) = \prodF r_n(e^1_n,\dots,e^k_n) \in \prodF \textbf{2} = \textbf{2}.\footnote{Here $\textbf{2}$ stands for the Boolean set $\{0,1\}$.}
$$ For the same reason we can call the relation $r$ the ultraproduct of the relations $r_n$. Note that this means that if the relation $r_n$ was true {\fan} (i.e., ${\rm Im}(r_n) = \{1\}$ \fan), it follows that $r$ is also true.
\end{ex}

One can easily check for oneself that the above examples (\ref{firstultex}) can be extended to any collections of sequences of sets, maps between them and relations between maps. That means that if we have a collection of sequences of sets with a certain algebraic structure defined by maps between them, we can form the ultraproducts of these sets and these maps. Moreover if the sequences of maps satisfy a certain collection of relations, the ultraproduct will satisfy them too. 

These observations may be formulated in the following way:
\begin{thm}\textbf{\lthm} (Theorem 2.3.2 in \cite{schoutens2010use})

Suppose we have a collection of sequences of sets $E^{(k)}_i$ for $k = 1,\dots,m$, a collection of sequences of elements $f^{(r)}_i$ for $r = 1,\dots, l$, and a  formula of a first order language $\phi(x_1,\dots,x_l, Y_1, \dots, Y_m)$ depending on some parameters $x_i$ and sets $Y_j$. Denote by \linebreak $E^{(k)} = \prod_{\mathcal F}E^{(k)}_{n}$ and $f^{(r)} = \prod_{\mathcal F} f^{(r)}_n$.  Then 
$\phi(f^{(1)}_n, \dots, f^{(l)}_n, E^{(1)}_n, \dots, E^{(m)}_n)$ is true for almost all $n$ iff  $\phi(f^{(1)}, \dots, f^{(l)}, E^{(1)}, \dots E^{(m)})$ is true.
\end{thm}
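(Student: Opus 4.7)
The plan is to prove this by induction on the structure of the first-order formula $\phi$. First I would check that for any term $t(x_1,\dots,x_l,Y_1,\dots,Y_m)$ built from the operation symbols available, the interpretation of $t$ evaluated on the ultraproduct data agrees with the ultraproduct of the interpretations on each $E_n$; this is essentially a restatement of Example \ref{firstultex} and follows by a straightforward inner induction on how $t$ is built from the operations. With this in hand, the base case of Łoś's theorem (atomic formulas of the form $r(t_1,\dots,t_k)$ or $t_1=t_2$) is immediate from the definition of relations on the ultraproduct, together with the fact that two sequences of elements define the same class in the ultraproduct iff they agree on a set in $\mathcal F$.

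For the inductive step on the connectives, conjunction and disjunction are handled directly by Lemma \ref{ultrlemma}: if $\phi=\phi_1\wedge\phi_2$, then the set of $n$ where $\phi_n$ holds is the intersection of the analogous sets for $\phi_1$ and $\phi_2$, which lies in $\mathcal F$ iff both do (using the inductive hypothesis for $\phi_1,\phi_2$). The disjunction case uses part (2) of the lemma. Negation is where the defining property of an ultrafilter is crucial: $\neg\phi$ holds on the ultraproduct iff $\phi$ does not, iff (by induction) the set $U\subset\mathbb N$ on which $\phi_n$ holds is not in $\mathcal F$, iff $\mathbb N\setminus U\in\mathcal F$ (using the fourth defining property of an ultrafilter), i.e. $\neg\phi_n$ holds for almost all $n$.

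The main obstacle, as is standard in proving Łoś's theorem, is the quantifier step; it suffices to treat the existential quantifier since $\forall x\,\phi = \neg\exists x\,\neg\phi$. One direction is easy: if $\exists x\in E^{(k)}\,\phi(x,\dots)$ holds on the ultraproduct, pick a witness $e=\prod_{\mathcal F}e_n$; by the inductive hypothesis applied to $\phi(e,\dots)$, the formula $\phi_n(e_n,\dots)$ holds for almost all $n$, so in particular $\exists x_n\in E^{(k)}_n\,\phi_n(x_n,\dots)$ holds for almost all $n$. The converse requires the axiom of choice: whenever $\exists x_n\,\phi_n(x_n,\dots)$ holds for $n$ in some $U\in\mathcal F$, we select a witness $e_n\in E^{(k)}_n$ for each such $n$ (and anything for the remaining $n$), and then $e=\prod_{\mathcal F}e_n$ is a witness in the ultraproduct by the inductive hypothesis.

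The one subtlety to keep track of is that in the negation and disjunction steps it is essential that $\mathcal F$ be an ultrafilter and not merely a filter; without the dichotomy ``$A\in\mathcal F$ or $\mathbb N\setminus A\in\mathcal F$'' the argument breaks down, as the ultraproduct would then fail to be two-valued on statements. Everything else in the proof is formal and uses no special features of the sets $E^{(k)}_n$ or of the sequences $f^{(r)}_n$.
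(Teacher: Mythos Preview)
Your argument is correct and is the standard proof of \lthm{} by structural induction on formulas. Note, however, that the paper does not give its own proof of this statement: it is quoted as Theorem 2.3.2 of \cite{schoutens2010use} and no proof environment follows. So there is nothing in the paper to compare your argument against; what you have written is essentially the textbook proof one would find in the cited reference.
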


In the next subsection we will provide a few examples of application of this theorem. One can easily see how the theorem works by working out what happens in these examples on one's own. Many of these examples will be used in the rest of the paper.

\subsubsection{Examples of ultraproducts}

\begin{ex}
 If $E_n$ is a sequence of monoids/groups/rings/fields then $\prod_{\mathcal F} E_n$ with operations given by taking the ultraproduct of the operations as elements of the corresponding sets of set-theoretical maps gives us a structure of a monoid/group/ring/field by Łoś's theorem.
\end{ex}

\begin{ex}
 If $V_i$ are finite dimensional vector spaces over a field $\Bbbk$, then $\prod_{\mathcal F} V_n$ is a vector space over $\prodF \Bbbk$, which is not necessarily finite dimensional, since the property of being finite dimensional cannot be written in a first-order language. But if the dimensions of $V_n$ are bounded, then they are the same for almost all $n$ and hence $V$ has the same dimension (for example, because the ultraproduct of bases is a basis).
\end{ex}

\begin{ex} \label{fieldextrans}
 Take the ultraproduct of a countably infinite number of copies of $\overline{\mathbb Q}$. By \lthm $\prod_{\mathcal F} \overline{\mathbb Q}$ is a field, which is algebraically closed. It has characteristic zero since $\forall k  \in \mathbb Z$ such that $k\ne 0$ it follows that $ k = \prod_{\mathcal F} k\ne 0$.  Also it is easy to see that its cardinality is continuum. Hence by Steinitz's theorem\footnote{This theorem tells us that two uncountable algebraically closed fields are isomorphic iff their characteristic and cardinality are the same. It is proven in \cite{steinitz1910algebraische}.} $\prod_{\mathcal F} \overline{\mathbb Q} \simeq \mathbb C$. Note that there is no canonical isomorphism.
 
 Consider the ultraproduct of integers $\prodF n$. Via the isomorphism constructed in the previous paragraph this is an element of $\mathbb C$. Notice that this element cannot satisfy any nontrivial polynomial equation over $\mathbb Q$ (indeed, the corresponding polynomial must have infinitely many roots), hence $\prodF n$ is a transcendental element of $\mathbb C$. By an automorphism of $\mathbb C$ we can send this element into any transcendental element of $\mathbb C$. 
 
 Thus we conclude that for any transcendental element $\nu \in \mathbb C$ there is an isomorphism $\prodF \FQ \simeq \mathbb C$, such that $\prodF n = \nu$.
 
 Also notice that by Steinitz's theorem it follows that $\overline{\mathbb C(x)}\simeq \mathbb C$, since they have the same cardinality. Thus we can also conclude that there is an isomorphism $\prodF \FQ \simeq \overline{\mathbb C(x)}$ such that $\prodF n = x$.
\end{ex}
 
 \begin{ex} \label{fieldexalg}
 Take the ultraproduct of $\overline{\mathbb F}_{p_n}$ for some sequence of distinct prime numbers $p_n$. As before, by \lthm${}$ $\prod_{\mathcal F} \overline{\mathbb F}_{p_n}$ is a field, which is algebraically closed. Also it has cardinality continuum. Now for any natural number $k$, we have $k = \prod_{\mathcal F} k \ne 0$, since it is equal to zero for at most a finite number of $n$.  Hence $\prod_{\mathcal F} \overline{\mathbb F}_{p_n} \simeq \mathbb C$ by Steinitz's theorem, again not in a canonical way.
 
 Suppose we are given an algebraic number $\nu \in \mathbb C$. Let us show that there exists a sequence of integers $\nu_n$ and prime numbers $p_n$ such that $\nu_n < p_n$ and $\prodF \nu_n = \nu$ inside $\prodF \Fpn \simeq \mathbb C$; this will be needed in what follows.
 
Let $q(x) \in \mathbb Z[x]$ be the minimal polynomial for $\nu$. We would like to find an infinite number of pairs $\nu_n$, $p_n$ such that $q(\nu_n) = 0 \mod p_n$. Let us show that there is an infinite number of primes dividing the collection of numbers $q(l)$ for $l \in \mathbb N$, from this it would follow that there is an infinite number of pairs since only a finite number of primes divide each $q(l)$. Suppose it is not so, and there are only $k$ such primes. Fix $C$ such that  we have $q(l) < C \cdot l^{\deg(q)}$ for all positive  integer values of $l$. Denote by $Q$ the number of integers of the form $q(l)$ for $l \in \mathbb Z_{\ge 0}$ such that $q(l)<L$. By the above inequality (that is $q(l) < C \cdot l^{\deg(q)}$ ) $Q$ is at least $\frac{1}{C}\cdot L^{\frac{1}{\deg(q)}}$. On the other hand the number $P$ of  numbers less than $L$ divisible only by $k$ fixed primes  is less or equal to $\log_2(L)^k$, since each prime number is at least $2$. Hence for big enough $L$ we have $P<Q$, which contradicts the hypothesis\footnote{This proof is also written by Nate Harman in the proof of Prop. 2.2 in \cite{harman2016deligne}.}. 
 
Hence we can take a sequence of distinct primes $p_n$ and a sequence of integers $\nu_n$ tending to infinity such that $q(\nu_n) = 0$ in $\mathbb F_{p_n}$ and $\nu_n < p_n$. It follows that $\prodF \nu_n$ in $\prodF \Fpn$ is a root of $q(x)$. Hence by an automorphism of $\mathbb C$ we can send $\prodF \nu_n$ into $\nu$.
\end{ex}

\begin{ex}\label{catex}
 Suppose $\mathcal C_n$ is a sequence of  (locally small) categories. We can define the ultraproduct category $\widehat{\mathcal C} = \prod_{\mathcal F} \mathcal C_n$ as the category 
whose objects are sequences of objects in $\mathcal C_n$.  
For clarity we will denote the ultraproduct of objects by\footnote{The superscript $C$ stands for "category".} $\prodF^C$. The morphisms in $\widehat{\mathcal C}$ are given by
 $$
 \Hom_{\widehat{\mathcal C}}(\prodF^C X_n,\prodF^C Y_n) = \prodF \Hom_{\mathcal C_n}(X_n,Y_n), 
 $$
 and the composition maps are given by the ultraproducts of the composition maps, i.e., \linebreak $(\prod_{\mathcal F}f_n) \circ (\prod_{\mathcal F}g_n) = \prod_{\mathcal F} (f_n \circ g_n)$. By \lthm${}$ this data satisfies the axioms of a category. If the categories $\mathcal C_n$ have some structures, for example the structures of an abelian or monoidal category, then $\widehat{\mathcal C}$ also has these structures\footnote{But the finite-length property, for example, does not survive, as it cannot be formulated as a first-order logical statement.}.
 
 Usually $\widehat{\mathcal C}$ is too big and it is interesting to consider a certain full subcategory $\mathcal C$ in there, for example by only  considering the ultraproducts of  sequences of objects of $\mathcal C_i$ bounded in some sense. This will be discussed in more detail in the next subsection. 
\end{ex}

\begin{rem}
Note that taking the ultraproduct of a sequence of algebraic objects as such is different from considering their ultraproduct as a sequence of objects in certain categories.

For example, consider a sequence of countably-dimensional vector spaces $V_n$ over $\Bbbk$. By \lthm${}$ $\prodF V_n$ is a vector space (although its dimension is more than countable). However, we can also regard $V_n$ as objects of the categories $\mathcal C_n = \textrm{Vect}_\Bbbk$ and construct $\prodF^C V_n \in \prodF \textrm{Vect}_{\Bbbk}$. The category $\prodF \textrm{Vect}_{\Bbbk}$ is not equivalent to the category of vector spaces (for example, it is rigid and can have objects of non-integer dimension), so $\prodF^C V_n$ is not a vector space in any sense. 
\end{rem}

 Also frequently it is useful to think about an ultraproduct as a certain kind of a limit as $n \mapsto \infty$, where $n$ becomes a ``free" parameter. 

\begin{ex} \label{finalgex}
Consider a sequence of finite dimensional algebras $A_n$ over $\overline{\mathbb Q}$ with a sequence of fixed vector space isomorphisms $A_n \simeq V$. Equivalently, this means that we have a sequence of binary operations $\mu_n:V \otimes V \to V$ which satisfy all the axioms of an algebra. Suppose in some basis (and hence in any basis) the matrices of $\mu_n$ have entries which depend polynomially on $n$.

Consider $A = \prod_{\mathcal F}A_n$. By Example \ref{fieldextrans} this is an algebra over $\overline{\mathbb C(x)}$. Since $A_i$ are finite dimensional and all isomorphic to $V$ via a fixed  isomorphism, we can also conclude that the binary operation on $A$, which we denote by $\mu$, is given by $\prodF \mu_n$.  Since $\mu_n$ depended polynomially on $n$ and $x = \prodF n$, it follows that $\mu$ is given by the same formulas as the sequence $\mu_n$ with $n$ substituted by $x$. In other words, if  $c^\gamma_{\alpha,\beta}(n)$ are the structure constants of $\mu_n$ in a certain basis then $c^\gamma_{\alpha,\beta}(x)$ are the structure constants of $\mu$. I.e., $n$ becomes a formal parameter in $A$.
\end{ex}

\subsubsection{Restricted ultraproducts}

When one works with a sequence of objects which are in some sense infinite dimensional, it's sometimes useful to consider a subobject in the ultraproduct consisting of the sequences of elements which are in a some way bounded. This can be called a {\it restricted ultraproduct.} We have already mentioned this in the case of categories in Example \ref{catex}. For example, the Deligne category $\Rep(S_\nu)$ will be constructed as a full subcategory in a certain ultraproduct category. 

In this section we will outline the definitions of the restricted ultraproduct which makes sense in the case of filtered or graded vector spaces and categories.

\begin{def0}\label{restrdef}
For a sequence of vector spaces $E_n$  with an increasing filtration $F^0E_n \subset F^1E_n \subset \dots \subset F^kE_n \subset \dots$, define the restricted ultraproduct $\prodF^r E_n$ to be equal to $\bigcup_{k=0}^\infty \prodF F^kE_n  \subset \prodF E_n$.
\end{def0}
\begin{def0}
For a sequence of vector spaces $E_n$  with a grading $E_n = \bigoplus_{k=0}^{\infty} {\rm gr}^kE_n$, define the restricted ultraproduct $\prodF^r E_n$ to be equal to $\bigoplus_{k=0}^\infty \prodF {\rm gr}^kE_n  \subset \prodF E_n$. Note that by taking $F^kE_n = \bigoplus_{i=0}^k {\rm gr}^iE_n$, this construction matches the construction of Definition \ref{restrdef}.
\end{def0}

We will use this notion in the case when the dimensions of the space $F^kE_n$ are finite and stabilize as $n\to \infty$ for fixed $k$. Let us give a few examples.

\begin{ex}
Consider a countable-dimensional vector space $V$ over $\Bbbk$. Consider a sequence of copies of $V$, i.e., $V_n = V$. Also consider an increasing filtration $F^jV$ by finite dimensional subspaces  and the same filtration on all $V_n$. We can calculate the restricted ultraproduct of this sequence:
$$
\prodF^rV_n = \bigcup_{k=0}^\infty \prodF F^kV_n = \bigcup_{k=0}^\infty F^kV = V  .
$$
Whereas the usual ultraproduct $\prodF V_n$ is more than countable-dimensional.
\end{ex}

\begin{ex} \label{infalgex}
This is an extension of Example \ref{finalgex} to an infinite dimensional setting. Consider $A_n$, a sequence algebras over $\FQ$  with an increasing filtration by finite dimensional subspaces, such that for every $k \in \mathbb N$ there is $N_k$ such that for $n > N_k$ all $F^kA_n$ are isomorphic as vector spaces to a fixed vector space $F^kA_{\infty}$ via fixed isomorphisms. I.e., every filtered component stabilizes after a certain point.

This means that we have a collection of sequences of coherent multiplication maps $\mu_n^{k,l}:F^kA_{\infty}\times F^lA_{\infty}\to F^{k+l}A_{\infty}$ defined \fan .  Let's also suppose that this sequence depends polynomially on $n$.

Consider $A = \prodF^rA_n$. Note that as a vector space the restricted ultraproduct equals to:
$$
\prodF^r A_n = \bigcup_{k=0}^{\infty} \prodF F^kA_n = \bigcup_{k=0}^{\infty}F^kA_{\infty},
$$
since $F^kA_n = F^kA_{\infty}$ \fan.

Now as in Example \ref{finalgex} the ultraproducts $\mu^{k,l} = \prodF \mu_{n}^{k,l}$ define a coherent collection of multiplication maps, the union of which defines a map $\mu:A \times A \to A$. The structure constants of this multiplication can also be obtained by taking the structure constants of $A_n$ and plugging in $x$ instead of $n$.

Note that the same construction works if the structure constants depend on $n$ as rational functions.

This example  shows better why it makes sense to think about the ultraproduct as a limit.
\end{ex}

We also would like to introduce a related construction, which we will also call a restricted ultraproduct. This will take place in the setting of the ultraproducts of categories. Suppose $\{\mathcal D_i\}$ is a sequence of artinian abelian categories and $\mathcal D = \prodF \mathcal D_i$ is their ultraproduct (an abelian category which is, in general, not artinian). Suppose $\mathcal C$ is a full artinian subcategory of $\mathcal D$. Using Construction \ref{indcons} we can obtain ind-objects of $\mathcal C$ in the following way.
\begin{cons} \label{indconsult}
Suppose we have a sequence of ind-objects $X_n \in \IND(\mathcal D_n)$ such that each $X_n$ is equipped with a filtration by objects of $\mathcal D_n$. I.e., we have $F^0X_n \subset F^1X_n \subset \dots \subset F^iX_n \subset \dots$, where all $F^iX_n \in \mathcal D_n$ and $X_n = \bigcup_{i \in \mathbb N}F^iX_n$. Also suppose that for each $i\ge 0$, $\prodF^CF^iX_n  \in \mathcal C$. Denote $\prodF^CF^iX_n$ by $F^iX_\infty$. It is clear that 
we have injections $F^iX_\infty\hookrightarrow F^{i+1}X_\infty$. 

It follows that the sequence $F^iX_\infty$ defines an object $X_\infty \in \IND(\mathcal C)$ as:
$$
X_\infty = \bigcup_{i \in \mathbb N}F^iX_\infty = \bigcup_{i \in \mathbb N}\prodF^CF^iX_n  .
$$
\end{cons}

We will use a special notation for this construction:
\begin{def0} \label{restrcatdef}
In the setting of Construction \ref{indconsult}, call $X_\infty$ the restricted ultraproduct of $X_n$ with respect to the fixed filtration. We will write
$$
X_\infty = \prodF^{C,r}X_n  .
$$
\end{def0}

\begin{rem} Let $\widetilde F^\bullet$ be another filtration on the sequence $\lbrace{X_n\rbrace}$ such that $\prodF^CF^iX_n  \in \mathcal C$, and let $\widetilde X_\infty$ be the corresponding restricted ultraproduct. Let us say that 
$F,\widetilde{F}$ are equivalent if for any $i$ there exist $r(i),s(i)$ such that 
$F^iX_n\subset \widetilde F^{r(i)}X_n$ and $\widetilde F^iX_n\subset F^{s(i)}X_n$ for almost all $n$. If $F,\widetilde F$ are equivalent, then we have maps 
$F^iX_\infty\to \widetilde F^{r(i)}X_\infty$ and $\widetilde F^{i}X_n\to F^{s(i)}X_\infty$, which give rise to maps $X_\infty\to \widetilde X_\infty$ and $\widetilde X_\infty\to X_\infty$ which are clearly inverse to each other; thus $X_\infty$ and $\widetilde X_\infty$ are naturally isomorphic. This shows that $X_\infty$ depends only on the equivalence class of the filtration $F$. 

However, not all filtrations are equivalent. E.g., if $X_n=\Bbbk^n$, $F^iX_n$ is spanned by the first $i+1$ standard basis vectors for $i\le n-1$, $g_n\in GL(n,\Bbbk)$ and 
$\widetilde F=g_n(F)$ on $X_n$ then in general $F,\widetilde F$ are not equivalent. 
Thus, without specifying a filtration (at least up to equivalence), we cannot define 
the restricted ultraproduct of $X_n$. 
\end{rem}

\section{Deligne Categories}

\subsection{Constructions of the category $\Rep(S_\nu)$}

In this section we will discuss a well known construction of the interpolation category for the symmetric group due to Deligne \cite{deligne2007categorie} and its basic properties. For more on this topic see \cite{comes2009blocks,comes2012ideals,comes2014deligne,etingof2014representation,etingof2016representation}. We assume that $\Bbbk$ has characteristic $0$. 

We will start by introducing the system of vector spaces which is going to play a role of the homomorphism spaces in the corresponding skeletal category. Although these spaces are best understood using diagrams, we will omit this for the sake of space. We advise anyone seeing Deligne categories for the first time to see \cite{comes2009blocks} for a much clearer diagrammatic construction of $\Rep(S_\nu)$.

\begin{def0}
 Denote by $\Bbbk P_{n,m}$ a vector space over a field $\Bbbk$ with the basis given by all possible partitions of an $n+m$-element set. Diagrammatically an element of the basis is represented by two rows of $\bullet$'s, the first of length $n$ and the second of length $m$, where all $\bullet$'s belonging to the same part of the partition are connected by edges. So, in other words, it is a graph on $n+m$ vertices, the set of connected components of which corresponds to a partition of $n+m$ (The graphs with the same set of connected components represent the same basis element).
 
 Define a map $\phi_\nu^{n,m,k}: \Bbbk P_{m,k} \times  \Bbbk P_{n,m} \to \Bbbk P_{n,k}$ for $\nu \in \Bbbk$ as follows.
 Consider two basis elements $\lambda \in \Bbbk P_{n,m}$ and $\mu \in \Bbbk P_{m,k}$. Take a vertical concatenation of the graphical representations of the corresponding partitions (the last one on top) and identify the rows of length $m$. After this we are left with a partition of three rows of $\bullet$'s of length $n,m$ and $k$. Now let's denote by $l(\mu,\lambda)$ the number of connected components consisting purely of $\bullet$'s lying in the second row. Also regard a partition of rows $n,k$ consisting of the same connected components as the partition of rows $n,m,k$ but with elements of the second row deleted, and denote it by $\mu \cdot \lambda$. Then $\phi_\nu^{n,m,k}(\mu, \lambda) = \nu^{l(\mu,\lambda)}\mu \cdot \lambda$.
 
 Define $\Bbbk P_{n}(\nu)$ to be $\Bbbk P_{n,n}$ with a structure of an algebra given by the map $\phi_\nu^{n,n,n}$. This algebra is called the {\it partition algebra} and it was introduced by Purdon in \cite{purdon1991potts}.
\end{def0}

The spaces $\Bbbk P_{n,m}$ can be seen as limits of the homomorphism spaces $\Hom_{S_N}(\mathfrak h_N^{\otimes n}, \mathfrak h_N^{\otimes m})$, where $\mathfrak h_N$ is the permutation representation of $S_N$.

Using this we can define a preliminary skeletal\footnote{Here "skeletal" means that all isomorphism classes of objects consist of exactly one object} category $\Rep^0(S_\nu; \Bbbk)$:
\begin{def0}
  For $\nu \in \Bbbk$ we denote by   $\Rep^0(S_\nu;\Bbbk)$ a skeletal rigid symmetric monoidal $\Bbbk$-linear category with objects given by elements of $\mathbb Z_{\ge 0}$, which can be graphically represented by rows of $\bullet$'s, and denoted by $[n]$.
  
  The set of morphisms $\Hom_{\Rep^0(S_\nu; \Bbbk)}([n],[m])$ is equal to $\Bbbk P_{n,m}$ and the composition maps are given by $\phi_\nu^{n,m,k}$.
  
  Tensor product on objects is defined by the horizontal concatenation of rows and on morphisms by the horizontal concatenation of diagrams. All objects $[n]$ are self-dual.
\end{def0}

Using this we can define the Deligne category $\Rep(S_\nu; \Bbbk)$ itself:
\begin{def0}
 For $\nu \in \Bbbk$, the Deligne category $\Rep(S_\nu; \Bbbk)$ is the Karoubian envelope of the additive envelope of $\Rep^0(S_\nu; \Bbbk)$. 
 \end{def0}
 
 This means that we add all possible direct sums and direct summands into our category.

Below we will list a few pieces of notation and results concerning Deligne categories. They are well known and can be found for example in \cite{comes2009blocks,etingof2014representation}.

\begin{def0}
The object $[1]$ is called the permutation representation and is denoted by $\mathfrak h$.
  The object $[0]$ is called the trivial representation and is denoted by $\Bbbk$ (by a slight abuse of notation).
\end{def0}

The important properties of $\Rep(S_\nu; \Bbbk)$ are listed below:
\begin{prop}
\textbf{a)} For $\nu \notin \mathbb Z_{\ge 0}$ ${\rm Rep}(S_\nu ; \Bbbk)$ is a semisimple tensor category. \\
\textbf{b)} For $\nu \notin \mathbb Z_{\ge 0}$  simple objects of ${\rm Rep}(S_\nu; \Bbbk)$ are in 1-1 correspondence with Young diagrams of arbitrary size. They are denoted by $\mathcal X(\lambda)$. Moreover $\mathcal X(\lambda)$ is a direct summand in $[|\lambda|]$. \\ 
\textbf{c)} The categorical dimension of $\mathfrak h$ is $\nu$ and of $\Bbbk$ is $1$. \\
\textbf{d)} All $\mathcal X(\lambda)$ are self-dual.
\end{prop}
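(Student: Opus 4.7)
The plan is to reduce items (a), (b), (d) to a structural analysis of the endomorphism algebras $\Bbbk P_n(\nu) = \End_{\Rep^0(S_\nu;\Bbbk)}([n])$, and to verify (c) by direct diagrammatic inspection. For (c), I would compute the categorical trace of $\mathrm{id}_{[1]}$ using the duality pairing on $[1]$: closing the identity diagram glues its one top vertex to its one bottom vertex, producing a single free loop, which contributes a factor of $\nu$ by the definition of $\phi_\nu$. Hence $\dim(\mathfrak h)=\nu$, and $\dim(\Bbbk)=\dim([0])=1$ since $[0]$ is the empty row.

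For (a) and (b), the strategy is to exhibit, for every Young diagram $\lambda$ with $|\lambda|\le n$, a primitive idempotent $e_\lambda^{(n)}\in \Bbbk P_n(\nu)$ such that these idempotents, taken with appropriate multiplicities, give a complete orthogonal decomposition of $\mathrm{id}_{[n]}$. The construction proceeds by interpolation from integer values: for $\nu=N\gg n$ the canonical map $\Bbbk P_n(N)\to \End_{S_N}(\mathfrak h_N^{\otimes n})$ is an isomorphism, and the right-hand side is semisimple with simple components indexed by Young diagrams $\mu$ with $|\mu|\le n$ via the assignment $\mu\mapsto \mu|_N$. The central and then primitive idempotents projecting onto these components can be written explicitly as linear combinations of partition diagrams with coefficients in $\mathbb Q(\nu)$; one then checks that the denominators involve only factors of the form $\nu-j$ for $j\in\mathbb Z_{\ge 0}$. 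Thus specialization at any $\nu\notin\mathbb Z_{\ge 0}$ produces genuine primitive orthogonal idempotents, and we define $\mathcal X(\lambda)$ as the image of $e_\lambda^{(|\lambda|)}$ in the Karoubian envelope, which is automatically a summand of $[|\lambda|]$. To see that the $\mathcal X(\lambda)$ are simple, pairwise non-isomorphic, and exhaust the isomorphism classes of indecomposables, I would compute the Hom spaces $\Hom(\mathcal X(\lambda),\mathcal X(\mu))$ as rational functions of $\nu$ and specialize the identity $\dim\Hom(\mathcal X(\lambda),\mathcal X(\mu))=\delta_{\lambda,\mu}$ from the large-$N$ case; semisimplicity of $\Rep(S_\nu;\Bbbk)$ then follows because every object is a summand of a direct sum of $[n]$'s and each such object decomposes via the $e_\lambda^{(n)}$.

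For (d), self-duality is built into the diagrammatics: each object $[n]$ is self-dual, with evaluation and coevaluation given by the ``cup'' and ``cap'' partitions pairing the two copies of $[n]$ vertex by vertex. The algebra $\Bbbk P_n(\nu)$ carries an anti-involution $\ast$ given by top-bottom reflection of diagrams, and the idempotents $e_\lambda^{(n)}$ constructed above are fixed by $\ast$ (since the analogous idempotents for large integer $N$ are the central/primitive idempotents of a symmetric semisimple algebra and are therefore $\ast$-invariant). Taking duals in $\Rep(S_\nu;\Bbbk)$ corresponds to applying $\ast$ to the cutting idempotent, so $\mathcal X(\lambda)^\vee\cong \mathcal X(\lambda)$.

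The main obstacle is the explicit control of the idempotents $e_\lambda^{(n)}\in \Bbbk P_n(\nu)$, namely proving that, viewed as rational functions of $\nu$, their coefficients have poles only at nonnegative integers, and that they remain both primitive and complete for every $\nu\notin\mathbb Z_{\ge 0}$ rather than merely for a Zariski-generic $\nu$. The cleanest route is via the Jones basic construction applied to the tower $\Bbbk P_0(\nu)\subset \Bbbk P_1(\nu)\subset \Bbbk P_2(\nu)\subset\cdots$, which exhibits a cellular structure on $\Bbbk P_n(\nu)$ whose Gram determinants are explicit polynomials in $\nu$ with integer roots; non-vanishing of these determinants at $\nu\notin\mathbb Z_{\ge 0}$ gives exactly the desired semisimplicity and primitivity.
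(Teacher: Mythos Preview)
The paper does not actually prove this proposition: it is stated as a list of ``well known'' facts with a pointer to \cite{comes2009blocks,etingof2014representation}, and no argument is given in the text. So there is no in-paper proof to compare against.

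That said, your sketch is essentially the argument carried out in those references, particularly \cite{comes2009blocks}. The computation of $\dim(\mathfrak h)$ in (c) is exactly right. For (a) and (b), the route via the tower $\Bbbk P_0(\nu)\subset\Bbbk P_1(\nu)\subset\cdots$, a cellular structure, and explicit Gram determinants with roots only at nonnegative integers is precisely how semisimplicity for $\nu\notin\mathbb Z_{\ge 0}$ is established there; your identification of the ``main obstacle'' (controlling primitivity and completeness of the specialized idempotents at every non-integer $\nu$, not just generically) is the genuine content, and the cellular/Gram-determinant argument is the standard resolution. For (d), the reflection anti-involution argument is correct in spirit; one small point is that primitive idempotents need not literally be $\ast$-fixed, only $\ast$-conjugate to an equivalent idempotent, but since the simple modules of $\Bbbk P_n(\nu)$ are already determined up to isomorphism by $\lambda$, this is enough to conclude $\mathcal X(\lambda)^\vee\cong\mathcal X(\lambda)$.
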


The Deligne category enjoys a certain universal property:
\begin{prop}\label{unprop}
(8.3 in \cite{deligne2007categorie})
For any $\Bbbk$-linear Karoubian symmetric monoidal category $\mathcal T$, the category of $\Bbbk$-linear symmetric monoidal functors from ${\rm Rep}(S_\nu; \Bbbk)$ to $\mathcal T$ is equivalent to the category $\mathcal T^f_\nu$ of commutative Frobenius algebras in $\mathcal T$ of dimension $\nu$. The equivalence sends a functor $F$ to the object $F(\mathfrak h)$.
\end{prop}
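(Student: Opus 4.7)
The plan is to exploit the fact that $\Rep(S_\nu;\Bbbk)$ is, by construction, freely generated as a $\Bbbk$-linear Karoubian symmetric monoidal category by the single object $\mathfrak h$ together with the partition-diagrammatic morphisms in $\Bbbk P_{n,m}$. First I would unpack the claim that $\mathfrak h$ carries a canonical structure of commutative Frobenius algebra of dimension $\nu$: the multiplication $\mu\colon \mathfrak h\otimes \mathfrak h\to \mathfrak h$ corresponds to the partition in $\Bbbk P_{2,1}$ with a single three-element block, the comultiplication $\Delta\colon \mathfrak h\to \mathfrak h\otimes \mathfrak h$ corresponds to the analogous element of $\Bbbk P_{1,2}$, the unit $\eta\colon \Bbbk\to \mathfrak h$ and counit $\epsilon\colon \mathfrak h\to \Bbbk$ correspond to the unique elements of $\Bbbk P_{0,1}$ and $\Bbbk P_{1,0}$. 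One then verifies by direct diagrammatic calculation that these satisfy the (co)associativity, (co)commutativity, unit, Frobenius and specialness axioms, and that $\epsilon\circ\eta = \nu\cdot\mathrm{id}_{\Bbbk}$ (which is the dimension condition).

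Next I would construct the two functors implementing the equivalence. In one direction, a $\Bbbk$-linear symmetric monoidal functor $F\colon \Rep(S_\nu;\Bbbk)\to \mathcal T$ automatically sends $\mathfrak h$ to a commutative Frobenius algebra of dimension $\nu$ in $\mathcal T$, functorially in $F$. In the other direction, given $A\in \mathcal T^f_\nu$, define $F_A$ on the generating category $\Rep^0(S_\nu;\Bbbk)$ by $F_A([n])=A^{\otimes n}$, and on basis morphisms as follows: any partition diagram $\lambda\in \Bbbk P_{n,m}$ may be built up as a composition of $\mu,\Delta,\eta,\epsilon$ and the symmetry morphisms $\sigma$, so interpret the diagram through the corresponding composition of $\mu_A,\Delta_A,\eta_A,\epsilon_A,\sigma_A$ in $\mathcal T$. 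Extend to $\Rep(S_\nu;\Bbbk)$ by passing to the additive and Karoubi envelopes, which is canonical.

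The key verification is that $F_A$ is well defined on morphisms, i.e.\ that two different decompositions of the same partition into elementary diagrams give the same morphism in $\mathcal T$, and that the composition law $\phi_\nu^{n,m,k}(\mu,\lambda)=\nu^{l(\mu,\lambda)}\mu\cdot\lambda$ is correctly reproduced. The first point reduces to showing that the relations between elementary partition diagrams match the commutative Frobenius algebra axioms (including the special/separable identity $\mu\circ\Delta=\mathrm{id}$, which makes the algebra correspond to a connected partition in one block). The second, which I expect to be the main obstacle, is the role of the scalar $\nu^{l(\mu,\lambda)}$: every ``middle-row loop'' in the stacked diagram translates, after reducing to normal form, into an insertion of $\epsilon_A\circ\eta_A = \nu\cdot\mathrm{id}_{\Bbbk}$, so exactly one factor of $\nu$ appears per such loop. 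Once this combinatorial dictionary between partitions and Frobenius-algebra expressions is established, the $\Bbbk$-linear extension is automatic, and $F_A$ manifestly depends functorially on $A$.

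Finally I would check that the two constructions are mutually inverse up to natural isomorphism: starting from $F$, one recovers $F_{F(\mathfrak h)}\cong F$ because both functors agree on the generator $\mathfrak h$ and on all generating morphisms, hence (by the universal nature of $\Rep^0$ together with the uniqueness of additive and Karoubian extensions) on the whole category; starting from $A\in \mathcal T^f_\nu$, one has $F_A(\mathfrak h)=A$ with its given Frobenius structure. The equivalence on morphism categories (i.e.\ on natural transformations versus Frobenius algebra homomorphisms) follows from the same generating-and-relations argument applied to natural transformations $F\Rightarrow G$, which are determined by their component at $\mathfrak h$ and must intertwine all the diagrammatic morphisms, hence be Frobenius algebra maps.
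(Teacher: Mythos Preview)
The paper does not give its own proof of this proposition; it is quoted from \cite{deligne2007categorie}, \S 8.3, and used as a black box. There is therefore nothing in the paper to compare your argument against. Your outline is correct and is essentially Deligne's argument: identify the elementary partition diagrams with the structure maps of a commutative Frobenius object on $\mathfrak h$, factor an arbitrary partition through these, and verify that the relations among such factorizations are exactly the commutative special Frobenius axioms together with the loop normalization that produces the scalar $\nu$.

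One small point worth flagging: the relation $\mu\circ\Delta=\mathrm{id}$, which you correctly need for well-definedness, is not part of a generic Frobenius structure. It is, however, built into the paper's specific definition, where the counit is taken to be $\Tr(a)=\mathrm{tr}(L_a)$; nondegeneracy of this particular trace pairing is exactly separability, and with this normalization both $\mu\circ\Delta=\mathrm{id}$ and $\epsilon\circ\eta=\dim A$ hold automatically. So your inclusion of the special axiom is consistent with the statement as formulated.
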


The important consequence of this result is that for every commutative Frobenius algebra $A$ in a Karoubian symmetric category $\mathcal T$ of dimension $\nu$, we have a symmetric monoidal functor from $\Rep(S_\nu; \Bbbk)$ to $\mathcal T$ which sends $\mathfrak h$ to $A$.

\begin{rem} 
Here by a commutative Frobenius algebra in $\mathcal T$ we mean an object $A$ with the  following structure. It is an associative commutative algebra with the corresponding algebraic structure given by $\mu_A,1_A$, and if we define a map:
$$
\Tr: A \xrightarrow{1 \otimes {\rm coev}_A} A \otimes A \otimes A^* \xrightarrow{\mu_A \otimes 1} A \otimes A^* \xrightarrow{{\rm ev}_A} \mathbbm{1},
$$
then the pairing $A \otimes A \xrightarrow{\mu_A} A \xrightarrow{\Tr} \mathbbm{1}$ is required to be non-degenerate, i.e., it corresponds to an isomorphism between $A$ and $A^*$ under the identification of $\Hom_{\mathcal T}(A\otimes A,\mathbbm{1})$ with $\Hom_{\mathcal T}(A, A^*)$.
\end{rem}

In the rest of the paper we will use Deligne categories over the following fields:
\begin{def0} \label{extdef}
For $\nu \in \mathbb C$ set $\Rep(S_\nu) := \Rep(S_\nu;\mathbb C)$. For $\nu \in \overline{\mathbb C(\nu)}$ set $\Rep^{\rm ext}(S_\nu) := \Rep(S_\nu; \overline{\mathbb C(\nu)})$.
\end{def0}

\subsection{Deligne categories $\Rep(S_\nu)$ and $\Rep(S_\nu \ltimes \Gamma^\nu)$ as ultraproducts}

\subsubsection{The category $\Rep(S_\nu)$ as an ultraproduct}

In this section we will  show how to construct $\Rep(S_\nu)$ using ultraproducts, and discuss some important consequences of this construction. This method is very useful, because it allows one to transfer all kinds of constructions and their properties from the case of finite rank categories almost automatically. The main ideas of this approach were contained in \cite{deligne2007categorie},\cite{harman2016deligne}\footnote{For the similar discussion about $\Rep(GL_\nu)$ see \cite{deligne2007categorie}, \cite{harman2016deligne}, \cite{kalinov2018finite}.}.

The idea is to construct the category $\Rep(S_\nu)$ for non-integer $\nu$ as a full subcategory in the ultraproduct category following Example \ref{catex}. We have the following result (See the introduction of \cite{deligne2007categorie} or Theorem 1.1 in \cite{harman2016deligne}):

\begin{thm} \label{ultdelthm}

\textbf{a)} Suppose $\nu\in \mathbb C$ is transcendental. Consider $\widehat{\mathcal C} = \prod_{\mathcal{F}} \textbf{Rep}^f_0(S_n)$. Set $\mathfrak h_\nu:= \prod^C_{\mathcal{F}}\mathfrak h_n$. Fix an isomorphism $\prod_{\mathcal F}\overline{\mathbb Q}\simeq \mathbb C$ such that $\prod_{\mathcal F} i = \nu$. Then the full subcategory of the $\prod_{\mathcal F}\overline{\mathbb Q}$-linear category $\widehat{\mathcal C}$ generated by $\mathfrak h_\nu$ under taking tensor products, direct sums and direct summands is equivalent to the $\mathbb C$--linear category $\Rep(S_\nu)$, in a way consistent with the fixed isomorphism $\prod_{\mathcal F}\overline{\mathbb Q} \simeq \mathbb C$.

\textbf{b)} Suppose $\nu \in \mathbb C$ is algebraic but not a nonnegative integer. Fix a sequence of distinct primes $p_n$, a sequence of integers $\nu_n$, and an isomorphism $\prod_{\mathcal F}\overline{\mathbb F}_{p_n}\simeq \mathbb C$ such that \linebreak $\prod_{\mathcal F}\nu_n = \nu$. Set $\widehat{\mathcal C}: = \prod_{\mathcal{F}} \textbf{Rep}^f_{p_n}(S_{\nu_n})$. Set $\mathfrak h_\nu := \prod_{\mathcal{F}}^C\mathfrak h_{p_n}^{\nu_n}$. Then the  full subcategory of the $\prod_{\mathcal F}\overline{\mathbb F}_{p_n}$-linear category $\widehat{\mathcal C}$ generated by $\mathfrak h_\nu$ under taking  tensor products, direct sums and direct summands is equivalent to the $\mathbb C$-linear category $\Rep(S_\nu)$,  in a way consistent with the fixed isomorphism $\prod_{\mathcal F}\overline{\mathbb F}_{p_n}\simeq \mathbb C$.
\end{thm}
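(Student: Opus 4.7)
The plan is to apply the universal property of $\Rep(S_\nu)$ recorded in Proposition \ref{unprop}. In both cases the permutation representation $\mathfrak h_n$ (resp.\ $\mathfrak h_{\nu_n}$) is naturally a commutative Frobenius algebra: identify it with functions on an $n$-element (resp.\ $\nu_n$-element) set, take pointwise multiplication with diagonal comultiplication, and define the trace as the sum of coordinates. The pairing is non-degenerate over $\FQ$ and, in part (b), also over $\Fpn$ because $p_n > \nu_n$ by construction. The categorical dimension of this Frobenius algebra is $n$ (resp.\ $\nu_n$). By \lthm, these structures ultraproduct to a commutative Frobenius algebra structure on $\mathfrak h_\nu$ inside $\widehat{\mathcal C}$ (which is itself Karoubian, being an ultraproduct of semisimple categories), of categorical dimension $\prodF n = \nu$ (resp.\ $\prodF \nu_n = \nu$) under the fixed field isomorphism. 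Proposition \ref{unprop} then yields a $\mathbb C$-linear symmetric monoidal functor $F\colon \Rep(S_\nu)\to \widehat{\mathcal C}$ with $F(\mathfrak h)=\mathfrak h_\nu$, and the subcategory described in the theorem statement is by construction the essential image of $F$ — provided $F$ is fully faithful.

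For fullness and faithfulness it suffices to show that $F$ induces an isomorphism on $\Hom([a],[b])$ for all $a,b\ge 0$. The source is $\mathbb C P_{a,b}$, of dimension equal to the Bell number $B_{a+b}$, with a basis of partition diagrams expressed universally via multiplication, unit, comultiplication and trace. On the target side, a classical result on $S_n$-invariants in tensor powers of the permutation representation gives, for $n\ge a+b$, that $\Hom_{S_n}(\mathfrak h_n^{\otimes a}, \mathfrak h_n^{\otimes b})$ has dimension $B_{a+b}$ with the very same partition basis constructed from the Frobenius operations on $\mathfrak h_n$, and composition carries a factor of $n^{l(\mu,\lambda)}$ in place of the $\nu^{l(\mu,\lambda)}$ that appears in $\Rep(S_\nu)$. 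Since the condition $n\ge a+b$ (resp.\ $\nu_n \ge a+b$) holds \fan, \lthm{} yields a target Hom space of dimension $B_{a+b}$ with a partition basis, and $F$, being built purely from the Frobenius structure, sends each partition diagram in the source to its namesake in the target. The matching of composition constants follows from the polynomial-in-$n$ formulas together with the identification $\prodF n = \nu$ (resp.\ $\prodF \nu_n = \nu$), exactly as in Example \ref{finalgex}.

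The main obstacle is in part (b): one must guarantee that the characteristic-$p_n$ description of $\Hom_{S_{\nu_n}}(\mathfrak h_{\nu_n}^{\otimes a}, \mathfrak h_{\nu_n}^{\otimes b})$ coincides with the characteristic-zero one \fan. This holds once both $p_n$ and $\nu_n$ exceed $a+b$, which is true along the ultrafilter because $p_n$ ranges over a sequence of distinct primes and $\nu_n$ has been chosen to tend to infinity in Example \ref{fieldexalg}. Combined with the correct specialization of the parameter $\nu$ furnished by the chosen isomorphisms $\prodF \FQ \simeq \mathbb C$ and $\prodF \Fpn \simeq \mathbb C$, this completes the fully-faithful comparison and yields the claimed equivalence with the full subcategory generated by $\mathfrak h_\nu$ under tensor products, direct sums, and direct summands.
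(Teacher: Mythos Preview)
Your proof is correct and follows essentially the same route as the paper: invoke the universal property (Proposition \ref{unprop}) using the ultraproduct of the Frobenius structures on the permutation representations, then verify full faithfulness by matching the Hom spaces $\Hom([a],[b])$ with the ultraproduct of $\Hom_{S_n}(\mathfrak h_n^{\otimes a},\mathfrak h_n^{\otimes b})$ via the partition-algebra description for $n$ large. The paper simply cites \cite{comes2009blocks} for the latter, whereas you spell out the Bell-number count and Frobenius-diagram basis explicitly; one small inessential slip is that the non-degeneracy of the pairing on $\mathfrak h_{\nu_n}$ does not actually require $p_n>\nu_n$ (the Gram matrix is the identity), though that hypothesis is still what guarantees semisimplicity and the agreement with characteristic zero.
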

\begin{proof}
\textbf{a)} The required isomorphism of fields exists by Example \ref{fieldextrans}. So we have a Karoubian symmetric monoidal category $\widehat{\mathcal C}$ linear over $\mathbb C$, with an object $\prod_{\mathcal F}^C\mathfrak h_n$ of dimension $\nu$. Since every $\mathfrak h_n$ is a commutative Frobenius algebra, it follows by \lthm${}$ that $\mathfrak h_\nu$ is also a commutative Frobenius algebra. Hence by Proposition \ref{unprop} we obtain a symmetric monoidal functor $F:\Rep(S_\nu) \to \widehat{\mathcal C}$ which takes $\mathfrak h$ to $\mathfrak h_{\nu}$. Since $\Rep(S_\nu)$ is generated by $\mathfrak h$ under taking  tensor products, direct sums and direct summands, it follows that the image of $\Rep(S_\nu)$ under $F$ is  the full subcategory $\mathcal C$ in $\widehat{\mathcal C}$ generated by $\mathfrak h_{\nu}$ under taking  tensor products, direct sums and direct summands. So we know that $F:\Rep(S_\nu) \to \mathcal C$ is essentially surjective. Now it is enough to prove that it is fully faithful. 

Note that it is enough to prove that 
$$
\prodF \Hom_{S_n}(\mathfrak h_n^{\otimes r}, \mathfrak h_n^{\otimes s}) = \Hom_{\Rep(S_\nu)}([r], [s]),
$$
and that the composition maps are the same. Indeed, if this is true, both categories can be obtained as the Karoubian envelopes of the additive envelopes of the categories consisting of all $[r]$ or  $\mathfrak h_\nu^{\otimes r}$ respectively.

But this follows from Theorem 2.6 in \cite{comes2009blocks}. Indeed, there it is stated that there is an isomorphism between $\overline{\mathbb Q}P_{r,s}$ and $\Hom_{S_{n}}(\mathfrak h_n^{\otimes r},\mathfrak h_n^{\otimes s})$ for $n>r+s$. So for almost all $n$ we have $\Hom_{S_n}(\mathfrak h_n^{\otimes r}, \mathfrak h_n^{\otimes s}) = \overline{\mathbb Q}P_{n,m}$. Also Proposition 2.8 in the same article states that under this isomorphism the composition rule on $\Hom_{S_n}(\mathfrak h_n^{\otimes r}, \mathfrak h_n^{\otimes s})$ transforms into the composition rule on $\overline{\mathbb Q}P_{r,s}$ in the definition of $\Rep^0(S_\nu)$. So it follows that, indeed, $\prod_{\mathcal F} \Hom_{S_n}(\mathfrak h_n^{\otimes r}, \mathfrak h_n^{\otimes s}) = \Hom_{\Rep(S_t)}([r], [s])$, and the composition rule is the same.

\textbf{b)} Again the required isomorphism exists by Example \ref{fieldexalg}. The rest of the proof is the same since the representation theory of $S_{n}$ is the same in zero characteristic and in characteristic $p>n$, and $p_n>\nu_n$ for almost all $n$.
\end{proof}

\begin{rem}
Note that for the purposes of this theorem we could also have used the categories $\Repb_{p_n}(S_{\nu_n})$.
\end{rem}

We can also formulate a similar result for $\Rep^{\rm ext}(S_\nu)$:
\begin{cor} \label{ultdelcor}
Fix an isomorphism $\prodF \FQ \simeq \overline{\mathbb C(\nu)}$ such that $\prodF n = \nu$.  Set \linebreak $\widehat{\mathcal C} = \prod_{\mathcal{F}} \textbf{Rep}^f_{0}(S_{n})$. Set $\mathfrak h_\nu = \prod_{\mathcal{F}}^C\mathfrak h_{n}$. Then the  full subcategory of the $\prod_{\mathcal F}\FQ$-linear category $\widehat{\mathcal C}$ generated by $\mathfrak h_\nu$ under taking  tensor products, direct sums and direct summands is equivalent to the $\Cext$-linear category $\Rep(S_\nu)$,  in a way consistent with the fixed isomorphism $\prod_{\mathcal F}\FQ\simeq \Cext$.
\end{cor}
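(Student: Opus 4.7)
The plan is to imitate the proof of Theorem \ref{ultdelthm}(a) verbatim, just reading $\Cext$ for $\mathbb C$ throughout. The isomorphism $\prodF \FQ \simeq \Cext$ sending $\prodF n$ to $\nu$ is guaranteed by the last sentence of Example \ref{fieldextrans}. So we obtain a Karoubian symmetric monoidal category $\widehat{\mathcal C}$ that is $\Cext$-linear, containing the object $\mathfrak h_\nu = \prodF^C \mathfrak h_n$. Since each $\mathfrak h_n$ carries the structure of a commutative Frobenius algebra of dimension $n$ in $\Repb^f_0(S_n)$, Łoś's theorem promotes $\mathfrak h_\nu$ to a commutative Frobenius algebra in $\widehat{\mathcal C}$ of categorical dimension $\prodF n = \nu$.

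By the universal property of Proposition \ref{unprop} (applied in the $\Cext$-linear setting, noting that the image of $\nu$ under the fixed isomorphism is an honest element of $\Cext$), we obtain a $\Cext$-linear symmetric monoidal functor $F : \Rep^{\rm ext}(S_\nu) \to \widehat{\mathcal C}$ with $F(\mathfrak h) = \mathfrak h_\nu$. By definition the essential image of $F$ is exactly the full subcategory $\mathcal C \subset \widehat{\mathcal C}$ generated by $\mathfrak h_\nu$ under tensor products, direct sums, and direct summands, so essential surjectivity is automatic.

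The only nontrivial step is full faithfulness, and as in the proof of Theorem \ref{ultdelthm}(a) it suffices to check that for every $r,s \ge 0$ the map
$$
\Hom_{\Rep^{\rm ext}(S_\nu)}([r],[s]) \longrightarrow \prodF \Hom_{S_n}(\mathfrak h_n^{\otimes r},\mathfrak h_n^{\otimes s})
$$
is an isomorphism compatible with composition. Once this is known, both $\Rep^{\rm ext}(S_\nu)$ and $\mathcal C$ arise as the Karoubian envelope of the additive envelope of the full subcategory on the objects $[r]$ and $\mathfrak h_\nu^{\otimes r}$ respectively, so $F$ is an equivalence. For the $\Hom$-identification we invoke Theorem 2.6 of \cite{comes2009blocks}, which identifies $\Hom_{S_n}(\mathfrak h_n^{\otimes r},\mathfrak h_n^{\otimes s})$ with $\FQ P_{r,s}$ as soon as $n > r+s$ (a condition satisfied \fan{} for fixed $r,s$), and Proposition 2.8 of loc.\ cit., which matches the composition on the $S_n$-side with the partition composition $\phi^{r,s,\cdot}_n$. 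Taking ultraproducts and using the field isomorphism of Example \ref{fieldextrans} turns $\prodF \FQ P_{r,s}$ into $\Cext P_{r,s}$ and $\prodF \phi^{r,s,\cdot}_n$ into $\phi^{r,s,\cdot}_\nu$ (since $\phi_n$ depends polynomially on $n$), which is exactly the definition of $\Hom$ and composition in $\Rep^0(S_\nu;\Cext)$.

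The main (mild) obstacle is bookkeeping: one has to ensure that the scalar extension from $\mathbb C$ to $\Cext$ in the universal property is harmless, and that the ultraproduct of scalars agrees with $\nu$ under the chosen isomorphism when computing categorical dimensions like $\dim \mathfrak h_\nu$. Both are straightforward applications of Łoś's theorem combined with the polynomial (in fact, linear) dependence of the Frobenius structure constants on $n$. No genuine new ideas beyond those of Theorem \ref{ultdelthm}(a) are needed.
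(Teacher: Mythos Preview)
Your argument is correct, but it is longer than necessary. The paper's own proof is a one-liner: since $\Cext$ and $\mathbb C$ are both algebraically closed fields of characteristic zero and cardinality continuum, Steinitz's theorem gives an isomorphism $\mathbb C \simeq \Cext$ (as noted in Example \ref{fieldextrans}), under which the formal variable $\nu\in\Cext$ corresponds to some transcendental complex number. Hence Corollary \ref{ultdelcor} is literally Theorem \ref{ultdelthm}(a) transported along this field isomorphism; there is no need to rerun the proof. Your approach of reproving Theorem \ref{ultdelthm}(a) over $\Cext$ works fine and has the minor advantage of being self-contained, but the paper's shortcut makes clearer why this is a corollary rather than a separate theorem.
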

\begin{proof}
This follows from the above Theorem and the fact that $\mathbb C \simeq \overline{\mathbb C(\nu)}$ (see Example \ref{fieldextrans}).
\end{proof}

\begin{rem}
As mentioned in the beginning of Section \ref{sectnot}, to treat the algebraic and transcendental cases simultaneously, it's useful to agree on the convention that by $\overline{\mathbb F}_0$ we will mean $\overline{\mathbb Q}$, and so the case $\nu_n = n$, $p_n = 0$ in the setting of part $(b)$ of the Theorem \ref{ultdelthm} gives us transcendental $\nu$. Also below we will always assume that the sequences $p_n$ and $\nu_n$ are the sequences from Theorem \ref{ultdelthm} or Corollary \ref{ultdelcor} corresponding to the given $\nu$. Finally, we will work only with $\nu \in \mathbb C \backslash \mathbb Z_{\ge 0}$.
\end{rem}

Now we would like to explain why this construction of the Deligne categories is quite useful. To begin with, we would like to construct the simple objects $\mathcal X(\lambda)$ as ultraproducts. This is easy to do, using the notation from Definition \ref{defyoung}:
\begin{prop} \label{simpprop}
The irreducible object $\mathcal X(\lambda)$ of ${\rm Rep}(S_\nu)$ can be obtained as an ultraproduct of irreducible objects of ${\bf Rep}^f_{p_n}(S_{\nu_n})$ as $\mathcal X(\lambda) = \prod_{\mathcal F}^CX_{\nu_n}(\lambda|_{\nu_n})$.
\end{prop}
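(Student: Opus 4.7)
The plan is to realize $\mathcal X(\lambda)$ as the image of an ultraproduct of primitive idempotents that project onto copies of $X_{\nu_n}(\lambda|_{\nu_n})$ inside $\mathfrak h_n^{\otimes |\lambda|}$. For almost all $n$ we have $\nu_n \ge |\lambda| + \lambda_1$, so $\lambda|_{\nu_n}$ is well defined, and by the standard branching/stability for $\mathbf{Rep}^f_{p_n}(S_{\nu_n})$ the simple $X_{\nu_n}(\lambda|_{\nu_n})$ occurs as a summand of $\mathfrak h_n^{\otimes |\lambda|}$ (and does not occur in $\mathfrak h_n^{\otimes k}$ for $k<|\lambda|$, since $\lambda|_{\nu_n}$ has $\nu_n-|\lambda|$ cells in its first row). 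Pick primitive idempotents $e_{\lambda,n}\in\End_{S_{\nu_n}}(\mathfrak h_n^{\otimes|\lambda|})$ whose images are isomorphic to $X_{\nu_n}(\lambda|_{\nu_n})$.

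By Łoś's theorem (Example \ref{catex}), $e_\lambda:=\prodF e_{\lambda,n}$ is an idempotent in
$$
\End_{\widehat{\mathcal C}}\bigl(\mathfrak h_\nu^{\otimes|\lambda|}\bigr)=\prodF\End_{S_{\nu_n}}(\mathfrak h_n^{\otimes|\lambda|}).
$$
Since $\Rep(S_\nu)$ is the full Karoubian subcategory of $\widehat{\mathcal C}$ generated by $\mathfrak h_\nu$ under tensor products, sums and summands (Theorem \ref{ultdelthm}), the image of $e_\lambda$ lies in $\Rep(S_\nu)$, and by the description of morphisms in the ultraproduct category this image is precisely $\prodF^C X_{\nu_n}(\lambda|_{\nu_n})$.

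Next, compute the endomorphism algebra of this image. Again by Łoś,
$$
\End\bigl(\prodF{}^{\!\!C} X_{\nu_n}(\lambda|_{\nu_n})\bigr)=\prodF\End_{S_{\nu_n}}\bigl(X_{\nu_n}(\lambda|_{\nu_n})\bigr)=\prodF \overline{\mathbb F}_{p_n}\simeq \mathbb C,
$$
the last isomorphism coming from Examples \ref{fieldextrans}, \ref{fieldexalg}. Since $\nu\notin \mathbb Z_{\ge 0}$, the category $\Rep(S_\nu)$ is semisimple, so an object with one-dimensional endomorphism algebra is simple. Hence $\prodF^C X_{\nu_n}(\lambda|_{\nu_n})$ is a simple object of $\Rep(S_\nu)$.

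Finally, identify which simple it is. The indexing of simples is characterized by the fact that $\mathcal X(\mu)$ is the unique simple summand of $[|\mu|]$ that does not appear in $[k]$ for any $k<|\mu|$. For our object, $\Hom\bigl(\prodF^C X_{\nu_n}(\lambda|_{\nu_n}),[k]\bigr)=\prodF\Hom_{S_{\nu_n}}(X_{\nu_n}(\lambda|_{\nu_n}),\mathfrak h_n^{\otimes k})$ by Łoś, and on the right the spaces are nonzero for almost all $n$ iff $k\ge|\lambda|$; thus $|\lambda|$ is the smallest $k$ for which our simple embeds into $[k]$, forcing it to be $\mathcal X(\lambda)$. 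The main technical point is arranging that the $e_{\lambda,n}$ have ultraproduct well defined and that the first-order statements about Hom-vanishing transfer through $\mathcal F$, both of which are immediate from Łoś's theorem, so no step presents a serious obstacle.
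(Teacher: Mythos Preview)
Your argument is correct up through showing that $\prod_{\mathcal F}^C X_{\nu_n}(\lambda|_{\nu_n})$ is a simple object of $\Rep(S_\nu)$. The gap is in the final identification step: your claimed characterization ``$\mathcal X(\mu)$ is the unique simple summand of $[|\mu|]$ that does not appear in $[k]$ for any $k<|\mu|$'' is false. What is true is that the simples appearing in $[m]$ and not in any $[k]$ with $k<m$ are exactly the $\mathcal X(\mu)$ with $|\mu|=m$, but for $m\ge 2$ there are several such $\mu$. So your argument only proves $\prod_{\mathcal F}^C X_{\nu_n}(\lambda|_{\nu_n})\cong \mathcal X(\mu)$ for \emph{some} $\mu$ with $|\mu|=|\lambda|$, not that $\mu=\lambda$.

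The paper's proof avoids this by working directly with the partition algebras. It uses the fact (Comes--Ostrik, \cite{comes2009blocks}, Section~3.3) that for all $\nu\notin\{0,1,\dots,2r\}$ the primitive idempotents of $\Bbbk P_r(\nu)$ are obtained by specializing a fixed family of idempotents in $\Bbbk(x)P_r(x)$, indexed once and for all by partitions $\lambda$ with $|\lambda|\le r$. Under the identification of Theorem~\ref{ultdelthm}, the partition algebra $\mathbb C P_r(\nu)$ is the ultraproduct of the $\overline{\mathbb F}_{p_n}P_r(\nu_n)$ with matching bases, so the idempotent labelled $\lambda$ is the ultraproduct of the idempotents labelled $\lambda$ at each finite rank. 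Since those idempotents cut out $X_{\nu_n}(\lambda|_{\nu_n})$ by the very definition of the labelling, the identification $\mathcal X(\lambda)=\prod_{\mathcal F}^C X_{\nu_n}(\lambda|_{\nu_n})$ comes for free. To repair your approach you would need an invariant separating simples with the same $|\lambda|$ that passes through the ultraproduct --- e.g.\ precisely this compatibility of the generic idempotents.
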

\begin{proof}
From Section 3.3 of \cite{comes2009blocks} we know that the algebras $\Bbbk P_r(\nu)$ for $\nu \ne 0,1, \dots, 2r$ have the same set of idempotents obtained by specialization from idempotents of $\Bbbk(x)P_r(x)$. Now by construction all simple objects of $\Rep(S_\nu)$ are given by the primitive idempotents of  $\End_{\Rep^0(S_\nu;\Bbbk)}([r]) = \Bbbk P_r(\nu)$. And by Theorem \ref{ultdelthm}, $\Bbbk P_r(\nu) \simeq \prodF \Fpn P_r(\nu_n)$ in such a way that basis elements are ultraproducts of basis elements. Thus it follows that idempotents in $\Bbbk P_r(\nu)$ are given by the ultraproducts of the same idempotents {\fan}. And so the claim follows.
\end{proof}

This result allows us to reformulate the definition of $\Rep(S_\nu)$ as an ultraproduct.
\begin{prop} \label{simplerultrthm}
In the notation of Theorem \ref{ultdelthm} the category ${\rm Rep}(S_\nu)$ can be described as the full subcategory of $\widehat{\mathcal C} ={\bf Rep}^f_{p_n}(S_{\nu_n})$ consisting of sequences of objects $Y_n= \bigoplus_{\alpha \in A_n} X_{p_n}(\lambda_{n,\alpha})$ for some indexing sets $A_n$ and Young diagrams $\lambda_{n,\alpha}$ such that both the sequence of $|A_n|$ and the sequence of $\max_{\alpha \in A_n}(|\lambda_{n,\alpha}| - (\lambda_{n,\alpha})_1)$, where $(\lambda_{n,\alpha})_1$ is the length of the first row, are bounded {\fan}.
\end{prop}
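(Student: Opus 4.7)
The plan is to prove the two inclusions separately, using Proposition \ref{simpprop} together with the Łoś-style Lemma \ref{ultrlemma} to translate between ``bounded sequences of Young diagrams'' and ``objects of $\Rep(S_\nu)$.''

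For the inclusion of $\Rep(S_\nu)$ into the subcategory described in the proposition, I would recall that $\Rep(S_\nu)$ is semisimple (since $\nu\notin\mathbb Z_{\ge 0}$), so every object is a finite direct sum $\bigoplus_{i=1}^N \mathcal X(\lambda^{(i)})$. By Proposition \ref{simpprop}, $\mathcal X(\lambda^{(i)}) = \prodF^C X_{p_n}(\lambda^{(i)}|_{\nu_n})$, so the whole object is the ultraproduct of $Y_n = \bigoplus_{i=1}^N X_{p_n}(\lambda^{(i)}|_{\nu_n})$. Here $|A_n| = N$ is constant and for each $i$ one has $|\lambda^{(i)}|_{\nu_n}| - (\lambda^{(i)}|_{\nu_n})_1 = |\lambda^{(i)}|$ by definition of $\lambda|_n$, so the two boundedness conditions are trivially satisfied.

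For the converse, I would start with a sequence $Y_n = \bigoplus_{\alpha\in A_n} X_{p_n}(\lambda_{n,\alpha})$ satisfying both bounds, say $|A_n|\le N$ and $|\lambda_{n,\alpha}|-(\lambda_{n,\alpha})_1\le K$ for almost all $n$. By part (2) of Lemma \ref{ultrlemma} applied to the statements ``$|A_n| = j$'' for $j=0,1,\ldots,N$, I may assume $|A_n|=N$ is constant almost everywhere, and after fixing a labeling $A_n = \{1,\ldots,N\}$ one obtains $\prodF^C Y_n = \bigoplus_{i=1}^N \prodF^C X_{p_n}(\lambda_{n,i})$ by taking ultraproducts of the primitive central idempotents cutting out each summand. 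Thus it suffices to check that, for each individual $i$, the ultraproduct $\prodF^C X_{p_n}(\lambda_{n,i})$ lies in $\Rep(S_\nu)$.

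For a single such sequence, write $\mu_n := \lambda_{n,i}$ and let $\mu_n^{\circ}=(\mu_{n,2},\mu_{n,3},\ldots)$ be the part of $\mu_n$ obtained by deleting its first row. The assumption says $|\mu_n^{\circ}|\le K$ for almost all $n$; but there are only finitely many Young diagrams of size $\le K$, so another application of part (2) of Lemma \ref{ultrlemma} to the finitely many statements ``$\mu_n^{\circ}=\tau$'' (one for each such $\tau$) produces a fixed Young diagram $\lambda$ with $\mu_n = \lambda|_{\nu_n}$ for almost all $n$. Then Proposition \ref{simpprop} gives $\prodF^C X_{p_n}(\mu_n) = \mathcal X(\lambda) \in \Rep(S_\nu)$, which completes the argument.

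The only nonobvious point, and the one I expect to double-check most carefully, is that the decomposition $\prodF^C Y_n = \bigoplus_{i=1}^N \prodF^C X_{p_n}(\lambda_{n,i})$ is genuinely a decomposition in the ultraproduct category $\widehat{\mathcal C}$: this follows because each $Y_n$ is equipped with a set of $N$ orthogonal central idempotents summing to the identity and whose images are the simple summands, and taking the ultraproduct of such a system yields $N$ orthogonal idempotents in $\End_{\widehat{\mathcal C}}(\prodF^C Y_n)$ summing to the identity — so the decomposition persists by Łoś's theorem applied to the first-order statements defining an idempotent decomposition.
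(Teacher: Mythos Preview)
Your proof is correct and follows essentially the same approach as the paper's, using Proposition \ref{simpprop} and Lemma \ref{ultrlemma} in the same way for both inclusions. The only cosmetic difference is that the paper applies Lemma \ref{ultrlemma}(2) once to the finite set of \emph{multisets} of $|A|$ Young diagrams of weight $\le L$ (obtaining a single fixed multiset up to permutation), whereas you fix a labeling and argue summand by summand; your extra paragraph justifying the idempotent decomposition in $\widehat{\mathcal C}$ is a detail the paper leaves implicit.
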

\begin{proof}
We know that $\Rep(S_\nu)$ is a full subcategory of $\widehat{\mathcal C}$ so we just need to match the objects.

On the one hand, suppose $Y \in \Rep(S_\nu)$. We know that for some set of Young diagrams $\mu_{\alpha}$ with $\alpha \in A$, a finite indexing set, we have $Y = \bigoplus_{\alpha \in A}\mathcal X(\mu_{\alpha})$, so from Proposition \ref{simpprop} it follows that $Y = \prodF^C \bigoplus_{\alpha \in A} X_{p_n}(\mu_{\alpha}|_{\nu_n})$. Thus we have a required sequence with $A_n = A$ and $\lambda_{\alpha,n} = \mu_{\alpha}|_{\nu_n}$. The sequence $|A_n|=A$ is constant, hence so is the sequence $\max_{\alpha \in A_n}(|\lambda_{n,\alpha}| - (\lambda_{n,\alpha})_1) = \max_{\alpha \in A}(|\mu_{\alpha}|)$.

On the other hand, suppose we have a sequence described in the statement of the Theorem. Since we know that $|A_n|$ is bounded {\fan}, there is a finite number of options for the cardinality of $|A_n|$ {\fan}, thus from part 2 of Lemma \ref{ultrlemma} it follows that {\fan}  the cardinality is the same. Fix $A$ to be a set of this cardinality. So, {\fan}  we have $Y_n = \bigoplus_{\alpha \in A} X_{p_n}(\lambda_{n,\alpha})$. Suppose $\max_{\alpha \in A_n}(|\lambda_{n,\alpha}| - (\lambda_{n,\alpha})_1)$ is bounded by $L$. Now each $\lambda_{n,\alpha}$ is a Young diagram of weight $\nu_n$ with at most $L$ boxes in the rows above the first one. I.e., for $n$ big enough (namely, $\nu_n > 2L$), it follows that each $\lambda_{n,\alpha} = \mu_{n,\alpha}|_{\nu_n}$ where $\mu_{n,\alpha}$ is a Young diagram of weight at most $L$. So {\fan}  each $Y_n$ is uniquely determined by a collection of $|A|$ Young diagrams of weight at most $L$. Notice that there is only a finite number of such collections. So by the same Lemma it follows that {\fan}  the collection is the same. Denote it by $\{\mu_{\alpha}\}_{\alpha \in A}$. Hence, {\fan}  up to a permutation we have $Y_n = \bigoplus_{\alpha \in A}X_{p_n}(\mu_{\alpha}|_{\nu_n})$. Hence we have $\prodF^C Y_n = \bigoplus_{\alpha \in A} \mathcal X(\mu_{\alpha})$ which is indeed an object of $\Rep(S_\nu)$.
\end{proof}

So, as promised in Example \ref{catex}, $\Rep(S_\nu)$ can indeed be described as given by ultraproducts bounded in a certain sense. 

We will also need to explain how to interpolate the central element $\Omega_n \in \Bbbk[S_n]$ to $\Rep(S_\nu)$. Recall that we can consider the central elements of $\Bbbk[S_{\nu_n}]$ as endomorphisms of the identity functor of $\Repb_{p_n}(S_{\nu_n})$.
\begin{def0} \label{delcentr} 
Denote by $\Omega$ the endomorphism of the identity functor of $\Rep(S_\nu)$ given by the restriction of the endomorphism $\prodF \Omega_{\nu_n}$.
\end{def0}
One can easily calculate the action of $\Omega$ on simple objects.
\begin{prop} 
\cite{etingof2014representation} The action of $\Omega$ on an object $\mathcal X(\lambda)$ is given by: $$
\Omega|_{\mathcal X(\lambda)} = \left(\ct(\lambda) - |\lambda| +\frac{(\nu - |\lambda|)(\nu - |\lambda|-1)}{2}\right) 1_{\mathcal X(\lambda)}  .
$$
\end{prop}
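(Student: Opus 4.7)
The plan is to reduce the statement to a direct computation of contents of Young diagrams, using the ultraproduct description of $\mathcal{X}(\lambda)$ and $\Omega$.

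First I would invoke Proposition \ref{simpprop} to write $\mathcal{X}(\lambda) = \prodF^{C} X_{p_n}(\lambda|_{\nu_n})$, and combine this with Definition \ref{delcentr}, which defines $\Omega = \prodF \Omega_{\nu_n}$. By Łoś's theorem, the action of $\Omega$ on $\mathcal{X}(\lambda)$ is the ultraproduct of the scalars by which $\Omega_{\nu_n}$ acts on $X_{p_n}(\lambda|_{\nu_n})$ for almost all $n$. By the Remark following Definition \ref{centraldef}, this scalar is exactly $\ct(\lambda|_{\nu_n})$. So everything reduces to computing $\ct(\lambda|_{\nu_n})$ as a polynomial in $\nu_n$ and then specializing to $\nu = \prodF \nu_n$.

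Next I would compute $\ct(\lambda|_{\nu_n})$ directly from the definition. Recall that $\lambda|_{\nu_n} = (\nu_n - |\lambda|, \lambda_1, \dots, \lambda_{l(\lambda)})$. I split the sum defining the content into two pieces. The first row of $\lambda|_{\nu_n}$ consists of boxes $(1,j)$ for $j = 1, \dots, \nu_n - |\lambda|$, contributing
$$
\sum_{j=1}^{\nu_n - |\lambda|} (j-1) = \frac{(\nu_n - |\lambda|)(\nu_n - |\lambda| - 1)}{2}.
$$
The remaining rows are precisely $\lambda$ shifted down by one row: a box at position $(i,j)$ in $\lambda$ corresponds to a box at position $(i+1, j)$ in $\lambda|_{\nu_n}$, contributing $j - (i+1) = (j-i) - 1$. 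Summing over all $|\lambda|$ boxes of $\lambda$ yields $\ct(\lambda) - |\lambda|$. Adding the two contributions gives
$$
\ct(\lambda|_{\nu_n}) = \ct(\lambda) - |\lambda| + \frac{(\nu_n - |\lambda|)(\nu_n - |\lambda| - 1)}{2}.
$$

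Finally, since this is a polynomial identity in $\nu_n$ valid for almost all $n$ (as soon as $\nu_n \geq \lambda_1 + |\lambda|$, which holds for almost all $n$ because $\nu_n \to \infty$ in the ultraproduct sense), I would take the ultraproduct and use $\prodF \nu_n = \nu$ to get the desired formula. There is no genuine obstacle here; the only subtlety is to check that $\lambda|_{\nu_n}$ is actually a well-defined Young diagram for almost all $n$, which follows from the fact that the set $\{n : \nu_n \geq \lambda_1 + |\lambda|\}$ is cofinite and hence belongs to $\mathcal{F}$.
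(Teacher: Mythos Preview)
Your proof is correct and follows essentially the same approach as the paper: reduce to the ultraproduct of the scalars $\ct(\lambda|_{\nu_n})$, then split the content computation into the first-row contribution $\frac{(\nu_n-|\lambda|)(\nu_n-|\lambda|-1)}{2}$ and the shifted-$\lambda$ contribution $\ct(\lambda)-|\lambda|$. The paper's proof is more terse but identical in substance.
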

\begin{proof}
Since $\mathcal X(\lambda) = \prodF^C X_{p_n}(\lambda|_{\nu_n})$, one needs to calculate $\prodF \ct(\lambda|_{\nu_n})$. It's easy to see that  each box of $\lambda$ contributes an extra $-1$ to the content of $\lambda|_{\nu_n}$, also $\nu_n- |\lambda|$ new boxes in the first row contribute $0 + 1 + \dots + (\nu_n-|\lambda|-1)$ to the content of $\lambda|_{\nu_n}$, thus we have:
$$
\prodF \ct(\lambda|_{\nu_n}) = \prodF \left(\ct(\lambda) - |\lambda| + \frac{(\nu_n-|\lambda|)(\nu_n-|\lambda|-1)}{2}\right) = 
$$
$$
=\ct(\lambda) - |\lambda| + \frac{(\nu -|\lambda|)(\nu -|\lambda|-1)}{2},
$$
which is exactly the value in the statement of the proposition.
\end{proof}

\begin{rem}
Note that all of the results of this Section work mutatis mutandis for $\Rep^{\rm ext}(S_\nu)$ (see Definition \ref{extdef}).
\end{rem}

Now we would like to give the reader a general idea of how this can be used to transfer constructions and facts from  representation theory in  finite rank to the context of Deligne categories.

Suppose we have a representation-theoretic structure $\mathcal Y_n$ in each $\Repb_{p_n}(S_{\nu_n})$ which can be constructed uniformly in an element-free way for every $n$. Then we can define the same structure $\mathcal Y$ in $\Rep(S_\nu)$ using the analogs of the same objects and maps. Since the definitions are the same, it would follow that $\mathcal Y = \prodF \mathcal Y_n$. Now one can try to transfer the properties of $\mathcal Y_n$ to $\mathcal Y$. For some it can be as easy as a direct application of \lthm. Others require quite a bit of technical work before one can do that. For some interesting results of this type see \cite{kalinov2018finite,harman2020classification}.

Oftentimes the structure $\mathcal Y$ might include some ind-objects of $\Rep(S_\nu)$. This will happen, for example, when we will try to define the rational Cherednik algebra  in $\Rep(S_\nu)$. Thus we  will deal with ind-objects in the ultraproduct setting in the next subsection.

\subsubsection{Ind-objects of $\Rep(S_\nu)$ as restricted ultraproducts}

In this section we are going to explain how ind-objects of $\Rep(S_\nu)$ can be obtained as restricted ultraproducts, thus extending Theorem \ref{ultdelthm} in a certain way. 

To do that, we will use the result of Construction \ref{indcons}.

\begin{prop} \label{inddelcons}
Suppose we have a sequence of representations of $M_n \in {\bf Rep}_{p_n}(S_{\nu_n})$, with fixed filtration by subrepresentations of finite length. i.e., we have $F^iM_n \in {\bf Rep}^f_{p_n}(S_{\nu_n})$ such that $\bigcup_{i \in \mathbb N}F^iM_n = M_n$. Also suppose that $\prodF^CF^iM_n \in {\rm Rep}(S_\nu)$. Then it follows that $M = \prodF^{C,r}M_n = \bigcup_{i \in \mathbb N}\prodF^C F^iM_n$ is an object of ${\rm IND}({\rm Rep}(S_\nu))$.\footnote{One can also define, through a more involved construction, the category $\IND(\Rep(S_\nu))$ as a subcategory of $\prodF \Repb_{p_n}(S_{\nu_n})$. Note that this subcategory will not be full. In this way one would also be able to consider $\prodF^C\bigcup_{i \in \mathbb N}F^iM_n$, i.e., take the ultraproduct directly. It can be shown that this would define the same object $M$.}
\end{prop}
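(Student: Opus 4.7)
The plan is to recognize the statement as a direct application of Construction \ref{indconsult} followed by Construction \ref{indcons}, so that only one genuinely new verification is needed. Take $\mathcal D_n = \Repb^f_{p_n}(S_{\nu_n})$, $\mathcal D = \widehat{\mathcal C} = \prodF \mathcal D_n$, and let $\mathcal C = \Rep(S_\nu)$ be the full subcategory of $\widehat{\mathcal C}$ provided by Theorem \ref{ultdelthm}. Each $M_n$ lives in $\Repb_{p_n}(S_{\nu_n}) = \IND(\mathcal D_n)$ with the prescribed exhaustion $M_n = \bigcup_i F^i M_n$ by objects of $\mathcal D_n$, and by hypothesis each $\prodF^C F^i M_n$ is already an object of $\mathcal C$.

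First I would verify the one ingredient not literally spelled out in the hypotheses: that the structural inclusions $F^i M_n \hookrightarrow F^{i+1} M_n$ pass to the ultraproduct as monomorphisms in $\Rep(S_\nu)$. The property ``$f$ is a monomorphism'' is expressible as a first-order statement about $f$ inside $\mathcal D_n$ (trivial kernel, or equivalently left-cancellativity against test morphisms), so \lthm{} in the categorical form of Example \ref{catex} implies that $\prodF (F^i M_n \hookrightarrow F^{i+1} M_n)$ is a monomorphism in $\widehat{\mathcal C}$. Since $\Rep(S_\nu) \subset \widehat{\mathcal C}$ is full, being a monomorphism in the ambient category forces being a monomorphism in the subcategory as well.

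With this in hand, setting $F^i X_\infty := \prodF^C F^i M_n$ yields a nested sequence
\[
F^0 X_\infty \subset F^1 X_\infty \subset \cdots \subset F^i X_\infty \subset \cdots
\]
of objects of $\Rep(S_\nu)$, to which Construction \ref{indcons} applies directly and produces the formal colimit
\[
X_\infty \;=\; \bigcup_{i \in \mathbb N} F^i X_\infty \;=\; \bigcup_{i \in \mathbb N} \prodF^C F^i M_n \;\in\; \IND(\Rep(S_\nu)).
\]
By Definition \ref{restrcatdef} this is precisely $M = \prodF^{C,r} M_n$, so $M$ lies in $\IND(\Rep(S_\nu))$ as claimed. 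I do not expect any serious obstacle: the proposition is essentially a repackaging of the machinery assembled in the preceding subsection, and the only step with genuine content is the monomorphism verification, which is immediate from \lthm.
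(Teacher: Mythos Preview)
Your proof is correct and follows exactly the paper's approach: the paper's entire proof is the single sentence ``This follows from Construction \ref{indconsult}.'' You have simply unpacked that citation, including the verification (via \lthm) that the inclusions remain monomorphisms in the ultraproduct, a detail the paper absorbs into the phrase ``It is clear that we have injections $F^iX_\infty\hookrightarrow F^{i+1}X_\infty$'' within Construction \ref{indconsult} itself.
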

\begin{proof}
This follows from Construction \ref{indconsult}.
\end{proof}

\begin{rem}
Note that, using Remark \ref{remindmor}, we conclude that if $M \in \IND(\Rep(S_\nu))$ has finite length, then for any $N \in \IND(\Rep(S_\nu))$ constructed via Proposition \ref{inddelcons}, we have:
$$
\Hom_{\IND(\Rep(S_\nu))}(M,N) = \bigcup_{j \in \mathbb N}\Hom_{\Rep(S_\nu)}(M,F^jN)=\bigcup_{j \in \mathbb N}\prodF\Hom_{\Repb_{p_n}(S_{\nu_n})}(M_{n},F^jN_{n})=
$$
$$
= {\prodF^r}\Hom_{\Repb_{p_n}(S_{\nu_n})}(M_{n},N_{n})  ,
$$
with the filtration arising from the filtration on $N$. 
\end{rem}

\subsubsection{The category $\Rep(S_\nu \ltimes \Gamma^\nu)$}

In this section we will explain how the category of representations of the wreath product in complex rank can be constructed. 

There are several ways to approach this problem. One construction was developed by Knop in \cite{knop2007tensor}. Another approach can be found in \cite{mori2012representation}. However, in the present paper we will use a different approach, outlined in \cite{etingof2014representation}. For brevity we will only address the case of transcendental $\nu$ in this section, although with slight modifications the results can be extended to the algebraic case as well.

Below we will use the notion of a  unital vector space. For details see \cite{etingof2014representation}.

\begin{def0}
A unital vector space $V$ is a vector space together with a unit, i.e., a distinguished non-zero vector denoted by $1 \in V$.
\end{def0}

In \cite{etingof2014representation} it is shown that given a finite dimensional unital vector space $V$, one can define an ind-object $V^{\otimes \nu}\in \Rep(S_\nu)$. The idea behind this is that, although there is no way to algebraically define $x^t$, there is such a way to define $(1+x)^t:=\sum_{m\ge 0}\binom{t}{m}x^m$. 

We can also construct this object via an ultraproduct. Anyone not familiar with \cite{etingof2014representation} might regard this as definition for the purposes of this paper.

Note that the $S_n$-module $V^{\otimes n}$ has a natural filtration induced by the filtration on $V$ given by $F^0V=\Bbbk 1$, $F^1V=V$. 

\begin{prop}\label{compow}
For a finite dimensional unital vector space $V$, the ind-object $V^{\otimes \nu}$ is given by:
$$
V^{\otimes \nu} = \prodF^{C,r} V^{\otimes n}. 
$$
\end{prop}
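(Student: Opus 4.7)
The plan is to compare the two ind-objects filtration-piece by filtration-piece. Choose once and for all a vector space complement $V=\Bbbk\cdot 1\oplus V'$. Then the induced filtration on $V^{\otimes n}$ decomposes as an $S_n$-module as
$$
F^kV^{\otimes n}\;=\;\bigoplus_{j=0}^{k}\Ind_{S_j\times S_{n-j}}^{S_n}\bigl((V')^{\otimes j}\boxtimes\Bbbk\bigr),
$$
since $F^kV^{\otimes n}$ is spanned by pure tensors whose nonunit part is supported in at most $k$ positions. In particular each $F^kV^{\otimes n}$ is a finite-length $S_n$-module of length bounded uniformly in $n$.

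First I would verify that $\prodF^{C}F^kV^{\otimes n}$ lies in $\Rep(S_\nu)$, which by Proposition \ref{inddelcons} makes $\prodF^{C,r}V^{\otimes n}$ a well-defined ind-object. Writing $(V')^{\otimes j}=\bigoplus_{|\lambda|=j}V'_\lambda\otimes X_0(\lambda)$ for the $S_j$-isotypic decomposition and then applying Pieri's rule to each $\Ind_{S_j\times S_{n-j}}^{S_n}(X_0(\lambda)\boxtimes \Bbbk)$, one finds that for $n\ge 2k$ the irreducible constituents of $F^kV^{\otimes n}$ are all of the form $X_{p_n}(\mu|_{\nu_n})$ with $|\mu|\le k$, and both the set of $\mu$'s and the multiplicities $m_\mu$ are independent of $n$. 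Proposition \ref{simplerultrthm} then places $\prodF^{C}F^kV^{\otimes n}$ in $\Rep(S_\nu)$, and Proposition \ref{simpprop} identifies it with $\bigoplus_{|\mu|\le k} m_\mu\,\mathcal{X}(\mu)$.

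Next I would match this with the filtered pieces of the ind-object $V^{\otimes \nu}$ as constructed in \cite{etingof2014representation}: there the $k$-th piece is built by literally interpolating the formula above, replacing the classical induction by the corresponding operation in $\Rep(S_\nu)$, and its isotypic decomposition is computed via the interpolated Pieri rule. This is precisely the decomposition produced by \lthm${}$ on the ultraproduct side, so the two agree term by term. The inclusions $F^k\hookrightarrow F^{k+1}$ are functorial on both sides and therefore also match, and passing to the colimit yields the claimed isomorphism $V^{\otimes \nu}=\prodF^{C,r}V^{\otimes n}$ in $\IND(\Rep(S_\nu))$.

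The main obstacle I expect is the careful identification of the interpolated induction appearing in the construction of $V^{\otimes \nu}$ from \cite{etingof2014representation} with the $\prodF^{C}$ of the classical inductions, together with the verification that the two filtrations coincide up to equivalence in the sense of the remark after Definition \ref{restrcatdef}. Both amount to uniform-in-$n$ statements in the representation theory of $S_n$, amenable to \lthm, but they require some bookkeeping about how the Pieri decomposition of $\Ind_{S_j\times S_{n-j}}^{S_n}(X_0(\lambda)\boxtimes \Bbbk)$ stabilizes as $n\to\infty$.
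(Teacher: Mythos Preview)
Your argument is correct and leads to the same conclusion, but it is more circuitous than the paper's. The paper simply invokes the Schur--Weyl decomposition directly: from \cite{etingof2014representation} one has
\[
V^{\otimes n}=\bigoplus_{\lambda}S^{\lambda|_n}V\otimes X(\lambda|_n),\qquad
F^iV^{\otimes n}=\bigoplus_{|\lambda|\le i}S^{\lambda|_n}V\otimes X(\lambda|_n),
\]
so taking the restricted ultraproduct immediately gives $\bigoplus_\lambda S^{\lambda,\infty}V\otimes\mathcal X(\lambda)$, which is the very definition of $V^{\otimes\nu}$ in \cite{etingof2014representation}. Your decomposition $F^kV^{\otimes n}=\bigoplus_{j\le k}\Ind_{S_j\times S_{n-j}}^{S_n}\bigl((V')^{\otimes j}\boxtimes\Bbbk\bigr)$ followed by Schur--Weyl on $(V')^{\otimes j}$ and Pieri is exactly what one does to \emph{prove} that Schur--Weyl formula and to see that the multiplicity of $X(\lambda|_n)$ is $\dim S^{\lambda|_n}V$; the paper skips this by citing the result. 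Consequently the ``main obstacle'' you anticipate---matching the interpolated inductions and checking filtration equivalence---disappears in the paper's approach, since both sides are already written in the same Schur--Weyl coordinates and the comparison is termwise on multiplicity spaces. Your route is fine and self-contained, but if you are willing to quote the decomposition from \cite{etingof2014representation} the proof becomes three lines.
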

\begin{proof}
Using the notation of \cite{etingof2014representation}, we have:
$$
V^{\otimes n} = \bigoplus_{\lambda}S^{\lambda|_n}V \otimes X(\lambda|_n) ,
$$
where $S^{\lambda|_{n}}$ are the corresponding Schur functors, and 
$$
F^iV^{\otimes n} = \bigoplus_{|\lambda|\le i} S^{\lambda|_n}V \otimes X(\lambda|_n).
$$
Thus, we obtain
$$
\prodF^{C,r} V^{\otimes n} = \bigcup_i \bigoplus_{|\lambda|\le i} \left( \prodF S^{\lambda|_n}V\right) \otimes \mathcal X(\lambda) = \bigoplus_{\lambda} S^{\lambda,\infty}V \otimes \mathcal X(\lambda)=V^{\otimes \nu},
$$
as needed. 
\end{proof}

Now consider a finite subgroup $\Gamma \subset \mathrm{SL}(2,\FQ)$. Proposition \ref{compow} allows us to define the following algebra:
\begin{def0}
An ind-object $\mathbb C[\Gamma]^{\otimes \nu} $ is constructed via Proposition \ref{compow} starting with $\FQ[\Gamma]$ as a unital vector space. It has the structure of the algebra given by the ultraproduct of the algebra structures on $\FQ[\Gamma]^{\otimes n}$.
\end{def0}

Using this, one can define the category $\Rep(S_\nu\ltimes \Gamma^{\nu})$ in the following way:
\begin{def0}
The category $\Rep(S_\nu\ltimes \Gamma^{\nu})$ is the category of $\mathbb C[\Gamma]^{\otimes \nu}$-modules in $\Rep(S_\nu)$. I.e., its objects are objects of $\Rep(S_\nu))$ with the structure of a $\mathbb C[\Gamma]^{\otimes \nu}$-module,  and its morphisms are morphisms in $\Rep(S_\nu)$ which commute with the module structure.
\end{def0}

It can be shown that $\Rep(S_\nu\ltimes \Gamma^{\nu})$ is equivalent to the wreath product category defined by Knop. 

We can construct some of the objects of $\Rep(S_\nu\ltimes \Gamma^{\nu})$ as ultraproducts.
\begin{prop} \label{wreathprop}
Consider a sequence of modules $M_n \in {\bf Rep}_0(S_n \ltimes \Gamma^n)$ whose ultraproduct as $S_n$-modules is a well-defined object of $\Rep(S_\nu)$. Then, this ultraproduct also lies in $\Rep(S_\nu\ltimes \Gamma^{\nu})$.
\end{prop}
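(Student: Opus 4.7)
The plan is to reinterpret the $S_n\ltimes\Gamma^n$-module structure on $M_n$ as a $\FQ[\Gamma]^{\otimes n}$-action \emph{inside} the category $\Repb_0(S_n)$, and then ultraproduce this action compatibly with the unital filtration used to define $\mathbb C[\Gamma]^{\otimes \nu}$.

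First, I would unpack the wreath-product module structure as follows: giving $M_n$ the structure of an $S_n\ltimes\Gamma^n$-module is the same as giving an $S_n$-equivariant algebra action map $a_n:\FQ[\Gamma]^{\otimes n}\otimes M_n\to M_n$ satisfying the usual associativity and unit axioms as morphisms in $\Repb_0(S_n)$. Recall from the construction preceding Proposition \ref{compow} that the unital structure on $\FQ[\Gamma]$ (with distinguished vector $1\in\FQ[\Gamma]$) induces the filtration
\[
F^i\FQ[\Gamma]^{\otimes n}=\bigoplus_{|\lambda|\le i}S^{\lambda|_n}(\FQ[\Gamma])\otimes X(\lambda|_n),
\]
and that by Proposition \ref{compow} we have $\prodF^{C,r}\FQ[\Gamma]^{\otimes n}=\mathbb C[\Gamma]^{\otimes \nu}$, with $\prodF^C F^i\FQ[\Gamma]^{\otimes n}=F^i\mathbb C[\Gamma]^{\otimes \nu}\in\Rep(S_\nu)$.

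Next, I would restrict $a_n$ to the finite pieces: $a_n^{\,i}:F^i\FQ[\Gamma]^{\otimes n}\otimes M_n\to M_n$. Since $M:=\prodF^C M_n\in\Rep(S_\nu)$ by hypothesis, and since $F^i\FQ[\Gamma]^{\otimes n}$ is a finite direct sum of objects $S^{\lambda|_n}(\FQ[\Gamma])\otimes X(\lambda|_n)$ with $|\lambda|\le i$ (whose ultraproduct is a finite object of $\Rep(S_\nu)$), the ultraproduct of the tensor products commutes with the tensor product of the ultraproducts in $\Rep(S_\nu)$—this is a standard consequence of the description of $\Rep(S_\nu)$ in Theorem \ref{ultdelthm} together with the fact that tensor products of objects in a Karoubian closure are computed at the level of the underlying objects, for which ultraproducts commute with $\otimes$ by Łoś's theorem. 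Thus $\prodF^C a_n^{\,i}$ gives a morphism $a^{\,i}:F^i\mathbb C[\Gamma]^{\otimes \nu}\otimes M\to M$ in $\Rep(S_\nu)$.

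Then I would assemble these into a single action map. The maps $a^{\,i}$ are compatible with the filtration by construction (they come from restrictions of the same $a_n$), so they glue to give a morphism
\[
a:\mathbb C[\Gamma]^{\otimes \nu}\otimes M\to M
\]
in $\mathrm{IND}(\Rep(S_\nu))$. Associativity of $a$ (as a commutative diagram involving $\mathbb C[\Gamma]^{\otimes \nu}\otimes\mathbb C[\Gamma]^{\otimes \nu}\otimes M$) and the unit axiom can each be tested on the finite filtered pieces $F^i\mathbb C[\Gamma]^{\otimes \nu}$, where both sides are ultraproducts of the corresponding diagrams for $a_n$; since these diagrams commute \fan{} (they hold for every $n$), Łoś's theorem promotes them to commuting diagrams in $\Rep(S_\nu)$. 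Hence $M$ is a $\mathbb C[\Gamma]^{\otimes \nu}$-module in $\Rep(S_\nu)$, i.e., an object of $\Rep(S_\nu\ltimes\Gamma^\nu)$.

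The main obstacle I anticipate is the bookkeeping around compatibility of tensor products with ultraproducts in the ind-setting: one must check that $\prodF^C(F^i\FQ[\Gamma]^{\otimes n}\otimes M_n)$ really equals $F^i\mathbb C[\Gamma]^{\otimes \nu}\otimes M$ in $\Rep(S_\nu)$, not merely in the ambient ultraproduct category $\widehat{\mathcal C}$. This boils down to checking that both objects, once expressed as direct sums of simple objects $\mathcal X(\mu)$ via Proposition \ref{simpprop} and Proposition \ref{simplerultrthm}, have the same multiplicities; this is again a first-order statement about finite-dimensional Hom spaces and hence follows from Łoś. Once this compatibility is in place, everything else is a mechanical transfer of axioms via Łoś's theorem.
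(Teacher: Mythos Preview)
Your proof is correct and follows essentially the same approach as the paper: reinterpret each $M_n$ as a $\FQ[\Gamma]^{\otimes n}$-module in $\Repb_0(S_n)$ and ultraproduce that module structure to obtain a $\mathbb C[\Gamma]^{\otimes\nu}$-module structure on $M$. The paper's proof is a two-line sketch of exactly this idea, while you have carefully spelled out the filtration-by-filtration passage and the Łoś-theorem verification of the module axioms; your discussion of the tensor-product compatibility is a legitimate point the paper leaves implicit.
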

\begin{proof}
Denote $M = \prodF^{C} M_n$. Indeed since $M_n$ has a structure of a $\FQ[\Gamma]^{\otimes n}$-module in $\Repb_0(S_n)$, it follows that $M$ has a structure of $\prodF^C \FQ[\Gamma]^{\otimes n} = \mathbb C[\Gamma]^{\otimes \nu}$-module. Hence it is an object of $\Rep(S_\nu\ltimes \Gamma^{\nu})$.
\end{proof}

In this way we can interpolate irreducible objects of $\Repb_0(S_n\ltimes \Gamma^n)$.
\begin{def0}
In the notation of Proposition \ref{wreathprop1}, consider $\lambda$ to be any function:
$$
\lambda: A \to \{\text{Partitions}\}.
$$
Denote by $\mathcal X(\lambda)$ the object of $\Rep(S_\nu \ltimes \Gamma^\nu)$ defined as:
$$
\mathcal X(\lambda) = \prodF^C X(\lambda_n)  ,
$$
where $\lambda_n({\rm triv}) = \lambda({\rm triv})|_n$ and $\lambda_n(\alpha) = \lambda(\alpha)$ for all other irreducibles $\alpha$ of $\Gamma$.

It follows that $\mathcal X(\lambda)$ is irreducible.
\end{def0}

\begin{rem}
We leave out the proof of the fact that these ultraproducts indeed define an object of $\Rep(S_\nu)$. This can be done using the results of \cite{knop2007tensor}, but we do not need this for this paper.
\end{rem}

\subsection{Cherednik algebras in complex rank}

\subsubsection{Cherednik algebra of type A in complex rank}

In this subsection we will explain how to construct the interpolation category for the representations of the rational Cherednik algebra of type A. After that we will construct an induction functor interpolating the functors $\Ind_{S_n}^{H_{t,k}(n)}$. This will allow us to define the DDC-algebra below. One can find more information about the rational Cherednik algebras in complex rank in \cite{entova2014representations}.

The definition of $\Rep(H_{t,k}(\nu))$ mimics the definition of  representations in the finite rank in an element-free way:
\begin{def0} \label{ultcherdef}
The category $\Rep(H_{t,k}(\nu))$ is defined as follows. The objects are given by  triples $(M,x,y)$, where $M$ is an ind-object of $\Rep(S_\nu)$, $x$ is a map  $x: \mathfrak h^* \otimes M \to M$ and $y$ a map $y: \mathfrak h \otimes M \to M$, both of which are morphisms in $\IND(\Rep(S_{\nu}))$. They also satisfy the following conditions:
$$
x \circ (1\otimes x) -x \circ (1\otimes x) \circ (\sigma \otimes 1) = 0  , 
$$
as a map from $\mathfrak h^* \otimes \mathfrak h^* \otimes M$ to $M$;
$$
y \circ (1\otimes y) -y \circ (1\otimes y) \circ (\sigma \otimes 1) = 0  , 
$$
as a map from $\mathfrak h \otimes \mathfrak h \otimes M$ to $M$;
$$
y \circ (1\otimes x) -x \circ (1\otimes y) \circ (\sigma \otimes 1) = t \cdot {\rm ev}_{\mathfrak h} \otimes 1 - k \cdot ({\rm ev}_{\mathfrak h}\otimes 1) \circ( \Omega^{3} - \Omega^{1,3}) , 
$$
as a map  from $\mathfrak h \otimes \mathfrak h^* \otimes M$ to $M$, 
where $\Omega$ is a central element from Definition \ref{delcentr}, and indices indicate the spaces on which $\Omega$ acts in the tensor product $\mathfrak h \otimes \mathfrak h^* \otimes M$.

The morphisms of $\Rep(H_{t,k}(\nu))$ are the morphisms of $\IND(\Rep(S_\nu))$ which commute with the action-maps $x$ and $y$.

Also by $\Rep^{\rm ext}(H_{t,k}(\nu))$ denote the similar category constructed over $\Rep^{\rm ext}(S_\nu)$.
\end{def0}

Note that, since our objects already are ``$S_\nu$-modules" we don't need to define any additional ``$S_\nu$-action". 

The last formula in the definition may need some explanation. To clarify it, let us apply it to $y_i \otimes x_j \otimes M$ in the finite rank case. We have:
$$
[y_i,x_j] = t\delta_{ij} - k\delta_{ij}\Omega + k\delta_{ij}\sum_{m<l,m\ne i, l\ne i}s_{ml} + k(1-\delta_{ij})s_{ij} = t\delta_{ij}-k\delta_{ij}\sum_{m\ne i}s_{im} + k(1-\delta_{ij})s_{ij}  ,
$$
which is precisely the formula from Definition \ref{cherAdef}. Hence we see that this is indeed the finite-rank definition rewritten in an element-free way.

Now we would like to show how we can construct some of the objects of  the category $\Rep(H_{t,k}(\nu))$ as ultraproducts.

\begin{rem}
Below we will denote by $t_n,k_n$ the elements of $\Fpn$  such that $\prodF t_n = t$ and $\prodF k_n = k$ under the fixed isomorphism of $\prodF \Fpn \simeq \mathbb C$. We will use the similar notation for all other parameters of algebras used in the paper.
\end{rem}

\begin{lemma} \label{thmcherault}
Suppose $M_n$ is a sequence of objects of ${\bf Rep}_{p_n}(H_{{t_n},{k_n}}(\nu_n))$ such that their (restricted) ultraproduct as objects of ${\bf Rep}_{p_n}(S_{\nu_n})$ lies in ${\rm IND}({\rm Rep}(S_\nu))$. Suppose $x_n$ and $y_n$ are the maps which define the action of generators of the corresponding Cherednik algebra on $M_n$. Then $(\prodF^{C,r}M_n, \prodF x_n, \prodF y_n)$ defines an object of ${\rm Rep}(H_{t,k}(\nu))$.
\end{lemma}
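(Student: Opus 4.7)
The plan is to set $M := \prodF^{C,r} M_n$ and define $x := \prodF x_n$, $y := \prodF y_n$, then verify the axioms of Definition \ref{ultcherdef} via \lthm. Two things must be checked: (i) that $x$ and $y$ are well-defined morphisms in $\IND(\Rep(S_\nu))$, and (ii) that the three Cherednik relations hold.

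For (i), I would restrict to each filtration level separately. On the compact subobject $\mathfrak{h}^*\otimes F^i M$ of $\mathfrak{h}^*\otimes M$, the map $x$ is described, via the remark after Proposition \ref{inddelcons}, as an element of $\prodF^r \Hom_{S_{\nu_n}}(\mathfrak{h}_{\nu_n}^*\otimes F^i M_n,\, M_n)$. Each $x_n|_{\mathfrak{h}_{\nu_n}^*\otimes F^iM_n}$ factors through some $F^{J(i,n)}M_n$ because the source has finite length; provided the shifts $J(i,n)$ are bounded \fan\ by some $J(i)$, the ultraproduct defines a morphism $\mathfrak{h}^*\otimes F^iM \to F^{J(i)}M$ in $\Rep(S_\nu)$. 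Gluing over $i$ yields a morphism in $\IND(\Rep(S_\nu))$, and the argument for $y$ is identical.

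For (ii), each of the three relations of Definition \ref{ultcherdef} holds identically on every $M_n$ with $t_n$, $k_n$, $\Omega_{\nu_n}$ in place of $t$, $k$, $\Omega$; this is just the assumption that $M_n$ is a module over $H_{t_n,k_n}(\nu_n)$ rewritten in element-free form, exactly as the relations of Definition \ref{cherAdef} rewrite under the change of variables explained after Definition \ref{ultcherdef}. Since $t = \prodF t_n$, $k = \prodF k_n$, and $\Omega = \prodF \Omega_{\nu_n}$ by Definition \ref{delcentr}, each side of each relation, restricted to any finite-length subobject of the source, is the ultraproduct of the corresponding finite-rank morphism. Applying \lthm\ to the equality of these two morphisms inside the ultraproduct of the relevant $\Hom$-spaces, and using that the $\IND(\Rep(S_\nu))$-morphism spaces embed into those ultraproducts, transports the three relations to $M$.

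The hard part I expect is the uniform boundedness of $J(i,n)$ in step (i): a priori the filtration $F^\bullet M_n$ need not be preserved up to a bounded shift by the action of $H_{t_n,k_n}(\nu_n)$. In the intended applications $F^\bullet M_n$ comes from a natural grading or PBW-type filtration under which $x_n$ and $y_n$ shift degree by a bounded amount, making $J(i,n)\le i + \mathrm{const}$ automatic. More generally one can pass to the filtration generated from $F^\bullet M_n$ by the action of $x_n$ and $y_n$; the hypothesis that $\prodF^{C,r}M_n\in \IND(\Rep(S_\nu))$ should be read as including the assertion that this enlarged filtration still has its ultraproduct in $\Rep(S_\nu)$, which is what powers step (i).
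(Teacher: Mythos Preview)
Your approach is essentially the same as the paper's, which dispatches the lemma in two sentences: the data is ``easy to see'' to be well defined, and the relations follow by \lthm. Your step (ii) is exactly this second sentence, spelled out.

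Where you go further is in step (i): you correctly isolate the only nontrivial point, namely that for $\prodF x_n$ and $\prodF y_n$ to define morphisms in $\IND(\Rep(S_\nu))$ one needs the action maps to shift the filtration by a uniformly bounded amount \fan. The paper does not address this explicitly and simply asserts well-definedness. Your diagnosis is accurate: in every application in the paper (e.g.\ Corollary \ref{indcor}, Lemma \ref{HElemma}) the filtration is the PBW filtration with $\deg(x_i)=\deg(y_i)=1$, so $J(i,n)=i+1$ identically and the issue evaporates. Your final paragraph correctly identifies that for a general filtration satisfying only the stated hypothesis, one should either assume compatibility with the action or replace $F^\bullet$ by the filtration it generates under $x_n,y_n$; the paper implicitly works only in the former situation. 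So your proof is the paper's proof made honest about a point the paper elides.
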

\begin{proof}
It's easy to see that the data $(\prodF^{C,r}M_n, \prodF x_n, \prodF y_n)$ is well defined. Since $x_n$ and $y_n$ satisfy the same conditions in finite rank and complex rank it follows that by \lthm${}$ this is indeed an object of $\Rep(H_{t,k}(\nu))$.
\end{proof}

Now we would like to construct an interpolation of the functors $\Ind_{S_{\nu_n}}^{  H_{t_n,k_n}(\nu_n)}$. It is possible to construct the full functor as ultraproduct directly, but this functor would a priori have $\prodF \Repb_{p_n}(H_{t_n,k_n}(\nu_n))$ as its target category, so we would need to explain why the functor really gives us objects of $\Rep(H_{t,k}(\nu))$. Instead we will construct this functor directly, which will also show that it agrees with the ultraproduct functor when applied to objects of $\Rep(S_\nu)$.

The idea is, following the PBW theorem, to think about ``$ H_{t,k}(\nu)$" as ``the direct sum $\bigoplus_{i,j \ge 0}S^i(\mathfrak h^*) \otimes S^j(\mathfrak h) \otimes \mathbb C[S_\nu]$" and take the tensor product with $V \in \Rep(S_\nu)$ ``over $\mathbb C[S_\nu]$".

\begin{cons} \label{inducdef}
 For an object $V \in \Rep(S_\nu)$, consider an ind-object $I_V= \oplus_{i,j\ge 0}I_{i,j}$, where $I_{i,j} = S^i(\mathfrak h^*) \otimes S^j(\mathfrak h ) \otimes V$, and maps $x_V: \mathfrak h^* \otimes I_V \to I_V$ and $y: \mathfrak h \otimes I_V \to I_V$, which are defined as follows.

First, note that $S^{i+1}(\mathfrak h)$ is isomorphic to a direct summand of $\mathfrak h \otimes S^{i}(\mathfrak h)$, let's denote the corresponding inclusion and projection as $\iota_{i+1, y}$ and $\pi_{i+1,y}$ respectively. The same is true for $\mathfrak h^*$, the corresponding morphisms are $\iota_{i+1,x}$ and $\pi_{i+1,x}$.

Now define $(x_V)|_{I_{i,j}}: \mathfrak h^*\otimes I_{i,j} \to I_{i+1,j}$ to be equal to $\pi_{i+1,x} \otimes 1$ for all $i,j$. Also define $(y_V)|_{I_{0,j}}: \mathfrak h\otimes I_{0,j} \to I_{0,j+1}$ as $\pi_{j+1,y}\otimes 1$. And lastly we define $(y_V)|_{I_{i,j}}: \mathfrak h \otimes I_{i,j} \to I_{i,j+1} \oplus I_{i-1,j}$ by induction in $i$ as:
$$
\left[(x \otimes 1) \circ(1 \otimes y \otimes 1) \circ (\sigma \otimes 1) + t \cdot {\rm ev}_{\mathfrak h} \otimes 1 - k\cdot ({\rm ev}_{\mathfrak h } \otimes 1) \circ (\Omega^{I_{i-1,j}} - \Omega^{\mathfrak h, I_{i-1,j}})\right]\circ(1 \otimes \iota_{i,x} \otimes 1)  .
$$ 
\end{cons}

Now we would like to show that this defines an object of $\Rep( H_{t,k}(\nu))$. Indeed:
\begin{lemma} \label{indlemma1}
In the notations of Construction \ref{inducdef}, the triple $(I_V,x_V,y_V)$ defines an object of $\Rep( H_{t,k}(\nu))$.
\end{lemma}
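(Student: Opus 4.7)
The plan is to realize the explicit Construction \ref{inducdef} as a restricted ultraproduct of the classical finite-rank induced Cherednik modules and then appeal to Lemma \ref{thmcherault}, so that the defining relations come for free from \lthm${}$ rather than being checked by hand.

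First, using Theorem \ref{ultdelthm} and Proposition \ref{simplerultrthm}, I would write $V = \prodF^C V_n$ with $V_n \in \Repb^f_{p_n}(S_{\nu_n})$. For each $n$, form the genuine induced module $M_n := H_{t_n,k_n}(\nu_n) \otimes_{\Bbbk[S_{\nu_n}]} V_n$, which is obviously a finite-rank Cherednik module. By the PBW theorem recalled in Section 2.3, multiplication gives an $S_{\nu_n}$-module isomorphism
\[
M_n \;\cong\; \bigoplus_{i,j\ge 0} S^i(\mathfrak{h}_n^*) \otimes S^j(\mathfrak{h}_n) \otimes V_n,
\]
and I equip $M_n$ with the filtration $F^r M_n = \bigoplus_{i+j\le r} S^i(\mathfrak{h}_n^*) \otimes S^j(\mathfrak{h}_n) \otimes V_n$. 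Each $F^r M_n$ lies in $\Repb^f_{p_n}(S_{\nu_n})$ and its restricted ultraproduct, by Proposition \ref{inddelcons} together with Proposition \ref{compow} applied to the Schur-functor summands, is exactly the ind-object $I_V = \bigoplus_{i,j\ge 0} S^i(\mathfrak{h}^*) \otimes S^j(\mathfrak{h}) \otimes V$ of Construction \ref{inducdef}.

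Next I would check that under this identification the finite-rank action maps $x_n \colon \mathfrak h_n^* \otimes M_n \to M_n$ and $y_n \colon \mathfrak h_n \otimes M_n \to M_n$ restrict to morphisms of the filtration pieces whose ultraproducts coincide with $x_V$ and $y_V$. For $x_n$ this is immediate: the action of $\mathfrak h_n^*$ is symmetric multiplication on $S^\bullet(\mathfrak h_n^*)$, which ultraproducts to $\pi_{i+1,x}\otimes 1$. For $y_n$ on the $i=0$ summand the $y$'s commute with each other in the Cherednik algebra, so again the action is symmetric multiplication, giving $\pi_{j+1,y}\otimes 1$. For the $i>0$ summands I would rewrite $y_n\cdot (x_n \otimes \xi)$ using the Cherednik commutation relation
\[
[y,x] \;=\; t_n\cdot \mathrm{ev}_{\mathfrak h_n} - k_n\cdot (\mathrm{ev}_{\mathfrak h_n}\otimes 1)\circ\bigl(\Omega_{\nu_n}^{3} - \Omega_{\nu_n}^{1,3}\bigr)
\]
(this is the coordinate-free rewriting of Definition \ref{cherAdef} used already in the discussion after Definition \ref{ultcherdef}). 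Pushing this relation through $\iota_{i,x}\otimes 1$ and then taking the ultraproduct turns $\Omega_{\nu_n}$ into the endomorphism $\Omega$ of Definition \ref{delcentr} and reproduces exactly the inductive formula defining $(y_V)|_{I_{i,j}}$.

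Having matched both the ind-object and the action maps, Lemma \ref{thmcherault} applies: since each $M_n$ satisfies the three relations of Definition \ref{cherAdef} in finite rank, \lthm${}$ (equivalently the element-free transfer recorded in Lemma \ref{thmcherault}) guarantees that $(I_V, x_V, y_V) = (\prodF^{C,r} M_n, \prodF x_n, \prodF y_n)$ satisfies the three relations of Definition \ref{ultcherdef} and is therefore an object of $\Rep(H_{t,k}(\nu))$. The only real obstacle is the bookkeeping in the previous paragraph: verifying that the inductive clause of Construction \ref{inducdef} really is the ultraproduct of the Cherednik commutation relation written with the correct $\Omega$--indices on $\mathfrak h\otimes \mathfrak h^*\otimes I_{i-1,j}$; once this matching is nailed down, the rest is automatic.
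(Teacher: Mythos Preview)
Your proposal is correct and follows essentially the paper's own approach. The paper's proof offers two arguments: first a direct one (the two commutation relations among $x$'s and among $y$'s hold because the actions factor through symmetric powers, and the mixed relation holds by the very inductive definition of $y_V$), and second exactly the ultraproduct argument you spell out, noting that in finite rank the construction is just $H_{t_n,k_n}(\nu_n)\otimes_{\Bbbk[S_{\nu_n}]}V_n$ and invoking \lthm. Your write-up is a faithful and more detailed version of that second argument; the only minor slip is citing Proposition~\ref{compow} for the identification $\prodF^C S^i(\mathfrak h_n^*)\otimes S^j(\mathfrak h_n)\otimes V_n \cong S^i(\mathfrak h^*)\otimes S^j(\mathfrak h)\otimes V$, which follows instead directly from the tensor structure of the ultraproduct realization of $\Rep(S_\nu)$ in Theorem~\ref{ultdelthm}.
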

\begin{proof}
Indeed, the first two formulas of Definition \ref{ultcherdef} are satisfied by the properties of symmetric powers, and we defined the action of $y_V$ by induction in such a way that the third equation is also satisfied.

Another way to see that is to note that in the finite rank case this construction amounts to $ H_{{t_n},{k_n}}(\nu_n) \otimes_{S_{\nu_n}} V_n$, and so by \lthm, we do get a correct structure of an \linebreak ``$ H_{t,k}(\nu)$-module".
\end{proof}

Now we need to construct the action of the induction functor on  morphisms.
\begin{cons} \label{indchermordef}
In the notation of Construction \ref{inducdef}, given a morphism \linebreak $\phi: V \to U$, define a  morphism $I_\phi: I_V\to I_U$ in the following way: $(I_\phi)|_{S^i(\mathfrak h^*) \otimes S^j(\mathfrak h) \otimes V}:=1 \otimes 1 \otimes \phi$.
\end{cons}
\begin{lemma} \label{indlemma2}
In the notation of Constructions \ref{inducdef} and \ref{indchermordef}, $I_\phi$ is a morphism in ${\rm Rep}( H_{t,k}(\nu))$.
\end{lemma}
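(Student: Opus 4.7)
The plan is to verify directly that $I_\phi$ intertwines the action maps, i.e., that $x_U \circ (1\otimes I_\phi) = I_\phi \circ x_V$ and $y_U \circ (1\otimes I_\phi) = I_\phi \circ y_V$. I would follow the same inductive scheme by which $y_V$ itself was built in Construction \ref{inducdef}.

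First I would observe that the commutation with $x$ is immediate on each summand $I_{i,j}=S^i(\mathfrak h^*)\otimes S^j(\mathfrak h)\otimes V$: by construction $(x_V)|_{I_{i,j}} = \pi_{i+1,x}\otimes 1$ acts only on the $\mathfrak h^*$-factor while $(I_\phi)|_{I_{i,j}} = 1\otimes 1\otimes \phi$ acts only on the $V$-factor, so the two maps commute as a tensor product of morphisms acting on disjoint tensor factors. The same argument handles $y$ on the summands $I_{0,j}$, where $(y_V)|_{I_{0,j}} = \pi_{j+1,y}\otimes 1$ again avoids the $V$-factor.

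The substance of the proof is the commutation with $y$ on $I_{i,j}$ for $i\geq 1$, which I would establish by induction on $i$. The defining formula for $(y_V)|_{I_{i,j}}$ expresses it as a sum of three terms post-composed with $1\otimes \iota_{i,x}\otimes 1$: the term $(x\otimes 1)\circ(1\otimes y\otimes 1)\circ(\sigma\otimes 1)$, the term $t\cdot \mathrm{ev}_{\mathfrak h}\otimes 1$, and the term involving $k\cdot(\mathrm{ev}_{\mathfrak h}\otimes 1)\circ(\Omega^{I_{i-1,j}}-\Omega^{\mathfrak h,I_{i-1,j}})$. For the first term, the inner $y$ acts on $I_{i-1,j}$ and commutes with $I_\phi$ by the inductive hypothesis, while the outer $x$ commutes with $I_\phi$ by the previous paragraph; the braiding $\sigma$ is natural, so it commutes with $I_\phi$ as well. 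The second term touches only the $\mathfrak h,\mathfrak h^*$-factors. For the third term, the key point is that $\Omega$ is an endomorphism of the identity functor of $\Rep(S_\nu)$ (Definition \ref{delcentr}), hence commutes with every morphism in $\IND(\Rep(S_\nu))$, in particular with $I_\phi$; the evaluation and the inclusions $\iota_{i,x}$ do not touch the $V$-factor. Summing, the three terms each commute with $I_\phi$, closing the induction.

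An alternative, essentially free route would be to invoke \lthm: in finite rank, for each $n$, the map $1\otimes \phi_n\colon H_{t_n,k_n}(\nu_n)\otimes_{S_{\nu_n}}V_n \to H_{t_n,k_n}(\nu_n)\otimes_{S_{\nu_n}}U_n$ is tautologically a morphism of $H_{t_n,k_n}(\nu_n)$-modules, and $I_\phi = \prodF (1\otimes \phi_n)$ via Lemma \ref{thmcherault}; the statement that this map intertwines the generating actions is first-order in the structure constants, so it transfers to $I_\phi$. No step here poses a real obstacle; the only mild subtlety is making sure that one uses the inductive definition of $y_V$ rather than trying to check the intertwining property on a non-existent closed form.
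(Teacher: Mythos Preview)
Your proposal is correct and follows precisely the two routes the paper itself indicates: a direct check from the definition (your inductive commutation argument) and the ultraproduct argument via finite rank. The paper's own proof is a single sentence pointing to exactly these two options, so you have simply unpacked what the paper leaves implicit.
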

\begin{proof}
This is easy to see both straight from the definition, or by the ultraproduct argument, since in finite rank this defines an actual $ H_{t_n,k_n}(\nu_n)$-module morphism.
\end{proof}

Now we can define the actual functor:
\begin{def0} \label{indfunctor}
Define a functor $\Ind_{S_\nu}^{ H_{t,k}(\nu)}: {\rm Rep}(S_\nu) \to \Rep(H_{t,k}(\nu))$ in the following way. On objects it takes $V$ to the triple $(I_V,x_V,y_V)$ from Construction \ref{inducdef}. And on morphisms it takes $\phi: V \to U$ to $I_\phi$ from Construction \ref{indchermordef}. This is a well defined functor by Lemmas \ref{indlemma1} and \ref{indlemma2}.
\end{def0}
The next Corollary follows by construction and the above lemmas:
\begin{cor} \label{indcor}
For any object $V \in {\rm Rep}(S_\nu)$ such that $V = \prodF V_n$ we have:
$$
{\rm Ind}_{S_\nu}^{ H_{t,k}(\nu)}V = \prodF^{C,r} {\rm Ind}_{S_{\nu_n}}^{ H_{{t_n},{k_n}}(\nu_n)}V_n  ,
$$
where the filtration on ${\rm Ind}_{S_{\nu_n}}^{ H_{{t_n},{k_n}}(\nu_n)}V_n$ is obtained from the filtration of $H_{{t_n},{k_n}}(\nu_n)$ given by $\deg(x_i)=\deg(y_i)=1$ and $\deg(\sigma_{ij}) = 0$.
\end{cor}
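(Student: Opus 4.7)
The plan is to unpack both sides and show they coincide as ind-objects of $\Rep(S_\nu)$ equipped with matching actions of $x$ and $y$. I would proceed in three steps: identifying the underlying ind-object, identifying the filtrations, and matching the action maps.

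First, I would invoke the PBW theorem for $H_{t_n,k_n}(\nu_n)$ (the proposition after Definition \ref{cherAdef}) to obtain the $S_{\nu_n}$-equivariant isomorphism
$$
\Ind_{S_{\nu_n}}^{H_{t_n,k_n}(\nu_n)} V_n \;\cong\; \bigoplus_{i,j\ge 0} S^i(\mathfrak h_n^*) \otimes S^j(\mathfrak h_n) \otimes V_n,
$$
under which the filtration by $\deg(x)=\deg(y)=1$, $\deg(\sigma)=0$ becomes $F^N = \bigoplus_{i+j\le N} S^i(\mathfrak h_n^*) \otimes S^j(\mathfrak h_n) \otimes V_n$, a finite sum of objects of $\Repb^f_{p_n}(S_{\nu_n})$.

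Second, I would compute the restricted ultraproduct. Because $\prodF^C \mathfrak h_n = \mathfrak h$ by Theorem \ref{ultdelthm} and symmetric powers (being defined by idempotents in the symmetric group algebra) commute with $\prodF^C$ by \lthm, one gets $\prodF^C S^i(\mathfrak h_n^*) = S^i(\mathfrak h^*)$ and likewise for $\mathfrak h$. Combining with $\prodF^C V_n = V$ and the fact that tensor products commute with ultraproducts, each filtered piece satisfies $\prodF^C F^N = \bigoplus_{i+j\le N} S^i(\mathfrak h^*) \otimes S^j(\mathfrak h) \otimes V$. Taking the union over $N$ as in Construction \ref{indconsult} and Proposition \ref{inddelcons} yields exactly the underlying ind-object $I_V$ of Construction \ref{inducdef}.

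Third, I would match the action maps. The map $x_V$ is defined in Construction \ref{inducdef} as $\pi_{i+1,x}\otimes 1$, which is visibly the finite-rank multiplication in the symmetric algebra of $\mathfrak h_n^*$ tensored with $\mathrm{id}_{V_n}$; hence $\prodF x_n = x_V$ by \lthm. The map $y_V$ is defined by induction on the $x$-degree using a formula that is a direct element-free rewriting of the commutation relation $[y_i,x_j] = t\delta_{ij} - k\delta_{ij}\sum_{m\ne i} s_{im} + k(1-\delta_{ij})s_{ij}$, as explained right after Definition \ref{ultcherdef}. In finite rank, $y_n$ acting on an element of $F^N$ is computed by exactly the same induction (pushing $y$ past $x$'s using the commutator). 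Hence $\prodF y_n$ and $y_V$ are given on each $F^N$ by the same formula, and agree by \lthm. Finally, Lemma \ref{thmcherault} ensures that the ultraproduct triple $(\prodF^{C,r}\Ind V_n,\prodF x_n,\prodF y_n)$ is a well-defined object of $\Rep(H_{t,k}(\nu))$, so the identification is an isomorphism in that category; functoriality (the morphism side) follows from Lemma \ref{indlemma2}, since $I_\phi = 1\otimes 1\otimes \phi$ is itself the componentwise ultraproduct of the analogous finite-rank induced morphisms.

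The main obstacle is the third step, and specifically the inductive verification that the element-free formula defining $y_V$ in Construction \ref{inducdef} genuinely coincides, piece by piece in the filtration, with the ultraproduct of the finite-rank $y_n$-action. Once this is established, the rest is bookkeeping: PBW gives the vector-space identification, Łoś's theorem transports it to the interpolation, and Lemmas \ref{indlemma1}--\ref{indlemma2} show that both sides are honest objects and morphisms of $\Rep(H_{t,k}(\nu))$.
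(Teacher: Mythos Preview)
Your proposal is correct and follows essentially the same approach as the paper, which simply states that the corollary ``follows by construction and the above lemmas.'' You have unpacked exactly what that means: the PBW decomposition identifies the underlying ind-object filtration-wise, Łoś's theorem transports symmetric powers and tensor products through the ultraproduct, and the inductive definition of $y_V$ in Construction~\ref{inducdef} was engineered precisely so that it matches the finite-rank action (this is the content of the second paragraph of the proof of Lemma~\ref{indlemma1}).
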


\begin{rem}
All of the constructions of the present section work for $\Rep^{\rm ext}( H_{t,k}(\nu))$ in the same fashion.
\end{rem}

\subsubsection{Symplectic reflection algebras in complex rank}

In this section we will briefly generalize the results of the previous section to the context of symplectic reflection algebras. As in Section 3.2.3, we will work for transcendental $\nu$ for simplicity. Also as in that section, we fix a finite group $\Gamma \subset \textrm{SL}(2,\FQ)$.

Below we will define the category $\Rep(H_{t,k,c}(\nu,\Gamma))$ following the lines of Definition \ref{ultcherdef}. To do this, we need to find the  analog of $V$ in Definition \ref{sympfindef}.
\begin{prop}
The ultraproduct $\prodF^C (\FQ^2)^n$ as $S_n\ltimes \Gamma^n$-modules defines an object of ${\rm Rep}(S_\nu\ltimes \Gamma^{\nu})$. 
\end{prop}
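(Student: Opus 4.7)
The plan is to reduce the statement to Proposition \ref{wreathprop} by analyzing the $S_n$-module structure of $(\FQ^2)^n$. By Proposition \ref{wreathprop}, it suffices to verify that the ultraproduct $\prodF^C (\FQ^2)^n$ of $(\FQ^2)^n$ regarded merely as $S_n$-modules is a well-defined object of $\Rep(S_\nu)$.

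To this end, I would first unpack the definition: $(\FQ^2)^n$ is the direct sum of $n$ copies of $\FQ^2$, with $S_n$ permuting the summands and the $i$-th copy of $\Gamma$ acting tautologically on the $i$-th summand. Forgetting the $\Gamma^n$-action, one therefore has a natural $S_n$-equivariant isomorphism
$$
(\FQ^2)^n \;\cong\; \FQ^2 \otimes \mathfrak h_n \;\cong\; \mathfrak h_n \oplus \mathfrak h_n,
$$
where $\FQ^2$ is trivial as an $S_n$-module and $\mathfrak h_n$ is the standard permutation representation.

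Next, since ultraproducts commute with finite direct sums (this is immediate from Example \ref{firstultex} or, in categorical form, from \lthm{} applied to the categorical sum in $\widehat{\mathcal C}$), I get
$$
\prodF^C (\FQ^2)^n \;\cong\; \Big(\prodF^C \mathfrak h_n\Big) \oplus \Big(\prodF^C \mathfrak h_n\Big) \;=\; \mathfrak h_\nu \oplus \mathfrak h_\nu.
$$
By Theorem \ref{ultdelthm}(a), $\mathfrak h_\nu$ is an object of $\Rep(S_\nu)$, and $\Rep(S_\nu)$ is closed under finite direct sums, so $\mathfrak h_\nu \oplus \mathfrak h_\nu$ lies in $\Rep(S_\nu)$. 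Alternatively, one could note that each $\mathfrak h_n = X_{0}((n-1,1)) \oplus X_0((n))$, so the hypothesis of Proposition \ref{simplerultrthm} is trivially satisfied (the indexing set has cardinality $4$ and the Young diagrams outside the first row all have weight at most $1$).

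With the $S_n$-module ultraproduct in hand, Proposition \ref{wreathprop} directly produces the $\mathbb C[\Gamma]^{\otimes \nu}$-module structure and concludes that $\prodF^C (\FQ^2)^n$ lies in $\Rep(S_\nu \ltimes \Gamma^\nu)$. I do not anticipate a real obstacle here: the only place one must be slightly careful is to make sure the decomposition $(\FQ^2)^n \cong \FQ^2 \otimes \mathfrak h_n$ is genuinely $S_n$-equivariant (which it is, by the very definition of the wreath product action), so that taking ultraproducts is compatible with the decomposition.
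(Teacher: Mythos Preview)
Your proposal is correct and follows essentially the same approach as the paper: identify $(\FQ^2)^n \cong \mathfrak h_n \oplus \mathfrak h_n$ as $S_n$-modules, take the ultraproduct to land in $\Rep(S_\nu)$, and then invoke Proposition \ref{wreathprop}. The paper's proof is just the terse version of what you wrote.
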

\begin{proof}
Indeed as $S_n$-modules, each $(\FQ^2)^n =\mathfrak h_n \oplus \mathfrak h_n$, hence their ultraproduct is given by $\mathfrak h^2$ as an object of $\Rep(S_\nu)$. Thus by Proposition \ref{wreathprop} it follows that it is also an object of $\Rep(S_\nu\ltimes \Gamma^{\nu})$. The symplectic pairing is given by the ultraproduct of symplectic pairings.
\end{proof}
We will denote this object by $V$ and call the fundamental representation of ``$S_\nu\ltimes \Gamma^\nu$''. Also $V$ carries a natural symplectic pairing $\omega$.

Now we are ready to define the category itself.
\begin{def0} \label{sympldeldef}
Consider $t,k,c_C,T_C$ as in Definition \ref{sympfindef} with $\Bbbk = \mathbb C$. Let $\nu \in \mathbb C$ be a transcendental number. The objects of the category $\Rep(H_{t,k,c}(\nu,\Gamma))$ are given by pairs $(M,y)$, where $M$ is an object of $\Rep(S_\nu\ltimes \Gamma^{\nu})$ and $y$ is a map:
$$
y: V \otimes M \to M ,
$$
such that the following holds:
$$
y\circ (1\otimes y) \circ ((1-\sigma) \otimes 1) = (\omega \otimes 1) \circ \left(t - k(\Omega^3 - \Omega^{1,3}) - \sum_{C}\frac{c_C}{1-T_C}(\Omega_C^{3}-\Omega_C^{13}-\Omega_C^{23}+\Omega_C^{123})\right)  ,
$$
as a map from $V \otimes V \otimes M $ to $M$, where $\Omega$ is an endomorphism from Definition \ref{delcentr} and $\Omega_C$ is the endomorphism obtained in a similar way as the ultraproduct of endomorphisms of the identity functor arising from the sum of elements of the group belonging to the conjugacy class $C$. 

The morphisms are given by morphisms in $\Rep(S_\nu\ltimes \Gamma^{\nu})$ which commute with $y$.
\end{def0}

In a fashion similar to the discussion after Definition \ref{ultcherdef} one can see that this definition is the same as in finite rank, written in an element free way. Thus for the same reasons one obtains the following statement, which generalizes Proposition \ref{wreathprop} and Lemma \ref{thmcherault}.
\begin{prop}
Suppose $M_n$ are $H_{t_n,k_n,c_n}(n,\Gamma)$-modules whose ultraproduct $\prodF^{C,r} M_n$ is a well defined object of ${\rm IND}({\rm Rep}(S_\nu))$. Suppose $y_n$ denotes the corresponding map \linebreak $y_n: (\FQ^2)^n \otimes M_n \to M_n$. Then ($\prod^{C,r}_{\mathcal F} M_n$, $\prodF y_n$) is an object of ${\rm Rep}(H_{t,k,c}(\nu,\Gamma))$.
\end{prop}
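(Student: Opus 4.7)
The proof will mirror Lemma \ref{thmcherault}, with the extra bookkeeping required to track the $\Gamma$-action. I would organize it in three steps.

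First, upgrade the ind-object structure. By hypothesis, $M := \prodF^{C,r} M_n$ lies in $\IND(\Rep(S_\nu))$. Each $M_n$ also carries a compatible $\overline{\mathbb F}_{p_n}[\Gamma]^{\otimes n}$-action, and this action is defined on every filtered piece. Taking the (restricted) ultraproduct of these actions gives $M$ the structure of a $\mathbb C[\Gamma]^{\otimes \nu}$-module in $\IND(\Rep(S_\nu))$; this is the ind-analogue of Proposition \ref{wreathprop}, and its validity is a direct application of \lthm{} to the associativity and unit axioms, combined with the compatibility of the $\Gamma$-action with the filtration. Hence $M \in \IND(\Rep(S_\nu \ltimes \Gamma^\nu))$.

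Second, construct the map $y$. Each $y_n\colon (\overline{\mathbb F}_{p_n}^{\,2})^n \otimes M_n \to M_n$ is an $S_n \ltimes \Gamma^n$-equivariant morphism, and increases the chosen filtration degree by at most one (as inherited from $\deg(y_i)=1$). Setting $y := \prodF y_n$ therefore yields a well-defined morphism $V \otimes M \to M$ in $\IND(\Rep(S_\nu \ltimes \Gamma^\nu))$, where $V = \prodF^{C}(\overline{\mathbb F}_{p_n}^{\,2})^n$ is the fundamental object introduced before Definition \ref{sympldeldef}, and $\omega = \prodF \omega_n$.

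Third, check the defining relation. By construction, the endomorphisms $\Omega,\Omega_C$ in $\Rep(S_\nu)$ are ultraproducts of their finite-rank counterparts $\Omega_n,\Omega_{C,n}$, and the scalars $t,k,c_C,T_C$ are ultraproducts of $t_n,k_n,c_{C,n},T_C$. The relation appearing in Definition \ref{sympldeldef} is an element-free reformulation of the commutator relation in Definition \ref{sympfindef}; one sees this by evaluating both sides on an elementary tensor $v\otimes w\otimes m \in V_n \otimes V_n \otimes M_n$ and using that $(1-\sigma)(v\otimes w) = v\otimes w - w\otimes v$ converts the bilinear expression $y_n\circ(1\otimes y_n)$ into the commutator $[y_n(v),y_n(w)]$. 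Since the $M_n$ are genuine $H_{t_n,k_n,c_n}(n,\Gamma)$-modules, the relation holds in each $M_n$, and \lthm{} transfers it to $(M,y)$.

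The main obstacle is the combinatorial translation in step three: one must match the four terms $\Omega_C^{3}-\Omega_C^{13}-\Omega_C^{23}+\Omega_C^{123}$ in Definition \ref{sympldeldef} with the expansion of the bilinear form $\omega((1-s)y,(1-s)x)$ appearing in Definition \ref{sympfindef}, i.e.\ with $(1-s)\otimes (1-s) = 1 - s^{(1)} - s^{(2)} + s^{(12)}$ applied componentwise to the two copies of $V$. Once this correspondence, as well as the analogous but simpler identification for the $k$-term (already present in type A) and the constant $t$-term, is verified by inspection, the rest of the proof is mechanical application of \lthm.
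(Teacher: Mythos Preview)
Your proposal is correct and follows exactly the approach the paper indicates: the paper does not give a detailed proof here but simply says the statement ``generalizes Proposition \ref{wreathprop} and Lemma \ref{thmcherault}'' and holds ``for the same reasons,'' which is precisely your three-step outline (upgrade to the $\Gamma^\nu$-module structure, take the ultraproduct of the $y_n$, then transfer the defining relation via \lthm). One small notational point: since this section of the paper restricts to transcendental $\nu$, the base fields are $\FQ$ rather than $\overline{\mathbb F}_{p_n}$, so your $\overline{\mathbb F}_{p_n}$ should read $\FQ$ throughout.
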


Also repeating the steps of Section 3.3.1 we can construct the induction functor. Since the construction is almost literally the same, we just state the result.

\begin{prop}\label{indfunctsympl}
There is a functor 
$$
{\rm Ind}_{S_\nu\ltimes \Gamma^n}^{H_{t,k,c}(\nu,\Gamma)}: {\rm Rep}(S_\nu\ltimes \Gamma^\nu) \to {\rm Rep}(H_{t,k,c}(\nu,\Gamma))  \ ,
$$
such that, if $M \in {\rm Rep}(S_\nu\ltimes \Gamma^\nu)$ is an object given by ultraproduct of $S_n\ltimes \Gamma^n$-modules, i.e., $M =\prodF^C M_n$, then:
$$
{\rm Ind}_{S_\nu\ltimes \Gamma^n}^{H_{t,k,c}(\nu,\Gamma)}(M) = \prodF^{C,r} {\rm Ind}_{S_n \ltimes \Gamma^n}^{H_{t_n,k_n,c_n}(n,\Gamma)}(M_n)  .
$$
\end{prop}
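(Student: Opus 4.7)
The plan is to imitate the construction of the induction functor in the type A case (Constructions \ref{inducdef} and \ref{indchermordef} together with Definition \ref{indfunctor}), replacing $\mathfrak h \oplus \mathfrak h^*$ by the fundamental object $V \in \Rep(S_\nu \ltimes \Gamma^\nu)$ and using the symplectic pairing $\omega$ together with the relation of Definition \ref{sympldeldef} instead of the type A Cherednik relation. Concretely, for $W \in \Rep(S_\nu \ltimes \Gamma^\nu)$ I would set
\[
I_W \;=\; \bigoplus_{j\ge 0} S^j(V)\otimes W,
\]
an ind-object of $\Rep(S_\nu \ltimes \Gamma^\nu)$ (with $\Gamma^\nu$ acting diagonally through the $\mathbb C[\Gamma]^{\otimes \nu}$-structure on $V$ and $W$). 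This is the natural ``PBW analogue'' of $H_{t,k,c}(\nu,\Gamma)\otimes_{S_\nu\ltimes \Gamma^\nu} W$.

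Next I would define the action map $y: V\otimes I_W \to I_W$ by induction on $j$. For the component $V\otimes (S^j(V)\otimes W)\to S^{j+1}(V)\otimes W$ I take the natural symmetrization $\pi_{j+1}\otimes 1$; for the piece landing in $S^{j-1}(V)\otimes W$, I read off the formula from the required commutation relation, so that on $V\otimes V\otimes (S^{j-1}(V)\otimes W)$ the combination
\[
y\circ (1\otimes y)\circ ((1-\sigma)\otimes 1)
\]
agrees, after precomposing with $1\otimes \iota_j\otimes 1$, with
\[
(\omega\otimes 1)\circ\Bigl(t - k(\Omega^3-\Omega^{1,3}) - \sum_C \tfrac{c_C}{1-T_C}(\Omega_C^3-\Omega_C^{13}-\Omega_C^{23}+\Omega_C^{123})\Bigr).
\]
By construction the pair $(I_W, y)$ lies in $\Rep(H_{t,k,c}(\nu,\Gamma))$. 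On morphisms $\phi: W\to W'$ the functor is $\mathrm{Ind}(\phi)|_{S^j(V)\otimes W} = 1\otimes\phi$, which obviously commutes with the natural symmetrization and, by the inductive recipe, also with the commutator part of $y$.

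To identify this with the restricted ultraproduct, I would observe that in finite rank the exact same recipe produces $\mathrm{Ind}_{S_n\ltimes \Gamma^n}^{H_{t_n,k_n,c_n}(n,\Gamma)}(W_n)$ with its PBW filtration $F^j = \bigoplus_{i\le j} S^i(V_n)\otimes W_n$. Since the ingredients of my construction (the symmetrizers $\pi,\iota$, the pairing $\omega$, and the endomorphisms $\Omega,\Omega_C$ of the identity functor) are all obtained as ultraproducts of their finite-rank counterparts (Proposition \ref{compow}, Definition \ref{delcentr}, and the analogous construction for $\Omega_C$), \lthm\ gives $\prod_{\mathcal F}^{C,r}\mathrm{Ind}(W_n) = I_W$ as ind-objects with compatible $y$-maps, as required.

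The main obstacle, as in the type A case, is checking that the inductive definition of $y$ is consistent and indeed satisfies the full relation of Definition \ref{sympldeldef} (not only for the induction step that was used to define it). The cleanest way to do this is to note that finite-rank $\mathrm{Ind}(W_n)$ tautologically satisfies this relation, and then invoke \lthm\ together with the fact that the restricted ultraproduct inherits any first-order identity that holds for almost all $n$; the technical bookkeeping is only in checking that every map appearing in the relation is the ultraproduct of its finite-rank version, which reduces to the observation made for $\Omega$ and $\Omega_C$ above.
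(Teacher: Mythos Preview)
Your proposal is correct and matches the paper's approach exactly: the paper simply says that one repeats the steps of Section~3.3.1 (Constructions~\ref{inducdef}, \ref{indchermordef}, Lemmas~\ref{indlemma1}, \ref{indlemma2}, Definition~\ref{indfunctor}, Corollary~\ref{indcor}) with $\mathfrak h\oplus\mathfrak h^*$ replaced by the symplectic object $V$ and the type~A relation replaced by the relation of Definition~\ref{sympldeldef}. Your use of $\bigoplus_{j\ge 0} S^j(V)\otimes W$ in place of the double-indexed $\bigoplus_{i,j} S^i(\mathfrak h^*)\otimes S^j(\mathfrak h)\otimes W$ is the correct adaptation, since for general $\Gamma$ there is no splitting of $V$ into Lagrangian halves and the PBW theorem for $H_{t,k,c}(n,\Gamma)$ gives $\mathrm{gr}\cong \Bbbk[S_n\ltimes\Gamma^n]\ltimes S(V)$.
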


\section{The deformed double current algebra of type A}

\subsection{Construction via the Deligne category $\Rep(S_\nu)$}

\subsubsection{The construction}

In this section we will construct the DDC-algebra of type A we are after, which we will call $\mathcal D_{t,k,\nu}$. We will do this by taking endomorphisms of a certain object of the Deligne category.

First define the following object:
\begin{def0}
Define an object $H_{t,k}(\nu)\be \in \Rep(H_{t,k}(\nu))$ to be equal to $\Ind_{S_\nu}^{H_{t,k}(\nu)}(\Bbbk)$.
\end{def0}

One can easily construct $H_{t,k}(\nu)\be$ as an ultraproduct.

\begin{lemma} \label{HElemma}
The object $H_{t,k}(\nu)\be$ is isomorphic to $\prod^{C,r}_{\mathcal F}H_{{t_n},{k_n}}(\nu_n)\be$.
\end{lemma}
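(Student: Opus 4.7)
The plan is to deduce the lemma directly from Corollary \ref{indcor} applied to the trivial representation. First I would observe that the unit object $\Bbbk\in\Rep(S_\nu)$ is the ultraproduct of the trivial $S_{\nu_n}$-modules: under the identification of the underlying fields, $\Bbbk = \prodF^C \Bbbk_n$, where $\Bbbk_n$ denotes the trivial representation of $S_{\nu_n}$ (this is immediate since each $\Bbbk_n$ is one-dimensional and the ultraproduct of one-dimensional spaces with the trivial action is one-dimensional and trivial).

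Next I would apply Definition \ref{indfunctor} to $V=\Bbbk$, and then invoke Corollary \ref{indcor} to get
\[
H_{t,k}(\nu)\be \;=\; \Ind_{S_\nu}^{H_{t,k}(\nu)}(\Bbbk) \;=\; \prodF^{C,r}\,\Ind_{S_{\nu_n}}^{H_{t_n,k_n}(\nu_n)}(\Bbbk_n),
\]
with the filtration on the right coming from the PBW filtration on $H_{t_n,k_n}(\nu_n)$ (that is, $\deg(x_i)=\deg(y_i)=1$, $\deg(g)=0$ for $g\in S_{\nu_n}$). Finally, by Remark \ref{sphrem} we have a canonical identification $\Ind_{S_{\nu_n}}^{H_{t_n,k_n}(\nu_n)}(\Bbbk_n)\cong H_{t_n,k_n}(\nu_n)\be$ as $H_{t_n,k_n}(\nu_n)$-modules. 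Combining these identifications (and noting that the isomorphism $\Ind_{S_{\nu_n}}^{H_{t_n,k_n}(\nu_n)}(\Bbbk_n)\cong H_{t_n,k_n}(\nu_n)\be$ is functorial in $n$, so that applying $\prodF^{C,r}$ on both sides preserves equality) gives the claimed isomorphism.

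The only step that needs a brief justification is the compatibility of filtrations: the PBW filtration on $H_{t_n,k_n}(\nu_n)\be$ inherited from $H_{t_n,k_n}(\nu_n)$ matches the filtration on $\Ind_{S_{\nu_n}}^{H_{t_n,k_n}(\nu_n)}(\Bbbk_n)$ used in Corollary \ref{indcor} (built from symmetric powers of $\mathfrak h$ and $\mathfrak h^*$). This follows from the PBW theorem for $H_{t_n,k_n}(\nu_n)$, which identifies the associated graded of $H_{t_n,k_n}(\nu_n)\be$ with $S(\mathfrak h\oplus \mathfrak h^*)^{S_{\nu_n}}\otimes \be$, matching the symmetric-power decomposition appearing in Construction \ref{inducdef}. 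I do not expect any real obstacle here; the whole content of the lemma is packaging the earlier Corollary \ref{indcor} together with the spherical description of Remark \ref{sphrem}.
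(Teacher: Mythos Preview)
Your proposal is correct and follows essentially the same approach as the paper: identify $H_{t_n,k_n}(\nu_n)\be$ with $\Ind_{S_{\nu_n}}^{H_{t_n,k_n}(\nu_n)}(\Bbbk_n)$ via Remark \ref{sphrem}, then invoke Corollary \ref{indcor}. The paper's proof is a single sentence to this effect; your version simply spells out the details (the ultraproduct of trivial representations, the filtration compatibility) that the paper leaves implicit.
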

\begin{proof}
Indeed, we know that $H_{{t_n},{k_n}}(\nu_n)\be = \Ind_{S_{\nu_n}}^{H_{t_n,k_n}(\nu_n)}(\Fpn)$, so the conclusion follows from Corollary \ref{indcor}. 
\end{proof}

Note that we can define a filtration on $H_{t,k}(\nu)\be$ by objects of $\Rep(S_\nu)$ using the construction in Definition \ref{inddef}. Indeed,  assign $\deg(x):= 1$ and $\deg(y):=1$, i.e., we take $F^mH_{t,k}(S_\nu)\be$ to be equal to $\sum_{i=0}^m S^i(\mathfrak h^*) \otimes S^{k-i}(\mathfrak h) \otimes \mathbb C$. This agrees with the filtration by $\textbf{Rep}_{p_n}(S_{\nu_n})$-modules of $H_{{t_n},c_n}(\nu_n)\be$ given by $\deg(x_i)=\deg(y_i)=1$.\footnote{This is the filtration used in taking the restricted ultraproduct in Lemma \ref{HElemma}.}

Notice that the same assignment of degrees defines a grading of $H_{t,k}(\nu)\be$ (and respectively $H_{{t_n},c_n}(\nu_n)\be$) by $S_\nu$-modules ($S_{\nu_n}$-modules). Hence we have a corollary:
\begin{cor}
The object $F^mH_{t,k}(\nu)\be$ is isomorphic to $\prod^C_{\mathcal F}F^mH_{t_n,k_n}(\nu_n)\be$.
\end{cor}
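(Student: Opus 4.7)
The plan is to reduce the claim to an identification of $m$-th filtered pieces and then compute both sides explicitly, using the PBW theorem on the finite-rank side and Construction \ref{inducdef} on the Deligne-category side.

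First I would unwind Lemma \ref{HElemma} via the definition of the restricted ultraproduct. By Definition \ref{restrcatdef}, the restricted ultraproduct is literally $\bigcup_{m\ge 0}\prodF^C F^m H_{t_n,k_n}(\nu_n)\be$, with the filtration on $H_{t_n,k_n}(\nu_n)\be$ being the one induced by $\deg(x_i)=\deg(y_i)=1$ (this is precisely the filtration noted in the paragraph preceding the corollary as agreeing with the filtration on $H_{t,k}(\nu)\be$). Consequently it is enough to verify the level-by-level equality $\prodF^C F^m H_{t_n,k_n}(\nu_n)\be = F^m H_{t,k}(\nu)\be$ for every $m\ge 0$.

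Next I would compute each side. For the finite-rank side, the PBW theorem for the Cherednik algebra gives $\gr H_{t_n,k_n}(\nu_n)\cong\mathbb F_{p_n}[S_n]\ltimes S(\mathfrak h_n\oplus\mathfrak h_n^*)$, and applying $\be$ to this identification (exactly as in Remark \ref{sphrem}) yields an $S_n$-module isomorphism $F^m H_{t_n,k_n}(\nu_n)\be\cong\bigoplus_{i+j\le m}S^i(\mathfrak h_n^*)\otimes S^j(\mathfrak h_n)$. For the Deligne-category side, Construction \ref{inducdef} applied to $V=\mathbb C$ gives, by definition, $F^m H_{t,k}(\nu)\be=\bigoplus_{i+j\le m}S^i(\mathfrak h^*)\otimes S^j(\mathfrak h)$ in $\Rep(S_\nu)$.

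Finally I would apply $\prodF^C$ to the finite-rank decomposition. Since the categorical ultraproduct is a $\Bbbk$-linear symmetric monoidal functor, it commutes with finite direct sums and with Schur functors (which are cut out by idempotents in the symmetric group algebra); combined with the identification $\mathfrak h=\prodF^C\mathfrak h_n$ of Theorem \ref{ultdelthm}, this yields $\prodF^C F^m H_{t_n,k_n}(\nu_n)\be=\bigoplus_{i+j\le m}S^i(\mathfrak h^*)\otimes S^j(\mathfrak h)=F^m H_{t,k}(\nu)\be$, completing the proof. I do not anticipate a serious obstacle: the statement is essentially a bookkeeping refinement of Lemma \ref{HElemma}, and the only mild point requiring care, namely the agreement of the two filtrations by generator-degree, has already been recorded in the text preceding the corollary.
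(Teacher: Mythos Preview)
Your proposal is correct and follows essentially the same approach as the paper: the paper deduces the corollary from the observation (recorded just before the statement) that the degree assignment actually gives a \emph{grading} by $S_\nu$-modules, so each filtered piece is a finite direct sum of graded pieces $S^i(\mathfrak h^*)\otimes S^j(\mathfrak h)$, which then match under $\prodF^C$. You simply spell this out more explicitly via PBW on the finite-rank side and Construction \ref{inducdef} on the Deligne side.
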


Now we are ready to define the DDC-algebra in question.
\begin{def0}
The algebra $\mathcal D_{t,k,\nu}$ is the endomorphism algebra $\End_{\Rep(H_{t,k}(\nu))}(H_{t,k}(\nu)\be)$.
\end{def0}

Note that this is an actual algebra over $\mathbb C$, since it is given by a vector space of morphisms.

Also note that we can rewrite this as:
$$
\End_{\Rep(H_{t,k}(\nu))}(H_{t,k}(\nu)\be) = \Hom_{\IND(\Rep(S_\nu))}(\mathbb C, H_{t,k}(\nu)\be)  .
$$
So this algebra is given by the direct sum of all trivial representations of $S_\nu$ in $H_{t,k}(\nu)\be$. Via this observation we can trivially restrict the grading of $H_{t,k}(\nu)\be$ to the grading on $\mathcal D_{t,k,\nu}$.

Note that by Remark \ref{sphrem} in finite rank this construction gives us the spherical subalgebra $\mathcal B_{t_n,k_n}(\nu_n)$. The spherical subalgebras inherit the gradings in a similar fashion.

To finish this section we would like to relate these algebras.

\begin{prop} \label{DDCult}
The algebra $\mathcal D_{t,k,\nu}$ is given by the restricted ultraproduct of the spherical subalgebras $\prodF^r \mathcal B_{t_n,k_n}(\nu_n)$ with respect to the  filtrations mentioned in the discussion after Lemma \ref{HElemma}.
\end{prop}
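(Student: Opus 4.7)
The plan is to unwind the definitions so that both sides become identifications of explicit ultraproducts, and then compare them.

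Start from the identification
\[
\mathcal D_{t,k,\nu}=\End_{\Rep(H_{t,k}(\nu))}(H_{t,k}(\nu)\be)=\Hom_{\IND(\Rep(S_\nu))}(\mathbb C, H_{t,k}(\nu)\be),
\]
which was observed right after the definition of $\mathcal D_{t,k,\nu}$. By Lemma \ref{HElemma} the right-hand target is the restricted ultraproduct $\prodF^{C,r}H_{t_n,k_n}(\nu_n)\be$, with the filtration $F^m H_{t,k}(\nu)\be=\prodF^C F^mH_{t_n,k_n}(\nu_n)\be$ induced by $\deg(x_i)=\deg(y_i)=1$. Each $F^mH_{t,k}(\nu)\be$ is an honest object of $\Rep(S_\nu)$, not merely of the ind-category.

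Next, since $\mathbb C$ is a compact object of $\IND(\Rep(S_\nu))$ (Remark \ref{remindmor}), we have
\[
\Hom_{\IND(\Rep(S_\nu))}(\mathbb C, H_{t,k}(\nu)\be)=\bigcup_{m\in\mathbb N}\Hom_{\Rep(S_\nu)}(\mathbb C, F^mH_{t,k}(\nu)\be).
\]
For each fixed $m$, Theorem \ref{ultdelthm} and the construction of the filtration give
\[
\Hom_{\Rep(S_\nu)}(\mathbb C, F^mH_{t,k}(\nu)\be)=\prodF\Hom_{S_{\nu_n}}(\Fpn, F^mH_{t_n,k_n}(\nu_n)\be)=\prodF F^m\mathcal B_{t_n,k_n}(\nu_n),
\]
where the last equality uses Remark \ref{sphrem}, applied in finite rank. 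Taking the union over $m$ gives precisely $\prodF^{r}\mathcal B_{t_n,k_n}(\nu_n)$, by Definition \ref{restrdef}; this yields the isomorphism of filtered vector spaces.

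It remains to check that this identification is an algebra isomorphism. Composition in $\End_{\Rep(H_{t,k}(\nu))}(H_{t,k}(\nu)\be)$ is, level by level in the filtration, induced by composition of maps in $\IND(\Rep(S_\nu))$, which in turn is the ultraproduct (over $n$) of the compositions in $\Repb_{p_n}(S_{\nu_n})$. Under the finite-rank identification $\be H_{t_n,k_n}(\nu_n)\be=\End_{H_{t_n,k_n}(\nu_n)}(\Ind_{S_{\nu_n}}^{H_{t_n,k_n}(\nu_n)}\Fpn)$, this composition is exactly the multiplication of the spherical subalgebra; by \lthm{} the same holds after taking $\prodF$. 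I expect the main (but entirely routine) obstacle to be verifying that the filtrations match on both sides: one must check that the filtration on $\prodF^r\mathcal B_{t_n,k_n}(\nu_n)$ used in the statement is equivalent, in the sense of the remark following Definition \ref{restrcatdef}, to the one induced by $\deg(x)=\deg(y)=1$ on $H_{t,k}(\nu)\be$, so that the restricted ultraproduct is unambiguous. This follows from the fact that both filtrations are induced by the $\mathbb Z_{\ge 0}$-grading coming from $\deg(x_i)=\deg(y_i)=1$, $\deg(s_{ij})=0$, which is preserved by the construction of the induction functor in Corollary \ref{indcor}.
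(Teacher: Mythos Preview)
Your proof is correct and follows essentially the same approach as the paper: reduce to $\Hom_{\IND(\Rep(S_\nu))}(\mathbb C, H_{t,k}(\nu)\be)$, use compactness of $\mathbb C$ to break this into a union over filtration levels, identify each level with an ultraproduct via Theorem \ref{ultdelthm} and Lemma \ref{HElemma}, and conclude. The paper's version is considerably terser and does not spell out the algebra-structure compatibility or the filtration check that you include at the end; your added verification there is routine but welcome.
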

\begin{proof}
Indeed, by the definition of the DDC-algebra we have:
$$
\mathcal D_{t,k,\nu} = \Hom_{\IND(\Rep(S_\nu))}(\mathbb C, H_{t,k}(\nu)\be) = \prodF^r\Hom_{\Repb_{p_n}(S_\nu)}(\Fpn, H_{t_n,k_n}(\nu_n)\be)  ,
$$
where the restricted ultraproduct is taken with respect to the filtrations on $H_{t_n,k_n}(\nu_n)\be$. Hence we can conclude that:
$$
\mathcal D_{t,k,\nu} = \prodF^r\mathcal B_{t_n,k_n}(\nu_n)  ,
$$
as required.
\end{proof}

\begin{rem}
These results suggest that the family of algebras $\mathcal B_{t_n,k_n}(\nu_n)$ should fall into the class covered by Example \ref{infalgex}. This is indeed the case and will be proved in Appendix A. This shows that we could have constructed $\mathcal D_{t,k,\nu}$ via the restricted ultraproduct without using Deligne categories. However, the construction via Deligne categories is more conceptual and has a number of advantages. For example, it allows one to easily define a large family of representations of $\mathcal D_{t,k,\nu}$. Indeed, if $M$ is an $H_{t,k}(\nu)$-module (see \cite{entova2014representations} for a description of some of them), then $\Hom_{\Rep(S_\nu)}(\mathbb C,M)$ has a natural structure of a $\mathcal D_{t,k,\nu}$-module. Admittedly, these modules are also  constructible as ultraproducts (as, by definition, is everything obtained from Deligne categories), but their direct construction via Deligne categories is more transparent. 
\end{rem}

We will also need the same algebra defined in $\Rep^{\rm ext}(H_{t,k}(\nu))$, and will denote it also by $\widetilde {\mathcal D}_{t,k,\nu}$ (note that in  this case $\nu$ is not a  number, but a  variable). Clearly, the analog of Proposition \ref{DDCult} also holds for this algebra.

\subsubsection{A basis of the deformed double current algebra of type A}

In this section we will construct a basis of $\mathcal D_{t,k,\nu}$. Note that in this section $t,k$ are arbitrary elements of $\Bbbk$ and $n$ is any integer.

In order to do this we will start by working with the spherical subalgebras in finite rank. One question which is worthwhile to ask is: can we introduce a basis of filtered components of these algebras which stabilizes for large $n$? Indeed, this should be possible since their restricted ultraproduct lies in $\IND(\Rep(S_\nu))$.

We will construct such a basis in the following way.
\begin{def0}
Define elements $T_{r,q,n} \in \mathcal B_{t,k}(n)$ (over $\Bbbk$) for $r,q \in \mathbb Z_{\ge 0}$, $r+q = L$ using the formula
$$
\sum_{r,q \ge 0,\ r+q=L} T_{r,q,n}\frac{u^rv^q}{r!q!} := \sum_{i=1}^n \frac{(ux_i + vy_i)^L}{L!}\be  ,
$$
where $u,v$ are formal variables. 
\end{def0}These elements are well defined if ${\rm char}(\Bbbk)=0$ or $>L$.

Next we need to define certain combinations of these elements.
\begin{def0}
Denote by  ${{\bf m}}$ a collection of non-negative integers $m_{r,q}$ for all $r,q \in \mathbb Z_+$ such that  $r+q>0$, all but finitely many of them zero. Denote $|{{\bf m}}| := \sum_{r,q\ge 0,r+q>0}m_{r,q}$ and $w({{\bf m}}) := \sum_{r,q\ge 0,r+q>0}(r+q)m_{r,q}$. Define elements $T_n({{\bf m}})\in \mathcal B_{t,k}(n)$, with $|{\bf m}| = m$, by the formula
$$
\sum_{\bold m: |\bold m|=m} T_n({{\bf m}}) \prod_{r,q\ge 0}\frac{z_{r,q}^{m_{r,q}}}{m_{r,q}!} = \frac{\left(\sum_{r,q\ge 0, r+q > 0}z_{r,q}T_{r,q,n}\right)^m}{m!}  .
$$
Here $z_{r,q}$ are once again formal variables and if we work in positive characteristic, we assume that $w(\bold m)<\chct(\Bbbk)$.
\end{def0}

We clarify these definitions by writing these elements more explicitly.  Define $a(r,q,j)$ for $1 \le j \le r+q$ to be $a(r,q,j) = x$ for $1 \le j \le r$ and $a(r,q,j) = y$ for $r+1 \le j \le r+q$. Then 
$$
T_{r,q,n} = \frac{1}{(r+q)!} \sum_{i=1}^n \sum_{\sigma \in S_{r+q}} \left(\prod_{j=1}^{r+q} a(r,q,\sigma(j))_i\right) \bold e  ,
$$
where the product in $\prod_{j=1}^{r+q}$ is taken from left to right (i.e., $a(r,q,\sigma(1))_i a(r,q,\sigma(2))_i \dots$). In other words, this element consists of sums of all possible shuffles of $r$ copies of $x_i$ and $q$ copies of $y_i$. Similarly, $T_n({{\bf m}})$ is proportional to the sum of all possible shuffles of $m_{r,q}$ copies of $T_{r,q,n}$.

Let us see what happens with these elements under the ``leading term" map: 
$$
{\rm gr}^L: F^L\mathcal B_{t,k}(n) \to F^L\mathcal B_{t,k}(n)/F^{L-1}\mathcal B_{t,k}(n)\simeq \Bbbk[x_1,\dots,x_n,y_1,\dots,y_n]^{S_n}_L  .
$$
We calculate:
$$
{\rm gr}^{r+q}(T_{r,q,n}) = \sum_{i=1}^nx_i^ry_i^q  .
$$
\begin{def0}
Denote by $P_{r,q,n}$ the symmetric polynomial $\sum_{i=1}^nx_i^ry_i^q$.
\end{def0}

So we can further conclude that:
$$
{\rm gr}^{w({{\bf m}})}(T_n({{\bf m}})) = \prod_{r,q\ge 0, r+q > 0}{P_{r,q,n}^{m_{r,q}}} \ .
$$

From this we can conclude the following.
\begin{lemma} \label{lemmasymm}
For $L \le n$ and $\chct(\Bbbk)=0$ or large compared to $n$, the vector space $F^L\mathcal B_{t,k}(n)/F^{L-1}\mathcal B_{t,k}(n)$ has a basis $\{T_n({{\bf m}}) | w({{\bf m}})=L\}$.
\end{lemma}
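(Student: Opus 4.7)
The strategy is to transfer the problem to the associated graded via the PBW theorem and then establish triangularity with the classical orbit-sum basis of the invariant ring. By the PBW proposition for $H_{t,k}(n)$, the leading-term map is an isomorphism
$$
\gr^L:\; F^L\mathcal B_{t,k}(n)/F^{L-1}\mathcal B_{t,k}(n) \xrightarrow{\sim}\; \Bbbk[x_1,\dots,x_n,y_1,\dots,y_n]^{S_n}_L.
$$
Combined with the explicit computation above giving $\gr^{w({\bf m})}(T_n({\bf m})) = \prod_{r,q} P_{r,q,n}^{m_{r,q}}$, the lemma reduces to showing that, for $L\le n$ and $\chct(\Bbbk)=0$ or large, $\{\prod_{r,q} P_{r,q,n}^{m_{r,q}} : w({\bf m})=L\}$ is a $\Bbbk$-basis of the degree-$L$ component of $\Bbbk[x_1,\dots,x_n,y_1,\dots,y_n]^{S_n}$.

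For the invariant ring I would use the classical monomial basis. A basis of $(\Bbbk[x,y]^{S_n})_L$ is given by orbit sums $s_{\bf m}$, where ${\bf m}$ ranges over multisets of nonzero pairs $(r,q)$ with $w({\bf m})=L$ and $|{\bf m}|\le n$, and $s_{\bf m}$ is the $S_n$-orbit sum of any monomial $\prod_\alpha x_{i_\alpha}^{r_\alpha}y_{i_\alpha}^{q_\alpha}$ realizing ${\bf m}$ on distinct indices $i_\alpha$. The key combinatorial observation is that for $L\le n$ the side condition $|{\bf m}|\le n$ is automatic: each pair $(r,q)$ with $r+q\ge 1$ contributes at least $1$ to $w({\bf m})$ per copy, so $|{\bf m}|\le w({\bf m}) = L\le n$. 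Hence in this range $\{s_{\bf m} : w({\bf m})=L\}$ is a basis of the target space, and in particular its cardinality equals $\dim(\Bbbk[x,y]^{S_n})_L$.

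To conclude, I would establish triangularity between $\{\prod P_{r,q,n}^{m_{r,q}}\}$ and $\{s_{\bf m}\}$. Expanding $\prod_{r,q}(\sum_i x_i^ry_i^q)^{m_{r,q}}$ and grouping terms by the set partition $\Pi$ of the $|{\bf m}|$ slots induced by which slots are assigned the same index, one obtains
$$
\prod_{r,q} P_{r,q,n}^{m_{r,q}} \;=\; \Bigl(\prod_{r,q} m_{r,q}!\Bigr)\,s_{\bf m} \;+\; \sum_{\substack{{\bf m}':\, w({\bf m}')=L \\ |{\bf m}'|<|{\bf m}|}} c_{{\bf m},{\bf m}'}\, s_{{\bf m}'},
$$
where each ${\bf m}'$ arises from ${\bf m}$ by repeatedly merging pairs $(r_1,q_1),(r_2,q_2)\mapsto(r_1+r_2,q_1+q_2)$ and the coefficients $c_{{\bf m},{\bf m}'}$ are explicit nonnegative integers. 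Since $m_{r,q}\le|{\bf m}|\le L\le n$, the diagonal coefficient $\prod m_{r,q}!$ is invertible in $\Bbbk$ by the characteristic hypothesis, so the transition matrix from $\{s_{\bf m}\}$ to $\{\prod P_{r,q,n}^{m_{r,q}}\}$, ordered by $|{\bf m}|$, is upper triangular with nonzero diagonal. Thus $\{\prod P_{r,q,n}^{m_{r,q}} : w({\bf m})=L\}$ is also a basis, which is what is needed.

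The main technical step is the triangularity computation: carefully checking which $S_n$-orbits arise from non-injective slot-to-index assignments and verifying that each such orbit corresponds to a strictly smaller $|{\bf m}'|$. Once the partition-grouping is set up correctly, the rest is bookkeeping; the characteristic assumption enters solely through the invertibility of the factorials $m_{r,q}!$ on the diagonal.
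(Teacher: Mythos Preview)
Your proposal is correct and follows the same overall route as the paper: both pass to the associated graded via PBW, identify $\gr^{w({\bf m})}(T_n({\bf m}))$ with the power-sum monomial $\prod_{r,q}P_{r,q,n}^{m_{r,q}}$, and then argue that these monomials form a basis of $\Bbbk[x_1,\dots,x_n,y_1,\dots,y_n]^{S_n}_L$ when $L\le n$.

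The difference is only in how that last invariant-theoretic fact is justified. The paper simply invokes it as a known result (``from invariant theory we know that $\Bbbk[x_1,\dots,x_n,y_1,\dots,y_n]^{S_n}_L = (\Bbbk[P_{r,q,n}])_L$'') and concludes immediately. You instead supply a self-contained proof: you take the orbit-sum basis $\{s_{\bf m}\}$, note that the side condition $|{\bf m}|\le n$ is automatic when $L\le n$, and show the transition matrix from $\{s_{\bf m}\}$ to $\{\prod P_{r,q,n}^{m_{r,q}}\}$ is unitriangular (up to the factors $\prod m_{r,q}!$) with respect to the partial order by $|{\bf m}|$. Your argument is more elementary and makes the role of the characteristic hypothesis explicit (invertibility of $\prod m_{r,q}!$), at the cost of a bit more bookkeeping; the paper's version is shorter but leans on a citation.
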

\begin{proof}
Indeed, from invariant theory we know that
$$
\Bbbk[x_1,\dots,x_n, y_1,\dots,y_n]^{S_n}_L= (\Bbbk[P_{r,q,n}]_{r,q\ge 0, 0 < r+q} )_L \ .
$$
 So a basis in $\Bbbk[x_1,\dots,x_n, y_1,\dots,y_n]^{S_n}_L$ is given by products $\prod_{r,q, 0 < r+q}P_{r,q,n}^{m_{r,q}}$ for $m_{r,q}$ such that $\sum_{r,q\ge 0, r+q > 0}m_{r,q}(r+q)=L$. But this is exactly the basis in question up to multiplication by non-zero constants.
\end{proof}

Now it follows that $T_n({{\bf m}})$ form a basis of the corresponding filtered component.  So we have a corollary. 
\begin{cor} \label{basisfin}
For $L \le n$ and $\chct(\Bbbk)=0$ or large compared to $n$ the vector space $F^L\mathcal B_{t,k}(n)$ has a basis given by $\{T_n({{\bf m}}) | w({{\bf m}})\le L\}$.
\end{cor}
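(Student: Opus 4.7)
The proof should be a quick bootstrap from Lemma \ref{lemmasymm} via a standard filtered-to-graded argument. The plan is to induct on $L$, using the lemma to handle each graded piece and lifting the bases to the filtered piece.

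First I would set up the base case $L=0$: here $F^0\mathcal B_{t,k}(n) = \Bbbk\be$, and the only multi-index $\bold m$ with $w(\bold m)=0$ is $\bold m = \bold 0$, with $T_n(\bold 0)=\be$, which indeed is a basis. For the inductive step, assume the set $\{T_n(\bold m)\mid w(\bold m)\le L-1\}$ is a basis of $F^{L-1}\mathcal B_{t,k}(n)$, where $L\le n$ and the characteristic hypothesis holds. I want to show the same for $L$.

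For linear independence, suppose we have a relation $\sum_{w(\bold m)\le L} c_{\bold m} T_n(\bold m)=0$. Split this as
$$
\sum_{w(\bold m)=L} c_{\bold m} T_n(\bold m) = -\sum_{w(\bold m)\le L-1} c_{\bold m} T_n(\bold m).
$$
The right-hand side lies in $F^{L-1}\mathcal B_{t,k}(n)$, so applying $\gr^L$ to the left-hand side gives zero in $F^L/F^{L-1}$. By Lemma \ref{lemmasymm}, the images $\gr^L(T_n(\bold m))$ for $w(\bold m)=L$ are linearly independent (their leading terms are precisely the monomials $\prod P_{r,q,n}^{m_{r,q}}$, which form a basis of the degree-$L$ component of $\Bbbk[x_i,y_i]^{S_n}$). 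Hence $c_{\bold m}=0$ for all $\bold m$ with $w(\bold m)=L$, and then the inductive hypothesis forces the remaining coefficients to vanish.

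For spanning, take any $a\in F^L\mathcal B_{t,k}(n)$. Its image $\gr^L(a)$ in $F^L/F^{L-1}\simeq \Bbbk[x_i,y_i]^{S_n}_L$ can be expanded, again by Lemma \ref{lemmasymm}, as $\sum_{w(\bold m)=L}c_{\bold m}\,\gr^L(T_n(\bold m))$ for unique $c_{\bold m}\in\Bbbk$. Then $a - \sum_{w(\bold m)=L}c_{\bold m}T_n(\bold m)\in F^{L-1}\mathcal B_{t,k}(n)$, and the inductive hypothesis expresses it as a combination of $T_n(\bold m)$ with $w(\bold m)\le L-1$. This completes the induction. The only subtlety is bookkeeping on the characteristic constraint: throughout the argument, the $T_n(\bold m)$ with $w(\bold m)\le L$ are defined (requiring $\chct(\Bbbk)=0$ or $>L$), and the invariant-theory identification used in Lemma \ref{lemmasymm} holds in this range; so there is no real obstacle, just the need to keep these hypotheses visibly in force.
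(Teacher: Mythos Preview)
Your proof is correct and is exactly the standard filtered-to-graded lifting argument that the paper is implicitly invoking; the paper simply states the corollary as an immediate consequence of Lemma \ref{lemmasymm} without writing out the induction, whereas you have spelled out the details carefully.
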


This tells us that $F^L\mathcal B_{t,k}(n)$ indeed stabilizes as $n \to \infty$. 

Now we would like to construct similar elements in $\mathcal D_{t,k,\nu}$.
Notice that we can think about $T_n({{\bf m}})$ as a map from $\Bbbk$ to $H_{t,k}(n)\be$. The image of this map lies within the filtered component of degree $w({{\bf m}})$. Thus the ultraproduct $\prod_{\mathcal F}T_{\nu_n}({{\bf m}})$ gives us a well-defined map from $\mathbb C$ to $H_{t,k}(\nu)\be$.

\begin{rem}
From this point on $t,c,\nu \in \mathbb C$ are the same elements as in the rest of the paper.
\end{rem}
\begin{def0}
By $T({{\bf m}})$ denote the element of $\mathcal D_{t,k,\nu}$ given by $\prod_{\mathcal F}T_{\nu_n}({{\bf m}})$.
\end{def0}

\begin{rem}
We can also  write down these maps  explicitly as follows.

First we send $\mathbb C$ to $\mathfrak h \otimes \mathfrak h^* \otimes \mathbb C$ via the co-evaluation map. After that using maps ${\rm pw}^q:\mathfrak h \to \mathfrak h^{\otimes q}$ and ${\rm pw}^r:\mathfrak h^* \to \mathfrak h^{*\otimes r}$ (ultraproducts of the standard maps $x_i \to x_i \otimes \dots \otimes x_i$), we send the target object of the previous map to $\mathfrak h^{\otimes q} \otimes \mathfrak h^{*\otimes r} \otimes \mathbb C$. Then we send this object to the  ${\rm Perm}_{r,q}(\mathfrak h)\otimes \mathbb C$, where ${\rm Perm}_{r,q}(\mathfrak h)$ is given by the direct sum of all possible permutations of tensor products of $q$ copies $\mathfrak h$ and $r$ copies of $\mathfrak h^*$. At last, we  act on this object via the map, which we denote ${\rm appl}$, sending any object $Y_1 \otimes \dots \otimes Y_{r+q}\otimes \mathbb C$ (where $Y_i = \mathfrak h$ or $\mathfrak h^*$) to $H_{t,k}(\nu)\be$ using the maps $x$ and $y$ applied starting from right. To sum up, we have:
$$
T_{r,q}: \mathbb C \xrightarrow{\rm coev} \mathfrak h \otimes \mathfrak h^* \otimes \mathbb C \xrightarrow{{\rm pw}^q \otimes {\rm pw}^r} \mathfrak h^{\otimes q} \otimes \mathfrak h^{*\otimes r} \otimes \mathbb C \xrightarrow{{\rm perm}_{r,q} \otimes 1} {\rm Perm}_{r,q}(\mathfrak h)\otimes \mathbb C \xrightarrow{\rm appl} H_{t,k}(\nu)\be  .
$$
Here ${\rm perm}_{r,q}$ is the average of all the permutaions.  
It's easy to see that this is the same as the ultraproduct of $T_{r,q,\nu_n}$. One can then obtain the maps $T({{\bf m}})$ by multiplication of these maps. 
\end{rem}

Using the last result we can conclude that the maps $T({{\bf m}})$ are a basis of $\mathcal D_{t,k,\nu}$.
\begin{prop}
The elements $T({{\bf m}})$ for all choices of ${{\bf m}}$ constitute a basis of $\mathcal D_{t,k,\nu}$.
\end{prop}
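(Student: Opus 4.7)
The plan is to leverage the identification $\mathcal D_{t,k,\nu} \simeq \prod^{r}_{\mathcal F} \mathcal B_{t_n,k_n}(\nu_n)$ from Proposition \ref{DDCult} and transfer the finite-rank basis result from Corollary \ref{basisfin} through the ultraproduct. Since the filtration on $\mathcal D_{t,k,\nu}$ is the one inherited from the filtrations on $\mathcal B_{t_n,k_n}(\nu_n)$, it suffices to exhibit, for each $L$, a basis of the filtered piece $F^L \mathcal D_{t,k,\nu} = \prodF F^L \mathcal B_{t_n,k_n}(\nu_n)$ and then take the union over $L$.

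First I would fix a collection ${\bf m}$ and check that $T({\bf m})=\prodF T_{\nu_n}({\bf m})$ is well-defined as a morphism in the Deligne category setting. For any fixed ${\bf m}$, the inequalities $\nu_n \ge w({\bf m})$ and $p_n > w({\bf m})$ hold for almost all $n$ (since $\nu_n\to\infty$ along $\mathcal F$, and the primes $p_n$ are distinct and hence also tend to infinity along $\mathcal F$). By Corollary \ref{basisfin}, for those $n$ the set $\{T_n({\bf m}')\mid w({\bf m}')\le L\}$ is an honest basis of $F^L\mathcal B_{t_n,k_n}(\nu_n)$, and in particular the dimension of this filtered piece stabilizes to the cardinality $d_L$ of $\{{\bf m}':w({\bf m}')\le L\}$ for almost all $n$.

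Next I would apply Łoś's theorem. The statement ``the vectors $\{T_n({\bf m}')\mid w({\bf m}')\le L\}$ are linearly independent and span $F^L\mathcal B_{t_n,k_n}(\nu_n)$'' is a first-order statement (for each fixed $L$, since $d_L$ is finite) which holds for almost all $n$. Hence its ultraproduct, namely the linear independence and spanning of $\{T({\bf m}')\mid w({\bf m}')\le L\}$ in $\prodF F^L\mathcal B_{t_n,k_n}(\nu_n)=F^L\mathcal D_{t,k,\nu}$, also holds. Taking the union over $L\ge 0$ gives that $\{T({\bf m})\}_{\bf m}$ is a basis of $\mathcal D_{t,k,\nu}=\bigcup_L F^L\mathcal D_{t,k,\nu}$.

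The main point to be careful about is that the ultraproduct is taken over the restricted filtered components, not over all of $\mathcal B_{t_n,k_n}(\nu_n)$; and that the individual dimensions $d_L$ must stabilize so that Łoś's theorem can be applied to a finite-dimensional statement. Both issues are handled by the observation above that $\dim F^L\mathcal B_{t_n,k_n}(\nu_n)=d_L$ for almost all $n$, which is essentially the content of Corollary \ref{basisfin} combined with the fact that the ``leading term'' basis of ${\rm gr}^L\mathcal B_{t,k}(n)$ in terms of the power sums $P_{r,q,n}$ becomes independent of $n$ once $n\ge L$. No obstruction beyond this bookkeeping is expected; the result is essentially a dimension count transferred through the ultraproduct.
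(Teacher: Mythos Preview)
Your argument is correct and follows essentially the same route as the paper's own proof: identify $F^L\mathcal D_{t,k,\nu}$ with $\prodF F^L\mathcal B_{t_n,k_n}(\nu_n)$ via Proposition \ref{DDCult}, invoke Corollary \ref{basisfin} for almost all $n$ (namely $\nu_n>L$) to get a basis $\{T_{\nu_n}({\bf m})\mid w({\bf m})\le L\}$ of each filtered piece, and take the union over $L$. The paper's version is terser and leaves the \lthm\ step implicit, whereas you spell out the first-order nature of the basis statement and the stabilization of $d_L$, but the substance is the same.
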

\begin{proof}
Indeed, from Proposition \ref{DDCult} we know that $F^L\mathcal D_{t,k,\nu} = \prodF F^L\mathcal B_{t_n,k_n}(\nu_n)$. But {\fan}  (i.e., $\nu_n > L$) by Corollary \ref{basisfin} we know that  the basis of $F^L\mathcal B_{t_n,k_n}(\nu_n)$ is given by  $T_{\nu_n}({{\bf m}})$ with $w({{\bf m}}) \le L$. Since $\mathcal D_{t,k,\nu} = \sum_{L\ge 0}F^L\mathcal D_{t,k,\nu}$ it follows that $T({{\bf m}})$ constitute the basis of the whole algebra.
\end{proof}

In a similar fashion we have a parallel proposition:
\begin{prop}
The elements $T({{\bf m}})$ for all choices of ${{\bf m}}$ constitute a basis of the $\Cext$-algebra $\widetilde{\mathcal D}_{t,k,\nu}$.
\end{prop}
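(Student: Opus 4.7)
The plan is to mirror the proof of the preceding proposition, making only the minimal adjustments needed for the extended setting. Recall that $\widetilde{\mathcal D}_{t,k,\nu}$ is constructed in the same way as $\mathcal D_{t,k,\nu}$ but inside $\Rep^{\rm ext}(H_{t,k}(\nu))$, which by Corollary \ref{ultdelcor} is realized as a full subcategory of $\prodF \Repb^f_0(S_n)$ with base field $\Cext = \overline{\mathbb C(\nu)}$ under an isomorphism $\prodF \FQ \simeq \Cext$ sending $\prodF n \mapsto \nu$. Thus in this setting we have $p_n = 0$ and $\nu_n = n$ uniformly.

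First I would invoke the analog of Proposition \ref{DDCult} for the extended algebra (which the authors have already stated holds) to identify
$$
F^L \widetilde{\mathcal D}_{t,k,\nu} \;=\; \prodF F^L \mathcal B_{t_n,k_n}(n),
$$
where the filtration on the spherical subalgebra is the one induced by $\deg(x_i)=\deg(y_i)=1$ and the restricted ultraproduct is taken with respect to it. Under this identification, the element $T(\bold m)$ is by definition the ultraproduct $\prodF T_n(\bold m)$.

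Next, since we are in characteristic $0$, Corollary \ref{basisfin} applies without restriction: for every $L$ and every $n \geq L$, the set $\{T_n(\bold m)\mid w(\bold m)\le L\}$ is a $\FQ$-basis of $F^L\mathcal B_{t_n,k_n}(n)$. Because ``being a basis of a specified finite-dimensional subspace with a fixed indexing" can be written as a first-order statement, Łoś's theorem gives that $\{\prodF T_n(\bold m)\mid w(\bold m)\le L\}$ is a $\prodF \FQ \simeq \Cext$-basis of $\prodF F^L \mathcal B_{t_n,k_n}(n)=F^L\widetilde{\mathcal D}_{t,k,\nu}$. Taking the union over $L$ and using that $\widetilde{\mathcal D}_{t,k,\nu} = \bigcup_{L\ge 0} F^L\widetilde{\mathcal D}_{t,k,\nu}$, we conclude that $\{T(\bold m)\}_{\bold m}$ is a basis of the whole $\Cext$-algebra.

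There is no real obstacle here beyond bookkeeping: the only substantive difference from the previous proposition is the replacement of the isomorphism $\prodF \FQ \simeq \mathbb C$ (with $\prodF \nu_n = \nu \in \mathbb C$) by the isomorphism $\prodF \FQ \simeq \Cext$ (with $\prodF n = \nu$ the formal variable), and the corresponding change of ground field from $\mathbb C$ to $\Cext$. All the finite-rank input (Corollary \ref{basisfin}) and the transfer mechanism (Łoś's theorem applied to the restricted ultraproduct) are unchanged.
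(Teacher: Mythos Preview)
Your proposal is correct and follows exactly the approach the paper intends: the paper does not even give a separate proof for this proposition, stating only ``In a similar fashion we have a parallel proposition,'' and your argument is precisely the parallel of the preceding proof with the substitutions $p_n=0$, $\nu_n=n$, and ground field $\Cext$ in place of $\mathbb C$.
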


\subsubsection{Deformed double current algebra of type A with central parameter}

For our convenience we would like to make $\nu$ into a central element and consider our DDC-algebra over $\mathbb C$. In order to do this, we will need the following result:
\begin{lemma} \label{lemmaappA}
The structure constants of the basis $T({{\bf m}}) \in \widetilde{\mathcal D}_{t,k,\nu}$ depend polynomially on $\nu$.
\end{lemma}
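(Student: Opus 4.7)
The plan is to reduce the statement to a finite-rank polynomiality claim and transfer via the ultraproduct. By the $\Cext$-analog of Proposition \ref{DDCult}, $\widetilde{\mathcal D}_{t,k,\nu}$ is the restricted ultraproduct $\prodF^r \mathcal B_{t,k}(n)$ with basis $T({\bf m}) = \prodF T_n({\bf m})$. Consequently the structure constants $\gamma^{\bf m}_{{\bf m}_1,{\bf m}_2}$ of $\widetilde{\mathcal D}_{t,k,\nu}$ satisfy $\gamma^{\bf m}_{{\bf m}_1,{\bf m}_2} = \prodF c^{\bf m}_{{\bf m}_1,{\bf m}_2}(n)$ for the corresponding structure constants $c^{\bf m}_{{\bf m}_1,{\bf m}_2}(n) \in \mathbb C$ in $\mathcal B_{t,k}(n)$. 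It therefore suffices to show that, for each fixed triple $({\bf m}_1, {\bf m}_2, {\bf m})$, the function $n \mapsto c^{\bf m}_{{\bf m}_1,{\bf m}_2}(n)$ is a polynomial in $n$ for all $n$ past some threshold; by \lthm{} together with Example \ref{infalgex} (polynomial dependence on $n$ carries over to the limit at $\nu = \prodF n$), this yields polynomial dependence on $\nu$.

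For the finite-rank polynomiality, I would argue as follows. Write $T_n({\bf m}) = Y_{{\bf m},n} \be$, where $Y_{{\bf m},n}$ is an $S_n$-conjugation-invariant element of $H_{t,k}(n)$ built from shuffled products of $x_i, y_i$ summed over $i \in \{1,\dots,n\}$. Because $Y_{{\bf m},n}$ is $S_n$-invariant it commutes with $\be$, so $T_n({\bf m}_1) T_n({\bf m}_2) = Y_{{\bf m}_1,n} Y_{{\bf m}_2,n} \be$. Moving all $y$'s to the right of all $x$'s in the product $Y_{{\bf m}_1,n} Y_{{\bf m}_2,n}$ via the commutator $[y_i, x_j] = \delta_{ij}(t - k \sum_{m \ne i} s_{im}) + (1-\delta_{ij}) k s_{ij}$, and pushing any group elements $s_{ij}$ to the right (which conjugates the $x, y$'s they cross), produces a PBW expansion $\sum_g P_g(x,y) g$. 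Multiplying on the right by $\be$ and using $g \be = \be$ collapses this to $\bigl(\sum_g P_g(x,y)\bigr)\be$. The only $n$-dependence in $\sum_g P_g$ comes from two sources: (i) the outer sums $\sum_{i=1}^n$ in $T_n({\bf m}_j)$, whose contributions to configurations involving $r$ distinct indices equal falling factorials $(n)_r$, and (ii) sums $\sum_{m \ne i} s_{im}$ produced by self-commutators, which after absorption by $\be$ contribute factors of $(n-1)$. Both are polynomial in $n$, so $\sum_g P_g$ is an element of $\mathbb C[x_i, y_i, t, k][n]$. Re-expressing the resulting $S_n$-invariant polynomial in the basis $\{T_n({\bf m})\}$ is a change of basis in $\mathbb C[x_i, y_i]^{S_n}$; by Corollary \ref{basisfin} it is a well-defined filtered-triangular procedure, and in each fixed total degree the change-of-basis matrix stabilizes as $n \to \infty$ (power-sum-product structure constants eventually becoming independent of $n$), with any $n$-dependent corrections again arising from the same two polynomial sources.

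The main obstacle is the combinatorial bookkeeping in the second paragraph, in particular checking that the $1/n!$ buried in $\be$ does not produce non-polynomial contributions; the key observation is that $\be$ appears exactly once on the right of the collapsed expression and the identity $g \be = \be$ prevents any amplification of this factor. A cleaner but more abstract alternative would be to bypass the combinatorics entirely: observe that $T({\bf m}) \in \Hom_{\IND(\Rep^{\rm ext}(S_\nu))}(\Cext, H_{t,k}(\nu)\be)$ and that every morphism space and composition rule in $\Rep^{\rm ext}(S_\nu)$ is defined over $\mathbb C[\nu]$ via the partition-algebra rule $\mu \circ \lambda = \nu^{l(\mu,\lambda)} \mu \cdot \lambda$, while the module-structure maps $x, y$ of $H_{t,k}(\nu)\be$ are constructed from morphisms with matrix coefficients polynomial in $\nu$; composing these morphisms to compute the multiplication of $\widetilde{\mathcal D}_{t,k,\nu}$ automatically produces structure constants in $\mathbb C[\nu]$.
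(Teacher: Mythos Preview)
Your reduction to finite rank via the ultraproduct (first paragraph) is exactly the setup the paper uses. The difference lies in how the finite-rank polynomiality is established.

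The paper (Appendix A) introduces the notion of an \emph{admissible sum}: an element of the shape $C\sum_{i_1,\dots,i_m=1}^n a(1)_{i_{u(1)}}\cdots a(l)_{i_{u(l)}}\be$ for some functions $a:[l]\to\{x,y\}$, $u:[l]\to[m]$, and an $n$-independent constant $C$. One checks that each $T_n({\bf m})$ is an $n$-independent combination of admissible sums, and that products of admissible sums are admissible sums. The heart of the argument is then an induction on the degree $l$: given an admissible sum $A$ of degree $M$, one first peels off any redundant summations (non-surjective $u$), which contribute only powers of $n$; then one subtracts an $n$-independent multiple of the appropriate $T_n({\bf m})$ so that the difference, after reordering, has strictly smaller degree. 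The reordering introduces commutator terms with constraints like $i_1\ne i_2$, which split by inclusion--exclusion into admissible sums of lower degree. This closes the induction.

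Your second paragraph has the right intuition about where the $n$-dependence originates, but the step ``re-expressing the resulting $S_n$-invariant polynomial in the basis $\{T_n({\bf m})\}$'' is precisely the nontrivial content, and you assert rather than prove it. After your PBW collapse you land in some element of $\mathcal B_{t,k}(n)$; to get back to the $T_n({\bf m})$ you must again symmetrize and compare, which regenerates lower-order terms via commutators. Without an induction on degree (or an equivalent mechanism) this is circular: you are assuming the very polynomiality you want to show. The paper's admissible-sum formalism is what organizes this loop into a terminating induction.

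Your alternative abstract approach via the $\mathbb C[\nu]$-structure of the partition algebra is appealing and could be made to work, but as stated it has the same gap: composing morphisms in $\Rep^{\rm ext}(S_\nu)$ certainly lands in $\mathbb C[\nu]$, but you then need that the \emph{specific} elements $T({\bf m})$ span a $\mathbb C[\nu]$-lattice and that the product, expressed in \emph{this} basis, has polynomial coefficients. That requires knowing that the change of basis from your intermediate output to the $T({\bf m})$ is defined over $\mathbb C[\nu]$, which is again the inductive content.
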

\begin{proof}
This follows from the fact that the only way $\nu$ can appear in the product of two basis elements, is if in the corresponding finite rank basis vectors we encounter an empty sum $\sum_{i=1}^{\nu_n}$, each of which contributes a multiple of $\nu_n$. For details see Appendix A.
\end{proof}

From this we can conclude that a $\mathbb C[\nu]$-lattice $\bigoplus_{{{\bf m}}} \mathbb C[\nu]T({{\bf m}})\subset \widetilde{\mathcal D}_{t,k,\nu}$ inherits the structure of algebra from $\widetilde{\mathcal D}_{t,k,\nu}$. Now we can define the following algebra:
\begin{def0}
By $\mathcal D^{\rm ext}_{t,k}$ denote the algebra $\bigoplus_{{{\bf m}}} \mathbb C[\nu]T({{\bf m}})$, regarded as an algebra over $\mathbb C$. In this context we will denote the central element $\nu$ by $K$.
\end{def0}
We can write down a basis of $\mathcal D^{\rm ext}_{t,k}$:
\begin{prop}
The elements $T({{\bf m}})K^j$ for all tuples ${{\bf m}}$ and $j\ge 0$ constitute a $\mathbb C$-basis of $\mathcal D^{\rm ext}_{t,k}$.
\end{prop}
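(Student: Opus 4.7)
The plan is to read the basis off directly from the defining description of $\mathcal D^{\rm ext}_{t,k}$, since the proposition is essentially a restatement of the definition combined with the preceding proposition and Lemma \ref{lemmaappA}. There are two small things to check: first, that $\mathcal D^{\rm ext}_{t,k}$ really is a $\mathbb C$-algebra (i.e. the $\mathbb C[\nu]$-lattice is closed under multiplication), and second, that once this is known, the collection $\{T({\bf m}) K^j\}$ is both $\mathbb C$-linearly independent and $\mathbb C$-spanning.

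First I would invoke Lemma \ref{lemmaappA}: the structure constants of the basis $\{T({\bf m})\}$ in $\widetilde{\mathcal D}_{t,k,\nu}$ lie in $\mathbb C[\nu]$, so the $\mathbb C[\nu]$-lattice $\bigoplus_{\bf m} \mathbb C[\nu]\,T({\bf m})$ is closed under multiplication. This makes $\mathcal D^{\rm ext}_{t,k}$ a $\mathbb C$-subalgebra of $\widetilde{\mathcal D}_{t,k,\nu}$, and the scalar $\nu \in \Cext$, being central in $\widetilde{\mathcal D}_{t,k,\nu}$, remains central after the relabeling $K := \nu$. Next, by the preceding Proposition, the $T({\bf m})$ form a basis of $\widetilde{\mathcal D}_{t,k,\nu}$ over $\Cext$, hence a fortiori they are $\mathbb C[\nu]$-linearly independent, so $\mathcal D^{\rm ext}_{t,k}$ is a \emph{free} $\mathbb C[\nu]$-module with basis $\{T({\bf m})\}$.

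Finally I would conclude by the standard tensoring-down argument. Since $\mathbb C[\nu] = \bigoplus_{j \ge 0} \mathbb C\cdot \nu^j$ as a $\mathbb C$-vector space, we have
\[
\mathcal D^{\rm ext}_{t,k} \;=\; \bigoplus_{{\bf m}} \mathbb C[\nu]\, T({\bf m}) \;=\; \bigoplus_{{\bf m}} \bigoplus_{j\ge 0} \mathbb C\cdot \nu^j T({\bf m}),
\]
and rewriting $\nu$ as $K$ yields the claimed $\mathbb C$-basis $\{T({\bf m})K^j\}_{{\bf m},\, j\ge 0}$. Linear independence: any relation $\sum_{{\bf m},j} c_{{\bf m},j} T({\bf m}) K^j = 0$ regroups to $\sum_{\bf m} p_{\bf m}(\nu)\, T({\bf m}) = 0$ with $p_{\bf m}(\nu) = \sum_j c_{{\bf m},j} \nu^j \in \mathbb C[\nu]$, and $\mathbb C[\nu]$-freeness of $\{T({\bf m})\}$ forces each $p_{\bf m} = 0$, hence all $c_{{\bf m},j} = 0$. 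Spanning is immediate from the direct sum decomposition displayed above.

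No step is expected to be a genuine obstacle: the only non-formal input is Lemma \ref{lemmaappA}, which is deferred to Appendix A and whose proof is sketched as relying on the observation that $\nu$ enters the structure constants only through empty sums $\sum_{i=1}^{\nu_n}$, each contributing a factor of $\nu_n$.
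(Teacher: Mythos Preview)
Your proof is correct and matches the paper's approach: the paper simply notes that this is evident from the definition, and your argument is exactly the unpacking of that observation (free $\mathbb C[\nu]$-module with basis $\{T({\bf m})\}$, then expand $\mathbb C[\nu]$ in powers of $\nu=K$). You have written out more detail than necessary, but nothing is wrong or missing.
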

\begin{proof}
This is evident from the definition.
\end{proof}

Note that trivially we also have the following result:
\begin{prop} \label{propquot}
For $\nu \in \mathbb C \backslash \mathbb Z$ we have, $\mathcal D^{\rm ext}_{t,k}/(K-\nu) = \mathcal D_{t,k,\nu}$. And for $\nu \in \Cext$ we have $(\mathcal D^{\rm ext}_{t,k} \otimes_{\mathbb C} \Cext)/(K-\nu) = \widetilde{\mathcal D}_{t,k,\nu}$.
\end{prop}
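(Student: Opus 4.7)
The plan is to verify the claim at the level of bases and structure constants, leveraging the preceding lemma that the structure constants of the basis $\{T(\bold m)\}$ in $\widetilde{\mathcal D}_{t,k,\nu}$ are polynomials in $\nu$. Since $\mathcal D^{\rm ext}_{t,k}=\bigoplus_{\bold m}\mathbb C[\nu]T(\bold m)$ by definition, as a $\mathbb C$-vector space we have a basis $\{K^j T(\bold m)\}_{j\ge 0,\bold m}$. Passing to the quotient by $(K-\nu_0)$ replaces $K$ by the scalar $\nu_0$ everywhere, so the images of the elements $T(\bold m)$ form a $\mathbb C$-basis of $\mathcal D^{\rm ext}_{t,k}/(K-\nu_0)$.

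Next I would match the multiplication. Denote by $c^{\bold m}_{\bold m_1,\bold m_2}(\nu)\in\mathbb C[\nu]$ the polynomial structure constants produced by Lemma \ref{lemmaappA}, so that in $\mathcal D^{\rm ext}_{t,k}$ one has $T(\bold m_1)T(\bold m_2)=\sum_{\bold m} c^{\bold m}_{\bold m_1,\bold m_2}(K)\,T(\bold m)$. After quotienting by $(K-\nu_0)$ this becomes $T(\bold m_1)T(\bold m_2)=\sum_{\bold m} c^{\bold m}_{\bold m_1,\bold m_2}(\nu_0)\,T(\bold m)$. I then need to verify that the same identity holds in $\mathcal D_{t,k,\nu_0}$. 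By construction of $T(\bold m)$ as $\prodF T_{\nu_n}(\bold m)$ and by Proposition \ref{DDCult}, the product $T(\bold m_1)T(\bold m_2)$ inside $\mathcal D_{t,k,\nu_0}$ is the ultraproduct of the corresponding products in $\mathcal B_{t_n,k_n}(\nu_n)$. In finite rank, the structure constants expressing $T_n(\bold m_1)T_n(\bold m_2)$ in the basis $\{T_n(\bold m)\}$ depend polynomially on $n$ (this is the content of Lemma \ref{lemmaappA}, proved in the appendix), and the resulting polynomial is precisely $c^{\bold m}_{\bold m_1,\bold m_2}$. Since the fixed isomorphism $\prodF \Fpn\simeq\mathbb C$ satisfies $\prodF \nu_n=\nu_0$, evaluation of this polynomial under the ultraproduct yields $c^{\bold m}_{\bold m_1,\bold m_2}(\nu_0)$, matching the quotient algebra.

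Hence the $\mathbb C$-linear isomorphism sending $T(\bold m)\in \mathcal D^{\rm ext}_{t,k}/(K-\nu_0)$ to $T(\bold m)\in \mathcal D_{t,k,\nu_0}$ is an algebra isomorphism, which is the first claim. For the second claim, the argument is parallel: extending scalars from $\mathbb C[\nu]$ to $\Cext$ and then quotienting by $(K-\nu)$ (now $\nu\in\Cext$) again yields an algebra with basis $\{T(\bold m)\}$ and structure constants $c^{\bold m}_{\bold m_1,\bold m_2}(\nu)$, while $\widetilde{\mathcal D}_{t,k,\nu}$ has the same basis (by the analog of the basis proposition) with the same structure constants by Lemma \ref{lemmaappA}.

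I do not anticipate a serious obstacle here: the substantive input is already carried by Lemma \ref{lemmaappA}, and once polynomial dependence of the structure constants is granted the proof is essentially a bookkeeping exercise comparing two algebras with identical bases and identical multiplication tables. The only point requiring a little care is to insist that the ``same'' polynomial $c^{\bold m}_{\bold m_1,\bold m_2}$ governs multiplication both in $\mathcal D^{\rm ext}_{t,k}$ (where $K=\nu$ is a formal variable defined via the lattice in $\widetilde{\mathcal D}_{t,k,\nu}$) and in $\mathcal D_{t,k,\nu_0}$ (where it arises from specializing the finite-rank polynomial at $\nu_0$ via the ultraproduct isomorphism); this is built into the construction of the $\mathbb C[\nu]$-lattice from the outset.
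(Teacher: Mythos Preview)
Your proof is correct and is exactly the argument the paper has in mind: the paper simply writes ``Note that trivially we also have the following result'' before stating the proposition and gives no further proof. Your verification at the level of bases and structure constants is precisely the way one unpacks that triviality, and the only substantive input---that the structure constants are polynomial in $\nu$---is indeed Lemma~\ref{lemmaappA}, just as you say.
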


\subsection{Presentation by generators and relations}

\subsubsection{The Lie algebra $\po$} \label{posect}

To give a presentation of $\mathcal D^{\rm ext}_{1,k}$ by generators and relations, we will have to start with the Lie algebra $\mathfrak{po}$ of polynomials on the symplectic plane. Later it will turn out that the DDC-algebra is a flat filtered deformation of $U(\mathfrak{po})$.

\begin{def0}
By $\mathfrak{po}$ denote the Lie algebra over $\Bbbk$ which is $\Bbbk[p,q]$ as a vector space, with the bracket defined by:
$$
[q^kp^l,q^mp^n] = (lm-nk)q^{k+m-1}p^{l+n-1}.
$$
We will denote the element $1 \in \Bbbk[p,q]$ by $K$.
\end{def0}
In other words, this Lie algebra is given by the standard Poisson bracket on $\Bbbk[p,q]$ determined by $\{p,q\}=1$.

This algebra admits the following grading:
\begin{def0}
Endow the Lie algebra $\mathfrak{po}$ with a grading given by $\deg(q^kp^l) = k+l-2$.
In this grading the bracket has degree $0$.
\end{def0}

Note that $(-\frac{q^2}{2}, pq, \frac{p^2}{2})$ constitutes an $\mathfrak{sl}_2$-triple. Hence we conclude that $\mathfrak{po}_0 \simeq \mathfrak{sl}_2$. This endows $\mathfrak{po}$ with a structure of an $\mathfrak{sl}_2$-module. It is easy to see that $\mathfrak{po}_i$ is isomorphic to the simple highest weight module $V_{i+2}$ of highest weight $i+2$.

\begin{def0}
Denote by $\mathfrak{n}$ the Lie subalgebra of $\mathfrak{po}$ given by $\mathfrak {n}=\bigoplus_{i >0} \mathfrak{po}_i$.
\end{def0}
As an $\mathfrak{sl}_2$-module we have:
$$
\mathfrak{n} = V_3 \oplus V_4 \oplus V_5 \oplus \dots  .
$$

\subsubsection{A presentation of $\po$ by generators and relations.}

To find a presentation of $\mathfrak{po}$ by generators and relations, it is enough to find the corresponding presentation of $\mathfrak n$. The rest will follow easily. This was done in \cite{van1991defining} using a computer calculation of the cohomology spaces of $\mathfrak{n}$ to obtain a minimal set of generators and relations. We will reproduce this result below. We will also present a direct proof of this result in Appendix B.

First, it's easy to find the generators:
\begin{def0}
The Lie algbera $\mathfrak{n}$ is generated by $\mathfrak{n}_{1}$. 
\end{def0}
\begin{proof}
 Indeed, this easily follows by induction from the formulas $p^kq^l = [\frac{p^{k+1}q^{l-2}}{k+1},\frac{q^3}{3}]$ for $l\ge 2$, $p^kq = [\frac{p^k}{k},\frac{pq^2}{2}]$ and $p^k = [\frac{p^{k-1}}{k-1}, p^2q]$.
\end{proof}

So it follows that the algebra $\mathfrak{n}$ is a quotient of the free Lie algebra $L(\mathfrak{n}_1)$, where $\mathfrak{n}_1 \simeq V_3$. The Lie algebra $L(\mathfrak{n}_1)$ has a grading determined by $\deg(\mathfrak{n}_1)=1$.

To describe the relations in a language of $\mathfrak{sl}_2$-modules we will first have to introduce a few definitions.

\begin{def0} \label{defiso1}
Fix an isomorpism of $\mathfrak{n}_1$ with $V_3$ with the highest weight vector specified as $c_1 = \frac{q^3}{6}$.

Consider $\Lambda^2\mathfrak{n}_1 = L(\mathfrak{n}_1)_2$. As $\mathfrak{sl}_2$-modules we have $\Lambda^2\mathfrak{n}_1 \simeq V_4 \oplus V_0$. Denote the submodule of $\Lambda^2\mathfrak{n}_1$ isomorophic to $V_0$ by $\phi_1$ and the submodule isomorphic to $V_4$ by $\phi_2$. Fix an isomorphism of $\phi_1$ with $V_0$ with the highest weight vector specified as $c_1 \wedge c_4-c_2\wedge c_3$, where $c_i = f^{i-1}c_1$. Fix an isomorphism of $\phi_2$ with $V_4$ with the highest weight vector specified as $d_1 = c_2\wedge c_1$.

Consider $\phi_2 \otimes \mathfrak{n}_1 \subset L(\mathfrak{n}_1)_3$. We have $ \phi_2 \otimes \mathfrak{n}_1 \simeq V_7 \oplus V_5 \oplus V_3 \oplus V_1$. Denote the submodule isomorphic to $V_1$ by $\psi_1$, the submodule isomorphic to $V_3$ by $\psi_2$, the submodule isomorphic to $V_5$ by $\psi_3$ and submodule isomorphic to $V_7$ by $\psi_4$. Fix an isomorphism of $\psi_1$ with $V_1$ with the highest weight vector specified as $-4d_1 \otimes c_4 + 3d_2\otimes c_3 - 2d_3 \otimes c_2 +d_4 \otimes c_1$, where $d_i = f^{i-1}d_1$. 

Consider $ \wedge^2 \phi_2 \subset L(\mathfrak{n}_1)_4$. We have $\wedge^2 \phi_2 =  V_6 \oplus V_2$. Denote the submodule isomorphic to $V_2$ by $\chi_1$. Fix an isomorphism of $\chi_1$ with $V_2$ with the highest weight vector specified as $3d_3 \wedge d_2 - 2 d_4 \wedge d_1$.
\end{def0}

We have the following proposition.
\begin{prop} \label{1proprel}
The Lie algebra $\mathfrak{n}$ is isomorphic to the quotient of the free Lie algebra $L(\mathfrak{n}_1)$ by the ideal generated by the $\mathfrak{sl}_2$-modules $\phi_1$, $\psi_4$, $\psi_1$ and $\chi_1$. This is a minimal set of relations.
\end{prop}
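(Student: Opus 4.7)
The strategy splits into two parts: verifying that the four specified $\mathfrak{sl}_2$-submodules actually vanish in $\mathfrak{n}$, and proving they generate all relations.

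For the first part, each of $\phi_1, \psi_1, \psi_4, \chi_1$ is an irreducible $\mathfrak{sl}_2$-submodule of some tensor power of $\mathfrak{n}_1$, so it suffices to check that its highest-weight vector, as specified in Definition \ref{defiso1}, maps to zero under the natural surjection $L(\mathfrak{n}_1) \twoheadrightarrow \mathfrak{n}$. Under the identification $c_i = f^{i-1}(q^3/6)$, each check reduces to a small Poisson-bracket computation in $\Bbbk[p,q]$.

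For the second part, let $\widetilde{\mathfrak{n}}$ denote the quotient of $L(\mathfrak{n}_1)$ by the ideal generated by the four modules, with induced surjection $\pi \colon \widetilde{\mathfrak{n}} \twoheadrightarrow \mathfrak{n}$. Since $\mathfrak{n}_d \simeq V_{d+2}$ has dimension $d+3$, it suffices to prove by induction on $d$ that $\widetilde{\mathfrak{n}}_d \simeq V_{d+2}$ as an $\mathfrak{sl}_2$-module. The case $d = 2$ is immediate: $\Lambda^2 V_3 = V_4 \oplus V_0$ and $\phi_1$ is exactly the $V_0$ summand. For $d = 3$, $\widetilde{\mathfrak{n}}_3 = [\widetilde{\mathfrak{n}}_1, \widetilde{\mathfrak{n}}_2]$ is a quotient of $V_3 \otimes V_4 = V_7 \oplus V_5 \oplus V_3 \oplus V_1$; the relations $\psi_4 \simeq V_7$, $\psi_1 \simeq V_1$, and the Jacobi consequence $[\phi_1, \widetilde{\mathfrak{n}}_1] \simeq V_0 \otimes V_3 = V_3$ kill the spurious $V_7, V_1, V_3$, leaving only $V_5$. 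A similar computation using $\chi_1$ handles $d = 4$. For $d \ge 5$, $\widetilde{\mathfrak{n}}_d = [\widetilde{\mathfrak{n}}_1, \widetilde{\mathfrak{n}}_{d-1}]$ is a quotient of $V_3 \otimes V_{d+1} = V_{d+4} \oplus V_{d+2} \oplus V_d \oplus V_{d-2}$, and the three spurious summands $V_{d+4}, V_d, V_{d-2}$ ought to be killed by the ideal-consequences $[\psi_4, \widetilde{\mathfrak{n}}_{d-3}]$, $[\phi_1, \widetilde{\mathfrak{n}}_{d-2}]$, $[\chi_1, \widetilde{\mathfrak{n}}_{d-4}]$ respectively.

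The main obstacle is the inductive step for $d \ge 5$: one must verify that these Jacobi consequences project non-trivially onto the intended $V_{d+4}, V_d, V_{d-2}$ summands of $V_3 \otimes V_{d+1}$. This amounts to a non-vanishing statement for certain $\mathfrak{sl}_2$-equivariant contractions of highest-weight vectors, which in principle is a finite per-$d$ check but requires a uniform argument to package cleanly. A conceptually cleaner alternative, and the route taken in \cite{van1991defining}, is to bypass these explicit manipulations by computing the $\mathfrak{sl}_2$-equivariant Lie algebra cohomology $H^1(\mathfrak{n})$ and $H^2(\mathfrak{n})$ directly; these yield a minimal set of generators and relations in one stroke, giving both sufficiency and minimality of the four relations simultaneously.
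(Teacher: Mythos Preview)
Your outline matches the paper's Appendix B closely: both argue by induction on the degree $d$, viewing $\widetilde{\mathfrak{n}}_d$ (the paper writes $\mathfrak{l}_{l-1}$ with $l=d+1$) as a quotient of $\mathfrak{n}_1 \otimes \widetilde{\mathfrak{n}}_{d-1} \simeq V_3 \otimes V_{d+1} = V_{d+4}\oplus V_{d+2}\oplus V_d\oplus V_{d-2}$ and then show the three spurious summands are killed by ideal-consequences of the relations. The base cases $d=2,3,4$ and the reference to \cite{van1991defining} for the cohomological shortcut are also handled the same way.

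There is one substantive difference and one acknowledged gap. For the smallest summand $V_{d-2}$ in the inductive step, you propose using $[\chi_1,\widetilde{\mathfrak{n}}_{d-4}]$, whereas the paper uses a consequence of $\psi_1$ (bracketing the $\psi_1$ highest-weight vector with $v_2^{l-2}$) and carries out the explicit computation: the resulting weight-$(l-3)$ vector is compared against the weight-$(l-3)$ vectors coming from the already-killed $V_{l+3}$ and $V_{l-1}$, and the $3\times 4$ coefficient matrix has full rank except at $l=-1,-2,5$. This pinpoints exactly why $\psi_1$ fails at $l=5$ (i.e.\ $d=4$) and why $\chi_1$ must enter the minimal set precisely there. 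Your route via $\chi_1$ in the general step may also work, but that is not what the paper verifies, and you would still need $\psi_1$ somewhere to account for the degree-3 base case.

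The gap you flag is real: the whole content of Appendix B is the explicit verification that these projections are nonzero, which you leave as ``the main obstacle'' and resolve only by pointing to the computer-assisted cohomology calculation. So your proposal is a correct high-level sketch of the paper's direct argument, but stops just where the paper's actual work begins.
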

\begin{proof}
 As stated in the beginning of this section, one can find a proof of this result by a computer computation in \cite{van1991defining}. See Appendix B for a more direct proof.
\end{proof}

Now we can move to the description of the whole algebra. First let us introduce the notation for the remaining part of $\po$:
\begin{def0}
Denote by $\mathfrak b$ the Lie subalgebra of $\po$ given by $\mathfrak{po}_{-2}\oplus \mathfrak{po}_{-1}\oplus \mathfrak{po}_0$. We have $\po = \mathfrak{b} \oplus \mathfrak{n}$.
\end{def0}

We will also need a little more notation:
\begin{def0} \label{defiso2}
Fix an isomorphism of $\mathfrak{b}_0$ with $\mathfrak{sl}_2$ given by $e \mapsto b_1=-\frac{q^2}{2}$ and \linebreak $f \mapsto b_3 =\frac{p^2}{2}$. Fix an isomorphism of  $\mathfrak{b}_{-1}$ with $V_1$ with the highest weight vector specified as $a_1 = q$. Fix an isomorphism of $\mathfrak{b}_{-2}$ with $V_0$ with the highest weight vector specified as $K$. 

Consider the free Lie algebra $L(\mathfrak{b} \oplus \mathfrak{n}_1)$. Consider $\Lambda^2 \mathfrak{b}_{-1} \subset L(\mathfrak{b} \oplus \mathfrak{n}_1)_2$, we have \linebreak $\Lambda^2 \mathfrak{b}_{-1}\simeq V_0$. Fix an isomorphism  of $\Lambda^2 \mathfrak{b}_{-1}$ with $V_0$ with the highest weight vector specified as $a_1\wedge a_2$.

Consider $\mathfrak{n}_1 \otimes \mathfrak{b}_{-1} \subset L(\mathfrak{b} \oplus \mathfrak{n}_1)_2$. We have $\mathfrak{n}_1 \otimes \po_{-1} \simeq V_4 \oplus V_2$. Denote the submodule isomorphic to $V_2$ by $\alpha_1$ and the submodule isomorphic to $V_4$
by $\alpha_2$. Fix an isomorphism of $\alpha_1$ with $V_2$ with the highest weight vector specified as $c_2\otimes a_1-2c_1 \otimes a_2$.
\end{def0}

\begin{prop} \label{propgen2}
The Lie algebra $\po$ is generated by $\mathfrak{b} \oplus \mathfrak{n}_1$ with the following set of relations:
\begin{gather} \mathfrak{b}_{-2} \simeq V_0 \text{ is central},\ 
    \mathfrak{b}_0 \simeq \mathfrak{sl}_2,\ \mathfrak{b}_{-1} \simeq V_1 \text{ as an $\mathfrak{sl}_2$-module}  ,\ \Lambda^2 \mathfrak{b}_{-1} = \mathfrak{b}_{-2}, \nonumber\\  \ \mathfrak{n}_1 \simeq V_3 \text{ as an $\mathfrak{sl}_2$-module} ,\  \alpha_2 = 0  , \ \alpha_1 = \mathfrak{b}_0  , \nonumber \\ \phi_1 = 0  ,\ \psi_4 = 0  ,\ \psi_1 = 0,\ \chi_1 = 0 \nonumber  ,
\end{gather}
where we use the isomorphisms from Definitions \ref{defiso1} and \ref{defiso2}. And by $\lambda X \simeq \mu Y$ for two $\mathfrak{sl}_2$-submodules of $L(\mathfrak{b} \oplus \mathfrak{n}_1)$  with two fixed isomorphisms with $V_j$ and two numbers $\lambda,\mu$ we mean that we take the quotient by the image of the map 
$$
V_j \xrightarrow{(\lambda,-\mu)}V_j \oplus V_j \simeq X\oplus Y \subset L(\mathfrak{b} \oplus \mathfrak{n}_1)  . 
$$
\end{prop}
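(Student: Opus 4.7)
The strategy has two parts: first, verify that all stated relations hold in $\po$, yielding a surjective morphism $\pi\colon\widetilde{\mathfrak g}\to\po$ from the Lie algebra $\widetilde{\mathfrak g}$ with the listed presentation onto $\po$; second, show $\pi$ is injective by a graded dimension argument.

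The first part is a direct verification using the Poisson bracket formula $[q^kp^l,q^mp^n]=(lm-nk)q^{k+m-1}p^{l+n-1}$. The $\mathfrak b$-relations encode the standard $6$-dimensional Heisenberg-plus-$\mathfrak{sl}_2$ structure: $\mathfrak b_0\simeq\mathfrak{sl}_2$ via $b_1,b_3$, the $\mathfrak{sl}_2$-module structures on $\mathfrak b_{-1}$ and $\mathfrak b_{-2}$, centrality of $K=1$, and $\Lambda^2\mathfrak b_{-1}=\mathfrak b_{-2}$ (via $\{q,p\}=-1$). The relations $\phi_1,\psi_4,\psi_1,\chi_1$ among elements of $\mathfrak n$ hold by Proposition \ref{1proprel}. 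For the cross-relations $\alpha_2=0$ and $\alpha_1=\mathfrak b_0$, one computes the brackets $[c_i,a_j]$ for small $i,j$ directly and verifies that the $V_4$-isotypic component of $[\mathfrak n_1,\mathfrak b_{-1}]$ vanishes, while the $V_2$-component is proportional to the highest-weight vector of $\mathfrak b_0$.

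For the second part, use the grading on $\widetilde{\mathfrak g}$ and write $\widetilde{\mathfrak g}=\widetilde{\mathfrak b}+\widetilde{\mathfrak n}$, where $\widetilde{\mathfrak b}$ (resp.\ $\widetilde{\mathfrak n}$) is the subalgebra generated by the $\mathfrak b$-generators (resp.\ by $\mathfrak n_1$). The $\mathfrak b$-relations force $\dim\widetilde{\mathfrak b}\le 6$, matching $\mathfrak b$, while Proposition \ref{1proprel} makes $\widetilde{\mathfrak n}$ a quotient of $\mathfrak n$. The key step is to show $[\widetilde{\mathfrak b},\widetilde{\mathfrak n}]\subset\widetilde{\mathfrak b}+\widetilde{\mathfrak n}$: for $\mathfrak b_0$ this follows from the $\mathfrak{sl}_2$-action, $\mathfrak b_{-2}$ is central, and $[\mathfrak b_{-1},\mathfrak n_1]\subset\mathfrak b_0$ by $\alpha_2=0$ and $\alpha_1=\mathfrak b_0$. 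Since $\mathfrak n$ is generated by $\mathfrak n_1$, one has $\mathfrak n_k=[\mathfrak n_1,\mathfrak n_{k-1}]$, and the Jacobi identity gives
\[
[\mathfrak b_{-1},\mathfrak n_k]\subset[[\mathfrak b_{-1},\mathfrak n_1],\mathfrak n_{k-1}]+[\mathfrak n_1,[\mathfrak b_{-1},\mathfrak n_{k-1}]]\subset[\mathfrak b_0,\mathfrak n_{k-1}]+[\mathfrak n_1,\mathfrak n_{k-2}]\subset\mathfrak n_{k-1}
\]
by induction on $k$. A graded dimension count then upgrades $\pi$ to an isomorphism.

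The main obstacle is checking the exact scalar normalization implicit in $\alpha_1=\mathfrak b_0$. Concretely, one must compute $[c_2,a_1]-2[c_1,a_2]$ in $\po$ and confirm it is a \emph{nonzero} multiple of $b_1$, so that the stated identification of $V_2$-modules is nondegenerate under the convention $\lambda=\mu=1$. A secondary but automatic point is that the $\mathfrak n$-relations $\phi_1,\psi_4,\psi_1,\chi_1$, being $\mathfrak{sl}_2$-submodules, are preserved by the $\mathfrak b_0$-action, so the full set of relations is internally consistent.
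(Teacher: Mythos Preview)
Your proposal is correct and follows essentially the same strategy as the paper: reduce to Proposition~\ref{1proprel} for the subalgebra $\widetilde{\mathfrak n}$, observe that the first line of relations pins down $\widetilde{\mathfrak b}\cong\mathfrak b$, and use the cross-relations $\alpha_1,\alpha_2$ to control $[\mathfrak b,\mathfrak n]$ so that $\widetilde{\mathfrak b}+\widetilde{\mathfrak n}$ is already the whole algebra. The paper compresses this last point into the single phrase ``making sure that nothing more is generated,'' whereas you have written out the Jacobi-identity induction showing $[\mathfrak b_{-1},\widetilde{\mathfrak n}_k]\subset\widetilde{\mathfrak n}_{k-1}$ explicitly; this is a welcome elaboration, not a different argument.
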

\begin{proof}
 This easily follows from Proposition \ref{1proprel}. Indeed, the first line of relations ensures that the subalgebra generated by $\mathfrak{b}$ is indeed $\mathfrak{b}$, the third line ensures that the subalgebra generated by $\mathfrak{n}_1$ is isomorphic to $\mathfrak{n}$. The second line fixes the adjoint action of $\mathfrak{b}$ on $\mathfrak{n}_1$ making sure that nothing more is generated. 
\end{proof}

One can also give a more explicit presentation, without using the language of $\mathfrak{sl}_2$-modules.

\begin{prop} \label{propgen3}
The Lie algebra $\mathfrak{po}$ is generated by elements $K$ of degree $-2$,  $q=a_1$  and $p=a_2$ of degree $-1$, $e := b_1= -\frac{q^2}{2}$ and $f :=b_3 = \frac{p^2}{2}$ of degree $0$, and $r := c_1= \frac{q^3}{6}$ of degree $1$, with defining relations:
\begin{gather}\label{rell}
     [K, X] = 0 \text{ for any $X$},\ [p,q] = K,\ [f,q] = p,\ [p,f] = 0,\ [e,p] = q  , \nonumber\\ 
       [[f,e],f]=2f, \nonumber\\
   [r,p] = e,\ [e,r] = 0,\ {\rm ad}_{f}^4(r) = 0,\ [e, [f,r]] = 3r, \nonumber  \\
     [r,{\rm ad}_{f}^3(r)] - [{\rm ad}_{f}(r),{\rm ad}_{f}^2(r)] = 0,\\ {\rm ad}_{r}^3(f)= 0,\ \nonumber \\
    4[{\rm ad}^3_{f}(r), {\rm ad}^2_{r}(f)] - 3[{\rm ad}^2_{f}(r), {\rm ad}_{f}{\rm ad}^2_{r}(f)] + 2[{\rm ad}_{f}(r), {\rm ad}^2_{f}{\rm ad}^2_{r}(f)] - [r, {\rm ad}^3_{f}{\rm ad}^2_{r}(f)] = 0  , \nonumber \\
    3[{\rm ad}_{f}^2{\rm ad}_{r}^2(f), {\rm ad}_{f}{\rm ad}_{r}^2(f)] - 2[{\rm ad}_{f}^3{\rm ad}_{r}^2(f), {\rm ad}_{r}^2(f)]=0.\nonumber
\end{gather}
\end{prop}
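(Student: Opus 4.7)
The plan is to derive this explicit presentation from the $\mathfrak{sl}_2$-module presentation of Proposition \ref{propgen2}, which has already been established. Let $\tilde{\mathfrak{po}}$ denote the Lie algebra defined by the generators and relations in (\ref{rell}). First I would verify, by a direct computation with the Poisson bracket $[q^k p^l, q^m p^n] = (lm-nk) q^{k+m-1} p^{l+n-1}$, that every relation in (\ref{rell}) holds in $\mathfrak{po}$ under the stated substitutions $K = 1$, $e = -q^2/2$, $f = p^2/2$, $r = q^3/6$; this produces a surjective Lie algebra homomorphism $\tilde{\mathfrak{po}} \twoheadrightarrow \mathfrak{po}$. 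The remaining work is to show the opposite inequality, i.e., that each of the $\mathfrak{sl}_2$-module relations of Proposition \ref{propgen2} can be derived inside $\tilde{\mathfrak{po}}$ from the explicit relations in (\ref{rell}).

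The guiding principle is that each explicit relation in (\ref{rell}) is a highest-weight vector of the irreducible $\mathfrak{sl}_2$-submodule whose vanishing is postulated in Proposition \ref{propgen2}; once the $\mathfrak{sl}_2$-action has been identified, repeated application of $\mathrm{ad}_f$ propagates a highest-weight vanishing to the full module. I would first extract the $\mathfrak{sl}_2$-triple: setting $h := [e, f]$, the relation $[[f, e], f] = 2f$ gives $[h, f] = -2f$, and Jacobi applied to it yields $[h, e] = 2e$, so $\mathrm{span}(e, f, h) \simeq \mathfrak{sl}_2$. Next, $[f, q] = p$, $[e, p] = q$, $[p, f] = 0$ identify $\mathrm{span}(p, q)$ as a weight $\pm 1$ subspace; after showing the absent relation $[e, q] = 0$ follows (from a weight argument inside the $\mathbb{Z}$-graded algebra $\tilde{\mathfrak{po}}$, since any nonzero $[e,q]$ would lie in the $\mathfrak{po}$-degree $-1$ component at $\mathfrak{sl}_2$-weight $3$, a configuration excluded once $\mathfrak{b}_{-1}$ is known to be generated by $p, q$), one has $\mathfrak{b}_{-1} \simeq V_1$, and $[p, q] = K$ identifies $\Lambda^2 \mathfrak{b}_{-1}$ with $\mathfrak{b}_{-2}$. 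Similarly, $[e, r] = 0$, $[e, [f, r]] = 3r$, $\mathrm{ad}_f^4(r) = 0$ together show $\mathfrak{n}_1 = \mathrm{span}(r, \mathrm{ad}_f r, \mathrm{ad}_f^2 r, \mathrm{ad}_f^3 r) \simeq V_3$; and $[r, p] = e$ provides the identification $\alpha_1 = \mathfrak{b}_0$, while the vanishing $\alpha_2 = 0$ (whose highest-weight vector is proportional to $[r, q]$) follows from the weight-counting argument that $\mathfrak{b}_0 \simeq V_2$ contains no weight $\ge 3$ vectors.

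For the four purely $\mathfrak{n}$-theoretic relations of Proposition \ref{propgen2}, I would match them line by line with the remaining relations of (\ref{rell}) by computing the highest-weight vectors in the isomorphisms of Definition \ref{defiso1}: under $c_i = \mathrm{ad}_f^{i-1}(r)$ and $d_i = \mathrm{ad}_f^{i-1}([c_2, c_1])$, the expression $[r, \mathrm{ad}_f^3(r)] - [\mathrm{ad}_f(r), \mathrm{ad}_f^2(r)]$ is exactly the $\mathfrak{sl}_2$-invariant $c_1 \wedge c_4 - c_2 \wedge c_3$ spanning $\phi_1 \simeq V_0$; $\mathrm{ad}_r^3(f)$ can be reduced by Jacobi to a scalar multiple of $[c_1, d_1]$, which is the weight $7$ highest-weight vector of $\psi_4 \simeq V_7$; the first degree-$4$ relation matches the weight $1$ highest-weight vector $-4 d_1 \otimes c_4 + 3 d_2 \otimes c_3 - 2 d_3 \otimes c_2 + d_4 \otimes c_1$ of $\psi_1 \simeq V_1$; and the second matches the weight $2$ highest-weight vector $3 d_3 \wedge d_2 - 2 d_4 \wedge d_1$ of $\chi_1 \simeq V_2$. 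Repeated application of $\mathrm{ad}_f$ then kills each full irreducible submodule, recovering $\phi_1 = \psi_4 = \psi_1 = \chi_1 = 0$. The main obstacle is the bookkeeping in this last matching step: the iterated brackets $\mathrm{ad}_f^i \mathrm{ad}_r^j(f)$ admit many equivalent rewritings via Jacobi, and one has to carefully expand them in terms of the $c_i$ and $d_i$ and check that the numerical coefficients in the two degree-$4$ relations of (\ref{rell}) agree with those in the prescribed highest-weight vectors of $\psi_1$ and $\chi_1$. Once the matching is verified, Proposition \ref{propgen2} gives $\tilde{\mathfrak{po}} \simeq \mathfrak{po}$, completing the proof.
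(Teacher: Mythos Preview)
Your approach is the same as the paper's: derive the explicit presentation from Proposition~\ref{propgen2} by keeping one generator per graded piece and only the highest-weight vector of each irreducible $\mathfrak{sl}_2$-submodule of relations, then recover the full module of relations by repeated $\mathrm{ad}_f$. Your matching of the last four relations of \eqref{rell} with the highest-weight vectors of $\phi_1,\psi_4,\psi_1,\chi_1$ is correct (once one checks $\mathrm{ad}_r^2(f)=d_1$, the translations are immediate; note incidentally that the $\psi_1$ relation sits in free-Lie degree~$3$, not~$4$).

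The one place where your argument is not complete is the derivation of $[e,q]=0$ and of $\alpha_2=0$ (whose highest-weight relation is $[r,q]=0$). Your weight-counting is circular: you invoke ``$\mathfrak{b}_{-1}$ is generated by $p,q$'' and ``$\mathfrak{b}_0\simeq V_2$'' inside $\tilde{\mathfrak{po}}$, but these are precisely the dimension statements you are trying to establish. Neither $[e,q]=0$ nor $[r,q]=0$ appears in \eqref{rell}, so they must be \emph{derived} from the listed relations; for instance one finds $[e,q]=-[p,[r,q]]$ and, using $\mathrm{ad}_f^4(r)=0$ together with the degree $\le 0$ relations, that $\mathrm{ad}_f^4([r,q])=0$, but closing the loop to $[r,q]=0$ still requires an honest argument. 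The paper's proof passes over this same point with ``it's easy to see'', so this is not a divergence from the paper but a step both leave implicit.
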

\begin{proof}
 In order to get this presentation from the one given in Proposition \ref{propgen2}, to start with, we need to throw out some of the generators. Indeed, in the formulation we threw out the generator corresponding to $h$ in the $\mathfrak{sl}_2$-triple of $\mathfrak{b}_0$ and we have only taken one generator from the whole of $\mathfrak{n}_1$ -- the highest-weight vector $r$. This is obviously enough, since we can generate the whole of $\mathfrak{sl}_2$ using $e$ and $f$, and then generate the rest of $\mathfrak{n}_1$ by the action of $\mathfrak{b}_0$ on $r$.
 
 Now, it's easy to see that the first line of the relations in Proposition \ref{propgen2} transforms into the first two lines of relations \eqref{rell} and the second line of the relations in Proposition \ref{propgen2} transforms into the third line of the relations \eqref{rell}. We only need to keep the highest-weight vectors of the third line of the relations in Proposition \ref{propgen2}, since the rest of the relations can be generated by the action of $\mathfrak{b}_0$. These four highest-weight vectors are given in the last lines of relations \eqref{rell} in the same order as the corresponding $\mathfrak{sl}_2$-modules in Proposition $\ref{propgen2}$.
 
 For the details of these calculations see Appendix B.
 \end{proof}
 
\begin{rem} \label{altgenpo}
Using this we can also write down a presentation of $\po$ with just three generators. Indeed, the Lie algebra $\mathfrak{po}$ is generated by elements $p$, $f$ and $r$ of degrees $-1,0,1$ respectively, with defining relations:
\begin{gather}\label{rell1}
     [{\rm ad}_p^3(r), X] = 0 \text{ for any $X$},\nonumber\\ [[[p,r],f],p]= p,\ [p,f] = 0  ,\ (\text{degree -1}) \nonumber\\ 
     [[[p,r],f],f]=2f,\ (\text{degree 0})\nonumber\\
    {\rm ad}_r^2(p) = 0  ,\ {\rm ad}_{f}^4(r) = 0  ,\ [[[p,r],f], r] = 3r,\ (\text{degree 1})\nonumber  \\
     [r,{\rm ad}_{f}^3(r)] - [{\rm ad}_{f}(r),{\rm ad}_{f}^2(r)] = 0,  \ (\text{degree 2})\nonumber \\
     {\rm ad}_{r}^3(f)= 0  ,\ (\text{degree 3}) \nonumber \\
    4[{\rm ad}^3_{f}(r), {\rm ad}^2_{r}(f)] - 3[{\rm ad}^2_{f}(r), {\rm ad}_{f}{\rm ad}^2_{r}(f)] + 2[{\rm ad}_{f}(r), {\rm ad}^2_{f}{\rm ad}^2_{r}(f)] - [r, {\rm ad}^3_{f}{\rm ad}^2_{r}(f)] = 0, \nonumber \\
    3[{\rm ad}_{f}^2{\rm ad}_{r}^2(f), {\rm ad}_{f}{\rm ad}_{r}^2(f)] - 2[{\rm ad}_{f}^3{\rm ad}_{r}^2(f), {\rm ad}_{r}^2(f)]=0\  (\text{degree 4}) .\nonumber
\end{gather}
\end{rem}

\subsubsection{Flat filtered deformations of $U(\po)$}

In the beginning of Section \ref{posect} we've mentioned that $\mathcal D^{\rm ext}_{1,k}$ is going to be isomorphic to a flat filtered deformation of $U(\mathfrak{po})$. For this reason in this section we will formulate a result on flat filtered deformations of $U(\mathfrak{po})$ obtained via computer calculations and then present a known flat filtered deformation of $U(\po)$.

Using computer calculation one can arrive at the following proposition about the deformations of $U(\po)$. Again, before we can formulate the relations in terms of $\mathfrak{sl}_2$-modules we need to introduce some notations:

\begin{def0}
Consider a free associative algebra $T(\mathfrak{b}\oplus \mathfrak{n}_1)$. Denote the subspace \linebreak $S^2\mathfrak{b}_{-1} \subset T(\mathfrak{b}\oplus\mathfrak{n}_1)_2$ isomorphic to $V_2$ as $\mathfrak{sl}_2$-module by $\beta_1$. Fix an isomorphism of $S^2\mathfrak{b}_{-1}$ with $V_2$ with the highest weight vector specified by $a_1^2$.

Also for any $\mathfrak{sl}_2$-submodule $\gamma \subset T(\mathfrak{b}\oplus\mathfrak{n}_1)$, denote by $K^i\gamma$ the submodule $\gamma \otimes \mathfrak{b}_{-2}^{\otimes i}$. If $\gamma$ had a fixed isomorphism with $V_j$ with the highest weight vector specified by $v_\gamma$, fix an isomorphism of $\gamma \otimes \mathfrak{b}_{-2}^{\otimes i}$ with $V_j$ with the highest weight vector specified by $v_\gamma \otimes K^{\otimes i}$.
\end{def0}

We are ready to state the main result of the section.

\begin{prop} \label{propdef1}
Suppose $U$ is a flat filtered deformation of $U(\mathfrak{po})$ as an associative algebra (up to an automorphism), such that $U(\mathfrak{b})$ is still a subalgebra of  $U$, and the action of $U(\mathfrak{b})$ on $\mathfrak{b}\oplus\mathfrak{n}_1$ is not deformed. Then $U$ is isomorphic to $A_{s_1,s_2}$ defined below for some values of $s_1$ and $s_2$. The algebra $A_{s_1,s_2}$ is generated by $\mathfrak{b}\oplus \mathfrak{n}_1$ with the set of relations given by the first two lines of Proposition \ref{propgen2} and the following relations, which substitute the last line in Proposition \ref{propgen2}:
\begin{gather}
\phi_1 = -\frac{s_1K}{2}  ,\ \psi_4 = 0  ,\ \psi_1 \simeq 15s_1\mathfrak{b}_{-1}  ,\ \chi_1 \simeq 3((30s_1 + 14s_2K)\mathfrak{b}_0 + 7s_2\beta_1)  , \nonumber  \end{gather}
where $s_1,s_2 \in \mathbb C[K]$, ''$\simeq$" means the same thing as in Proposition \ref{propgen2}, and all the submodules of $L(\mathfrak{b}\oplus \mathfrak{n}_{-1})$ are interpreted as submodules of $T(\mathfrak{b}\oplus \mathfrak{n}_{-1})$ via the map \linebreak $L(\mathfrak{b}\oplus \mathfrak{n}_{-1}) \to T(\mathfrak{b}\oplus \mathfrak{n}_{-1})$ which sends the elements of the free Lie algebra into the corresponding commutators in the free associative algebra. 
\end{prop}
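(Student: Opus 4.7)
The plan is to start from the presentation of $U(\po)$ obtained by viewing Proposition \ref{propgen2} associatively, and to analyze which of its defining relations are forced to remain undeformed under our hypotheses, and which are free to acquire lower-order corrections. The hypothesis that $U(\mathfrak{b})$ is an undeformed subalgebra and that $U(\mathfrak{b})$ acts on $\mathfrak{b}\oplus\mathfrak{n}_1$ in the original way kills all flexibility in the first two lines of Proposition \ref{propgen2}: the $\mathfrak{b}$-internal commutators, the $\mathfrak{b}_0$-action on $\mathfrak{b}_{-1}$ and on $\mathfrak{n}_1$, the vanishing of $\alpha_2$, and the identification $\alpha_1 = \mathfrak{b}_0$ all remain intact. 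Hence the only relations that may be deformed are the four in the third line,
$$\phi_1 = 0, \qquad \psi_4 = 0, \qquad \psi_1 = 0, \qquad \chi_1 = 0.$$

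First I would use $\mathfrak{sl}_2$-equivariance to enumerate the allowed corrections. In a filtered deformation each of these four relations is replaced by an equality with an element of strictly lower filtration degree living in the same $\mathfrak{sl}_2$-isotypic type ($V_0$, $V_7$, $V_1$, $V_2$ respectively). Because the $\mathfrak{b}$-action is undeformed and $U$ is flat (so the associated graded recovers $S(\mathfrak{b}\oplus\mathfrak{n}_1)$ modulo the undeformed relations), the spaces of admissible corrections can be read off from the $\mathfrak{sl}_2$-isotypic decomposition of low-degree components of $U(\mathfrak{b}\oplus\mathfrak{n}_1)$: no $V_7$ of sufficiently low degree exists, forcing $\psi_4 = 0$; the only $V_0$'s of low enough degree come from powers of the central element, so $\phi_1 \in \mathbb{C}[K]\cdot K$; the low-degree $V_1$'s are exhausted by $\mathbb{C}[K]\cdot \mathfrak{b}_{-1}$; and the low-degree $V_2$'s are spanned by $\mathbb{C}[K]\cdot \mathfrak{b}_0$ together with $\mathbb{C}[K]\cdot \beta_1$. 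This produces an ansatz with a small, finite list of unknown coefficient polynomials in $K$.

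The remaining step is to impose flatness (the PBW condition) on this ansatz. Each pair of defining relations, combined with an extra generator, yields a consistency constraint via a Jacobi-type ``diamond'' identity; reducing these modulo the undeformed relations produces a system of polynomial equations that the unknown coefficients must satisfy. This system couples the four correction terms together, and its solution space turns out to be two-dimensional, parametrized by $s_1,s_2 \in \mathbb{C}[K]$, with the specific numerical factors $-\tfrac{1}{2},\,15,\,30,\,14,\,7,\,3$ determined by the normalizations of highest-weight vectors fixed in Definitions \ref{defiso1} and \ref{defiso2}.

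The main obstacle is precisely this flatness check: the number of Jacobi triples to verify, together with the size of the $\mathfrak{sl}_2$-modules involved (the relations $\psi_i$ and $\chi_1$ already live in degrees $3$ and $4$ of the free Lie algebra), makes a direct hand computation impractical, and, as announced in the acknowledgements, the verification is carried out on a computer using MAGMA. Conceptually, the proof reduces the classification of deformations under our hypotheses to a finite-dimensional Hochschild-cohomology-style calculation whose answer happens to be two-parameter; the existence of the family $A_{s_1,s_2}$ for all values of $s_1,s_2$ will be confirmed separately in the next subsection by exhibiting $\mathcal{D}^{\mathrm{ext}}_{1,k}$ itself as a member of it.
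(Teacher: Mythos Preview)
Your proposal is correct and follows essentially the same route as the paper. You give a more explicit account of the $\mathfrak{sl}_2$-equivariant bookkeeping that pins down the shape of the allowed lower-order corrections before invoking the machine, whereas the paper compresses this into one sentence and instead describes the computer step in more detail (noncommutative Gr\"obner/S-polynomial reductions, with the decisive constraints appearing only in degree~15); but the overall architecture --- reduce to deforming the four $\mathfrak{n}$-relations, write the most general $\mathfrak{sl}_2$-compatible ansatz, and let a MAGMA flatness check cut the parameter space down to $(s_1,s_2)$ --- is the same.
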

\begin{proof}
First of all note that our requirement on the type of deformation effectively means that we consider such deformations of relations in Proposition \ref{propgen2} which change only the last four relations, augmenting them by some lower order terms. 

The outline of the computer calculation used is as follows.

Given a family of putative flat filtered deformations of a finitely graded algebra, the subscheme over which it is flat is cut out by the condition that for any linear combination of the deformed relations, the leading degree term is in the undeformed ideal. Just as in the commutative setting, there is a notion of Gr\"obner bases for noncommutative algebras, and one could in principle check flatness by computing the Gr\"obner bases of both the original and the deformed ideal and verifying that the leading terms agree. Unfortunately (since basic questions about noncommutative algebras are undecidable), the Gr\"obner basis is in general infinite, so the algorithm that produces such a basis will not terminate. However, we can still produce a {\em subset} of the equations satisfied on the flat locus via this approach, by simply stopping the calculation at some arbitrary point. In the case of interest, we do this by computing all S-polynomials of pairs of the deformed relations (noting that in the noncommutative case two relations may have more than one S-polynomial) and reducing them modulo the deformed relations. This gives us out a new collection of relations, and any such relation that vanishes in $U({\po})$ must vanish on the flat deformation, so gives an equation for each of its coefficients. After using these equations to eliminate parameters, we find that some of the relations become independent of the parameters, and thus we may reduce mod those relations. The resulting set contains 12 relations of degree 15 that span a 10-dimensional space of relations on $U({\po})$, and thus gives two new relations vanishing on $U({\po})$, allowing us to eliminate all but two parameters as required.
\end{proof}

\begin{rem} Note that we can specialize the central element $K$ to a number, which will give a $3$-parameter flat family of algebras $A_{s_1,s_2,K}$, with 
$s_1,s_2,K\in \Bbb C$. These parameters have degrees $4,6,-2$, respectively; alternatively, we may view this deformation as one with four deformation parameters 
$s_1,s_2,s_1'=s_1K,s_2'=s_2K$ of degrees $4,6,2,4$, respectively, which are constrained by the relation $s_1s_2'=s_2s_1'$; i.e., deformations are parametrized by a quadratic cone in $\Bbb C^4$. Also, we see that up to rescaling there are only two essential parameters, $s_1^*=s_1K^2$ and $s_2^*=s_2K^3$.  
\end{rem} 

As before, this presentation can be formulated more explicitly as follows:
\begin{prop} \label{propdef2}
The algebra $A_{s_1,s_2}$ is generated by the same generators as $\mathfrak{po}$ and the same set of relations as in Proposition \ref{propgen3}, with the last four relations deformed as follows:
\begin{gather}
[r,{\rm ad}_{f}^3(r)] - [{\rm ad}_{f}(r),{\rm ad}_{f}^2(r)] = -\frac{s_1K}{2}  ,\nonumber \\ {\rm ad}^3_{r}(f)=0  ,  \\
    4[{\rm ad}^3_{f}(r), {\rm ad}^2_{r}(f)] - 3[{\rm ad}^2_{f}(r), {\rm ad}_{f}{\rm ad}^2_{r}(f)] + 2[{\rm ad}_{f}(r), {\rm ad}^2_{f}{\rm ad}^2_{r}(f)] 
    - [r, {\rm ad}^3_{f}{\rm ad}^2_{r}(f)] = 15s_1q  , \nonumber\\
    3[{\rm ad}_{f}^2{\rm ad}_{r}^2(f), {\rm ad}_{f}{\rm ad}_{r}^2(f)] - 2[{\rm ad}_{f}^3{\rm ad}_{r}^2(f), {\rm ad}_{r}^2(f)]= 3((30s_1 + 14s_2K)e + 7s_2q^2)   ,\nonumber
\end{gather}
where $s_1,s_2 \in \mathbb C[K]$.
\end{prop}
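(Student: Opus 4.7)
The plan is to follow the same translation strategy that passes from Proposition \ref{propgen2} to Proposition \ref{propgen3}, now applied to the deformed presentation of Proposition \ref{propdef1}. First I would keep the same reduced set of generators $K, p, q, e, f, r$ (the highest weight vectors of $\mathfrak{b}_{-2}$, $\mathfrak{b}_{-1}$, $\mathfrak{b}_0$, and $\mathfrak{n}_1$, together with $p$ and $f$ obtained by applying the lowering operator), so that the undeformed relations of Proposition \ref{propdef1} — the first two lines of Proposition \ref{propgen2} together with $\psi_4 = 0$ — translate exactly as in Proposition \ref{propgen3}, yielding all but the last four relations of the list \eqref{rell} together with ${\rm ad}_r^3(f) = 0$.

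Next I would show that the four deformed relations of Proposition \ref{propdef1} translate, upon choosing the highest weight vector of each $\mathfrak{sl}_2$-module involved, to the four deformed relations of Proposition \ref{propdef2}. The key observation is that the map $L(\mathfrak{b}\oplus \mathfrak{n}_1) \to T(\mathfrak{b}\oplus \mathfrak{n}_1)$ sends $c_i = f^{i-1}c_1$ to ${\rm ad}_f^{i-1}(r)$, so the highest weight vector $c_1\wedge c_4 - c_2\wedge c_3$ of $\phi_1$ becomes $[r,{\rm ad}_f^3(r)] - [{\rm ad}_f(r),{\rm ad}_f^2(r)]$, which under the deformation $\phi_1 = -s_1K/2$ produces the first deformed relation. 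The relation $\psi_4 = 0$ is undeformed and reduces to ${\rm ad}_r^3(f) = 0$ exactly as in Proposition \ref{propgen3}. For $\psi_1$, its highest weight vector is $-4d_1\otimes c_4 + 3d_2\otimes c_3 - 2d_3\otimes c_2 + d_4\otimes c_1$ with $d_i = f^{i-1}(c_2\wedge c_1)$; mapping wedges to commutators and $f$-lowering to ${\rm ad}_f$, this becomes the left-hand side of the third deformed relation, while the isomorphism $\psi_1 \simeq 15 s_1 \mathfrak{b}_{-1}$ yields the right-hand side $15 s_1 q$ because the chosen highest weight vector of $\mathfrak{b}_{-1}$ is $q$. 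For $\chi_1$, the highest weight vector $3d_3\wedge d_2 - 2d_4\wedge d_1$ maps to the left-hand side of the fourth relation; the right-hand side $3((30 s_1 + 14 s_2 K)e + 7 s_2 q^2)$ comes from reading off the highest weight vectors of $\mathfrak{b}_0$ (which is $e$) and of $\beta_1$ (which is $a_1^2 = q^2$).

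The main obstacle will be the bookkeeping of numerical factors in the highest-weight vectors of $\psi_1$ and $\chi_1$: one needs to verify that the coefficients $-4,3,-2,1$ and $3,-2$ (together with the overall factor of $3$ in front of the right-hand side of the $\chi_1$-relation and the factors $15$, $30$, $14$, $7$) are the correct Clebsch-Gordan coefficients coming from the tensor product decompositions described in Definitions \ref{defiso1} and \ref{defiso2}. This amounts to a direct, if somewhat lengthy, application of lowering operators inside $T(\mathfrak{b}\oplus \mathfrak{n}_1)$, and can be reduced to the corresponding computation already carried out in Appendix B for the case of $\mathfrak{po}$.

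Once these four deformed relations are matched, the fact that both presentations define isomorphic algebras is immediate, since both are obtained from the free algebra on $\mathfrak{b}\oplus \mathfrak{n}_1$ by quotienting by ideals generated by the same $\mathfrak{sl}_2$-submodules: the $\mathfrak{sl}_2$-action on the free algebra guarantees that quotienting by the ideal generated by a highest weight vector is the same as quotienting by the ideal generated by the full module it generates. Thus the explicit relations of Proposition \ref{propdef2} together with the $\mathfrak{b}_0$-action reconstruct the module-theoretic relations of Proposition \ref{propdef1}.
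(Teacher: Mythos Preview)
Your proposal is correct and follows essentially the same route as the paper's own proof, which simply says ``This is easy to see following the proof of Proposition \ref{propgen3}.'' You have spelled out in detail exactly that translation: keep the reduced generator set, carry over the undeformed relations verbatim, and for each deformed $\mathfrak{sl}_2$-module relation of Proposition \ref{propdef1} extract the highest weight vector using the normalizations fixed in Definitions \ref{defiso1} and \ref{defiso2} to obtain the explicit commutator identities of Proposition \ref{propdef2}.
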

\begin{proof}
This is easy to see following the proof of Proposition \ref{propgen3}.
\end{proof}

\begin{rem}
We can also rewrite the above relations (Proposition \ref{propdef2}) using the set of generators of Remark \ref{altgenpo}. Indeed, the algebra $A_{s_1,s_2}$ is generated by the same set of generators as $\mathfrak{po}$ in Remark \ref{altgenpo} (i.e., $p, f,r$) and the same set of relations as in Remark \ref{altgenpo}, with the last four (degrees $2,3,4$) deformed as follows:
\begin{gather}
[r,{\rm ad}_{f}^3(r)] - [{\rm ad}_{f}(r),{\rm ad}_{f}^2(r)] = -\frac{s_1K}{2}  , \\ {\rm ad}^3_{r}(f)=0  , \nonumber \\
    4[{\rm ad}^3_{f}(r), {\rm ad}^2_{r}(f)] - 3[{\rm ad}^2_{f}(r), {\rm ad}_{f}{\rm ad}^2_{r}(f)] + 2[{\rm ad}_{f}(r), {\rm ad}^2_{f}{\rm ad}^2_{r}(f)] 
    - [r, {\rm ad}^3_{f}{\rm ad}^2_{r}(f)] = 15s_1{\rm ad}^2_{p}(r)  , \nonumber\\
    3[{\rm ad}_{f}^2{\rm ad}_{r}^2(f), {\rm ad}_{f}{\rm ad}_{r}^2(f)] - 2[{\rm ad}_{f}^3{\rm ad}_{r}^2(f), {\rm ad}_{r}^2(f)]
    =3( 7s_2{\rm ad}^2_{p}(r)^2-(30s_1 + 14s_2K){\rm ad}_{p}(r))   ,\nonumber
\end{gather}
where $K={\rm ad}^3_p(r)$ and 
$s_1,s_2 \in \mathbb C[K]$.
\end{rem}

Below we will show that the universal enveloping algebra of the Lie algebra $\mathbb C[x,\partial]$ gives us an example of such a deformation. This result is well-known, see \cite{feigin1980homology}.
\begin{def0}
Denote by $\mathbb C[x,\partial]$ the Lie algebra of polynomial differential operators, with a Lie bracket given by the commutator.
\end{def0}
Consider a grading on $\mathbb C[x,\partial]$ given by $\deg(x^k\partial^l) =k+l-2$. We have a decomposition $\mathbb C[x,\partial] = \bigoplus_{i=-2} \mathbb C[x,\partial]_i$. It's easy to see that with this grading the Lie bracket decreases filtration degree at least by $2$ and preserves degree modulo $2$:
$$
[,]: \mathbb C[x,\partial]_i \otimes \mathbb C[x,\partial]_j \to \mathbb C[x,\partial]_{i+j} \oplus \mathbb C[x,\partial]_{i+j-2} \oplus \dots. 
$$
Indeed, when we compute the commutator we use the identity $[\partial,x]=1$ at least once, and each time it decreases the grading by $2$. 

\begin{lemma} \label{lemmaassdif}
The associated graded Lie algebra of $\mathbb C[x,\partial]$ is isomorphic to $\po$.
\end{lemma}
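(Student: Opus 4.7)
The plan is to establish a vector space identification between $\gr(\mathbb C[x,\partial])$ and $\po=\mathbb C[q,p]$ sending the class of $x^k\partial^l$ to $q^kp^l$, and then verify that the induced Lie bracket on the left-hand side agrees with the Poisson bracket on the right-hand side after passing to leading symbols.

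First, I would fix the monomial basis $\{x^k\partial^l\}_{k,l\ge 0}$ of $\mathbb C[x,\partial]$. By the preceding discussion the filtration $F^n=\bigoplus_{i\le n}\mathbb C[x,\partial]_i$ satisfies $[F^n,F^m]\subset F^{n+m}$, so the associated graded inherits the structure of a graded Lie algebra. The classes $\overline{x^kp^l}\in F^{k+l-2}/F^{k+l-3}$ form a basis of $\gr(\mathbb C[x,\partial])$, and the vector space map $\phi:\gr(\mathbb C[x,\partial])\to \po$, $\overline{x^k\partial^l}\mapsto q^kp^l$, is a graded isomorphism of $\mathbb C$-vector spaces (with the degree shift already built into the grading).

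Next I would compute the bracket on a pair of basis elements. Using the identity $\partial^l x^m=\sum_{i\ge 0}\binom{l}{i}\tfrac{m!}{(m-i)!}x^{m-i}\partial^{l-i}$, one has
\[
x^k\partial^l\cdot x^m\partial^n=x^{k+m}\partial^{l+n}+lm\,x^{k+m-1}\partial^{l+n-1}+R_{<},
\]
where $R_{<}$ lies in $F^{k+l+m+n-6}$. Swapping the roles of the two factors and subtracting gives
\[
[x^k\partial^l,\,x^m\partial^n]=(lm-nk)\,x^{k+m-1}\partial^{l+n-1}\pmod{F^{k+l+m+n-6}},
\]
and the top-order term lives in degree exactly $(k+l-2)+(m+n-2)$. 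Thus, in the associated graded,
\[
[\overline{x^k\partial^l},\,\overline{x^m\partial^n}]=(lm-nk)\,\overline{x^{k+m-1}\partial^{l+n-1}}.
\]
This matches verbatim the defining relation $[q^kp^l,q^mp^n]=(lm-nk)q^{k+m-1}p^{l+n-1}$ of $\po$, so $\phi$ is a graded Lie algebra homomorphism, and hence an isomorphism.

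There is no real obstacle here; the content of the statement is exactly the standard fact that the associated graded of the Weyl algebra (viewed with the Bernstein filtration) is commutative with the Moyal/Poisson bracket as leading commutator. The only point requiring care is bookkeeping: one must verify both that the leading-order commutator coefficient is $lm-nk$ (not some permutation thereof) and that the subleading corrections in $\partial^l x^m$ really do land in strictly lower filtration degree of the commutator, which is automatic since each application of $[\partial,x]=1$ drops the combined $x$- and $\partial$-degree by $2$.
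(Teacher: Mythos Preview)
Your proof is correct and follows exactly the same approach as the paper's: compute the commutator of monomials in the Weyl algebra, observe that the leading term $(lm-nk)x^{k+m-1}\partial^{l+n-1}$ reproduces the Poisson bracket of $\po$, and conclude that the obvious linear identification $\overline{x^k\partial^l}\mapsto q^kp^l$ is a Lie algebra isomorphism. Your write-up is somewhat more detailed (making the filtration bookkeeping and the Weyl-algebra reordering formula explicit), but the argument is the same.
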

\begin{proof}
Writing down the commutator of basis elements,  we have:
$$
[x^k\partial^l, x^m \partial^n] = (lm-nk)x^{k+m-1}\partial^{l+n-1} + \dots  .
$$
So by taking the associated graded of $\mathbb C[x,\partial]$  and denoting the image of $x$ by $q$ and the image of $\partial$ by $p$, we end up with $\mathfrak{po}$.
\end{proof}

And we have the following corollary:
\begin{cor}
$\mathbb C[x,\partial]$ is a non-trivial flat filtered deformation of $\po$ as a Lie algebra.
\end{cor}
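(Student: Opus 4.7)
The plan is very short, since most of the work is already done by Lemma \ref{lemmaassdif}. First, I would observe that the grading $\deg(x^k\partial^l)=k+l-2$ gives $\mathbb C[x,\partial]$ a filtration $F^m=\bigoplus_{i\le m}\mathbb C[x,\partial]_i$, and that flatness of the deformation is exactly the statement that the associated graded is the target Lie algebra $\po$; this is Lemma \ref{lemmaassdif}. So the only content of the corollary beyond that lemma is non-triviality.

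For non-triviality, I would argue that $\mathbb C[x,\partial]$ is not isomorphic as a filtered Lie algebra to $\po$ (with its trivial filtration coming from the grading). The cleanest way is to exhibit a pair of elements whose bracket in $\mathbb C[x,\partial]$ has strictly smaller filtration degree than predicted by the bracket in $\po$. For instance, $[\partial,x]=1$ has filtration degree $-2$, while the corresponding bracket $[p,q]=0$ in $\po$ vanishes; more generally $[\partial^k,x^k]$ is a nonzero scalar in $\mathbb C[x,\partial]$ while $[p^k,q^k]=0$ in $\po$. Thus the bracket on $\mathbb C[x,\partial]$ is not preserved by the associated graded projection at the top degree in a trivial way — there are genuine lower-order corrections.

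Equivalently, one may simply note that $\po$ has nontrivial center (the constants) while $\mathbb C[x,\partial]$ has trivial center (the commutant of $x$ is $\mathbb C[x]$, and the commutant of $\partial$ in $\mathbb C[x]$ is $\mathbb C$ which does not commute with $\partial$ except for scalars; actually the center of $\mathbb C[x,\partial]$ is $\mathbb C$, while the center of $\po$ contains all of $\mathbb C\cdot K = \mathbb C\cdot 1$ as well). The point is that any Lie algebra isomorphism $\mathbb C[x,\partial]\simeq \po$ respecting the filtration would, on the associated graded, give the identity; but then one obtains a contradiction with $[\partial,x]=1\ne 0=[p,q]$ at the level of filtered brackets.

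I expect the only potentially delicate point is formalizing ``non-trivial deformation'': I would adopt the standard convention that a filtered deformation of a graded Lie algebra $\mathfrak g$ is trivial if it is isomorphic, as a filtered Lie algebra, to $\mathfrak g$ with its canonical filtration, and the obstruction above then rules this out. No computation beyond the single identity $[\partial,x]=1$ is needed.
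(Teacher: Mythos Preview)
Your argument for flatness via Lemma \ref{lemmaassdif} is fine and matches the paper. However, your argument for non-triviality contains a genuine error: in $\po$ we have $[p,q]=K=1$, \emph{not} $0$. Indeed, from the defining formula $[q^kp^l,q^mp^n]=(lm-nk)q^{k+m-1}p^{l+n-1}$ one gets $[p,q]=[q^0p^1,q^1p^0]=1$. So the identity $[\partial,x]=1$ in $\mathbb C[x,\partial]$ matches $[p,q]=1$ in $\po$ exactly and gives no obstruction. Likewise $[p^k,q^k]=k^2\,q^{k-1}p^{k-1}\ne 0$ in $\po$, so your ``more generally'' claim also fails; and both Lie algebras have center $\mathbb C\cdot 1$, so the center argument does not distinguish them either.

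The point is that the lowest-degree relations in $\po$ and $\mathbb C[x,\partial]$ coincide; one must go further up the filtration to see the discrepancy. The paper simply invokes the Groenewold--van Hove theorem for this. If you want a self-contained computation, the one carried out later in Proposition \ref{propdiff} does the job: with $r=\tfrac{x^3}{6}$, $f=\tfrac{\partial^2}{2}$ one finds
\[
[r,\mathrm{ad}_f^3(r)]-[\mathrm{ad}_f(r),\mathrm{ad}_f^2(r)]=-\tfrac{1}{2}
\]
in $\mathbb C[x,\partial]$, whereas the corresponding expression with $r=\tfrac{q^3}{6}$, $f=\tfrac{p^2}{2}$ vanishes in $\po$ (it is one of the defining relations in Proposition \ref{propgen3}). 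A filtered isomorphism $\mathbb C[x,\partial]\to\po$ inducing the identity on the associated graded would force this scalar to be $0$, giving the contradiction you were looking for --- but at degree $2$ in $\mathfrak n_1$, not at degree $-1$.
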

\begin{proof}
The flatness follows from Lemma \ref{lemmaassdif} and the fact that the graded dimensions of the two Lie algebras are the same. 

The fact that this deformation is non-trivial (which is not hard to check directly) is known as the van Hove-Groenewold's theorem in quantum mechanics, which says that classical infinitesimal symmetries deform nontrivially under quantization. See Theorem 13.13 in \cite{hall2013quantum}.
\end{proof}

Now from Proposition \ref{propdef1} it follows that $U(\mathbb C[x,\partial])$ must be isomorphic to $A_{s_1,s_2}$ for some choice of $s_1$ and $s_2$. Let us now compute these parameters.
\begin{prop} \label{propdiff}
The algebra $U(\mathbb C[x,\partial])$ is isomorphic to $A_{1,0}$.
\end{prop}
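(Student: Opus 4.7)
By the preceding corollary, $\mathbb C[x,\partial]$ is a flat filtered deformation of $\po$ as a Lie algebra, so $U(\mathbb C[x,\partial])$ is a flat filtered deformation of $U(\po)$ of the type covered by Proposition~\ref{propdef1}, and hence isomorphic to $A_{s_1,s_2}$ for some $s_1,s_2\in\mathbb C[K]$. I will pin down the values by evaluating the first and fourth of the deformed relations of Proposition~\ref{propdef2} in $U(\mathbb C[x,\partial])$, with the natural choice of generators $q=x$, $p=\partial$, $e=-x^2/2$, $f=\partial^2/2$, $r=x^3/6$, and $K=1$ the identity operator.

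For $s_1$, one computes the iterated brackets ${\rm ad}_f^i(r)$ by routine Weyl-algebra manipulations using $[p,q]=1$: they evaluate to ${\rm ad}_f(r)=(q^2p+q)/2$, ${\rm ad}_f^2(r)=qp^2+p$, and ${\rm ad}_f^3(r)=p^3$. Expanding $[r,p^3]$ and $[(q^2p+q)/2,\,qp^2+p]$ via Leibniz and the Weyl commutation, the leading terms $-\frac{3}{2}q^2p^2-3qp$ cancel, leaving the constant $-1/2$; as a Lie algebra element this equals $-K/2$. Matching with the right-hand side $-s_1K/2$ of the first deformed relation gives $s_1=1$.

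For $s_2$, the cleanest approach is a PBW filtration argument. The LHS of the fourth deformed relation is a sum of commutators of Lie algebra elements, and hence itself lies in $\mathbb C[x,\partial]\subset U^{\le 1}(\mathbb C[x,\partial])$. The RHS is $3((30s_1+14s_2K)e+7s_2q^2)$, where --- as one sees by tracing through the definition of $\beta_1=S^2\mathfrak b_{-1}\subset T(\mathfrak b\oplus\mathfrak n_1)$ --- the symbol $q^2$ denotes the product $q\cdot q$ in $U(\po)$, which has PBW filtration degree $2$, unlike the Lie algebra element $x^2$ of filtration degree $1$. If $s_2(K)=\sum_{j}a_jK^j$ has top coefficient $a_N\ne 0$, then in the associated graded $S^{N+2}(\mathbb C[x,\partial])$ the top piece of the RHS is a nonzero combination of the two linearly independent monomials $K^{N+1}\cdot x^2$ (coming from $K^{N+1}e$) and $K^N\cdot(x\odot x)$ (coming from $K^N\cdot q\cdot q$), contradicting the fact that LHS lies in $U^{\le 1}$. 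Therefore $s_2=0$.

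The main obstacle is the correct interpretation of ``$q^2$'' in the fourth relation: naively passing to the Weyl algebra quotient, in which $K$ is identified with the scalar $1$ and $q\cdot q$ with the Lie algebra element $x^2$, collapses the relation to $\mathrm{LHS}=-45x^2$, an identity that a direct computation confirms holds in $\mathbb C[x,\partial]$ regardless of $s_2$, thus hiding the constraint. One must therefore keep $K$ as a free central Lie algebra element of the universal enveloping algebra and carefully distinguish the two distinct copies of $V_2$ given by $\mathfrak b_0$ and $\beta_1$ on the right-hand side.
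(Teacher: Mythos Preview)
Your argument is correct and is essentially the same as the paper's. The paper establishes $s_2=0$ in one line by observing that the deformation comes from a Lie algebra deformation (so the quadratic $\beta_1$-term cannot appear), then pins down the isomorphism on generators and computes the first relation to get $s_1=1$; you reverse the order, choosing the natural lifts directly, computing $s_1=1$ from the first relation, and then spelling out the ``comes from a Lie algebra'' observation as a PBW-degree argument for $s_2=0$. The one step you leave implicit is that your natural choices $K=1,\ q=x,\ p=\partial,\ e=-x^2/2,\ f=\partial^2/2,\ r=x^3/6$ actually satisfy the undeformed relations (the first three lines of Proposition~\ref{propgen3}), which is what puts you in the setting of Proposition~\ref{propdef1}; the paper handles this by showing the abstract isomorphism has no lower-order corrections on generators, but your direct check is equally valid and routine.
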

\begin{proof}
From Proposition \ref{propdef1} we know that $U(\mathbb C[x,\partial]) \simeq A_{s_1,s_2}$. Since this deformation actually comes from the Lie algebra deformation, we can conclude that $s_2$ must be equal to zero. Now we can consider the Lie algebra $\mathfrak{a}_{s_1}$ given by the generators and relations of Proposition \ref{propdef2} with $s_2=0$. So we know that $\mathbb C[x,\partial] \simeq \mathfrak{a}_{s_1}$. Let's denote this isomorphism by $\varepsilon: \mathfrak{a}_{s_1} \to \mathbb C[x,\partial]$. Since $\varepsilon$ is determined up to a constant, we can set the image of $K$ under  $\varepsilon$ to be $\varepsilon(K) = 1$. Now since $\mathfrak a_{s_1}$ is a deformation of $\gr(\mathbb C[x,\partial])$, we know that $\varepsilon(q) = x + \dots$, $\varepsilon(p) = \partial + \dots$, $\varepsilon(e) = -\frac{x^2}{2}+ \dots$, $\varepsilon(f) = \frac{\partial^2}{2}+ \dots$ and $\varepsilon(r) = \frac{x^3}{6} + \dots$, where ``$\dots$" stand for the lower order terms. Also note that since the commutator is deformed in degrees starting with $-2$, it follows that the lower order terms also can appear only starting with degrees $-2$. Hence $\varepsilon(q) = x$ and $\varepsilon(p) = \partial$. Suppose $\varepsilon(e) = -\frac{x^2}{2}+ c_1$ and $\varepsilon(f) = \frac{\partial^2}{2}+ c_2$, it follows that  $[\varepsilon(e),\varepsilon(f)] = x\partial + \frac{1}{2}$. Now by calculating $[[\varepsilon(e),\varepsilon(f)],\varepsilon(e)] = [x\partial, -\frac{x^2}{2}]  =  -x^2$, we conclude that $c_1$ must be equal to $0$. The same holds true for $c_2$. Now suppose $\varepsilon(r) = \frac{x^3}{6} + d_1x+ d_2\partial$. Now $[\varepsilon(q),\varepsilon(r)] = -d_2$, hence $d_2 = 0$. And $[\varepsilon(p),\varepsilon(r)] = \frac{x^2}{2} + d_1$, hence $d_1 = 0$. So we know the images of the commutators. Now it's enough to calculate one of the relations. 

We compute ${\rm ad}_{f}(r) = [\frac{\partial^2}{2}, \frac{x^3}{6}] = \frac{x^2\partial + x}{2}$, ${\rm ad}_{f}^2(r) = [\frac{\partial^2}{2},\frac{x^2\partial+x}{2}] = x\partial^2 + \partial$ and ${\rm ad}_{f}^3(r) = [\frac{\partial^2}{2},x\partial^2] = \partial^3$. So it follows that:
$$
[\frac{x^3}{6},\partial^3] - [\frac{x^2\partial + x}{2},x\partial^2 + \partial] = -\frac{3}{2}x^2\partial^2 -3x\partial - 1 + \frac{3}{2}x^2\partial^2 + 3x\partial + \frac{1}{2} = -\frac{1}{2}  .
$$
Thus we conclude that $s_1 = 1$.
\end{proof}

We also have a corollary:
\begin{cor}
The deformation $A_{1,0}$ is flat.
\end{cor}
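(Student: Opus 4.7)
The plan is very short: by Proposition \ref{propdiff} we have an isomorphism $A_{1,0} \cong U(\mathbb C[x,\partial])$ of associative algebras, so it suffices to verify that $U(\mathbb C[x,\partial])$ is a flat filtered deformation of $U(\po)$.

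To do this, I would equip $\mathbb C[x,\partial]$ with the filtration $F^j\mathbb C[x,\partial] = \mathrm{span}\{x^k\partial^l : k+l-2 \le j\}$ inherited from the grading $\deg(x^k\partial^l) = k+l-2$ introduced before Lemma \ref{lemmaassdif}. That lemma already established that this makes $\mathbb C[x,\partial]$ a filtered Lie algebra satisfying $[F^i,F^j]\subseteq F^{i+j}$, and that its associated graded is isomorphic to $\po$ as a graded Lie algebra. This filtration extends to the standard induced filtration on $U(\mathbb C[x,\partial])$, and the isomorphism of Proposition \ref{propdiff} is compatible with it: the generators $K,q,p,e,f,r$ have images $1,x,\partial,-x^2/2,\partial^2/2,x^3/6$ of the correct degrees $-2,-1,-1,0,0,1$, exactly matching the filtration on $A_{1,0}$ coming from the generator degrees in Proposition \ref{propgen3}.

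The key step is the filtered-PBW statement $\gr U(\mathbb C[x,\partial]) \cong U(\gr \mathbb C[x,\partial]) = U(\po)$ as graded algebras. This is a standard generalization of the ordinary Poincar\'e--Birkhoff--Witt theorem: the canonical surjection $U(\gr L) \twoheadrightarrow \gr U(L)$ is shown to be an isomorphism by comparing PBW-style monomial bases on both sides, and the usual argument applies because our filtration is bounded below (starting at $F^{-2}$, which is spanned by the central element $1$) and has finite-dimensional filtered pieces. Combining the two steps yields $\gr A_{1,0} \cong \gr U(\mathbb C[x,\partial]) \cong U(\po)$ as graded algebras, which is by definition the flatness of $A_{1,0}$ as a filtered deformation of $U(\po)$. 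The only mildly delicate point is the filtered PBW lemma, but it is entirely routine given the filtration is bounded below, so there is no real obstacle.
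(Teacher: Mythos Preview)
Your proposal is correct and follows essentially the same route the paper intends: the corollary is stated immediately after Proposition \ref{propdiff} with no explicit proof, precisely because the isomorphism $A_{1,0}\cong U(\mathbb C[x,\partial])$ together with the already-established fact that $\mathbb C[x,\partial]$ is a flat filtered deformation of $\po$ (Lemma \ref{lemmaassdif} and its corollary) gives the result. You have simply made explicit the filtered PBW step $\gr U(\mathbb C[x,\partial])\cong U(\gr\,\mathbb C[x,\partial])=U(\po)$ that the paper tacitly uses.
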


\begin{rem}
Of course we could have proved that $\mathbb C[x,\partial]$ is isomorphic to $\mathfrak{a}_{1}$ without using computer computation and Proposition \ref{propdef1}. Indeed, one just needs to check that $1,\  x, \ \partial, \ -\frac{x^2}{2}, \ \frac{\partial^2}{2}$ and $\frac{x^3}{6}$ satisfy the required relations, which is easy to do.
\end{rem}

\subsubsection{The deformed double current algebra of type A as a flat filtered deformation of $U(\po)$}

Here we would like to show that the generic choice of parameters $s_1$ and $s_2$ can give us the algebra $\mathcal D^{\rm ext}_{1,k}$. 

Below we will need to compute things in $\mathcal D^{\rm ext}_{1,k}$. Since this algebra is defined as a certain lattice in the ultraproduct, we need to understand how we can do this. The following definition provides us with a method.
\begin{def0} \label{defshort}
Suppose $ Y \in \mathcal D^{\rm ext}_{1,k}$ is given by $Y = f(\lbrace{T({{\bf m}})\rbrace})K^i$, where $f$ is a non-commutative polynomial with coefficients in $\mathbb C$. By construction we know that $Y|_{K=\nu} = \prodF f_n(\lbrace{T_{\nu_n}({{\bf m}})\rbrace})\nu_n^i$, where $f_n$ are non-commutative polynomials with coefficients in $\FQ$, such that $\prodF f_n = f$. As a shorthand notation we will write $Y \backsim f_n(\lbrace T_{\nu_n}({{\bf m}})\rbrace\nu_n^i)$, where we will consider the r.h.s. for large enough $n$.
\end{def0}

With this tool we are ready to continue:

\begin{prop}\label{DDCflatdef}
The algebra $\mathcal D^{\rm ext}_{1,k}$ is a flat filtered deformation of $U(\mathfrak{po})$.
\end{prop}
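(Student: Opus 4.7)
The strategy is to equip $\mathcal{D}^{\mathrm{ext}}_{1,k}$ with a natural filtration whose associated graded algebra is $U(\mathfrak{po})$ with its natural grading inherited from $\deg q^r p^q = r + q - 2$ and $\deg K = -2$.

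Declare the filtration $F^m \mathcal{D}^{\mathrm{ext}}_{1,k}$ to be the span of basis vectors $T(\mathbf{m}) K^j$ with $w(\mathbf{m}) - 2|\mathbf{m}| - 2j \le m$. This matches the $\mathfrak{po}$-grading under $T_{r,q} \leftrightarrow q^r p^q$ (so that $q = T_{1,0}$, $p = T_{0,1}$, $e = -T_{2,0}/2$, $f = T_{0,2}/2$, $r = T_{3,0}/6$, consistent with Proposition \ref{propgen3}) and $K \leftrightarrow K$. Multiplicativity $F^m \cdot F^{m'} \subset F^{m+m'}$ is checked in finite rank via Definition \ref{defshort}: the Bernstein filtration on $\mathcal{B}_{1,k}(n)$ (with $\deg x_i = \deg y_i = 1$) satisfies $[F_B^a, F_B^b] \subset F_B^{a+b-2}$, and the symmetrization identity $T_{r,q,n} T_{r',q',n} = T_n(\mathbf{m}^*) + \tfrac{1}{2}[T_{r,q,n}, T_{r',q',n}]$ (with $\mathbf{m}^*$ the natural two-element tuple) together with a direct filtration count shows that both summands land in the correct $F^\bullet$ level.

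The key commutator identity
\[
[T_{r,q}, T_{r',q'}] \equiv (r'q - rq')\, T_{r+r'-1,\, q+q'-1} \pmod{F^{(r+q-2)+(r'+q'-2)-1}}
\]
reduces by \lthm{} to its finite-rank analog in $\mathcal{B}_{1,k}(n)$, which follows from the Poisson computation in $\gr^B \mathcal{B}_{1,k}(n) = \mathbb{C}[x_i, y_i]^{S_n}$: since $\{y_i, x_j\} = \delta_{ij}$, a direct calculation gives $\{P_{r,q,n}, P_{r',q',n}\} = (r'q - rq')\, P_{r+r'-1, q+q'-1, n}$, which is exactly the leading Bernstein symbol of the commutator. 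The map $q^r p^q \mapsto \overline{T_{r,q}}$, $K \mapsto \overline{K}$ then defines a Lie algebra homomorphism $\mathfrak{po} \to \gr^F \mathcal{D}^{\mathrm{ext}}_{1,k}$, extending to a surjective graded algebra homomorphism $\Phi : U(\mathfrak{po}) \to \gr^F \mathcal{D}^{\mathrm{ext}}_{1,k}$.

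Flatness (injectivity of $\Phi$) follows by matching graded dimensions: the PBW basis of $U(\mathfrak{po})_m$ is in bijection with pairs $(\mathbf{m}, j)$ satisfying $w(\mathbf{m}) - 2|\mathbf{m}| - 2j = m$, which is exactly the basis of $(\gr^F \mathcal{D}^{\mathrm{ext}}_{1,k})_m$. The main technical obstacle is the bookkeeping of subleading corrections in the commutator and product identities: one must verify that the $k$-dependent corrections arising from the Cherednik commutator $[y_i, x_j] = \delta_{ij}(1 - k \sum_m s_{im}) + (1 - \delta_{ij}) k s_{ij}$, when expanded in the basis $\{T_n(\mathbf{m})\}$, fall into strictly smaller $F^\bullet$-filtration — crucially relying on the fact that the Cherednik commutator drops Bernstein degree by exactly $2$ rather than $1$.
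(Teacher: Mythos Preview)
Your route differs from the paper's in the choice of filtration. The paper uses the Bernstein-type filtration where $T(\mathbf{m})K^i$ sits in degree $w(\mathbf{m})$ (so $K$ has degree $0$); this is manifestly multiplicative, being inherited from $\deg x_i=\deg y_i=1$ on the Cherednik algebra. Under it, $\gr\mathcal D^{\rm ext}_{1,k}=\mathbb C[P_{r,q}]_{r,q\ge 0}$ (with $P_{0,0}=K$) is identified with $S(\mathfrak{po})$, and the commutator drops degree by at least $2$. The paper then checks that the induced Poisson bracket on $S(\mathfrak{po})$ is the standard one and from this concludes that $\mathcal D^{\rm ext}_{1,k}$ is a filtered deformation of $U(\mathfrak{po})$.

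You instead try to exhibit directly a filtration with associated graded $U(\mathfrak{po})$, by declaring $\deg T(\mathbf{m})K^j=w(\mathbf{m})-2|\mathbf{m}|-2j$. This is the conceptually sharper statement, but multiplicativity $F^m\cdot F^{m'}\subset F^{m+m'}$ is not established by your argument. Knowing that the Bernstein commutator drops by $2$ only tells you $w(\mathbf{m}'')\le w(\mathbf{m})+w(\mathbf{m}')-2$ for subleading terms; it does not control $|\mathbf{m}''|$ or the exponent $j''$ of $K$ coming from the $n$-dependence of the structure constants, and both enter your filtration degree. Your symmetrization identity handles only $|\mathbf{m}|=|\mathbf{m}'|=1$, and even there the phrase ``a direct filtration count'' hides the actual work: expanding $[T_{r,q,n},T_{r',q',n}]$ in the basis, one must show $w(\mathbf{m}'')-2|\mathbf{m}''|-2j''\le r+q+r'+q'-4$ for every term, not merely $w(\mathbf{m}'')\le r+q+r'+q'-2$. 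You flag this yourself as ``the main technical obstacle'' without resolving it. Note also that both your surjectivity claim for $\Phi$ and your commutator identity in $\gr^F$ presuppose multiplicativity, so the entire argument rests on this unverified point.

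The paper's route sidesteps the difficulty: Bernstein multiplicativity is free, and the passage from ``$\gr=S(\mathfrak{po})$ with the right Poisson bracket'' to ``filtered deformation of $U(\mathfrak{po})$'' is a standard (if implicit) refiltration step. Your filtration is certainly the correct one a~posteriori --- it is exactly the one inherited from $A_{s_1,s_2}$ via Proposition~\ref{defpropDDC} --- but proving its multiplicativity from scratch would require a finer analysis of how $|\mathbf{m}''|$ and powers of $n$ arise when expanding products in the basis, along the lines of Appendix~A, which you have not carried out.
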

\begin{proof}
Indeed,  we know that the basis in this algebra is given by $T({{\bf m}})K^i$. Also recall the natural filtration we considered in the previous section (so that $T({{\bf m}})K^i \in (\mathcal D^{\rm ext}_{1,k})_{w({{\bf m}})}$). Since by Lemma \ref{lemmasymm} we know that $\gr\mathcal B_{1,k_n}(\nu_n) = \FQ[P_{r,q,\nu_n}]_{r,q\ge 0, 0 < r+q}$ in sufficiently low degrees, where the associated graded is taken with respect to the filtration discussed after Lemma \ref{HElemma}, it follows that
$$
\gr\mathcal D^{\rm ext}_{1,k} = (\prodF \FQ[P_{r,q,\nu_n}]_{r,q\ge 0, 0 < r+q  \le \nu_n})|_{\nu = K} = \mathbb C[P_{r,q}]_{r,q\ge 0}  ,
$$
where $P_{r,q} = \gr^{r+q}(T_{r,q})$ and $P_{0,0} = \gr^0(K)$.

Now the bracket $[,]$ acts as follows:
$$
[,]: (\mathcal D^{\rm ext}_{1,k})_n \otimes (\mathcal D^{\rm ext}_{1,k})_m \to (\mathcal D^{\rm ext}_{1,k})_{m+n-2} \oplus (\mathcal D^{\rm ext}_{1,k})_{m+n-4} \oplus \dots  ,
$$
where we consider the grading of the algebra as a vector space. Indeed,  this follows from the fact that $[T({{\bf m}}),T({{\bf n}})] \backsim [T_{\nu_n}({{\bf m}}),T_{\nu_n}({{\bf n}})]$, and to calculate the latter expression we need to use the commutator $[x_i,y_j]$ at least once, which, each time we use it, lowers the degree by $2$. We would like to calculate the leading term of the commutator. To calculate ${\rm gr}^{w({{\bf m}})+w({{\bf n}})-2}([T({{\bf m}}),T({{\bf n}})])$  it is enough to compute it via $\backsim$,  commuting freely elements within $T_{\nu_n}({{\bf m}})$ and leaving only the highest term in the commutator of $[x_i,y_j] = \delta_{ij} + \dots$. So:
$$
{\rm gr}^{w({{\bf m}})+w({{\bf n}})-2}([T({{\bf m}}),T({{\bf n}})]) \backsim \left[\prod_{r,q\ge0, r+q > 0}P_{r,q,\nu_n}^{m_{r,q}}, \prod_{r,q\ge0, r+q > 0}P_{r,q,\nu_n}^{n_{r,q}}\right] =
$$
$$
=\prod_{r,q\ge0, r+q > 0} P_{r,q,\nu_n}^{m_{r,q}+n_{r,q}}\sum_{r_1,r_2,q_1,q_2} \frac{m_{r_1,q_1}n_{r_2,q_2}}{P_{r_1,q_1,\nu_n}P_{r_2,q_2,\nu_n}}[P_{r_1,q_1,\nu_n},P_{r_2,q_2,\nu_n}]  .
$$

But now:
$$
[P_{r_1,q_1,\nu_n},P_{r_2,q_2,\nu_n}] =  \sum_{i,j=1}^{\nu_n} [x_i^{r_1}y_i^{q_1},x_j^{r_2}y_j^{q_2}] =
$$
$$
=(q_1r_2 - q_2r_1) P_{r_1+r_2-1,q_1+q_2-1,\nu_n}  ,
$$
where we use $P_{0,0,\nu_n}$ to denote $\nu_n$.

These formulas show us that $\gr\mathcal D^{\rm ext}_{1,k}$ is isomorphic to a deformation of $U(\mathfrak{po})$ after identification of $T_{i,j}$  with $p^iq^j$. So it follows that $\mathcal D^{\rm ext}_{1,k}$ is a deformation of $U(\mathfrak{po})$. Moreover it is a flat filtered deformation, by virtue of the fact that $T({{\bf m}})K^i$ constitute a basis of $\mathcal D^{\rm ext}_{1,k}$.
\end{proof}

Since we know all possible flat filtered deformations of $U(\po)$, it follows that $\mathcal D^{\rm ext}_{1,k}$ is isomorphic to $A_{s_1,s_2}$ for some choice of constants. We would also like to calculate the exact correspondence. 
\begin{prop}\label{defpropDDC}
The DDC-algebra $\mathcal D^{\rm ext}_{1,k}$ is isomorphic to $A_{s_1,s_2}$ with  
$$
s_1=1 + k(k+1)(1-K) \ \text{and} \ s_2 =k(k+1). 
$$
\end{prop}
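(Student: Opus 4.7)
The plan is to combine Propositions~\ref{DDCflatdef} and~\ref{propdef1} to reduce the statement to determining two polynomials $s_1,s_2\in\mathbb C[K]$, and then to pin these down by evaluating the two ``genuinely deformed'' relations of $A_{s_1,s_2}$ inside $\mathcal D^{\rm ext}_{1,k}$ using the ultraproduct description of the spherical subalgebras $\mathcal B_{1,k_n}(\nu_n)$.

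First, I would apply Proposition~\ref{propdef1} to Proposition~\ref{DDCflatdef} to conclude that $\mathcal D^{\rm ext}_{1,k}\simeq A_{s_1,s_2}$ for some pair $(s_1,s_2)\in\mathbb C[K]^2$. This requires verifying the hypotheses of~\ref{propdef1}: that $U(\mathfrak b)$ still embeds as a subalgebra of $\mathcal D^{\rm ext}_{1,k}$ and that its adjoint action on $\mathfrak b\oplus\mathfrak n_1$ is not deformed. Both are read off from the explicit basis $T({\bf m})K^i$ and the leading-order computations in the proof of Proposition~\ref{DDCflatdef}: the elements $T_{r,q}$ with $r+q\le 3$ span images of the generators of $\mathfrak b\oplus\mathfrak n_1$, and their pairwise brackets in $\mathcal D^{\rm ext}_{1,k}$ agree with the $\po$-brackets, since the lowest piece of $\po$ is $\mathfrak b_{-2}=\mathbb C K$ and no lower-filtration correction can fit.

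Next, I would fix the identification of the generators of $A_{s_1,s_2}$ with specific elements of $\mathcal D^{\rm ext}_{1,k}$ via the dictionary $P_{r,q}\leftrightarrow q^{r}p^{q}$ from the proof of Proposition~\ref{DDCflatdef}: set
$K=T_{0,0}$, $q=T_{1,0}$, $p=T_{0,1}$, $e=-\tfrac12 T_{2,0}$, $f=\tfrac12 T_{0,2}$, $r=\tfrac16 T_{3,0}$,
possibly adjusted by lower filtration pieces in $\mathbb C[K]$ of the appropriate degree so that the undeformed relations of Proposition~\ref{propgen3} hold exactly. The necessary corrections are determined by a short direct calculation analogous to the one at the end of the proof of Proposition~\ref{propdiff}.

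The heart of the proof is then the explicit evaluation of the two deformed relations. To obtain $s_1$, I would compute
\[
[r,\ad_f^3(r)]-[\ad_f(r),\ad_f^2(r)]
\]
in $\mathcal D^{\rm ext}_{1,k}$. Using the notation $\backsim$ of Definition~\ref{defshort}, this reduces to computing the same expression inside $\mathcal B_{1,k_n}(\nu_n)$ for $n$ large: expand each $T_{r,q,\nu_n}$ in the $x_i,y_i$, commute all $y_i$'s past all $x_j$'s via
\[
[y_i,x_j]=\delta_{ij}\Bigl(1-k_n\sum_{m\ne i}s_{im}\Bigr)+(1-\delta_{ij})k_n s_{ij},
\]
and read off the $\be$-component. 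The expected answer is $-\tfrac12\nu_n\bigl(1+k_n(k_n+1)(1-\nu_n)\bigr)\be$, whose ultralimit is $-\tfrac12 K\bigl(1+k(k+1)(1-K)\bigr)$; comparison with the relation $[r,\ad_f^3(r)]-[\ad_f(r),\ad_f^2(r)]=-s_1K/2$ of Proposition~\ref{propdef2} yields $s_1=1+k(k+1)(1-K)$. The parameter $s_2$ is then extracted in exactly the same way from the degree-$4$ deformed relation
\[
3[\ad_f^2\ad_r^2(f),\ad_f\ad_r^2(f)]-2[\ad_f^3\ad_r^2(f),\ad_r^2(f)],
\]
compared with $3\bigl((30s_1+14s_2K)e+7s_2q^2\bigr)$ using the value of $s_1$ already determined, giving $s_2=k(k+1)$. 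The main obstacle is this second, degree-$4$ computation: it involves commutators of elements of total Cherednik degree up to $8$ with a substantial amount of bookkeeping to isolate the $S_{\nu_n}$-invariant component. Two cheap consistency checks are available: at $k=0$ the formulas give $s_1=1,\ s_2=0$, in agreement with the identification $\mathcal B_{1,0}(n)=D(\mathbb C^n)^{S_n}$ and Proposition~\ref{propdiff}; and both $s_1$ and $s_2$ are manifestly invariant under $k\mapsto-k-1$, consistent with the corresponding symmetry of the Cherednik spherical subalgebra.
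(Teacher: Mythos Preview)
Your overall strategy is correct and matches the paper's: reduce via Propositions~\ref{DDCflatdef} and~\ref{propdef1} to determining $(s_1,s_2)$, pin down the generators as $T_{r,q}$ with no lower-order corrections, and then evaluate deformed relations inside $\mathcal B_{1,k_n}(\nu_n)$.

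There are two genuine tactical differences, though. First, the paper does not compute directly with the Cherednik relations $[y_i,x_j]=\delta_{ij}(1-k\sum_m s_{im})+(1-\delta_{ij})ks_{ij}$ as you propose; instead it passes through the faithful polynomial representation $\pi_n:\mathcal B_{1,k_n}(\nu_n)\to\overline{\mathbb Q}(x_1,\dots,x_{\nu_n})[\partial_1,\dots,\partial_{\nu_n}]^{S_{\nu_n}}$, where $\sum_i y_i^2$ becomes the Calogero--Moser operator $\sum_i\partial_i^2-\kappa\sum_{i\ne j}(x_i-x_j)^{-2}$ with $\kappa=k(k+1)$. This packages all the reflection terms into a single rational potential and makes the degree-$4$ computation a matter of differentiating rational functions rather than tracking group elements through nested commutators; the $k(k+1)$ structure is then visible from the outset rather than emerging at the end. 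Second, the paper computes only the $\chi_1$ relation (your degree-$4$ one) and reads off \emph{both} $s_1$ and $s_2$ from its right-hand side $3((30s_1+14s_2K)e+7s_2q^2)$, whereas you propose to first extract $s_1$ from the simpler $\phi_1$ relation and then $s_2$ from $\chi_1$. Your route trades one hard computation for one easy and one hard computation; the paper's route does the single hard computation but gets everything from it. Either is fine, but if you stay with the direct Cherednik-algebra approach for the degree-$4$ relation, the bookkeeping you flag as ``substantial'' is substantially worse than in the Dunkl picture---the polynomial representation is the device that makes the paper's computation short.
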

\begin{proof}
We know that $\mathcal D^{\rm ext}_{1,k} \simeq A_{s_1,s_2}$ for some $s_1,s_2 \in \mathbb C[K]$. Denote this isomorphism by $\beta:A_{s_1,s_2} \to \mathcal D^{\rm ext}_{1,k}$. It is enough to calculate $s_1,s_2$ via evaluating one of the commutators. We will largely follow the steps of the proof of Proposition \ref{propdiff}. 

First, we will define another shorthand notation for our calculations:
\begin{def0}
Recall the notation of Definition \ref{defshort}. Suppose $Y = \beta(X)$ for some element $X \in A_{s_1,s_2}$. Consider the faithful polynomial representation: 
$$
\pi_n:\mathcal B_{1,k_n}(\nu_n) \to \FQ(x_1,\dots,x_{\nu_n})[\partial_1,\dots,\partial_{\nu_n}]^{S_{\nu_n}}.
$$
 Oftentimes it will be easier for us to work with $f_n(\lbrace T_{\nu_n}({{\bf m}})\rbrace )$ as $\pi_n^{-1}(f_n(\lbrace\pi_n( T_{\nu_n}({{\bf m}}))\rbrace ))$. In this case we will use another shorthand notation $X \backsim_\beta f_n(\lbrace\pi_n( T_{\nu_n}({{\bf m}}))\rbrace )\nu_n^i$, where we will consider the r.h.s for a large enough $n$.
\end{def0}

Note that the highest orders of generators are as follows. We have\footnote{Here we slightly abuse the notation and denoting by $p,q$ both the elements $q,p$ in $\po$ and the generators of $A_{s_1,s_2}$. Since these elements lie in different spaces this shouldn't cause any confusion.} ${\rm gr}^0(K) = K$, ${\rm gr}^1(T_{1,0}) = {q}$, ${\rm gr}^1(T_{0,1}) = {p}$, ${\rm gr}^2(T_{2,0}) = {q}^2$, ${\rm gr}^2(T_{1,1}) =  {pq}$, ${\rm gr}^2(T_{0,2}) =  {p}^2$ and ${\rm gr}^3(T_{3,0}) =  {q}^3$. Thus it follows that $\beta(K)=K + \dots$, $\beta(q) = T_{1,0} + \dots$, $\beta(p) = T_{0,1} + \dots$, $\beta(e) = -\frac{T_{2,0}}{2} + \dots$, $\beta(f)= \frac{T_{0,2}}{2} + \dots$ and $\beta(r) = \frac{T_{3,0}}{6} + \dots$, where ``$\dots$" stand for lower order terms. Note that since commutator has additional terms only 2 degrees lower, it follows that additional terms in $\beta$ also can only be $2k$ degrees lower for a positive integer $k$.

Thus it follows that there are no additional terms in the action of $\beta$ on $K,q$ and $p$. Suppose $\beta(e) = - \frac{T_{2,0}}{2} + \gamma_1$ and $\beta(f) = \frac{T_{0,2}}{2} + \gamma_2 $ for some $\gamma_i \in \mathbb C[K]$. First let's calculate $[\beta(e),\beta(f)]$:
$$
[e,f] \backsim_{\beta} \frac{1}{4}[\sum_i D_i^2, \sum_j x_j^2] = \frac{1}{4}[\sum_i \partial_i^2 - c(c+1)\sum_{i\ne k}\frac{1}{(x_i-x_k)^2}, \sum_j x_j^2] =
$$
$$
=\frac{1}{4}\sum_{i,j}[ \partial_i^2,x_j^2] = \frac{1}{4}\sum_i (4x_i\partial_i + 2) \backsim_\beta \beta^{-1}(T_{1,1})  ,
$$
so we conclude that $[\beta(e),\beta(f)] = T_{1,1}$. Now we want $[[\beta(e),\beta(f)],\beta(e)] = -\frac{1}{2}[T_{1,1},T_{2,0}]$ to be equal to $2\beta(e)$. We calculate:
$$
[[e,f],e]\backsim_{\beta} \frac{1}{2}[\sum_i x_i\partial_i, \sum_{j}x_j^2] \backsim_{\beta} \beta^{-1}\left(\frac{1}{2}T_{2,0}\right).
$$
Thus we conclude $\gamma_1 = 0$. A similar calculation results in $\gamma_2 = 0$. 

Now we can write $\beta(r) = \frac{T_{3,0}}{6} + \delta_1T_{1,0} + \delta_2 T_{0,1}$ for $\delta_i \in \mathbb C[K]$ (we only need to add elements of the lower degrees which have the same parity). Let's calculate $[\beta(r),\beta(q)]$ and $[\beta(r),\beta(p)]$. To do this, we need to calculate $[T_{3,0},T_{1,0}]$ and $[T_{0,3},T_{0,1}]$. The first one is obviously zero. So we have $[\beta(r),\beta(a_1)] = \delta_2[T_{0,1},T_{1,0}] = \delta_2K$. But this commutator should be zero. Hence $\delta_2 = 0$. Now for the other one:
$$
[T_{0,1,n},T_{3,0,n}] = \sum_{i,j}[\partial_i,x_j^3] = 3T_{2,0,n}  ,
$$

Thus $[\beta(r),\beta(p)] = [\frac{T_{3,0}}{6} + \delta_1T_{1,0}, T_{0,1}] = \frac{T_{2,0}}{2} - \delta_1K = -\beta(e) -\delta_1K$. Hence $\delta_1 = 0$. Thus we have successfully calculated the images of all the generators. 

Now we need to calculate the image of $ 3[{\rm ad}_{f}^2{\rm ad}_{r}^2(f), {\rm ad}_{f}{\rm ad}_{r}^2(f)] - 2[{\rm ad}_{f}^3{\rm ad}_{r}^2(f), {\rm ad}_{r}^2(f)]$. Indeed,  this is the only relation where both $s_1$ and $s_2$ are present. We calculate:
$$
{\rm ad}_{r}(f) = -[f,r] \backsim_{\beta} -\frac{1}{12}[\sum_i \partial_i^2, \sum_jx_j^3]= -\frac{1}{2}\sum_i(x_i^2\partial + x_i)  ,
$$
and 
$$
{\rm ad}_{r}^2(f) \backsim_{\beta} \frac{1}{12}[\sum_i x_i^2\partial_i, \sum_j x_j^3] = \frac{1}{4}\sum_i x_i^4  .
$$

Similarly we can compute the results of the action of powers of ${\rm ad}_{f}$. The differential operator part is quite straightforward, but we will write down the part depending on $c$ in more detail. Denoting $X = {\rm ad}_{r}^2(f)$ and $\kappa = k(k+1)$, we have:
$$
{\rm ad}_{f}(X) \backsim_{\beta} \frac{1}{8}[\sum_i \partial_i^2, \sum_j x_j^4] = \sum_{i}\left(x_i^3\partial_i + \frac{3}{2}x_i^2\right),
$$
$$
{\rm ad}^{2}_{f}(X) \backsim_{\beta} \frac{1}{4}[\sum_i \partial_i^2, \sum_j(2x_j^3 \partial_j + 3x_j^2)] -\frac{\kappa}{2}[\sum_{i\ne j}\frac{1}{(x_i-x_j)^2}, \sum_m x_m^3\partial_m] =
$$
$$
=\sum_i\left(3x_i^2\partial_i^2 + 6x_i\partial_i +\frac{3}{2}\right) -\frac{\kappa}{2}\left(\sum_{i\ne j}\frac{2x_i^3-2x_j^3}{(x_i-x_j)^3}\right).
$$
Now, transforming the last sum, we have:
$$
\sum_{i\ne j}\frac{2x_i^3-2x_j^3}{(x_i-x_j)^3} = \sum_{i\ne j}\frac{2x_ix_j -x_i^2-x_j^2 + 3x_i^2 +3x_j^2}{(x_i-x_j)^2} = 3\sum_{i\ne j}\frac{x_i^2 + x_j^2}{(x_i-x_j)^2} - n(n-1)  .
$$
So in total we have:
$$
{\rm ad}^{2}_{f}(X) \backsim_{\beta} \sum_i(3x_i^2\partial_i^2 + 6x_i\partial_i) -\frac{3}{2}\kappa\sum_{i\ne j}\frac{x_i^2+x_j^2}{(x_i-x_j)^2} + \frac{3}{2}n + \frac{\kappa n(n-1)}{2}.
$$
The next one is
$$
{\rm ad}^{3}_{f}(X) \backsim_{\beta} \frac{1}{2}[\sum_i \partial_i^2, \sum_j(3x_j^2\partial_j^2 + 6x_j\partial_j)] - \frac{3}{4}\kappa[\sum_m\partial_m^2, \sum_{i\ne j} \frac{x_i^2+x_j^2}{(x_i-x_j)^2}] - 
$$
$$
-\frac{\kappa}{2}[\sum_{i\ne j}\frac{1}{(x_i-x_j)^2}, \sum_m(3x_m^2\partial_m^2 + 6x_m\partial_m)]. 
$$
The second commutator in this formula amounts to:
$$
\sum_{k,i\ne j}[\partial_k^2, \frac{x_i^2+x_j^2}{(x_i-x_j)^2}] = \sum_{i\ne j} \left( 2\frac{2x_i\partial_i +2x_j\partial_j}{(x_i-x_j)^2} + 2\frac{(x_i^2+x_j^2)(-2\partial_i+2\partial_j)}{(x_i-x_j)^3} +\frac{2+2}{(x_i-x_j)^2} +\right. 
$$
$$
\left.+2\frac{-4x_i+4x_j}{(x_i-x_j)^3} + \frac{(x_i^2+x_j^2)(6+6)}{(x_i-x_j)^4}\right) = 
$$
$$
=4\sum_{i\ne j}\left(\frac{(x_i+x_j)(x_i\partial_j - x_j \partial_i)}{(x_i-x_j)^3} - \frac{1}{(x_i-x_j)^2} + 3\frac{(x_i^2+x_j^2)}{(x_i-x_j)^4}\right)  ,
$$
and the third one:
$$
[\sum_{i\ne j}\frac{1}{(x_i-x_j)^2}, \sum_m(3x_m^2\partial_m^2 + 6x_m\partial_m)] = -3\sum_{i\ne j}\left(2\frac{-2x_i^2\partial_i + 2x_j^2\partial_j}{(x_i-x_j)^3} + \frac{6x_i^2+6x_j^2}{(x_i-x_j)^4} +2\frac{-2x_i+2x_j}{(x_i-x_j)^3}   \right).
$$
So the original expression amounts to:
$$
{\rm ad}^{3}_{f}(X) \backsim_{\beta} 3\sum_i(2x_i\partial_i^3 + 3\partial_i^2) + 3\kappa\sum_{i\ne j}\left( \frac{2x_j^2\partial_j -2x_i^2\partial_i -(x_i+x_j)(x_i\partial_j-x_j\partial_i)}{(x_i-x_j)^3}-3\frac{1}{(x_i-x_j)^2} \right) = 
$$
$$
=3\sum_i(2x_i\partial_i^3 + 3\partial_i^2) - 3\kappa\sum_{i\ne j}\left(\frac{x_i\partial_j + x_j\partial_i + 2x_j\partial_j + 2x_i\partial_i}{(x_i-x_j)^2} +3\frac{1}{(x_i-x_j)^2}\right). 
$$

So, now we can finally compute the image of the relation:
$$
3[{\rm ad}^2_{f}(X),{\rm ad}_{f}(X)] - 2[{\rm ad}^3_{f}(X),X] \backsim_{\beta} $$
$$
\backsim_{\beta} 3[3\sum_i(x_i^2\partial_i^2 + 2x_i\partial_i), \sum_j(x_j^3\partial_j + \frac{3}{2}x_j^2)] - 2[3\sum_i(2x_i\partial_i^3 + 3\partial_i^2), \frac{1}{4}\sum_j x_j^4] -
$$
$$
-\frac{9}{2}\kappa[\sum_{i\ne j}\frac{x_i^2+x_j^2}{(x_i-x_j)^2},\sum_mx_m^3\partial_m] + 6\kappa[\sum_{i\ne j}\frac{x_i\partial_j + x_j\partial_i + 2x_i\partial_i + 2x_j\partial_j}{(x_i-x_j)^2}, \frac{1}{4}\sum_m x^4_m].
$$
The part coming from the first two commutators is just the r.h.s. of the relation when $k=0$. It is equal to:
$$
-15 \cdot 3 \sum_i x_i^2 \backsim 3\cdot 2\cdot 15 b_1  ,
$$
as we would expect since in this case $s_1 = 1, s_2 = 0$.

The third commutator gives:
$$
[\sum_{i\ne j}\frac{x_i^2 + x_j^2}{(x_i-x_j)^2}, \sum_m x_m^3 \partial_m] = -\sum_{i\ne j}\frac{2x_i^4 + 2x_j^4}{(x_i-x_j)^2} + \sum_{i \ne j} \frac{2(x_i^2+x_j^2)(x^3_i-x^3_j)}{(x_i-x_j)^3} =
$$
$$
=2\sum_{i\ne j} \frac{(x_i^2 +x_j^2)(x_i^2+x_ix_j+x_j^2) - x_i^4-x_j^4 }{(x_i-x_j)^2},
$$
and the forth one:
$$
[\sum_{i\ne j}\frac{x_i\partial_j + x_j\partial_i + 2x_i\partial_i +2x_j\partial_j}{(x_i-x_j)^2},\sum_m x_m^4] = 4\sum_{i\ne j}\frac{x_ix_j^3 + x_jx_i^3 + 2x_i^4 + 2x_j^4}{(x_i-x_j)^2}.
$$
If we put together the formulas for the third and forth commutators in the original expression, we get:
$$
3[{\rm ad}^2_{f}(X),{\rm ad}_{f}(X)] - 2[{\rm ad}^3_{f}(X),X] - 3\cdot 15\cdot 2b_1 \backsim_{\beta} 
$$
$$
\backsim_{\beta} 3\kappa\sum_{i\ne j}\frac{-3(2x_i^2x_j^2 + x_i^3x_j + x_j^3x_i) + 2(x_ix_j^3 + x_jx_i^3 + 2x_i^4 +2x_j^4)}{(x_i-x_j)^2} =
$$
$$
=3\kappa \sum_{i\ne j}\frac{4x_i^4 + 4x_j^4-6x_i^2x_j^2 -x_ix_j^3-x_jx_i^3}{(x_i-x_j)^2} = 3\kappa \sum_{i\ne j}\frac{4(x_i^2+x_j^2)(x_i-x_j)^2 + 7x_ix_j(x_i-x_j)^2}{(x_i-x_j)^2}=
$$
$$
= 3\kappa\sum_{i\ne j}(8x_i^2 + 7x_ix_j) 
=3(8\kappa(n-1)\sum_i x_i^2 + 7(\sum_i x_i)^2 - 7\sum_{i}x_i^2) \backsim_{\beta}
$$
$$
\backsim_{\beta} 3\kappa(-16b_1(K-1) + 7(a_1^2 + 2b_1))  .
$$
Thus we see that:
$$
3[{\rm ad}^2_{f}(X),{\rm ad}_{f}(X)] - 2[{\rm ad}^3_{f}(X),X] \backsim_{\beta} 3(2(15-\kappa(8K-15))b_1 +7\kappa a_1^2) = 
$$
$$
=3((30(1+\kappa(1-K)) +14\kappa K)b_1 +7\kappa a_1^2) .
$$
And we can conclude that $s_1 = 1+k(k+1)(1-K)$ and $s_2 = k(k+1)$.
\end{proof}

\begin{rem}
Note that instead of using the computer calculation from Proposition \ref{propdef1}, we could have defined the map on generators by the same formula as $\beta$ and checked that it satisfies the remaining relations. This is easy to do, in fact the relation we have checked is the most complicated one. 
\end{rem}

\begin{rem}
One can think about the isomorphism of Proposition \ref{defpropDDC} in the following way. For the Lie algebra $\mathbb C[x,\partial]$ there exists a standard map:
$$
U(\mathbb C[x,\partial]) \to S^n\mathbb C[x,\partial] = \textrm{Diff}(\mathbb C^n)^{S_n}  .
$$

One can deform this map to arrive at the map:
$$
A_{s_1,s_2} \to  \left(\textrm{Diff}(\mathbb C^n)\left[\frac{1}{\prod_{1 \le i<j \le n} (x_i-x_j)}\right]\right)^{S_n}  ,
$$
with $s_1 = 1+k(k+1)(1-n)$ and $s_2 = k(k+1)$. These maps are given by the formulas in the polynomial representation of the Cherednik algebra, which we used in the proof of Proposition \ref{defpropDDC}. The isomorphism $\beta$ can be thought of as a certain ultraproduct of these maps.
\end{rem}

We have another corollary:
\begin{cor}
The algebra $A_{1 +k(k+1)(1-K),k(k+1)}$ is a flat filtered deformation of $U(\mathfrak{po})$.
\end{cor}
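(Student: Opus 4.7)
The proof proposal is that this corollary follows immediately by combining the two preceding propositions. Specifically, Proposition \ref{DDCflatdef} establishes that $\mathcal D^{\rm ext}_{1,k}$ is a flat filtered deformation of $U(\mathfrak{po})$, while Proposition \ref{defpropDDC} exhibits a concrete isomorphism of filtered algebras $\beta: A_{1+k(k+1)(1-K),k(k+1)} \xrightarrow{\sim} \mathcal D^{\rm ext}_{1,k}$. Transporting the filtration and the identification of the associated graded along $\beta$ yields the claim; there is essentially no new content to verify.

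In more detail, the plan is: first I would note that in the construction of $A_{s_1,s_2}$ (Proposition \ref{propdef1}), the deformation parameters $s_1,s_2$ only affect the lower-order terms of the top four relations, so the algebra is automatically equipped with the natural filtration inherited from $U(\mathfrak{po})$, and there is a canonical surjection $U(\mathfrak{po}) \twoheadrightarrow \gr A_{s_1,s_2}$. Flatness of the deformation is precisely the statement that this surjection is an isomorphism. Under $\beta$, this surjection is identified with the surjection $U(\mathfrak{po}) \twoheadrightarrow \gr \mathcal D^{\rm ext}_{1,k}$, which is an isomorphism by Proposition \ref{DDCflatdef} (the leading-term computation in the proof of \ref{DDCflatdef} exactly matches the Poisson bracket on $\mathfrak{po}$, and the basis $T(\mathbf{m})K^i$ restricts to a basis of $\gr$ coinciding with the PBW basis of $U(\mathfrak{po})$). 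Hence the corresponding surjection for $A_{1+k(k+1)(1-K),k(k+1)}$ is also an isomorphism, which is exactly flatness of the filtered deformation.

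There is no real obstacle to overcome: the work has already been done in Propositions \ref{DDCflatdef} and \ref{defpropDDC}. The only minor point to check is that the isomorphism $\beta$ constructed in Proposition \ref{defpropDDC} indeed respects the filtrations on both sides (which is immediate from its definition on generators, since each generator is sent to an element whose leading term has the correct degree). A one-line proof will suffice: apply Proposition \ref{DDCflatdef} and Proposition \ref{defpropDDC}.
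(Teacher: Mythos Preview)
Your proposal is correct and matches the paper's approach exactly: the paper states this corollary without proof, treating it as an immediate consequence of Propositions \ref{DDCflatdef} and \ref{defpropDDC}, which is precisely the combination you describe.
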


\begin{rem}
Note that via Proposition \ref{propquot} we can also easily obtain the presentation by generators and relations of DDC-algebras $\mathcal D_{1,k,\nu}$.
\end{rem}

\subsection{The Galois symmetry} 

Recall that the algebra $\mathcal D_{1,k,\nu}$ is the quotient of $\mathcal D^{\rm ext}_{1,k}$. Indeed, by Proposition \ref{propquot} we have $\mathcal D_{1,k,\nu} = \mathcal D^{\rm ext}_{1,k,\nu}/(K - \nu)$. I.e., it is an algebra where the central parameter $K$ became a scalar.

Now we can see that the equations for $s_1$ and $s_2$ in Proposition \ref{defpropDDC} can be written in terms of the essential parameters $s_1^*=s_1\nu^2$ and $s_2^*=s_2\nu^3$ as follows: 
\begin{equation}\label{equati}
s_1^*=(k^2+k+1)\nu^2-k(k+1)\nu^3,\ s_2^*=k(k+1)\nu^3.
\end{equation} 
It is easy to check that these equations are invariant under the symmetry 
$$
g_1(k,\nu):=\left(\frac{1}{k},k\nu\right).
$$
This implies 
\begin{prop} We have an isomorphism of filtered algebras
$$
\mathcal D_{1,k,\nu}\cong {\mathcal D}_{1,\frac{1}{k},k\nu}.
$$
\end{prop}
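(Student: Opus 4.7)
The plan is to lift the numerical invariance of equations \eqref{equati} under $g_1$ to an actual isomorphism of filtered algebras by exploiting the homogeneity of the presentation of $A_{s_1,s_2}$ with respect to the grading of $\po$. Starting from the identification $\mathcal D_{1,k,\nu}\cong A_{s_1,s_2,K}|_{K=\nu}$ with $(s_1,s_2)=(1+k(k+1)(1-\nu),\ k(k+1))$ given by Propositions \ref{defpropDDC} and \ref{propquot}, the first step is to verify that the assignment $X\mapsto \lambda^{\deg X}X$ on the generators $K,p,q,e,f,r$ (of degrees $-2,-1,-1,0,0,1$) extends, for every nonzero $\lambda\in\mathbb C$, to a filtered $\mathbb C$-algebra isomorphism
$$
\iota_\lambda\colon A_{s_1,s_2,K}\longrightarrow A_{\lambda^{-4}s_1,\,\lambda^{-6}s_2,\,\lambda^{2}K}.
$$
This is a direct inspection of the deformed relations in Proposition \ref{propdef2}: each relation is homogeneous of a fixed degree (degrees $2,3,3,4$ respectively), and the coefficients $s_1,s_2,K$ carry degrees $4,6,-2$ precisely so that both sides scale in the same way under $\iota_\lambda$.

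The second step is to apply $\iota_\lambda$ with $\lambda^2=k$ (choosing any square root). Then $K=\nu$ is mapped to $\lambda^2K=k\nu$, matching the target value of $K$ in $\mathcal D_{1,1/k,k\nu}$, while the parameters transform as
\begin{equation*}
\lambda^{-4}s_1(k,\nu)=\tfrac{s_1(k,\nu)}{k^2}=s_1(1/k,k\nu),\qquad \lambda^{-6}s_2(k,\nu)=\tfrac{s_2(k,\nu)}{k^3}=s_2(1/k,k\nu).
\end{equation*}
Both equalities are a direct consequence of the invariance of $(s_1^*,s_2^*)$ under $g_1$ stated in \eqref{equati}, since the rescaling $(s_1,s_2,K)\mapsto(\lambda^{-4}s_1,\lambda^{-6}s_2,\lambda^2K)$ visibly preserves the essential parameters $s_i^*=s_i K^{i+1}$; alternatively they can be checked by a one-line substitution in the explicit formulas for $s_1,s_2$. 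Composing, the map $\iota_{\sqrt{k}}$ is the sought isomorphism.

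The only point that needs minor care is the verification in the first step, and even this is essentially automatic once the degree of each relation has been tracked. No genuine obstacle arises: the content of the proposition is precisely the observation that the numerical symmetry of the essential parameters \eqref{equati} can be realized as a $\mathbb C$-algebra isomorphism via the grading-compatible rescaling $\iota_{\sqrt k}$, and the filtered (rather than graded) nature of the target is preserved automatically because $\iota_\lambda$ acts by the scalar $\lambda^n$ on the $n$-th graded component of the underlying graded algebra.
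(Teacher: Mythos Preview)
Your proof is correct and is essentially the paper's own argument made explicit. The paper simply notes that the essential parameters $s_1^*,s_2^*$ are invariant under $g_1$ and declares that this implies the isomorphism, relying on the earlier Remark that the parameters $s_1,s_2,K$ have degrees $4,6,-2$ so that ``up to rescaling there are only two essential parameters''; your map $\iota_\lambda$ is precisely this rescaling, written out on generators and verified against the relations of Proposition \ref{propdef2}.
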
  
This proposition also follows from the results of \cite{CEE}, Sections 8,9, see also \cite{EGL}, Section 6, which establish similar symmetries for spherical Cherednik algebras of finite rank. There is also an obvious symmetry $g_2(k,\nu)=(-k-1,\nu)$. 
It is easy to see that $g_1^2=g_2^2=1$, $(g_1g_2)^3=1$, so $g_1,g_2$ generate a copy of the group $S_3$. In fact, this $S_3$-symmetry comes from permuting the parameters $q_1,q_2,q_3$ in the toroidal quantum group (\cite{Mi}), which can be degenerated to $\mathcal D_{1,k,\nu}$. 

Moreover, this group is also the Galois group of the system of equations \eqref{equati}. 
Namely, we have 
$$
s_1^*+s_2^*=(1+k+k^2)\nu^2,\ s_2^*=k(k+1)\nu^3,
$$
so 
$$
(1+k+k^2)^3=uk^2(k+1)^2,
$$
where $u:=\frac{(s_1^*+s_2^*)^3}{s_1^{*2}}$.
Dividing this by $k^3$, we get 
$$
\zeta^3-u\zeta+u=0, 
$$
where $\zeta:=k+\frac{1}{k}+1$. The group $S_3$ just mentioned is the Galois group of this cubic equation over $\Bbb C(u)$. Namely, $\Bbb C(\zeta)$ is a non-normal cubic extension of $\Bbb C(u)$, and $\Bbb C(k)$ is the corresponding splitting field (a quadratic extension of $\Bbb C(\zeta)$). 

\section{Deformed double current algebras for arbitrary $\Gamma$}

\subsection{The case of general $\Gamma$} 
In this section we will repeat the construction of Section 4.1 for the DDCA corresponding to arbitrary $\Gamma$. Here again for brevity we consider only the case of transcendental $\nu$. Since the construction is literally the same upon changing $\Rep(S_\nu)$ to $\Rep(S_\nu \ltimes \Gamma^\nu)$, we will go over it rather quickly.

First we start with a definition.
\begin{def0} \label{Htkcdef}
The object $H_{t,k,c}(\nu,\Gamma)\be \in \Rep(H_{t,k,c}(\nu,\Gamma))$ is defined to be equal to $\Ind_{S_\nu\ltimes \Gamma^\nu}^{H_{t,k,c}(\nu,\Gamma)}(\Bbbk)$. It follows that $H_{t,k,c}(\nu,\Gamma)\be = \prodF^{C,r}H_{t_n,k_n,c_n}(n,\Gamma)\be$.
\end{def0}

Note that assigning $\deg(V)=1$ gives us the filtration on $H_{t,k,c}(\nu,\Gamma)\be$ in the same fashion as in the discussion after Lemma \ref{HElemma}. The same filtration works in finite rank.

Now we can define the DDCA itself:
\begin{def0}
The DDC algebra $\mathcal D_{t,k,c,\nu}(\Gamma)$ is given by:
$$
\mathcal D_{t,k,c,\nu}(\Gamma):=\End_{\Rep(H_{t,k,c}(\nu, \Gamma))}(H_{t,k,c}(\nu,\Gamma)\be) = \Hom_{\Rep(S_\nu\ltimes \Gamma^\nu)}(\mathbb C, H_{t,k,c}(\nu,\Gamma)\be)  .
$$
\end{def0}

Similarly to Proposition \ref{DDCult}, we have:
\begin{prop}
The algebra $\mathcal D_{t,k,c,\nu}(\Gamma)$ can be constructed as the restricted ultraproduct of spherical subalgebras $\prodF^r \mathcal B_{t_n,k_n,c_n}(n,\Gamma)$ with respect to the filtrations mentioned after Definition \ref{Htkcdef}.
\end{prop}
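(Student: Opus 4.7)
The plan is to imitate the argument used for Proposition \ref{DDCult} in the case $\Gamma=1$, replacing $\Rep(S_\nu)$ by $\Rep(S_\nu\ltimes \Gamma^\nu)$ throughout, and using the restricted ultraproduct description of $H_{t,k,c}(\nu,\Gamma)\be$ provided by Definition \ref{Htkcdef} together with the remark after Proposition \ref{inddelcons} which allows $\Hom$ to pass through the restricted ultraproduct when the source is a compact object.

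First, unwind the definition. By the adjunction $\Ind_{S_\nu\ltimes \Gamma^\nu}^{H_{t,k,c}(\nu,\Gamma)}\dashv \Res$ (or equivalently, by the analogue of Remark \ref{sphrem} in this setting),
$$
\mathcal D_{t,k,c,\nu}(\Gamma)=\End_{\Rep(H_{t,k,c}(\nu,\Gamma))}\bigl(H_{t,k,c}(\nu,\Gamma)\be\bigr)=\Hom_{\IND(\Rep(S_\nu\ltimes \Gamma^\nu))}\bigl(\mathbb C,\, H_{t,k,c}(\nu,\Gamma)\be\bigr).
$$
Second, by Definition \ref{Htkcdef} the right-hand ind-object is the restricted ultraproduct
$H_{t,k,c}(\nu,\Gamma)\be=\prodF^{C,r}H_{t_n,k_n,c_n}(n,\Gamma)\be$
with respect to the filtration induced by $\deg V=1$; this filtration matches the one on the finite-rank induced modules, i.e. the filtration inherited from the PBW filtration of $H_{t_n,k_n,c_n}(n,\Gamma)$.

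Third, since $\mathbb C$ (the trivial object) is a compact object of $\IND(\Rep(S_\nu\ltimes\Gamma^\nu))$, the remark following Proposition \ref{inddelcons} (adapted verbatim to the wreath product setting) yields
$$
\Hom_{\IND(\Rep(S_\nu\ltimes\Gamma^\nu))}\bigl(\mathbb C,\, \prodF^{C,r}H_{t_n,k_n,c_n}(n,\Gamma)\be\bigr)=\prodF^{r}\Hom_{\Repb_{p_n}(S_n\ltimes \Gamma^n)}\bigl(\Fpn,\, H_{t_n,k_n,c_n}(n,\Gamma)\be\bigr),
$$
with the restricted ultraproduct on the right-hand side taken with respect to the filtration induced on the Hom spaces. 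Finally, the standard identification
$$
\Hom_{\Repb_{p_n}(S_n\ltimes\Gamma^n)}\bigl(\Fpn, H_{t_n,k_n,c_n}(n,\Gamma)\be\bigr)=\be H_{t_n,k_n,c_n}(n,\Gamma)\be=\mathcal B_{t_n,k_n,c_n}(n,\Gamma)
$$
is intertwined with the filtrations, giving $\mathcal D_{t,k,c,\nu}(\Gamma)=\prodF^{r}\mathcal B_{t_n,k_n,c_n}(n,\Gamma)$, as required.

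The only mildly non-formal point — and thus the one I would write out with care — is checking that the isomorphism in the third step really does intertwine the filtrations on both sides (so that the two restricted ultraproducts agree), and that the compactness argument from the remark after Proposition \ref{inddelcons} transfers without change from $\Rep(S_\nu)$ to $\Rep(S_\nu\ltimes\Gamma^\nu)$; the latter is immediate because $\mathbb C$ remains simple (hence of finite length, hence compact) in the wreath product setting, and the former follows from the definition of the induced module's filtration via $\deg V=1$, which is the one compatible with taking $\be$-invariants term by term. Everything else is a direct reuse of the $\Gamma=1$ argument.
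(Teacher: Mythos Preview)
Your proposal is correct and follows exactly the approach the paper intends: the paper does not give a separate proof here but simply writes ``Similarly to Proposition \ref{DDCult}'', and your argument is precisely the line-by-line adaptation of that proof to general $\Gamma$, replacing $\Rep(S_\nu)$ by $\Rep(S_\nu\ltimes\Gamma^\nu)$ and invoking Definition \ref{Htkcdef} and the remark after Proposition \ref{inddelcons}. Your additional care about compactness of $\mathbb C$ and compatibility of filtrations is welcome but not strictly needed beyond what the $\Gamma=1$ case already uses.
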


\begin{rem}
We can also do the same thing in the Deligne categories over $\Cext$ and obtain the algebra $\widetilde{\mathcal D}_{t,k,c,\nu}(\Gamma)$ over $\Cext$.
\end{rem}

\begin{rem}
The  analogs of the results of Section 4.1.3 still hold and we can also construct the algebra $\mathcal D^{\rm ext}_{t,k,c}(\Gamma)$ over $\mathbb C$, where $\nu$ becomes a central element. 
\end{rem}

\begin{rem}
Note that we obtain the case of type A if we set $\Gamma = 1$, the trivial group. i.e., we have $\mathcal D_{t,k,\emptyset,\nu}(1) = \mathcal D_{t,k,\nu}$.
\end{rem}

\subsection{The deformed double current algebra of type B}

In this section we would like to sketch some results on the presentation of the DDCA in type B by generators and relations akin to the discussion for type A in Section 4.2. Most of the results of this section were obtained through a computer computation.

First of all note that we can obtain the DDCA of type B by taking $\Gamma = \mathbb Z/2$.
\begin{def0}
Denote $\mathcal D_{t,k,c,\nu}: = \mathcal D_{t,k,c,\nu}(\mathbb Z/2)$. Here $c$ is just a single number, since $\mathbb Z/2$ has a single non-trivial conjugacy class. Define $\widetilde{\mathcal D}_{t,k,c,\nu}$ and $\mathcal D^{\rm ext}_{t,k,c}$ in the same way.
\end{def0}

We saw that $\mathcal D^{\rm ext}_{t,k}$ was a deformation of $U(\mathfrak{po})$. It turns out that a similar statement holds for type B.

\begin{def0}
By $\po^+$ denote the Lie subaglebra of $\po$ given by the linear combinations of even degree monomials. I.e., $\po^+ = \po^{\mathbb Z/2}$, where $\mathbb Z/2$ acts on $\po$ by $p \mapsto -p$ and $q \mapsto -q$. This Lie algebra has an even grading restricted from the grading of $\po$, and this grading is also a grading by $\mathfrak{sl}_2$-modules under the adjoint action of $\po_0$.
\end{def0}

It's now easy to see, by similar arguments, that whereas the ultraproduct of type A algebras $\be H_{t,k}(n)\be$ which are isomorphic to $\FQ[x_1,\dots,x_n,y_1,\dots,y_n]^{S_n}$ as vector spaces is a deformation of $U(\po)$, the ultraproduct of type $B$ algebras $\be H_{t,k,c}(n)\be$ which are isomorphic as vector spaces to $\FQ[x_1,\dots,x_n,y_1,\dots, y_n]^{S_n \ltimes \mathbb (Z/ 2 \mathbb Z)^n}$ is a deformation of $U(\po^+)$.

Now one can also provide a presentation of $\po^+$ similar to Proposition \ref{propgen2}. To state such a result we need to give a few definitions.

\begin{def0}
Denote by $\mathfrak{b}$ the Lie subalgebra of $\po^+$ given by $\po^+_{-2} \oplus \po^+_{0}$. The Lie subalgebra $\mathfrak{n}$ is given by $\po^+_2\oplus \po^+_4 \oplus \dots$. So $\po^+ = \mathfrak{b}\oplus \mathfrak{n}$.\footnote{The algebras $\mathfrak{b}$ and $\mathfrak{n}$ should not be confused with their  analogs from Section 4.} 
\end{def0}

We will need a little more notation:
\begin{def0} \label{typeBpodef}
Fix an isomorpism of $\mathfrak{b}_0$ with $\mathfrak{sl}_2$ given by $e \to b_1 = -\frac{q^2}{2}$ and \linebreak $f \to b_3 = \frac{p^2}{2}$. Fix an isomorphism of $\mathfrak{b}_{-2}$ with $V_0$ with the highest weight vector specified as $K$.

Fix an isomorphism of $\mathfrak{n}_2$ with $V_4$ with the highest weight vector specified as $d_1 = \frac{q^4}{8}$.

Consider the free Lie algebra $L(\mathfrak{n}_2)$. Consider $\Lambda^2\mathfrak{n}_2 \subset L(\mathfrak{n}_2)_4$. As $\mathfrak{sl}_2$-modules we have $\Lambda^2\mathfrak{n}_2 \simeq V_6 \oplus V_2$. Denote the submodule of $\Lambda^2 \mathfrak{n}_2$ isomorphic to $V_0$ by $\phi_1'$ and the submodule isomorphic to $V_6$ by $\phi_2'$. Fix an isomorphism of $\phi_1'$ with $V_2$ with the highest weight vector specified as $3d_2\wedge d_3-2d_1\wedge d_4$. Fix an isomorphism of $\phi_2'$ with $V_6$ with the highest weight vector specified as $g_1 = d_2\wedge d_1$. Here $d_i = f^{i-1}d_1$.

Consider $\phi_2'\otimes \mathfrak{n}_2 \subset L(\mathfrak{n}_2)_6$. We have $\phi_2' \otimes \mathfrak{n}_2 \simeq V_{10} \oplus V_8 \oplus V_6 \oplus V_4 \oplus V_2$ as $\mathfrak{sl}_2$-modules. Denote the submodule isomorphic to $V_{10}$ by $\psi_5'$ and the submodule isomorphic to $V_4$ by $\psi_2'$. Fix an isomorphism of $\psi_2'$ with $V_4$ with the highest weight vector specified as $g_4\otimes d_1 -3g_3 \otimes d_2 +5g_2 \otimes d_3 - 5 g_1 \otimes d_4$, where $g_i = f^{i-1}g_1$.
\end{def0}

Now we can state how the presentation of $\po^+$ looks like:
\begin{prop} \label{proprelB} (see \cite{GL}, Table 3.1, relations 2.1, 3.2, 3.3) 
The Lie algebra $\po^+$ is generated by $\mathfrak{b} \oplus \mathfrak{n}_2$ with the following set of relations:
\begin{gather*}
    \mathfrak{b}_0 \simeq \mathfrak{sl}_2  , \ \mathfrak{b}_{-2} \simeq V_0 \text{ is central}  , \ \mathfrak{n}_2 \simeq V_4 \text{ as an $\mathfrak{sl}_2$-module}  , \\
    \phi_1' = 0  , \ \psi_5' = 0  , \ \psi_2'= 0  ,
\end{gather*}
where we use fixed isomorphisms from Definition \ref{typeBpodef}. 
\end{prop}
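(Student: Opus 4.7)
The plan is to mirror the strategy used for Proposition \ref{1proprel} in the type A case, adapted to the even setting. The argument naturally splits into three parts: verifying generation, verifying the listed relations, and verifying that these relations are complete. The simplifying feature compared to type A is that $\mathfrak{b}_{-1}$ and the odd components $\po_{2i+1}$ are absent, but the technical obstacle is of essentially the same character.

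First I would check that $\mathfrak{n}_2 \simeq V_4$ generates $\mathfrak{n} = \bigoplus_{i\ge 1}\po^+_{2i}$. The component $\mathfrak{n}_{2i}$ is spanned by the monomials $q^a p^b$ with $a+b = 2i+2$. From the Poisson bracket $\{q^a p^b, q^c p^d\} = (bc-ad)\, q^{a+c-1} p^{b+d-1}$ one obtains inductive formulas analogous to those used for Proposition \ref{1proprel}: brackets of the form $\{q^4, q^{a-3} p^{b+1}\}$ and $\{p^4, q^{a+1}p^{b-3}\}$ produce the required monomials in $\mathfrak{n}_{2i}$ from elements of $\mathfrak{n}_{2i-2}$, completing generation by induction on $i$.

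Second, the three relations $\phi'_1 = 0$, $\psi'_5 = 0$, $\psi'_2 = 0$ each lie in an irreducible isotypic $\mathfrak{sl}_2$-component, so it suffices to check vanishing of the highest weight vector. These are finite mechanical computations in $\Bbbk[p,q]$: for $\phi'_1$, one computes $3\{d_2,d_3\} - 2\{d_1,d_4\}$ with $d_i = f^{i-1}d_1$ and $d_1 = q^4/8$ and verifies it vanishes; analogously for $\psi'_5$, lying in the $V_{10}$ component of a triple bracket of $\mathfrak{n}_2$'s, and for $\psi'_2$, the specified $V_4$-component of the six-fold bracket.

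The main obstacle — and the bulk of the work — is the completeness half: that the quotient $\mathfrak{L}$ of the free Lie algebra on $\mathfrak{n}_2$ by the ideal generated by the $\mathfrak{sl}_2$-orbits of $\phi'_1, \psi'_5, \psi'_2$ surjects onto $\mathfrak{n}$ with no kernel. The cleanest route, as in \cite{GL} and paralleling \cite{van1991defining}, is cohomological: compute $H_2(\mathfrak{n};\Bbbk)$ as an $\mathfrak{sl}_2$-module and check it coincides exactly with $\phi'_1 \oplus \psi'_5 \oplus \psi'_2$, whence the given relations form a minimal generating set of relations. A more elementary but tedious alternative is a direct degree-by-degree dimension count, analogous to the approach indicated in Appendix B for type A: verify that $\dim \mathfrak{L}_{2i} = \dim \mathfrak{n}_{2i} = 2i+3$ by decomposing the free Lie algebra via a Lyndon-type basis, using the relations to eliminate enough classes, and checking equality through the first few degrees after which the inductive structure propagates. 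Finally, the passage from a presentation of $\mathfrak{n}$ to one of $\po^+ = \mathfrak{b} \oplus \mathfrak{n}$ is automatic: the stipulations $\mathfrak{b}_0 \simeq \mathfrak{sl}_2$, $\mathfrak{b}_{-2}$ central, and $\mathfrak{n}_2 \simeq V_4$ as an $\mathfrak{sl}_2$-module together fix the structure of $\mathfrak{b}$ and the adjoint action of $\mathfrak{b}_0$ on the generating subspace $\mathfrak{n}_2$, and the Jacobi identity then determines this action on all higher $\mathfrak{n}_{2i}$, ensuring no additional cross-relations are needed.
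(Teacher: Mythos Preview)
The paper does not actually prove this proposition; it simply cites \cite{GL}, Table 3.1 (relations 2.1, 3.2, 3.3), and moves on. So there is no ``paper's own proof'' to compare against here.

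Your proposal is a reasonable proof strategy and is the natural type B analogue of the argument the paper does carry out in Appendix B for Proposition \ref{1proprel}. The three-part structure (generation, verification that the relations hold, completeness) is correct, and your observation that the passage from a presentation of $\mathfrak{n}$ to one of $\po^+$ is automatic (once the $\mathfrak{sl}_2$-module structure on $\mathfrak{n}_2$ is fixed) mirrors exactly what the paper does in Proposition \ref{propgen2} for type A. The generation step via brackets with $q^4$ and $p^4$ is fine; you should just note that small cases ($a\le 2$, $b\le 2$) require switching which formula you use, but both together cover all monomials. The completeness step is, as you say, the real work: the cohomological route you mention is precisely what \cite{GL} does, while the hands-on inductive dimension count would parallel Appendix B --- in that appendix the key trick is to show that the surjection $\mathfrak{l}_{j-1}\otimes\mathfrak{n}_1\to\mathfrak{l}_j$ kills all $\mathfrak{sl}_2$-summands except the one of the right dimension, using consequences of the relations from the previous degree, and the same scheme would work here with $\mathfrak{n}_2$ in place of $\mathfrak{n}_1$.

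In short: your plan is sound and essentially the type B transcription of the paper's Appendix B argument; the paper itself simply defers to the literature rather than writing it out.
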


Via a computer calculation similar to Proposition \ref{propdef1} one can obtain a result about a certain class of flat filtered deformations of $U(\po^+)$. To state that result we will need one more definition:
\begin{def0}
Consider the free associative algebra $T(\mathfrak{b} \oplus \mathfrak{n}_2)$. Note that the subspace $S^2\mathfrak{b}_0 \subset T(\mathfrak{b} \oplus \mathfrak{n}_2)$ is isomorphic to $V_4 \oplus V_0$ as a $\mathfrak{sl}_2$-module. Denote the submodule isomorphic to $V_4$ as $\alpha'$. Fix an isomorphism of $\alpha'$ with $V_4$ with the highest weight vector specified by $e^2$.
\end{def0}

\begin{prop} \label{propdeftypeB}
Suppose $U$ is a flat filtered deformation of $U(\po^+)$ as an associative algebra (up to an automorphism), such that $U(\mathfrak{b})$ is still a subalgebra of  $U$, and the action of $U(\mathfrak{b})$ on $\mathfrak{n}_2$ is not deformed. Then $U$ is isomorphic to the algebra $A_{s_1,s_2,s_3}$ defined below for some values of $s_1,s_2$ and $s_3$. The algebra $A_{s_1,s_2,s_3}$ is generated by $\mathfrak{b}\oplus \mathfrak{n}_2$ with the set of relations given by the first line of Proposition \ref{proprelB} and the following relations, which substitute the last line in Proposition \ref{proprelB}:
\begin{gather*}
\phi_1' \simeq 6s_1\mathfrak{b}_0  , \ \psi_5' = 0  , \\
\psi'_2 \simeq 24(s_3\alpha' + 12s_2\mathfrak{n}_2),
\end{gather*}
where the notation used is understood in the same way as in Proposition \ref{propdef1}.
\end{prop}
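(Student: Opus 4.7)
The plan is to imitate the computer-assisted strategy that proves Proposition \ref{propdef1}, now applied to the even Poisson algebra $\po^+$. The hypothesis that $U(\mathfrak{b})\subset U$ embeds unchanged and acts on $\mathfrak{n}_2$ via the undeformed action means that, of the defining relations in Proposition \ref{proprelB}, only the three vanishings $\phi_1'=0$, $\psi_5'=0$, $\psi_2'=0$ are allowed to acquire lower-filtration corrections. Each correction must be an $\mathfrak{sl}_2$-equivariant element of $T(\mathfrak{b}\oplus \mathfrak{n}_2)$ of strictly smaller filtration degree than the leading term, and of the same $\mathfrak{sl}_2$-isotype as that term.

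First I would enumerate the possible correction terms by decomposing the low-degree components of $T(\mathfrak{b}\oplus \mathfrak{n}_2)$ into $\mathfrak{sl}_2$-isotypic pieces. Since the generators in $\mathfrak{b}\oplus \mathfrak{n}_2$ have $\mathfrak{sl}_2$-weights at most $4$, no copy of $V_{10}$ occurs in degrees less than $6$; hence $\psi_5'\simeq V_{10}$ cannot be deformed and must remain zero. For $\phi_1'\simeq V_2$ in degree $4$, the only lower-degree $V_2$ is $\mathfrak{b}_0\subset \po_0^+$ (with coefficient a polynomial in the central element $K$), giving a single parameter $s_1$. For $\psi_2'\simeq V_4$ in degree $6$, the $V_4$-isotypic pieces of lower degree that can a priori appear are $\mathfrak{n}_2$ (parameter $s_2$) and $\alpha'\subset S^2 \mathfrak{b}_0$ (parameter $s_3$), together with $K$-multiples of these, which is what the statement of the proposition records.

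The next step is to impose flatness. As in Proposition \ref{propdef1}, one computes all S-polynomials of pairs of deformed relations, reduces them modulo the deformed ideal, and extracts obstructions: any reduced expression whose image in $U(\po^+)$ vanishes but which is not already in the deformed ideal yields a polynomial equation constraining $(s_1, s_2, s_3)$ and any still-floating auxiliary parameters from Step 1. Iterating this Gr\"obner-basis-with-truncation procedure (by MAGMA, following the template of Section~4.2.3) both eliminates the extraneous parameters, leaving exactly the three-dimensional family $A_{s_1,s_2,s_3}$ with the normalization written in the proposition, and confirms that no further obstructions arise, so the resulting family is flat.

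The main obstacle is the same as in the type A case: producing enough S-polynomial reductions so that every nontrivial obstruction is detected, while keeping the calculation tractable. The built-in $\mathfrak{sl}_2$-equivariance reduces the work substantially by letting one restrict to highest-weight vectors of each isotypic summand, but the arithmetic of composing relations at weight $10$ and weight $4$ in degree $6$ is still heavy and is where the computer calculation is essential. Up to the normalization of $s_1, s_2, s_3$ (which can be fixed by comparing with the explicit spherical Cherednik algebra of type B in its polynomial representation, in analogy with Proposition \ref{defpropDDC}), the computer output is exactly the presentation claimed.
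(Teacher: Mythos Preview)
Your overall strategy---parametrize the admissible lower-order corrections to the three relations $\phi_1',\psi_5',\psi_2'$ by $\mathfrak{sl}_2$-isotype and then run the truncated Gr\"obner/S-polynomial calculation of Proposition~\ref{propdef1} to cut down to the surviving parameters---is exactly what the paper does (it says only that the result is obtained ``via a computer calculation similar to Proposition~\ref{propdef1}''). So the plan is fine.

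However, your a priori enumeration of correction terms is not correct, and in particular the argument that $\psi_5'$ must stay undeformed is mistaken. You write that ``no copy of $V_{10}$ occurs in degrees less than $6$'', but this is false in the associative setting you yourself invoke: for instance $e^5\in S^5\mathfrak b_0\subset U(\mathfrak b)$ is a highest-weight vector of weight $10$ sitting in Lie degree $0$, and mixed products such as $S^2\mathfrak n_2\cdot\mathfrak b_0$ (Lie degree $4$) also contain a copy of $V_{10}$. (If instead you meant the filtration in which $\mathfrak b_0$ has degree $2$ and $\mathfrak n_2$ degree $4$, then $\psi_5'$ sits in degree $12$, not $6$, and the same products now live in degree $10<12$.) Likewise there are more $V_2$'s below $\phi_1'$ and more $V_4$'s below $\psi_2'$ than the ones you list, e.g.\ $V_2\subset\mathfrak n_2\cdot\mathfrak b_0$, $V_4\subset S^2\mathfrak n_2$, $V_4\subset\mathfrak n_2\cdot\mathfrak b_0$, and their $K$-multiples. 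The correct statement is that all of these extra parameters are killed by the flatness constraints your Step~2 produces (together with the ``up to automorphism'' freedom), not that they are absent to begin with. Once you drop the faulty shortcut and feed the full list of candidate corrections into the computer calculation, the output is indeed the three-parameter family $A_{s_1,s_2,s_3}$.
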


Now we can state the result about the DDC-algebra of type $B$.

\begin{prop}\label{typB}
The DDC-algebra $\mathcal D^{\rm ext}_{1,k,c}$ is a flat filtered deformation of $U(\po^+)$. It is isomorphic to $A_{s_1,s_2,s_3}$ with 
$$
s_1 = 4k(k+1)K+\lambda^2-4(k^2+k+1),\ s_2 =4k(k+1)K+\lambda^2-9(k^2+k+1),\ s_3 = k(k+1),
$$
where $\lambda:= c+\frac{1}{2}$. 
\end{prop}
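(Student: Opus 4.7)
The plan is to follow the two-step strategy of Propositions \ref{DDCflatdef} and \ref{defpropDDC}, now using Proposition \ref{propdeftypeB} in place of Proposition \ref{propdef1}. First, for flatness: since the nontrivial element of $\mathbb Z/2\subset SL_2(\mathbb C)$ acts as $-\mathrm{Id}$ on each $\mathbb C^2$, the ring of $S_n\ltimes(\mathbb Z/2)^n$-invariants in $\mathbb C[x_1,\ldots,x_n,y_1,\ldots,y_n]$ is generated by the even-degree power sums $P^+_{r,q,n}=\sum_{i=1}^n x_i^r y_i^q$ with $r+q$ even. Constructing symmetrised lifts $T^+_{r,q,n}\in\mathcal B_{1,k,c}(n)$ and multi-index combinations $T^+_n(\mathbf m)$ exactly as in Section 4.1.2, the argument of Lemma \ref{lemmasymm} and Corollary \ref{basisfin} yields a filtered basis $\{T^+_n(\mathbf m)\}_{w(\mathbf m)\le L}$ of $F^L\mathcal B_{1,k,c}(n)$ for $n$ large, whose restricted ultraproduct gives a basis $\{T^+(\mathbf m)K^j\}$ of $\mathcal D^{\rm ext}_{1,k,c}$. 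Its associated graded is $\mathbb C[P^+_{r,q}]_{r+q\text{ even}}=S(\mathfrak{po}^+)$, and the leading Poisson bracket $[P^+_{r_1,q_1},P^+_{r_2,q_2}]=(q_1r_2-q_2r_1)P^+_{r_1+r_2-1,q_1+q_2-1}+\cdots$ stays inside the even sector, proving flatness as a deformation of $U(\mathfrak{po}^+)$. The hypotheses of Proposition \ref{propdeftypeB} are then satisfied: $U(\mathfrak b)$ is undeformed because $\mathfrak b_0\simeq \mathfrak{sl}_2$ comes from the symplectic action on $V\simeq \mathfrak h\oplus \mathfrak h^*$ and $\mathfrak b_{-2}=\mathbb CK$ is central, and the adjoint action on $\mathfrak n_2\simeq V_4$ is inherited from the same $\mathfrak{sl}_2$-structure. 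Consequently $\mathcal D^{\rm ext}_{1,k,c}\cong A_{s_1,s_2,s_3}$ for some $(s_1,s_2,s_3)\in\mathbb C[K]^3$.

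Second, to pin down the constants, I would fix an isomorphism $\beta:A_{s_1,s_2,s_3}\to \mathcal D^{\rm ext}_{1,k,c}$ by setting $\beta(K)=K$, $\beta(e)=-\tfrac12 T^+_{2,0}$, $\beta(f)=\tfrac12 T^+_{0,2}$, and $\beta(d_1)=\tfrac18 T^+_{4,0}$ on the highest-weight vector of $\mathfrak n_2$, with lower-order corrections pinned down by the $\mathfrak b$-commutators exactly as in the proof of Proposition \ref{defpropDDC}. Passing to the Dunkl representation of $H_{1,k,c}(n,\mathbb Z/2)$ acting on $\mathbb C[x_1,\ldots,x_n][1/\prod_i x_i\prod_{i<j}(x_i^2-x_j^2)]$ (in which the parameter $\lambda=c+\tfrac12$ enters through the $\tfrac{\lambda}{x_i}(1-\sigma_i)$ terms of the Dunkl operators), I compute the ``$\backsim_\beta$''-images of the highest-weight vectors of the three relations of Proposition \ref{propdeftypeB}. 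Matching $\phi_1'$ against $6s_1 e$ determines $s_1$; $\psi_5'=0$ is automatic since $\mathfrak n_2$ is $\mathfrak{sl}_2$-irreducible; and the image of $\psi_2'$, expanded in the basis consisting of $e^2$ (the top of $\alpha'$) and $d_1$ (the top of $\mathfrak n_2$), splits into two linear equations which decouple $s_3$ and $s_2$.

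The main obstacle is the length and delicacy of this last computation. Relative to type A, the relations now live in degrees $4$ and $6$ of the free algebra on $\mathfrak n_2\simeq V_4$ rather than on $V_3$, so the intermediate expressions for $\mathrm{ad}_f^j(d_1)$ and $\mathrm{ad}_{d_1}^j(f)$ involve substantially longer rational-function manipulations. Moreover the Dunkl operators carry simultaneously a $k(k+1)$-contribution from the $S_n$-reflections, via $\sum_{j\ne i}(x_i\pm x_j)^{-2}$-type sums, and a $(c+\tfrac12)^2$-contribution from the $\mathbb Z/2$-reflections, via $\sum_i x_i^{-2}$-type sums; disentangling these to recover the stated shifts $\lambda^2-4(k^2+k+1)$ and $\lambda^2-9(k^2+k+1)$, together with the $K$-linear part $4k(k+1)K$ common to $s_1$ and $s_2$, is where the bulk of the work lies. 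The value $s_3=k(k+1)$, independent of both $c$ and $K$, matches the type A answer from Proposition \ref{defpropDDC} and serves as a useful consistency check on the whole calculation.
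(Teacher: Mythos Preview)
Your proposal is correct and follows precisely the approach the paper indicates: the paper does not give a detailed proof of this proposition, stating only that it is obtained by computation parallel to the type~A argument of Propositions~\ref{DDCflatdef} and~\ref{defpropDDC}, which is exactly the two-step strategy you outline (flatness via the even-degree power-sum basis, then identification of $s_1,s_2,s_3$ through the polynomial/Dunkl representation). One small correction: your justification that ``$\psi_5'=0$ is automatic since $\mathfrak n_2$ is $\mathfrak{sl}_2$-irreducible'' is not the actual reason---rather, once you have established flatness and the hypotheses of Proposition~\ref{propdeftypeB}, the relation $\psi_5'=0$ holds simply because it is undeformed in every $A_{s_1,s_2,s_3}$, so there is indeed nothing to compute there.
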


\begin{rem}
Notice that in the same way as $\mathbb C[x,\partial]$ is a flat filtered deformation of $\po$, Feigin's Lie algebra $\mathfrak{gl}(\lambda):=U(\mathfrak{sl}_2)/(C=\frac{\lambda^2-1}{2})$ (where $C:=ef+fe+\frac{h^2}{2}$ is the Casimir) introduced in \cite{feigin1988lie} is a flat filtered deformation of $\po^+$. More precisely we have $U(\mathfrak{gl}(\lambda)) \simeq \mathcal{D}_{1,0,\lambda-\frac{1}{2},\nu}$ (for any $\nu$, as this algebra does not depend of $\nu$); indeed, it is easy to see looking at the relations that the deformation $\mathcal{D}_{1,0,\lambda-\frac{1}{2},\nu}$ arises from the most general deformation of $\po^+$ as a (filtered) Lie algebra. These relations are given in \cite{GL}, at the beginning of Table 3.1. For more information about deformations of $\po^+$ and $\mathfrak{gl}(\lambda)$ see \cite{post1996gl}.
\end{rem}

Note that the parameters $s_1,s_2,s_3$ 
in Proposition \ref{typB} are not independent: we have 
\begin{equation}\label{rela}
s_1-s_2=5(s_3+1). 
\end{equation} 
This is, however, the most general deformation because the parameters 
$s_1,s_2,s_3$ are homogeneous of degrees $4,4,6$, hence can be rescaled 
by $s_1\mapsto z^2s_1$, $s_2\mapsto z^2s_2$, $s_3\mapsto z^3s_3$ without changing the algebra. This implies 

\begin{cor} The deformation $A_{s_1,s_2,s_3}$ is flat for all $s_1,s_2,s_3$. 
\end{cor}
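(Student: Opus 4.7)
The plan is to combine the partial flat realization of Proposition \ref{typB} with the $\mathbb{C}^*$-rescaling symmetry highlighted in the remark just above, and then invoke upper semicontinuity of fiber dimensions.

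First, I would argue that the flat locus
$$
F = \{(s_1,s_2,s_3) \in \mathbb{C}^3 : A_{s_1,s_2,s_3} \text{ is a flat deformation of } U(\mathfrak{po}^+)\}
$$
is Zariski closed in $\mathbb{C}^3$. Since $A_{s_1,s_2,s_3}$ is the quotient of a fixed free algebra by relations depending polynomially on $(s_1,s_2,s_3)$, each graded dimension $(s_1,s_2,s_3)\mapsto \dim A_{s_1,s_2,s_3}^{(d)}$ is upper semicontinuous; the leading terms of the deformed relations recover those of $U(\mathfrak{po}^+)$, giving the bound $\dim A_{s_1,s_2,s_3}^{(d)} \leq \dim U(\mathfrak{po}^+)^{(d)}$; and flatness is exactly the condition that equality hold in every degree $d$. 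Alternatively, as in the proof of Proposition \ref{propdef1}, one can realize $F$ as the vanishing locus of the reductions of the $S$-polynomials of the deformed relations, so $F$ is cut out by polynomial equations.

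Second, Proposition \ref{typB} yields the inclusion $S \subset F$, where $S = \{s_1 - s_2 = 5(s_3+1)\}$: for every $(k,c,K) \in \mathbb{C}^3$ the algebra $\mathcal{D}^{\rm ext}_{1,k,c}$ is a flat deformation of $U(\mathfrak{po}^+)$ isomorphic to $A_{s_1,s_2,s_3}$ with parameters on $S$, and the image of this parametrization is Zariski dense in $S$ (for generic $k$, $s_3 = k(k+1)$ is fixed, and then $K$ and $\lambda^2$ with $\lambda = c+\tfrac{1}{2}$ independently sweep out the one-dimensional fibre of $S$ over $s_3$). Closedness of $F$ then gives $S \subset F$.

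Third, the scaling $A_{s_1,s_2,s_3} \cong A_{z^2 s_1, z^2 s_2, z^3 s_3}$, $z \in \mathbb{C}^*$, recalled just before the corollary, preserves $F$, so $F$ contains the full $\mathbb{C}^*$-orbit of $S$. A point $(s_1,s_2,s_3) \in \mathbb{C}^3$ belongs to this orbit precisely when the cubic
$$
5 z^3 - (s_1-s_2)\,z + 5 s_3 = 0
$$
admits a nonzero root $z$, which fails only on the line $\{s_3 = 0,\ s_1 = s_2\}$. Hence the orbit is Zariski dense in $\mathbb{C}^3$, and by closedness of $F$ we conclude $F = \mathbb{C}^3$. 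The main obstacle is the first step: the closedness of $F$ is natural but requires either a careful semicontinuity argument or an explicit extraction of defining equations for $F$ from the $S$-polynomial computation underlying Proposition \ref{propdeftypeB}; everything else reduces to the cubic-root calculation above.
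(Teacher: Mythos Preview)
Your argument is correct and follows the same idea as the paper's: use the rescaling $A_{s_1,s_2,s_3}\cong A_{z^2 s_1,\,z^2 s_2,\,z^3 s_3}$ to reduce to the flat family of Proposition~\ref{typB}. The paper's proof is the single sentence ``This implies'' preceding the corollary and does not spell out either the Zariski-closedness of the flat locus or the exceptional line $\{s_3=0,\ s_1=s_2\}$ not reached by rescaling, so your write-up is in fact more complete than the paper's own.
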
 

On the other hand, this means that for the algebra $\mathcal D_{1,k,\lambda-\frac{1}{2},\nu}$ we have only two essential parameters, so we cannot recover all the three parameters 
$\nu$, $\lambda$ and $k$ from $s_1,s_2,s_3$. More precisely, we can recover
$k$ (up to the symmetry $k\mapsto -k-1$) and the combination 
$\lambda^2-4k(k+1)\nu$. This gives 

\begin{cor} One has a filtered isomorphism 
$$
\mathcal D_{1,k,\lambda-\frac{1}{2},\nu}\cong \mathcal D_{1,k,\sqrt{\lambda^2-4k(k+1)\nu}-\frac{1}{2},0}.
$$ 
\end{cor}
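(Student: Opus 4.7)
The plan is to deduce the corollary directly from the preceding remark, which asserts that the isomorphism class of $\mathcal D_{1,k,\lambda-\frac{1}{2},\nu}$ as a filtered algebra is determined by only two invariants: the value of $k$ modulo the symmetry $k\mapsto -k-1$, and the combination $\lambda^2 - 4k(k+1)\nu$. Since both algebras appearing in the statement of the corollary share these two invariants by construction, they must be isomorphic as filtered algebras.

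Concretely I would proceed as follows. First, write each side as a specialization of $\mathcal D^{\rm ext}_{1,k,c}$ by imposing $K=\nu$ or $K=0$ respectively, and apply Proposition \ref{typB} to identify each with some $A_{s_1,s_2,s_3}$. On the LHS, with $\lambda=c+\tfrac{1}{2}$, Proposition \ref{typB} yields the triple
$$
s_1 = 4k(k+1)\nu+\lambda^2-4(k^2+k+1),\quad s_2 = s_1-5(k^2+k+1),\quad s_3 = k(k+1).
$$
On the RHS, setting $\nu'=0$ and $\lambda'=\mu=\sqrt{\lambda^2-4k(k+1)\nu}$, we obtain a triple $(s_1',s_2',s_3')$ with $s_3'=s_3$ and second essential invariant $\mu^2 = \lambda^2-4k(k+1)\nu$. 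By the remark, both triples therefore correspond to the same filtered algebra $A_{s_1,s_2,s_3}$, giving the desired filtered isomorphism of the two DDCA's.

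The main difficulty is that the triples $(s_1,s_2,s_3)$ and $(s_1',s_2',s_3')$ produced by Proposition \ref{typB} do not literally coincide: they agree in the third entry but not in the first. The identification therefore must be exhibited using the rescaling symmetry $s_i\mapsto z^{d_i}s_i$ of degrees $(4,4,6)$ together with the constraint \eqref{rela}, which together reduce the three parameters to the two invariants above. This is essentially the content of the preceding remark, but would need to be verified explicitly; checking that the two specializations give the same algebra $A_{s_1,s_2,s_3}$ up to this combined gauge freedom is the main computational step of the proof.
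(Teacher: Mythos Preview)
Your overall plan is exactly the paper's: the corollary is an immediate consequence of the remark that the filtered isomorphism class of $\mathcal D_{1,k,\lambda-\frac{1}{2},\nu}$ is determined by $k$ (up to $k\mapsto -k-1$) and a single combination of $\lambda$ and $\nu$. So the strategy is fine.

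However, the ``main difficulty'' you flag is misdiagnosed, and your proposed resolution via rescaling does not work. If the two triples satisfy $s_3=s_3'=k(k+1)\ne 0$ but $s_1\ne s_1'$, then any rescaling $s_i\mapsto z^{d_i}s_i$ with $d=(4,4,6)$ that preserves $s_3$ forces $z^3=1$, hence $z^2$ is a primitive cube root of unity, and this cannot send a generic $s_1$ to a generic $s_1'$. So the gauge freedom you invoke is not available here.

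The actual explanation is simpler: with the parameters from Proposition~\ref{typB}, specializing $K=\nu$ on the left and $K=0$, $\lambda\to\mu$ on the right gives
\[
s_1=4k(k+1)\nu+\lambda^2-4(k^2+k+1),\qquad s_1'=\mu^2-4(k^2+k+1),
\]
and likewise for $s_2,s_2'$, while $s_3=s_3'=k(k+1)$. Hence the triples coincide \emph{on the nose} precisely when $\mu^2=\lambda^2+4k(k+1)\nu$; no rescaling is needed. The discrepancy you observed comes from the sign in the statement (and in the preceding remark): the invariant that is actually recovered from $(s_1,s_2,s_3)$ is $\lambda^2+4k(k+1)\nu$, not $\lambda^2-4k(k+1)\nu$. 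With that correction the two $A_{s_1,s_2,s_3}$'s are literally equal and the corollary is immediate, exactly as the paper intends.
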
 

Thus, to study the most general DDCA of type $B$ (with $t\ne 0$), 
it suffices to consider the algebras $\mathcal D_{1,k,\lambda-\frac{1}{2},0}$. 
In this case, we have 
$$
s_1-s_2=5(k^2+k+1),\ 9s_1-4s_2=5\lambda^2,\ s_3=k(k+1). 
$$
The essential parameters (unchanged under scaling) are 
$$
u=\frac{(s_1-s_2)^3}{125s_3^2}=\frac{(k^2+k+1)^3}{k^2(k+1)^2},\ v=\frac{4s_2-9s_1}{s_2-s_1}=\frac{\lambda^2}{k^2+k+1}.
$$
It is easy to see that $u,v$ are invariant under the symmetry\footnote{The symmetry $h_1$ appears to be related to the $q\to 1$ limit of a symmetry of
``lifted'' Koornwinder polynomials \cite[Prop. 7.4]{YY}.  These are
symmetric functions that analytically continue the Koornwinder
polynomials in the dimension (introducing an additional parameter in the
process), and satisfy a duality transformation swapping $q$ and $t$, and
thus inverting $k=\log_q(t)$.}
$$
h_1(k,\lambda):=\left(\frac{1}{k},\frac{\lambda}{k}\right). 
$$

Thus, we obtain the following proposition. 

\begin{prop} We have an isomorphism of filtered algebras
$$
\mathcal{D}_{1,k,\lambda-\frac{1}{2},0}\cong 
\mathcal{D}_{1,\frac{1}{k},\frac{\lambda}{k}-\frac{1}{2},0}.
$$
\end{prop}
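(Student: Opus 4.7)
The plan is to identify both sides of the claimed isomorphism with specializations of the three-parameter family $A_{s_1,s_2,s_3}$ of Proposition \ref{propdeftypeB}, via Proposition \ref{typB} applied with $\nu=0$, and then exhibit a single rescaling $z\in \mathbb C^*$ that carries one parameter triple to the other.

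Concretely, applying Proposition \ref{typB} with $\nu=0$ will give
$$\mathcal D_{1,k,\lambda-\frac{1}{2},0}\cong A_{s_1,s_2,s_3},\quad s_1=\lambda^2-4(k^2+k+1),\ s_2=\lambda^2-9(k^2+k+1),\ s_3=k(k+1),$$
and, by substituting $(k,\lambda)\mapsto(1/k,\lambda/k)$, an analogous identification $\mathcal D_{1,1/k,\lambda/k-\frac{1}{2},0}\cong A_{s_1',s_2',s_3'}$. A direct computation, clearing denominators of $k^2$, will show $s_1'=s_1/k^2$, $s_2'=s_2/k^2$, and $s_3'=(k+1)/k^2=s_3/k^3$. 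In particular, setting $z:=1/k$, we obtain $(s_1',s_2',s_3')=(z^2s_1,\,z^2s_2,\,z^3s_3)$.

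The final step is to invoke the scaling invariance of the family $A_{s_1,s_2,s_3}$: since the parameters $s_1,s_2,s_3$ are homogeneous of degrees $4,4,6$ respectively, the transformation $(s_1,s_2,s_3)\mapsto(z^2s_1,z^2s_2,z^3s_3)$ arises from rescaling the generators and hence yields an isomorphic filtered algebra (as stated in the corollary preceding this proposition). Combining this with the identification above gives $A_{s_1,s_2,s_3}\cong A_{s_1',s_2',s_3'}$, and therefore the desired filtered isomorphism. Alternatively, one may phrase the argument in terms of the scaling-invariant parameters $u,v$ computed just before the proposition: one directly checks that both $u=(k^2+k+1)^3/(k^2(k+1)^2)$ and $v=\lambda^2/(k^2+k+1)$ are fixed by $h_1(k,\lambda)=(1/k,\lambda/k)$, and since $(u,v)$ determines the orbit of $(s_1,s_2,s_3)$ under the scaling action, the conclusion follows.

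The main obstacle is not computational but conceptual: one must actually have in hand the scaling isomorphism $A_{s_1,s_2,s_3}\cong A_{z^2 s_1, z^2 s_2, z^3 s_3}$. This is standard given the homogeneity degrees of the relations in Proposition \ref{propdeftypeB} (the generators $\mathfrak b_{-2},\mathfrak n_2$ have degrees $-2,2$ and the deformation terms have the indicated weights), but it is worth writing out explicitly: rescaling the degree-$2$ generators in $\mathfrak n_2$ by $z$ and the degree-$-2$ generator $K$ by $z^{-1}$ effects exactly the substitution $s_i\mapsto z^{d_i}s_i$ in the deformed relations of Proposition \ref{propdeftypeB}. Once this is verified, the rest of the argument is the routine bookkeeping outlined above.
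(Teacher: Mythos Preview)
Your proposal is correct and follows essentially the same approach as the paper. The paper's argument is precisely that the scaling-invariant parameters $u=(k^2+k+1)^3/(k^2(k+1)^2)$ and $v=\lambda^2/(k^2+k+1)$ are fixed by $h_1(k,\lambda)=(1/k,\lambda/k)$, which is exactly your alternative phrasing; your explicit verification that $(s_1',s_2',s_3')=(z^2s_1,z^2s_2,z^3s_3)$ with $z=1/k$ is simply a more concrete unpacking of the same observation.
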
 

We also have symmetries $h_2(k,\lambda)=(-k-1,\lambda)$ and $h_3(k,\lambda)=(k,-\lambda)$, which generate the group $S_3\times \Bbb Z/2$. This group is the Galois group of the extension $\Bbb C(k,\lambda)$ over $\Bbb C(u,v)$.
Indeed, as in type A, we have 
$$
\zeta^3-u\zeta+u=0, 
$$
where $\zeta:=k+\frac{1}{k}+1$. 
So the group $S_3$ is the Galois group of this cubic equation. Furthermore, once $k$ is found, 
we can find $\lambda^2$ from the equation 
$$
\lambda^2=(k^2+k+1)v,
$$
whose Galois group is $\Bbb Z/2$.

\begin{rem} We see that when we interpolate the spherical Cherednik algebras $\bold e H_{1,k,c}(S_n)\bold e$ of type $B$ into the DDCA $\mathcal D_{1,k,c,\nu}$, we lose one parameter (unlike the case of type $A$). Let us explain why such a loss of parameter is inevitable and to be expected a priori. 

To this end, note that for generic $k,c$ the algebra $\bold e H_{1,k,c}(n)\bold e$ is simple and therefore has no nonzero finite dimensional representations. 

On the other hand, let $\overline{\mathfrak{po}}^+:=\mathfrak{po}^+/\Bbb C$. 
We claim that any filtered deformation of $S(\overline{\mathfrak{po}}^+)$ necessarily has a 1-dimensional representation. 

Indeed, let $A$ be such a deformation. Let us show that the augmentation homomorphism $\varepsilon: S(\overline{\mathfrak{po}}^+_0)\to \Bbb C$ lifts to a 1-dimensional representation of $A$.
By definition, $A$ has generators $\bold a=(a_{ij})$ with $i+j>0$ even ($i,j\ge 0$) of filtration degree $i+j$ (namely, lifts of $p^iq^j$) and has defining relations
$$
[a_{ij},a_{kl}]=P_{ijkl}(\bold a),
$$
where $P_{ijkl}$ is a noncommutative polynomial of degree $\le i+j+k+l-2$ whose part of degree exactly $i+j+k+l-2$ is $(jk-il)a_{i+k-1,j+l-1}$. In particular, setting 
$$
-a_{20}/2=e,\ a_{11}=h,\ a_{02}/2=f, 
$$
we get 
$$
[h,e]=2e+c_1, \
[h,f]=-2f+c_2, \ [e,f]=h+c_3, 
$$
for some constants $c_i\in \Bbb C$. These constants give a 2-cocycle on $\mathfrak{sl}_2$, which must be a coboundary since $H^2(\mathfrak{sl}_2)=0$, so by shifting $e,f,h$ by constants we can make sure that $c_i=0$. Thus $A$ contains $\mathfrak{sl}_2$ and we can write all the relations $\mathfrak{sl}_2$-equivariantly. Thus we can assume that $a_{ij}$ with $i+j=2m$ span the representation $V_{2m}$. So $[a_{ij},a_{kl}]$ belongs to the representation $V_{2m}\otimes V_{2n}$ if $i+j=2m$, $k+l=2n$, $m\ne n$, and to $\Lambda^2 V_{2m}$ if $m=n$. These representations don't contain $\Bbb C$, so the polynomial $P_{ijkl}$ can be chosen without constant term. Thus we have a 1-dimensional $A$-module in which all $a_{ij}$ act by 0, as claimed. 

Note that this argument (and the statement itself) fails for $\mathfrak{po}$ (type $A$), since $\Lambda^2 V_r$ contains $\Bbb C$ for odd $r$. 

This loss of parameter is similar to the one for Deligne categories: the interpolation 
of the category ${\rm Rep}GL(m|n)$ is the Deligne category ${\rm Rep} GL_\nu$ with $\nu=m-n$. 
So the interpolation procedure forgets $m,n$ and remembers only the difference $m-n$. 
\end{rem}  

\appendix
\section{Appendix: On structure constants of the deformed double current algebra of type A}

As was promised in the proof of Lemma \ref{lemmaappA}, in this Appendix we will prove that the structure constants of $\mathcal B_{t,k}(n)$ depend polynomially on $n$, making this sequence of algebras fit  Example \ref{infalgex}. So, what we want to show is that $T_n({{\bf m}}_1) \cdot T_n({{\bf m}}_2)$ can be written as a linear combination of $T_n({{\bf m}})$ with coefficients which depend polynomially on $n$. This proof is due to Travis Schedler.

To start with, we will need the following definition.
\begin{def0}
For a function $a: [l] \to \{x,y\}$ (where $[l] = \{1,\dots, l\}$), a function $u:[l] \to [m]$ and a number $C \in \Bbbk$ call an element of $\mathcal B_{t,k}(n)$ given by:
$$
C\sum_{i_1,\dots,i_m=1}^N a(1)_{i_{u(1)}}\dots a(l)_{i_{u(l)}}\be \  
$$
an admissible sum.
\end{def0}

It is easy to see that
$$
T_{r,q,n} = \frac{r!q!}{(r+q)!} \sum_{\substack {a:[r+q] \to \{x,y\} \\  |a^{-1}(x)|=r}}\sum_{i=1}^na(1)_i\dots a(r+q)_i  .
$$
So $T_{r,q,n}$ is a sum of admissible sums with $l=r+q$ and $m=1$ with coefficients which do not depend on $n$. 

It is also easy to see that the product of admissible sums is an admissible sum (we just need to combine two pairs of functions into a single pair by concatenation). So, since $T_n({{\bf m}})$ is given by the sum of products of $T_{r,q,n}$ with coefficients which do not depend on $n$, it follows that $T_n({{\bf m}})$ is a sum of admissible sums with coefficients which do not depend on $n$.

We  are now ready to prove the following proposition.
\begin{prop}
The product of $T_n({{\bf m}}_1)$ and $T_n({{\bf m}}_2)$ can be written as a sum of $T_n({{\bf m}})$ with coefficients which depend polynomially on $n$.
\end{prop}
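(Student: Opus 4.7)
By the observation immediately preceding the statement, every $T_n({\bf m})$ is an $n$-independent $\mathbb{Q}$-linear combination of admissible sums, and the product of two admissible sums is again admissible (with scalars multiplied and the data $(a,u)$ concatenated). Thus $T_n({\bf m}_1)\cdot T_n({\bf m}_2)$ is an $n$-independent $\mathbb{Q}$-linear combination of admissible sums, and the proposition reduces to the following claim: \emph{every admissible sum is a $\mathbb{Q}[n]$-linear combination of the $T_n({\bf m})$'s}. I would prove this reduced claim by induction on the word length $l$ of the admissible sum.

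Given an admissible sum $C \sum_{i_1, \dots, i_m = 1}^n \prod_{p=1}^l a(p)_{i_{u(p)}}\be$, the first step is to decompose $\sum_{i_1, \dots, i_m}$ according to the pattern of coincidences among the $i_a$'s: for each set-partition $\pi$ of $[m]$ with $k = |\pi|$ blocks, write the corresponding contribution as a sum over $k$-tuples of distinct indices $(j_1, \dots, j_k)$. The conversion uses only $n$-independent integer (Möbius) coefficients. Next, I would normal-order each such distinct-index piece by repeatedly applying the Cherednik relation $[y_i, x_j] = \delta_{ij}(t - k\sum_{h \neq i} s_{ih}) + (1-\delta_{ij})\,k\,s_{ij}$, pushing all $y$'s to the right of all $x$'s. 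Every commutator application produces correction terms of strictly smaller word length $l$ (after commuting any resulting $s_{\bullet\bullet}$ across the remaining generators to reach the rightmost $\be$, where it is absorbed via $s_{ij}\be = \be$); the induction hypothesis then applies to these corrections.

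Once normal-ordering is finished, the surviving irreducible terms take the form $C' \sum_{(j_1, \dots, j_{k'})\text{ distinct}} \prod_s x_{j_s}^{r_s} y_{j_s}^{q_s}\be$, multiplied by polynomial-in-$n$ factors accumulated from the ``free index'' contributions described below. A second Möbius inversion on the partition lattice of $[k']$ rewrites each such ``sum over distinct tuples'' as an $n$-independent $\mathbb{Z}$-linear combination of products $\prod_s P_{r_s, q_s, n}\,\be$; after rearranging shuffles to match the definition of $T_n({\bf m})$, this yields a $\mathbb{Z}$-linear combination of $T_n({\bf m})$'s. Assembling all steps produces the desired $\mathbb{Q}[n]$-linear expression for the original admissible sum.

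\textbf{Main obstacle.} The crucial technical point is the bookkeeping in the normal-ordering step: in particular, tracking what happens to the new summed dummy $h$ arising from the term $-k\sum_{h \neq i} s_{ih}$. The argument needs to split $h$ into two cases: (i) $h$ coincides with some already-named index $j_s$, in which case the resulting term is a shorter admissible sum over the same distinct-index set and is handled by induction; and (ii) $h$ is genuinely fresh, in which case $s_{ih}$ is commuted past the remaining generators and absorbed into $\be$, leaving the empty sum $\sum_{h \notin \{j_1, \dots, j_k\}} 1 = n - k$ as a single \emph{linear} factor in $n$. This dichotomy is what guarantees that the only $n$-dependence introduced is polynomial and that each reduction strictly decreases $l$; once this is verified, the polynomiality of the structure constants of the $T_n({\bf m})$-basis is immediate.
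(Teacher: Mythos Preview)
Your induction on the word length $l$ and your identification of free dummy indices as the sole source of $n$-dependence are both correct and are exactly what the paper does. However, the route through partition decomposition of the index set, full normal-ordering, and a second M\"obius inversion is an unnecessary detour, and the final step hides a genuine gap. After M\"obius inversion you assert that one obtains ``products $\prod_s P_{r_s,q_s,n}\be$'' which, ``after rearranging shuffles,'' become $T_n({\bf m})$'s. But $P_{r,q,n}$ lives in the associated graded; in $\mathcal B_{t,k}(n)$ the merged-block terms coming from inclusion--exclusion are ordered words $x_j^{r_s}y_j^{q_s}x_j^{r_{s'}}y_j^{q_{s'}}\cdots$, not power sums, and passing from any such ordered product to the shuffle-symmetrized $T_n({\bf m})$ requires further commutations, each producing lower-degree corrections that must themselves be shown to be admissible sums handled by the induction. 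You never close this loop. There is also a smaller imprecision in your case~(ii): when $h$ is fresh, commuting $s_{ih}$ to $\be$ relabels every occurrence of $i$ to its right as $h$, so the resulting word may genuinely involve $h$ and the sum over $h$ is then not the free factor $(n-k)$; it is still a shorter admissible sum, but your dichotomy as stated is too coarse.

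The paper sidesteps all of this by matching leading terms directly. Given an admissible sum $A$ of degree $M$ with $u$ surjective, read off $r_j,q_j$ and the corresponding ${\bf m}$; then $\gr^M(A)=\alpha\,\gr^M(T_n({\bf m}))$ for an $n$-independent constant $\alpha$, and $A-\alpha T_n({\bf m})$ is obtained by a finite sequence of transpositions of neighboring letters. Each transposition either commutes or invokes the Cherednik relation; the group elements produced are pushed right and absorbed into $\be$, and any resulting inequality constraint on indices is dissolved via $\sum_{i_1\ne i_2}=\sum_{i_1,i_2}-\sum_{i_1=i_2}$. All corrections are admissible of degree $<M$ with $n$-independent coefficients, and the only way $n$ enters is through a non-surjective $u$ (a dummy index absent from the word), which contributes a power of $n$. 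Your partition decomposition, normal-ordering step, and second M\"obius inversion can simply be deleted; what remains is the paper's argument.
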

\begin{proof}

By the preceding discussion $T_n({{\bf m}}_1)T_n({{\bf m}}_2)$ is a sum of admissible sums with coefficients independent of $n$. So it is enough to prove that any admissible sum can be written as a sum of $T_n({{\bf m}})$ with coefficients which depend on $n$ polynomially. We will prove this by induction on $l$ (the cardinality of the source of $a$ in the definition of admissible sum, or the degree of admissible sum). Assume that the statement holds for admissible sums with $l < M$.

Suppose we are given an admissible sum $A$ with $l=M$ defined by functions \linebreak $a:[M] \to \{x,y\}$ and $u: [M] \to [k]$. If $u$ is not surjective, it follows that some of the summations are redundant and just give a coefficient in the form of the power of $n$ (this is where polynomial dependence on $n$ actually comes from). So we can reduce to the case of $u$ being surjective. For $j \in [k]$  define $r_j = |u^{-1}(j)\cap a^{-1}(x)|$ and $q_j = |u^{-1}(j)\cap a^{-1}(y)|$, i.e., this is the number of $x$'s and $y$'s in the summation corresponding to $i_j$. Define ${{\bf m}}$ in the following way: we set $m_{r,q}:=|\{j \in [k]| \ r_j =r  , q_j=q\}|$. We want to prove that there is a  number $\alpha$ which does not depend on $n$ such that $A-\alpha T({{\bf m}})$ is given by the sum of admissible sums with  $n$-independent coefficients all of which have degree $l<M$. Indeed, by the choice of ${{\bf m}}$ it follows that the highest orders of $A$ and $T_n({{\bf m}})$ are proportional up to some  factor coming from the factorials in the definition of $T_n({{\bf m}})$, so choose $\alpha \in \Bbbk$ such that ${\rm gr}^{w({{\bf m}})}(A) - \alpha \cdot {\rm gr}^{w({{\bf m}})}(T_n({{\bf m}})) = 0$.

To calculate the actual difference one would need to permute $x$'s and $y$'s in $A$ to bring it to the form of $T_n({{\bf m}})$. Obviously when we permute $x$'s and $y$'s, an admissible sum stays admissible. So we only need to see what happens with the parts arising due to commutators.

If we commute $x_{i_1}$ with $y_{i_2}$, we reduce the number of generators by $2$ (so the resulting degree is less than $M$) and insert $t-k\sum_{m \ne i_1}s_{m,i_q}$ in case $i_1= i_2$ or $ks_{i_1i_2}$ in case of $i_1 \ne i_2$. In either case we can commute group elements to the right and absorb them into $\be$. What we have afterwards is a sum of sums which only differ from admissible sums by the fact that they sometimes have the condition $i_1\ne i_2$. But since \linebreak $\sum_{i_1\ne i_2} = \sum_{i_1,i_2} - \sum_{i_1=i_2}$ this reduces to the sum of admissible sums, and we are done. 
\end{proof}

\section{Appendix: Direct calculation of generators and relations for $\mathfrak{po}$}

Here we would like to give a direct proof of Proposition \ref{1proprel}. As we mentioned in the main text, we have a surjective map $\pi:L(\mathfrak{n}_1) \to \mathfrak{n}$. Let us denote the ideal generated by $\phi_1$, $\psi_4, \psi_1$ and $\chi_1$ as $I \subset L(\mathfrak{n}_1)$. It's easy to see that this ideal is in the kernel of $\pi$. Indeed, to conclude that we only need to know that the generators of $\mathfrak{n}_1$ satisfy the four last relations of Proposition \ref{propgen3}, which is a straightforward calculation. Let us denote the quotient $L(\mathfrak{n}_1)/I$ by $\mathfrak{l}$. So we have a surjective map $\pi': \mathfrak{l} \to \mathfrak{n}$. We only need to prove that $\pi'$ is injective.

\begin{prop}
The map $\pi'$ just described is an isomorphism.
\end{prop}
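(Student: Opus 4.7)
The plan is to prove that the surjective map $\pi': \mathfrak{l}\to \mathfrak{n}$ is injective by comparing graded dimensions. Both sides inherit a natural $\mathbb Z_{>0}$-grading: $\mathfrak{l}=\bigoplus_{d\ge 1}\mathfrak{l}_d$ from the bracket-length grading on $L(\mathfrak{n}_1)$, and $\mathfrak{n}=\bigoplus_{d\ge 1}\mathfrak{n}_d$ with $\mathfrak{n}_d=\mathfrak{po}_d\cong V_{d+2}$. Since all four generators $\phi_1,\psi_4,\psi_1,\chi_1$ of $I$ are $\mathfrak{sl}_2$-submodules of $L(\mathfrak{n}_1)$, the ideal $I$ is $\mathfrak{sl}_2$-stable, and $\mathfrak{l}$ inherits an $\mathfrak{sl}_2$-action under which $\pi'$ is equivariant. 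As $\pi'$ is surjective and grading-preserving, it suffices to show $\dim\mathfrak{l}_d\le d+3$ for every $d\ge 1$; equivalently, we will show $\mathfrak{l}_d\cong V_{d+2}$ as $\mathfrak{sl}_2$-modules, by strong induction on $d$.

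For the base cases $d=1,2,3,4$ I would proceed by explicit character / Clebsch--Gordan computations in the free Lie algebra on $V_3$. Using Witt's formula and the decomposition $V_3\otimes V_3\cong V_6\oplus V_4\oplus V_2\oplus V_0$, one gets $L(\mathfrak{n}_1)_2=\Lambda^2 V_3=V_4\oplus V_0$, so imposing $\phi_1=V_0=0$ leaves exactly $\mathfrak{l}_2=V_4$. A similar but longer calculation gives $L(\mathfrak{n}_1)_3=V_7\oplus V_5\oplus V_3\oplus V_1$; imposing $\psi_4=V_7=0$ and $\psi_1=V_1=0$ and noting that $[\phi_1,\mathfrak{n}_1]$ gives the unique copy of $V_3$ in $L(\mathfrak{n}_1)_3$ leaves $\mathfrak{l}_3=V_5$. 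Degree four is handled analogously: decompose $L(\mathfrak{n}_1)_4$ (of dimension $60$) into irreducibles, observe that the ideal component in degree $4$ contains $[\phi_1,\mathfrak{n}_1\wedge \mathfrak{n}_1]$, $[\psi_4,\mathfrak{n}_1]$, $[\psi_1,\mathfrak{n}_1]$ and $\chi_1$, and check that what remains is exactly $V_6$.

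For the inductive step ($d\ge 5$), assume $\mathfrak{l}_{d'}\cong V_{d'+2}$ for all $d'<d$. Since $\mathfrak{l}$ is generated by $\mathfrak{l}_1$, one has $\mathfrak{l}_d=[\mathfrak{l}_1,\mathfrak{l}_{d-1}]$, a quotient of $V_3\otimes V_{d+1}\cong V_{d+4}\oplus V_{d+2}\oplus V_d\oplus V_{d-2}$. The goal is to show the three ``extraneous'' summands $V_{d+4},V_d,V_{d-2}$ vanish in $\mathfrak{l}_d$. I would do this by producing, for each extraneous isotypic component, an explicit highest-weight vector lying in $I_d$. The vanishing of the $V_{d+4}$ piece follows by iterating the Serre-type relation $\psi_4=0$: bracketing it $d-3$ times with the highest-weight vector $c_1=q^3/6$ of $\mathfrak{n}_1$ and repeatedly applying the Jacobi identity (together with the inductive assumption that lower-degree graded pieces are single irreducibles) produces an element whose leading term is the highest weight vector of $V_{d+4}$ inside $[\mathfrak{l}_1,\mathfrak{l}_{d-1}]$. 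The $V_d$ and $V_{d-2}$ components are eliminated in the same spirit by propagating $\phi_1$, $\psi_1$ and $\chi_1$ upward via Jacobi brackets with $\mathfrak{n}_1$.

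The hard step will be the inductive argument, specifically showing rigorously that the four low-degree relations suffice to kill all unwanted isotypic components in every higher degree. The subtlety is twofold: one must verify that the element produced by the iterated Jacobi manipulation genuinely has a nonzero projection to the extraneous $V_{d+4}$ (resp.\ $V_d$, $V_{d-2}$) summand and not just to $V_{d+2}$, and one must keep careful track of how the four relations interact (since bracketing them with $\mathfrak{n}_1$ creates many terms that have to be reorganized using the other relations). Once these propagation statements are established, the upper bound $\dim\mathfrak{l}_d\le d+3$ follows, and combined with surjectivity of $\pi'$ this forces $\pi'$ to be an isomorphism.
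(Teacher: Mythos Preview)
Your approach is essentially the same as the paper's: both prove $\pi'$ is an isomorphism by strong induction on degree using $\mathfrak{sl}_2$-equivariance and the Clebsch--Gordan decomposition $\mathfrak{l}_{d-1}\otimes\mathfrak{n}_1\cong V_{d+4}\oplus V_{d+2}\oplus V_d\oplus V_{d-2}$, killing the extraneous summands by propagating $\psi_4$, $\phi_1$, $\psi_1$ upward via Jacobi and the inductive hypothesis. The paper carries out your ``hard step'' with explicit highest-weight-vector computations and discovers that the $\psi_1$-propagation used to kill $V_{d-2}$ degenerates exactly at $d=4$ (the relevant $3\times 4$ matrix drops rank there), which is precisely where and why the additional relation $\chi_1$ is needed---a subtlety your sketch of the base cases glosses over but does not contradict.
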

\begin{proof}
Both algebras have a natural grading given by assigning $\mathfrak{n}_1$ to have degree $1$. We will prove that $\pi'$ is an isomorphism by induction. 

It will be easier for us to begin with the induction step. I.e., we will prove that if $\pi'$ is an isomorphism for all degrees up to $l-2$ (with $l \ge 6$), then it is an isomorphism for $l-1$. We will prove the base of induction (i.e., the fact that $\pi'$ is an isomorphism for degrees $2,3$ and $4$) later, using the general formulas we derived.

So, suppose we know that up to $l-2$ we have $\mathfrak{l}_{j} \simeq \mathfrak{n}_j \simeq V_{j+2}$. It means that if we want to show that a certain element of $L(\mathfrak{n}_1)$ is in $I$, we can freely commute elements with total degree $\le j$ as though they were the elements of $\po$. Indeed, this will only add to our elements something which is already contained in $I$. Let us denote the highest weight vector of $\mathfrak{l}_j$ by $v_1^{j+2}$, which corresponds to $q^{j+2}$ under the above isomorphism (i.e. $\mathfrak{l}_{j} \simeq \mathfrak{n}_j \simeq V_{j+2}$). We set $v_i^{j} = f^{i-1}v_1^{j}$.

Now we know that $\mathfrak{l}_{l-1}$ is a quotient of $\mathfrak{l}_{l-2} \otimes \mathfrak{n}_1 \simeq V_{l+3}\oplus V_{l+1} \oplus V_{l-1}\oplus V_{l-3}$, i.e., we have a surjective map $\xi_{l-1}: V_{l+3}\oplus V_{l+1} \oplus V_{l-1}\oplus V_{l-3} \to \mathfrak{l}_{l-1}$. We only need to prove that $ \xi_{l-1}(V_{l+3}\oplus V_{l-1}\oplus V_{l-3}) = 0$. 

We would like to describe the highest weight vectors of the simple $\mathfrak{sl}_2$-modules in the decomposition of $\mathfrak{l}_{l-2} \otimes \mathfrak{n}_1$ explicitly. To do so, it is enough to find the vectors of the required weight which are annihilated by the action of $e$. It is easy to see that the highest weight vector of $V_{l+3}$ is proportional to $v_1^{l}\otimes c_1$; the highest weight vector of $V_{l+1}$ to $3v_2^{l}\otimes c_1 - l v_1^l \otimes c_2$; the highest weight vector of $V_{l-1}$ to 
$$
6v_3^l\otimes c_1 - 4(l-1)v_2^{l}\otimes c_2 + l(l-1)v_1^l \otimes c_3;
$$ 
the highest weight vector of $V_{l-3}$ to 
$$
6v_4^l \otimes c_1 - 6(l-2)v_3^l\otimes c_2 + 3(l-2)(l-1)v_2^l \otimes c_3 - l(l-1)(l-2)v_1^l \otimes c_4.
$$ 
Writing the highest weight vectors in this way allows us to write the action of $\xi_{l-1}$ in a straightforward way, i.e., $y \otimes x \in \mathfrak{l}_{l-2} \otimes \mathfrak{n}_1$ is mapped into $\xi_{l-1}(y \otimes x) \to [y,x]$.

Now we need to prove that each of the highest weight vectors corresponding to $V_{l+3}$, $V_{l-1}$ and $V_{l-3}$ belongs to $I$, i.e., maps to zero under $\xi_{l-1}$. Let us start with $V_{l+3}$. Now we know that $[v_2^{l-2},[d_1,c_1]]$ is in $I$. If we transform this expression using the commutator formulas in $\po$ for elements of degree less or equal than $l-2$, we will stay in $I$ by the induction assumption. So,  in $\mathfrak{l}_{l-1}$ we have:
$$
0 = [v_2^{l-2},[d_1,c_1]] = [[v_2^{l-2},d_1],c_1] - [[v_2^{l-2},c_1],d_1] = (l-2)[v_1^l,c_1] - \frac{1}{2}(l-2)[v_1^{l-1},d_1]  , 
$$
where we have calculated $[v_2^{l-2},d_1]$ and $[v_2^{l-2},c_1]$ in $\po$ as we've discussed before the formula.
Now we also express $d_1 = [c_2,c_1]$ and get:
$$
[v_1^{l-1},d_1] = [[v_1^{l-1},c_2],c_1] - [[v_1^{l-1},c_1],c_2] = -\frac{1}{2}(l-1)[v_1^l,c_1]  .
$$
So we conclude that:
$$
0 = (l-2)(l+3)[v_1^l,c_1] = (l-2)(l+3)\xi_{l-1}(v_1^{l}\otimes c_1)  ,
$$
which is proportional to the image of the highest weight vector of $V_{l+3}$, and since $l \ge 6$, it follows that it is indeed zero.

We use a similar method for two other highest weight vectors. Starting with:
$$
0 = [v_1^{l-1},[c_1,c_4]-[c_2,c_3]]  ,
$$
we get:
$$
0=6[v_3^l,c_1]-4(l-1)[v_2^l,c_2]+ (l-1)l[v_1^l;c_3] = \xi_{l-1}(6v_3^l\otimes c_1 - 4(l-1)v_2^{l}\otimes c_2 + l(l-1)v_1^l \otimes c_3)  , 
$$
which is the highest weight vector of $V_{l-1}$. 

To deal with the highest weight vector of $V_{l-3}$ we start with:
$$
0=[v_2^{l-2},[d_4,c_1]-2[d_3,c_2] + 3[d_2,c_3]-4[d_1,c_4]].
$$
After a similar calculation we get:
\begin{align} \label{lasteq}
0 = \xi_{l-1}(12(44-16l)v_4^l \otimes c_1+12(l-2)(11l-35)v_3^l \otimes c_2 +\\ 
+12(l-2)(l-1)(13-3l)v_2^l \otimes c_3 + l(l-1)(l-2)(2l-34)v_1^l \otimes c_4)  , \nonumber
\end{align}
denote this element  of $\mathfrak{l}_{l-2} \otimes \mathfrak{n}_1$ in brackets by $\alpha_1$. We know that $\alpha_1$ is of  $\mathfrak{sl}_2$-weight $l-3$, and we know that it belongs to the kernel of $\pi'\circ\xi_{l-1}$. Hence it is the element of the submodule isomorphic to $V_{l+3}\oplus V_{l-1}\oplus V_{l-3}$. Denote by $\alpha_2$ the result of the action by $e^3$ on the highest weight vector of $V_{l+3}$ and by $\alpha_3$ the result of the action of $e$ on the highest weight vector of $V_{l+1}$. We have:
$$
\alpha_2 = v_4^l \otimes c_1 + 3v_3^l \otimes c_2 + 3v_2^l \otimes c_3 + v_1^l \otimes c_4  ,
$$
and:
$$
\alpha_3 = 6v_4^l \otimes c_1 + (10-4l)v_3^l \otimes c_2 + (l-1)(l-4)v_2^l \otimes c_3 + l(l-1)v_1^l \otimes c_4.
$$

Now if $\alpha_1$ is linearly independent of $\alpha_2$ and $\alpha_3$, then the highest weight vector of $V_{l-3}$ lies in the linear span of $\alpha_i$, and since $\xi_{l-1}(\alpha_i) = 0$ it follows that $\xi_{l-1}$ acts on the highest weight vector by zero.

But calculating the roots of the minors of the matrix given by the coordinates of $\alpha_i$, we see that the common roots are only $l=-1,-2,5$. So, since in our case $l\ge 6$, we are done, and $\pi'$ is an isomorphism in the degree $l-1$.

Now we can prove the base of induction, i.e., the degrees $2,3,4$.

Let us begin with $\mathfrak{l}_2$. We have $L(\mathfrak{n}_1) = \Lambda^2\mathfrak{n}_1 = \phi_2 \oplus \phi_1$ ($V_0\oplus V_4$ as $\mathfrak{sl}_2$-modules) and $I_2 = \phi_1$. So we see that $\mathfrak{l}_2=\phi_2 \simeq V_4$, which has the same dimension as $\mathfrak{n}_2$. So $\pi'$ must be an isomorphism in degree $2$. Note that it also follows that the minimal set of relations must contain $\phi_1$.

Now we deal with $\mathfrak{l}_3$. We have a surjective map 
$$\xi_{3}: \mathfrak{l}_2 \otimes \mathfrak{n}_1 \simeq V_7 \oplus V_5 \oplus V_3 \oplus V_1 \to \mathfrak{l}_3.$$ Here we have a part of $I_3$ generated by $I_2$, i.e., we have $\phi_1 \otimes \mathfrak{n}_1 \simeq V_3\to I_3$. But the consequence of this relation was exactly calculated by us in the general case, when we used that $0 = [v_1^{l-1},[c_1,c_4]-[c_2,c_3]]$. As was shown there, this leads to the conclusion that $V_3$ is in the kernel of $\xi_3$. However, we cannot kill anything else using only the relation $\phi_1$.
But $V_7 \oplus V_0$ are precisely $\psi_4$ and $\psi_1$, hence they lie in $I_3$  and in the kernel of $\xi_3$. So $\mathfrak{l}_3$ has the same dimension as $\mathfrak{n}_3$ and $\pi'$ is an isomorphism. Note that this also shows that the minimal set of relations must contain $\psi_1,\psi_4$.

To finish we need to consider $\mathfrak{l}_4$. As before, we have a surjective map $$\xi_{4}:\mathfrak{l}_3 \otimes \mathfrak{n}_1 \simeq V_8 \oplus V_6 \oplus V_4 \oplus V_2 \to \mathfrak{l}_4.$$ The general formulas from the induction step allow us to conclude that $\xi_4(V_8 \oplus V_4) = 0$. Now we need to deal with $V_2$. However, as we can see from the general formulas, $\alpha_1$ defined in Equation \ref{lasteq} becomes linearly dependent with $\alpha_2$ and $\alpha_3$ in degree $4$. Indeed, it turns out that $V_2$ does not belong to the ideal generated by $\psi_1,\psi_4$ and $\phi_0$.

We see that all we can generate by $\phi_0$ in degree $4$ is given by $\phi_0 \otimes \Lambda^2\mathfrak{n}_1 \simeq V_0 \oplus V_4$, \linebreak so it does not contain anything isomorphic to $V_2$. All we can generate by $\psi_4$ is \linebreak $\psi_4 \otimes \mathfrak{n}_1 \simeq V_{10} \oplus V_8 \oplus V_6 \oplus V_4$, so it does not contain anything isomorphic to $V_2$. So the only chance to kill $V_2$ is $\psi_1 \otimes \mathfrak{n}_1 = V_4 \oplus V_2 $. But using our calculation (and similar ones) it follows that this doesn't kill $V_2$ in $\mathfrak{l}_3 \otimes \mathfrak{n}_1$. 

But the relation $\chi_1$ takes care of it. So it follows both that $\mathfrak{l}_4$ is isomorphic to $\mathfrak{n}_4$ under $\pi'$ and that the minimal set of relations must contain $\chi_1$.

\end{proof}

\bibliographystyle{alpha}
\bibliography{biblio}

\end{document}